\documentclass[a4paper,twoside,10pt]{amsart}

\usepackage{amsthm,amsmath,amssymb,amscd}
\usepackage[greek,english]{babel}
\usepackage{graphicx}

  \setlength\voffset{-1in}
  \setlength\hoffset{-1in}
  \setlength\headsep{17pt}
  \setlength\topskip\baselineskip
  \setlength\marginparsep{5pt}
  \setlength\marginparwidth{20pt}
  \setlength\marginparpush{5pt}
  \setlength\headheight{4\baselineskip}
  \setlength{\footskip}{21pt}
  \setlength\textheight{55\baselineskip}
  \setlength\paperwidth{8.5in}
  \setlength\paperheight{11in}
  \setlength\columnsep{2pc}
  \setlength\oddsidemargin{3.5cm}
 \setlength\evensidemargin{3.5cm}
  \setlength\textwidth\paperwidth
  \addtolength\textwidth{-2\oddsidemargin}

\newtheorem{theorem}{Theorem}[section]
\newtheorem{proposition}[theorem]{Proposition}
\newtheorem{lemma}[theorem]{Lemma}
\newtheorem{corollary}[theorem]{Corollary}

\theoremstyle{definition}
\newtheorem{definition}[theorem]{Definition}

\newtheorem{example}[theorem]{Example}
\theoremstyle{remark}
\newtheorem{remark}[theorem]{Remark}
\newtheorem{Remark}[theorem]{Remarks}

\newlength{\szer}


\renewcommand{\qedsymbol}{$\square$}

\def\Par{{\par\noindent}}
\def\A{{\mathbf A}}

\def\C{{\mathbf C}}

\def\Ff{\mathcal F}

\def\Ll{{\mathcal L}}
\def\N{{\mathbf N}}

\def\P{{\mathbf P}}
\def\Pp{{\mathcal P}}
\def\Q{{\mathbf Q}}
\def\Qq{{\mathcal Q}}
\def\R{\mathbf R}

\def\Z{{\mathbf Z}}

\def\int{\mathrm{int}}

\def\elem(#1,#2){  \{ \frac{#1}{\overline{\ #2\ }}\}  }

\begin{document}
\bibliographystyle{amsplain}
\setcounter{tocdepth}{2}
\title{OVERWEIGHT DEFORMATIONS OF AFFINE TORIC VARIETIES AND LOCAL UNIFORMIZATION}
\author{Bernard Teissier}
\address{Institut de Math\'ematiques de Jussieu - Paris Rive Gauche, UMR 7586 du CNRS, 
B\^atiment Sophie Germain, Case 7012,
75205 PARIS Cedex 13, France}

\email{teissier@math.jussieu.fr}

\keywords{Toric geometry, Valuations, Key polynomials}

\subjclass[2000]{14M25, 14E15, 14B05}

\dedicatory{ \textit {To the memory of Shreeram S. Abhyankar}}

\begin{abstract}
 Given an equicharacteristic complete noetherian local domain $R$ with algebraically closed residue field $k$, we first present a combinatorial proof of \emph{embedded} local uniformization for zero-dimensional valuations of $R$ whose associated graded ring ${\rm gr}_\nu R$ with respect to the filtration defined by the valuation is a finitely generated $k$-algebra. The main idea here is that some of the birational toric maps which provide embedded pseudo-resolutions for the affine toric variety corresponding to ${\rm gr}_\nu R$ also provide local uniformizations for $\nu$ on $R$. These valuations are necessarily Abhyankar (for zero-dimensional valuations this means that the value group is $\Z^r$ with $r={\rm dim}R$).\Par
 In a second part we show that conversely, given an excellent noetherian equicharacteristic local domain $R$ with algebraically closed residue field, if the zero-dimensional valuation $\nu$ of $R$ is Abhyankar, there are local domains $R'$ which are essentially of finite type over $R$ and dominated by the valuation ring $R_\nu$ ($\nu$-modifications of $R$) such that the semigroup of values of $\nu$ on $R'$ is finitely generated, and therefore so is the $k$-algebra ${\rm gr}_\nu R'$. Combining the two results and using the fact that Abhyankar valuations behave well under completion gives a proof of local uniformization for rational Abhyankar valuations and, by a specialization argument, for all Abhyankar valuations. \par
As a by-product we obtain a description of the valuation ring of a rational Abhyankar valuation as an inductive limit indexed by $\N$ of birational toric maps of regular local rings. One of our main tools, the valuative Cohen theorem, is then used to study the extensions of rational monomial Abhyankar valuations of the ring $k[[x_1,\ldots ,x_r]]$ to monogenous integral extensions and the nature of their key polynomials. In the conclusion we place the results in the perspective of local embedded resolution of singularities by a single toric modification after an appropriate re-embedding.
\end{abstract}

\footnote{AMS classification: Primary 14M25; Secondary 14E15, 14B05.
Keywords:
Toric geometry, Valuations, Uniformization, Key polynomials.}

\maketitle

\newpage
\tableofcontents

\section{Introduction}
After \cite{Te1}, this is the second stage of an attempt to prove that every singularity $(X,0)$ over an algebraically closed field $k$ can be embedded in an affine space $\A^N(k)$ over that field in such a way that there exist a system of coordinates, making $\A^N(k)$ a toric variety such that the intersection of $X$ with the torus is dense in $X$, and a toric proper and birational map of non singular toric varieties $Z\to \A^N(k)$ such that the strict transform of $(X,0)$ is non singular and transversal to the toric boundary. This problem of proving embedded resolution by a single toric morphism after a suitable re-embedding has recently been solved for projective varieties, assuming embedded resolution of singularities, by Jenia Tevelev (see \cite{Tev} and the conclusion of this paper).
\Par This paper is concerned with the "toric embedded local uniformization" avatar of the "toric embedded resolution" problem just mentioned and explained in \cite{Te2},  \cite{Te3}. \par
A valuation of a local domain $(R,m)$ is the datum of a valuation ring $(R_\nu, m_\nu)$ of the fraction field $K$ of $R$, containing $R$. We say that the valuation is centered in $m$ if $(R,m)$ is dominated by $(R_\nu, m_\nu)$ (i.e., $m_\nu\cap R=m$). The valuation is \emph{rational} if the corresponding residue fields extension is trivial. The problem of local uniformization of a valuation is to show the existence of a regular local ring $R'$ essentially of finite type over $R$ and dominated by $R_\nu$. Embedded local uniformization requires in addition that $R'$ should be obtained as strict transform of $R$ in a birational morphism, essentially of finite type, of regular local rings $S\to S'$ having respectively $R$ and $R'$ as quotients.\par\noindent This text deals mostly with the local uniformization of a class of valuations of equicharacteristic noetherian excellent local domains with an algebraically closed residue field. The focus is on local uniformization of rational valuations. \par
When the residue field $k$ of $R$ is algebraically closed rational valuations, which correspond to the $k$-rational points of the Riemann-Zariski manifold of valuations centered in ${\rm Spec}R$, concentrate the difficulty of local uniformization.\par
 The first purpose of this paper is to realize a part of the program for local uniformization of valuations of excellent equicharacteristic local rings with an algebraically closed residue field proposed in \cite {Te1}, in the special case of a complete local domain $(R,m)$ and a valuation whose semigroup of values is finitely generated.\par\noindent The main point is that the formal space corresponding to the local ring can be embedded in an affine space over the residue field $k=R/m$ in such a way that  its strict transform under a single toric birational modification of the ambient space is non singular at the point picked by the valuation.\par
Let us fix a valuation $\nu$ with ring $R_\nu$ on a noetherian equicharacteristic local domain $(R,m)$, and assume that it is centered in $m$.\Par
Set $k_\nu=R_\nu/m_\nu$ and recall Abhyankar's inequality $r(\nu)+{\rm tr}_kk_\nu\leq {\rm dim}R$, where $\hbox{\rm r}(\nu)$ is the rational rank  of the totally ordered abelian group $\Phi$ of values of the valuation $\nu$, and ${\rm tr}_kk_\nu$ is the transcendence degree of the residue fields extension, also called the \emph{dimension} of the valuation (see \cite{V0}, Th\'eor\`eme 9.2). We say that our valuation is \emph{rational} if $k_\nu=k$. We then have $\hbox{\rm r}(\nu)\leq{\rm dim}R$. Abhyankar valuations are defined as those for which Abhyankar's inequality is an equality and it is known (see \emph{loc.cit.}) that this implies $\Phi\cong \Z^{r(\nu)}$. In the case where $R$ is a $k$-algebra with residue field $k$ the rationality condition means that the centers of $\nu$ in all birational models proper over ${\rm Spec}R$ are closed points which are rational over $k$, hence the name. A rational valuation on $R$ is Abhyankar when $\Phi\cong \Z^{{\rm dim}R}$.\par\noindent Denote by $\Gamma=\nu(R\setminus \{0\})\subset \Phi_{\geq 0}=\Phi_+\cup\{0\}$ the semigroup of values of $\nu$ on $R$. The semigroup $\Gamma$ is well ordered since $R$ is noetherian and so we can denote by  $\Gamma=\langle (\gamma_i)_{i\in I}\rangle$ its minimal set of generators, indexed by an ordinal $I\leq \omega^h$ where $h$ is the real (or Archimedean) rank, of the valuation, also called its  height (see \cite{Te1}, corollary 3.10).\par
If we agree that $\nu(0)=+\infty$, an element larger than any element of $\Phi$, for any subring $R\subseteq R_\nu$, noetherian or not, the valuation $\nu$ defines a filtration of $R$ by the ideals $\Pp_\phi (R)=\{x\in R\vert\nu(x)\geq\phi\}, \Pp^+_\phi (R)=\{x\in R\vert\nu(x)>\phi\}$ and an associated graded ring $${\rm gr}_\nu R=\bigoplus_{\phi\in \Phi_{\geq 0}}\Pp_\phi (R)/\Pp^+_\phi(R).$$
Note that $\Pp_\phi(R)=R$ if $\phi\notin  \Phi_+$ and that the sum on the right is actually indexed by $\Gamma$. If $\Gamma$ is well ordered, the ideal $\Pp^+_\phi (R)$ is equal to $\Pp_{\phi^+} (R)$, where $\phi^+={\rm min}\{\psi\in \Gamma\vert \psi>\phi\}$ is the successor of $\phi$ in $\Gamma$.\par
It is shown in (\cite{Te1}, 2.3) that even without the assumption of rationality there a faithfully flat specialization of the ring $R$ to ${\rm gr}_\nu R$.\Par If the valuation is rational, each non zero homogeneous component of the $k$-algebra ${\rm gr}_\nu R$ is a one-dimensional vector space over $k$ and the algebra is generated by homogeneous elements $\overline \xi_i$ whose degrees $\gamma_i$ generate the semigroup $\Gamma$ (see \cite{Te1}, \S 4). In particular it is finitely generated if and only if the semigroup $\Gamma$ is.\Par Thus (see \emph{loc.cit.}), when the valuation is rational this graded ring is the quotient of a polynomial algebra (possibly in infinitely many variables) $k[(U_i)_{i\in I}]$ by a prime binomial ideal $(U^{m^\ell}-\lambda_\ell U^{n^\ell})_{\ell\in L}$, with $\lambda_\ell\in k^*$, and is isomorphic to the semigroup algebra over $k$ of the value semigroup $\Gamma$.
\Par Assuming again that $R$ is noetherian, by a theorem of Piltant (see \cite{Te1}, proposition 3.1) for zero dimensional valuations the Krull dimension of the ring ${\rm gr}_\nu R$, which is not necessarily noetherian, is equal to ${\rm r}(\nu)$. Thus, for zero dimensional valuations we have the inequality ${\rm dim}{\rm gr}_\nu R\leq {\rm dim}R$ and among them Abhyankar valuations are characterized\footnote{A typical example where strict inequality holds is due to Zariski and analyzed from the viewpoint of this paper in \cite{Te1}, Example 4.20.}by the equality ${\rm dim}{\rm gr}_\nu R={\rm dim}R$.\par
The constants $\lambda_\ell$ which appear in the binomial equations reflect the fact that the incarnation of ${\rm Specgr}_\nu R$ in the affine space with coordinates $(U_i)_{i\in I}$ is isomorphic to the closure of the orbit of a point under the action of the torus $k^{*r}$, but there is no canonical choice of the point, just as there is no canonical choice of a system of generators of the $k$-algebra ${\rm gr}_\nu R$. Once a system of generators is fixed, these constants have a geometric interpretation in terms of coordinates in an affine chart of the point picked by the valuation in a toric modification of our ring (see \cite{Te1}, remark 5.12).\par Taking as in \cite{Te1} the point of view that $R$ is a deformation of its associated graded ring ${\rm gr}_\nu R$, we start from the fact that the affine variety defined by a prime ideal generated by binomials $U^{m^\ell}-\lambda_\ell U^{n^\ell}$ of $k[(U_i)_{i\in F}]$, with constants $\lambda_\ell\in k^*$, is a toric variety and therefore has, over any algebraically closed field $k$, toric \emph{embedded} resolutions of singularities described in \cite{GP-T1} from an intrinsic and general viewpoint and in (\cite{Te1}, 6.2) from the equational viewpoint, which is more adapted here since we deform equations. Given a weight on the polynomial or power series ring which is compatible with the binomials, we define a class of deformations of such affine toric varieties and their formal completions at the origin, which is determined by equidimensionality of the fibers and weight conditions. These deformations have the property that the monomial order induced by the weight determines a unique valuation on the ring $R$ of the general fiber of the deformation. The main point here is that \emph{some} of the toric (pseudo-)resolutions of the affine toric varieties extend to local uniformizations of this valuation. It is then shown in section \ref{OW} that any complete equicharacteristic local ring with a rational valuation such that the associated graded ring is a finitely generated algebra can be obtained in this way and thus can be uniformized in this manner.\par The idea underlying \cite{Te1} is that \emph{every} rational valuation of a complete noetherian equicharacteristic local domain $R$ can be obtained by overweight deformation from a weighted affine toric variety (possibly of infinite embedding dimension) and that because $R$ itself is of finite embedding dimension the valuation can be uniformized by a birational toric map of a suitably large ambient space, corresponding to a \emph{partial} pseudo-resolution of the toric variety. \par The concept of overweight deformation is one of the avatars of a general principle in singularity theory, which is that adding to equations terms of "higher weight" tends to preserve geometric features at the origin of the zero set of these equations. Here we deal with the behavior of strict transforms under toric modifications and it is a combinatorial problem on exponents, which is somewhat more complicated when the weights take value in a totally ordered group of rank $>1$. \par\medskip
In a second part we show that when $R$ is complete with an algebraically closed residue field the condition that the value semigroup of the rational valuation $\nu$ on a $\nu$-modification\footnote{This means the ring obtained by localizing a birational modification essentially of finite type (which we may assume to be a blowing-up) of ${\rm Spec}R$ at the point picked by the valuation} of $R$ is finitely generated is equivalent to the valuation being Abhyankar. Here the key ingredients are the equational version of the smoothness over ${\rm Spec}\Z$ of the torus of the toric scheme ${\rm Spec}\Z[t^\Gamma]$, and the version of key polynomials provided for Abhyankar valuations by the valuative Cohen theorem (theorem \ref{Valco}) of section \ref{cohen}. The first ingredient allows us to find, after a suitable birational $\nu$-modification $(R,m)\to (R',m')$ of our original ring, an injection of complete local rings $k[[x'_1,\ldots, x'_r]]\subset \hat R'^{m'}$ which is finite, and such that the extension of the value groups of their fraction fields is tame, in the usual sense that its index is not divisible by the characteristic of the residue field. It may be very different from the injection corresponding to $r$ elements of a well chosen minimal set of generators of the maximal ideal of $\hat R'^{m'}$, as one sees in remark \ref{keyrem}. This puts us in position to build key polynomials thanks to the valuative Cohen theorem, and we use them to produce a contradiction from the assumption that the semigroup of our Abhyankar valuation is not finitely generated. The proof also shows that the fraction fields extension corresponding to the finite tame injection has to be separable.\par
 The good behavior of Abhyankar valuations under completion (see subsection \ref{extcomp}) allows us to deduce the same result of finite generation after a $\nu$-modification for an excellent equicharacteristic local domain $R$, and finally to obtain local uniformization of rational Abhyankar valuations, using the result of the first part. By a specialization argument, the local uniformization of rational Abhyankar valuations of $R$ implies the local uniformization of all Abhyankar valuations of $R$.\par This result is closely related to the work of Knaf-Kuhlmann (see \cite{KK1}) proving  a field-theoretic version of local uniformization for Abhyankar valuations of algebraic function fields, as well as to the work of M. Temkin (see \cite{Tem}) on inseparable local uniformization, which in particular proves local uniformization for Abhyankar valuations of algebraic function fields and also uses toroidal methods.\par
 A consequence in valuation theory is that for the excellent local domains we study, it is equivalent for a rational valuation to be Abhyankar and to be quasi monomial, a fact which was first shown to be true by Dale Cutkosky for rank one valuations of algebraic function fields assuming embedded resolution of singularities. In any characteristic and for arbitrary Abhyankar valuations of algebraic function fields separable over the base field, it is a consequence of the more recent work of Knaf-Kuhlmann mentioned above.\par
We obtain a stronger result, theorem \ref{LUTOR} which is a description of the valuation ring of a rational Abhyankar valuation as an inductive limit indexed by $\N$ of birational toric maps of regular local rings. The inductive system is rather explicitly related to the ordering of the value group by an extension of the Jacobi-Perron algorithm.\par The valuative Cohen theorem also provides a proof of the defectlessness of rational Abhyankar valuations of the field $k((x_1,\ldots ,x_r))$ and an analogue for rational Abhyankar valuations on the rings of hypersurfaces of Abhyankar's irreducibility criterion for plane curves. We also give a characterization of the semigroups of rational Abhyankar valuations of hypersurfaces which generalizes the plane branch case. \par Two sections propose comments on the use of key polynomials in the proof and in the study of Artin-Schreier extensions. In the conclusion we place the result in the perspective of local embedded resolution of singularities by a single toric modification after an appropriate re-embedding.
\section{Weighted affine toric varieties}\label{waff}
Let $k$ be a field and $\Phi$ a totally ordered abelian group of finite rational rank $r$.\par\noindent
\begin{definition}1) A \textit{weight} on the rings $k[U_1,\ldots ,U_N]$ or $k[[U_1,\ldots ,U_N]]$ is an homomorphism of groups $b\colon \Z^N\to \Phi$ which is induced by an homomorphism of semigroups $\N^N\to \Phi_{\geq 0}$. It defines a weight on monomials by $w(U^m)=b(m)$ and an additive total preorder (or monomial preorder) by $U^m\leq U^n\leftrightarrow b(m)\leq b(n)$. We shall often write also $w(m)$ instead of $b(m)$ for $w(U^m)$.\Par
2) In this text we shall assume that the weight is non trivial, which means that the homomorphism is not zero. The set $\{w(U_i)\}_{1\leq i\leq N}$ is well ordered since it is finite, and by a result of B.H. Neumann (see \cite{Ne}) the subsemigroup of $\Phi_{\geq 0}$ that it generates is also well ordered. The weight of a series is the least weight of its terms. \end{definition}
\begin{remark}
\small{\emph{A weight on $k[U_1,\ldots ,U_N]$ or $k[[U_1,\ldots ,U_N]]$ determines a monomial valuation by $$\nu_w(P)=w(P)={\rm min}_{m\in E(P)}w(U^m)\ {\rm when}\ P=\Sigma_{m\in E(P)} c_mU^m, c_m\in k^*.$$ Provided that $w(U_i)>0$ for $i=1,\ldots, N$, the valuation is rational. and conversely a monomial rational valuation on such a ring \textit{endowed with a coordinate system} determines a weight with $w(U_i)>0$ for all $i$. }}\par
\end{remark}
This rather general situation can be reduced to a more familiar one thanks to the following result: \par\noindent
\begin{proposition} \label{finapp} {\rm (see \cite{Te1}, proposition 4.12)}\footnote{In \cite{Te1}, 4.3, it is implicit that the family has a smallest element, the semigroup generated by the chosen rationally independent elements, and that the nested sequence is indexed by $\N$. It is made explicit here. Since \cite{Te1} was published, I have learnt that a version (without the nestedness and the algorithmic aspect) of this result is due to George A. Elliott; see \cite{EL}. Hagen Knaf and Franz-Viktor Kuhlmann have remarked (see \cite{Ku}, Lemma 15.4 and \cite{KK1}, Lemma 4.2 and the lines above it) that Theorem 1 in Zariski's paper \cite{Z} can also be interpreted as a result of the same nature for groups of rank one. A version of Elliott's result also appears in the more recent paper (\cite{Tem}, Appendix) of M. Temkin.} Let $\Phi_{\geq 0}$ be the positive semigroup  of a totally ordered abelian group of finite rational rank $r$. Choose $r$ rationally independent elements in $\Phi_+$ and let $\N_{(0)}^r\subset\Phi_{\geq 0}$ be the free subsemigroup they generate. Then $\Phi_{\geq 0}$ is the union of a family $(\N^r_{(h)})_{h\in \N}$, of nested  free subsemigroups of rank $r$: $$\N_{(0)}^r\subset \cdots\subset \N_{(h)}^r\subset \N_{(h+1)}^r\subset\cdots\subset \Phi_{\geq 0},$$ the inclusions being semigroup maps.\end{proposition}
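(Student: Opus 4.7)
The plan is to build the nested sequence one step at a time, after enumerating $\Phi_{\geq 0}$. Since $\Phi$ is torsion-free of finite rational rank $r$, it embeds in $\Phi \otimes_\Z \Q \cong \Q^r$, so $\Phi_{\geq 0}$ is countable. I will enumerate $\Phi_{\geq 0} = \{\phi_0, \phi_1, \ldots\}$ with the first $r$ entries being the chosen generators of $\N_{(0)}^r$, and construct each $\N_{(h+1)}^r$ from $\N_{(h)}^r$ so that $\phi_h \in \N_{(h+1)}^r$. The union is then automatically $\Phi_{\geq 0}$.

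The inductive step is the content of an \emph{enlargement lemma}: given a free rank-$r$ subsemigroup $S = \sum_{i=1}^r \N g_i \subset \Phi_{\geq 0}$ with rationally independent positive generators, and given $\phi \in \Phi_{\geq 0}$, I must produce a free rank-$r$ subsemigroup $S' = \sum_{i=1}^r \N g_i' \subset \Phi_{\geq 0}$ with $S \cup \{\phi\} \subset S'$. To construct $S'$, set $H = \sum \Z g_i + \Z\phi \subset \Phi$. Since $\Phi$ has rational rank $r$ and $H$ already contains $r$ rationally independent elements, $H$ is free of rank $r$ and $\sum \Z g_i$ has finite index in $H$. Inside $V = H \otimes_\Z \R \cong \R^r$, I look for rationally independent $g_1', \ldots, g_r' \in \Phi_{>0}$ with two properties: (a) the simplicial cone $\sum \R_{\geq 0} g_i'$ contains $g_1, \ldots, g_r, \phi$, and (b) $g_1, \ldots, g_r, \phi$ all lie in the lattice $\sum \Z g_i'$. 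Conditions (a) and (b) together force each of $g_1, \ldots, g_r, \phi$ to be a non-negative integer combination of the $g_i'$, hence to lie in $S'$.

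The $g_i'$ are to be produced by a Jacobi--Perron-type iteration: at each step one replaces a current generator by a positive lattice vector of strictly smaller value with respect to a well-founded measure (for example, the lattice index of $\sum \Z g_i + \Z\phi$ in the current $\sum \Z g_i'$ combined with the position of $\phi$ in the current simplicial decomposition). Termination uses Neumann's lemma, already recorded in the paper: a finitely generated subsemigroup of $\Phi_{\geq 0}$ is well ordered, providing the required descent; the explicit, algorithmic nature of this process is what yields the nested form indexed by $\N$ rather than an unordered cofinal family. The main obstacle is to maintain smoothness (the $g_i'$ forming a $\Z$-basis of a rank-$r$ sublattice of $\Phi$) and positivity of each $g_i'$ \emph{simultaneously}: smoothness is a discrete integrality condition, whereas positivity depends on the total order of $\Phi$, which may be non-archimedean. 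I plan to reconcile them by induction on the real rank of $\Phi$ via its chain of convex subgroups with archimedean quotients, reducing the subdivision step to the archimedean case, where the classical multidimensional continued fraction keeps the replacement generators positive by lexicographic control.
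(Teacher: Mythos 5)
First, a caveat on the comparison: the paper does not actually prove this proposition. It is quoted from \cite{Te1}, Proposition 4.12 (with versions attributed in the footnote to Elliott \cite{EL} and, in rank one, to Zariski \cite{Z}), and the only indication of the proof given here is that it is an extension of the Jacobi--Perron algorithm. Your architecture --- enumerate the countable set $\Phi_{\geq 0}$, absorb one element per step through an enlargement lemma, and deduce membership in $\sum_i\N g_i'$ from the conjunction of the cone condition (a) and the lattice condition (b) --- is sound and matches the shape of the cited proof; in particular it is what produces the nested family indexed by $\N$ that the footnote emphasizes.

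The gap is that the enlargement lemma, which carries the entire content of the proposition, is not proved, and the one concrete termination mechanism you offer does not work. Neumann's lemma says that the subsemigroup generated by a \emph{fixed} finite set of positive elements is well ordered; it does not forbid infinite strictly decreasing sequences of positive elements of $\Phi$ (for $\Phi=\Z+\Z\sqrt{2}\subset\R$ the positive part is not well ordered: consider $(\sqrt{2}-1)^n$), and the vectors produced by successive subtractions $g\mapsto g-g'$ do not remain inside a finitely generated subsemigroup fixed in advance, since each step introduces new generators. So ``descent in $\Phi_{\geq 0}$ plus Neumann'' is not a termination argument. In the actual algorithm the descent is on integer-valued quantities: within a fixed lattice one tracks the coefficient vector of the fixed target $\phi$ in the evolving basis and shows that a suitable norm of the negative part strictly decreases at each admissible subtraction (already for $r=2$ in rank one this is the $|a|+|b|$ descent of the continued-fraction argument), while the finitely many genuine lattice enlargements are bounded by the index $[H:\sum_i\Z g_i]$; your second measure, ``the position of $\phi$ in the current simplicial decomposition,'' is never defined. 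Finally, the reconciliation of positivity of the new generators with unimodularity in the non-archimedean case --- which you correctly single out as the main obstacle, since the choice between replacing $g_i$ by $g_i-g_j$ or $g_j$ by $g_j-g_i$ is dictated by the order while the coefficient bookkeeping is dictated by the lattice --- is only announced, not carried out; it is exactly the step that must be performed in each archimedean quotient $\Psi_{j-1}/\Psi_j$ and then lifted. As it stands the proposal is a correct plan whose key lemma remains unproved.
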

As explained in \textit{loc.cit.}, this result and the algorithmic aspect of its proof can be viewed as an extension of the Jacobi-Perron algorithm for approximating directions of vectors in $\R^N$ by directions of integral vectors.\par\medskip
Since the image $b(\N^N)$ of the weight map is a finitely generated subsemigroup of $\Phi_{\geq 0}$ it is contained in some free subsemigroup $\N_{(h)}^r\subset \Phi_{\geq 0}$ and the subgroup of $\Phi$ which this image generates is finitely generated and free of rank $r'\leq r$, totally ordered by the order of $\Phi$. Renaming $r'$ into $r$ \emph{we may and will assume in the sequel that $\Phi=\Z^r$ with a total order and that the map $b\colon \Z^N\to \Z^r$ is surjective}.\par Of course we do not need proposition \ref{finapp} to obtain this, since by construction the image of the map $b$ is finitely generated and torsion-free. The placement of the semigroup $b(\N^N)$ in the sequence of the $\N^r_{(h)}$ will be useful later. \par
\par\medskip\noindent
Recall the following corollary of proposition \ref{finapp}; here a \textit{term} is the product of a monomial by a nonzero constant:
\begin{corollary}\label{grLU}{\rm (see \cite{Te1}, proposition 4.15)} Let $R_\nu$ be the valuation ring of a valuation with value group $\Phi$, of finite rational rank $r$. Choose $r$ homogeneous elements $x_1^{(0)},\ldots ,x_r^{(0)}\in \hbox{\rm gr}_\nu R_\nu$ whose valuations are rationally independent. The graded algebra $\hbox{\rm gr}_\nu R_\nu$ is the union of a nested family of polynomial algebras in $r$ variables over $k_\nu=R_\nu/m_\nu$ with maps between them sending each variable to a term:
$$k_\nu[x_1^{(0)},\ldots ,x_r^{(0)}]\subset\ldots \subset k_\nu[x_1^{(h)},\ldots ,x_r^{(h)}]\subset k_\nu[x_1^{(h+1)},\ldots ,x_r^{(h+1)}]\subset\ldots\subset \hbox{\rm gr}_\nu R_\nu.$$
\end{corollary}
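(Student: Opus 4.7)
The plan is to reduce everything to Proposition \ref{finapp} by exploiting the characteristic feature of a valuation ring: every nonzero homogeneous component of $\mathrm{gr}_\nu R_\nu$ is a one-dimensional $k_\nu$-vector space. Indeed, if $x,y\in R_\nu\setminus\{0\}$ have the same value $\phi$, then $xy^{-1}\in R_\nu^\times$, so the class of $x$ modulo $\Pp_\phi^+$ is a $k_\nu$-multiple of the class of $y$; in particular, the value semigroup of $\nu$ on $R_\nu$ is all of $\Phi_{\geq 0}$.

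With this in hand, I would first apply Proposition \ref{finapp} to write $\Phi_{\geq 0}=\bigcup_{h\in\N}\N^r_{(h)}$ as a nested union of free rank-$r$ subsemigroups, starting from the free subsemigroup generated by $\gamma_i^{(0)}:=\nu(x_i^{(0)})$. For each $h\geq 1$ I would choose a basis $(\gamma_1^{(h)},\ldots,\gamma_r^{(h)})$ of $\N^r_{(h)}$ and pick a nonzero homogeneous element $x_i^{(h)}\in\mathrm{gr}_\nu R_\nu$ of degree $\gamma_i^{(h)}$ (possible by one-dimensionality).

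Next I would verify that the $k_\nu$-subalgebra $A_h$ generated by $x_1^{(h)},\ldots,x_r^{(h)}$ is a polynomial algebra in these variables. Since the $\gamma_i^{(h)}$ are $\Z$-linearly independent in $\Phi$, distinct monomials $\prod_i(x_i^{(h)})^{a_i}$ have pairwise distinct $\nu$-degrees $\sum_i a_i\gamma_i^{(h)}$, so no nontrivial $k_\nu$-linear relation can hold between them. The degree-$\phi$ part of $A_h$ is nonzero precisely when $\phi\in\N^r_{(h)}$, and by one-dimensionality it then coincides with the full degree-$\phi$ component of $\mathrm{gr}_\nu R_\nu$. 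Consequently $\bigcup_h A_h$ meets every homogeneous component of $\mathrm{gr}_\nu R_\nu$, and therefore equals it.

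It remains to show that the inclusion $A_h\hookrightarrow A_{h+1}$ sends each variable to a term. Since $\gamma_i^{(h)}\in\N^r_{(h+1)}$, it admits a unique expression $\gamma_i^{(h)}=\sum_j m_{ij}\gamma_j^{(h+1)}$ with $m_{ij}\in\N$; then $x_i^{(h)}$ and $\prod_j(x_j^{(h+1)})^{m_{ij}}$ lie in the same one-dimensional homogeneous component, so they differ by a unique scalar $c_i\in k_\nu^\times$, giving $x_i^{(h)}=c_i\prod_j(x_j^{(h+1)})^{m_{ij}}$, which is a term. There is really no serious obstacle once Proposition \ref{finapp} is invoked; the whole argument rests on the one-dimensionality of nonzero graded pieces for a valuation ring, which reduces the structure of $\mathrm{gr}_\nu R_\nu$ to the combinatorics of $\Phi_{\geq 0}$ furnished by that proposition.
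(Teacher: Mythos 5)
Your proof is correct and follows essentially the same route the paper intends: it is exactly the combination of Proposition \ref{finapp} with the observation that every nonzero homogeneous component of $\mathrm{gr}_\nu R_\nu$ is one-dimensional over $k_\nu$ (so that $\mathrm{gr}_\nu R_\nu$ is the semigroup algebra of $\Phi_{\geq 0}$, and the nested free subsemigroups give the nested polynomial subalgebras, with variables mapping to terms). The only point left tacit — that the monomials $\prod_j(x_j^{(h+1)})^{m_{ij}}$ are nonzero, which follows since initial forms multiply ($\mathrm{in}_\nu(a)\mathrm{in}_\nu(b)=\mathrm{in}_\nu(ab)$) so $\mathrm{gr}_\nu R_\nu$ is a domain — is immediate from the same unit argument you already use.
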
 These subalgebras are the semigroup algebras of nested free subsemigroups of the non negative part $\Phi_{\geq 0}$ of $\Phi$.\par As was observed in \cite{Te1}, 4.3, this corollary can be viewed as implying a graded version of local uniformization: a finitely generated graded $k_\nu$-subalgebra of $\hbox{\rm gr}_\nu R_\nu$ is contained in a polynomial (i.e., graded regular) subalgebra.
\begin{remark}\label{unimod} \small{\emph{In the case which will interest us most here, the group $\Phi$ is $\Z^r$ and then there exists an integer $h_0$ such that for $h\geq h_0$ the maps $\N_{(h)}^r\subset \Z^r$ are unimodular, since the indices of the images in $\Phi$ of the groups $\Z_{(h)}^r$ decrease as $h$ increases and must eventually become one. Of course if we can take the $r$ rationally independent elements to be a basis of $\Z^r$ we have $h_0=0$}.\par
\emph{ In the language of corollary \ref{grLU} unimodularity means the corresponding inclusions of polynomial rings are, up to a homothety on each variable $x_k^{(h)}$, birational toric maps.}}
\end{remark}
\par\medskip

We now recall some facts which can be found in \cite{Stu} and \cite{Ei-S}. Denoting by $e_i$ the $i$-th basis vector of $\Z^N$ and setting $\gamma_i=b (e_i)\in \Z^r$, we may consider the semigroup $\Gamma$ generated by $\gamma_1,\ldots ,\gamma_N$ and the toric variety $X_0={\rm Spec}k[t^\Gamma]$ where $k[t^\Gamma]$ is the semigroup algebra with coefficients in $k$. It is the closure of the orbit of the point $(1,1,\ldots ,1)\in \A^N(k)$ under the action of the torus $k^{*r}$ determined by $(t,z_1,\ldots ,z_N)\mapsto (t^{\gamma_1}z_1,\ldots ,t ^{\gamma_N}z_N)$ with $t=(t_1,\ldots ,t_r)\in k^{*r}$ and $t^{\gamma_j}=t_1^{\gamma_{j1}}...t_r^{\gamma_{jr}}$\Par
Denoting by $\Ll$ the lattice which is the kernel of $b$, we may choose a system of generators $(m^\ell-n^\ell)_{\ell\in L}$ for $\Ll$, where $m^\ell$ and $-n^\ell$ are respectively the non negative and negative part of the vector $m^\ell-n^\ell$ so that the entries of $m^\ell$ and $n^\ell$ are non negative, and which is such that the ideal $F_0$ of $k[U_1,\ldots ,U_N]$ generated by the $(U^{m^\ell}-U^{n^\ell})_{\ell\in L}$ is a prime binomial ideal defining the embedding $X_0\subset \A^N(k)$.\Par
\emph{We note that the vectors $m^\ell-n^\ell$ do not 
 constitute a minimal system of generators of $\Ll$}. (See \cite{Ei-S}, corollary 2.3.)\par In this way a surjective weight map $b\colon\Z^N\to\Z^r$ determines an affine toric variety corresponding to the affine semigroup $\Gamma=b(\N^N)$. This is what we call a weighted affine toric variety. If $k$ is algebraically closed, any reduced and irreducible affine toric variety in $\A^N(k)$ can be obtained in this way; its prime binomial ideal corresponds to a system of generators of the saturated lattice $\Ll\subset\Z^N$ (see \cite{Ei-S}, Theorem 2.1) and so to a surjective map $b\colon\Z^N\to\Z^r=\Z^N/\Ll$ and it suffices to choose a total monomial order on $\Z^r$ such that $b(\N^N)\subseteq \Z^r_{\geq 0}$  to obtain a weight.\Par\emph{Thus, the datum of a weight is equivalent to the datum of a total monomial preorder on $\Z^N$ such that $\N^N\subseteq \Z^N_{\geq 0}$, the lattice $\Ll$ appearing as the lattice of elements preorder-equivalent to $0$.}\Par
The Krull dimension of $k[t^\Gamma]$ is equal to $r$; it is the dimension of $X_0$.

\section{Overweight deformations of prime binomial ideals}\label{OD}

Let $w$ be a weight on a polynomial or power series ring over a field $k$, with values in the positive part $\Phi_{\geq 0}$ of a totally ordered group $\Phi$ of finite rational rank. \par Let us consider the power series case and the ring $S=k[[u_1,\ldots ,u_N]]$. Consider the filtration of $S$ indexed by $\Phi_{\geq 0}$ and determined by the ideals $\Qq_\phi$ of elements of weight $\geq \phi$, where the weight of a series is the minimum weight of a monomial appearing in it. Defining similarly $\Qq^+_\phi$ as the ideal of elements of weight $>\phi$, the graded ring associated to this filtration is the polynomial ring $$
\bigoplus_{\phi\in \Phi_{\geq 0}}\Qq_\phi/\Qq^+_\phi=k[U_1,\ldots ,U_N],$$
with $U_i={\rm in}_wu_i$, graded by ${\rm deg}U_i=w(u_i)$.

\begin{definition}\label{overweight}
Given a weight $w$ as above, a (finite dimensional) overweight deformation\footnote{As far as I know, overweight (and underweight) deformations of binomial ideals were considered for the first time by Henry Pinkham in his Ph.D. thesis \cite{Pi} under the name of deformations of negative (resp.  positive) weight with respect to a natural action of the multiplicative group $k^*$ on the basis of a miniversal deformation of a monomial curve.}   is the datum of a prime binomial ideal $(u^{m^\ell}-\lambda_\ell u^{n^\ell})_{1\leq \ell \leq s},\ \lambda_\ell\in k^*$, of $S=k[[u_1,\ldots ,u_N]]$ such that the vectors $m^\ell-n^\ell\in \Z^N$ generate the lattice of relations between the $\gamma_i=w(u_i)$, and of series
$$\begin{array}{lr}
F_1=u^{m^1}-\lambda_1 u^{n^1}+\Sigma_{w(p)>w(m^1)} c^{(1)}_pu^p\\
F_2=u^{m^2}-\lambda_2 u^{n^2}+\Sigma_{w(p)>w(m^2)} c^{(2)}_pu^p\\
.....\\
F_\ell=u^{m^\ell}-\lambda_\ell u^{n^\ell}+\Sigma_{w(p)>w(m^\ell)} c^{(\ell)}_pu^p\\
.....\\
F_s=u^{m^s}-\lambda_s u^{n^s}+\Sigma_{w(p)>w(m^s)} c^{(s)}_pu^p\\
\end{array}\leqno{(OD)}
$$ in $k[[u_1,\ldots ,u_N]]$ such that, with respect to the monomial order determined by $w$, they form a standard basis for the ideal which they generate: their initial forms generate the ideal of initial forms of elements of that ideal.\par\noindent
Here we have written $w(p)$ for $w(u^p)$ and the coefficients $c^{(\ell)}_p$ are in $k$.
\end{definition}
Let us denote by $X$ the formal subspace of $\A^N(k)$ defined by the ideal $F=(F_1,\ldots ,F_s)$.
\begin{remark} \small{\emph{A prime binomial ideal $F_0$ in $k[u_1,\ldots ,u_N]$ remains prime after extension to $k[[u_1,\ldots ,u_N]]$ because the completion of $k[u_1,\ldots ,u_N]/F_0$ is the completion of the local ring of the toric variety at the point picked by the weight, which is the origin (closed orbit). This completion is an integral domain because its associated graded ring with respect to the filtration determined by the weight is an integral domain, the ring of the affine toric variety. See also \cite{GP1}, Lemma 1.}}\end{remark}
Recall from (\cite{Te1}, 6.2) and \cite{GP-T1} that the difference between a \emph{pseudo-resolution} and a resolution of singularities is that the pseudo-resolution is not required to induce an isomorphism outside of the singular locus.
\begin{proposition}\label{resoverwght}\par\noindent
a) Assuming that $w(u_i)>0$ for $i=1,\ldots , N$, an overweight deformation determines a rational valuation $\nu$ of the ring $R=S/(F_1,\ldots ,F_s)$ with value group $\Phi$, and the associated graded ring of $R$ is $$\hbox{\rm gr}_\nu R=k[U_1,\ldots ,U_N]/(U^{m^1}-\lambda_1 U^{n^1},\ldots ,U^{m^s}-\lambda_s U^{n^s}).$$
The associated graded map associated to the surjective map $\pi\colon S\to R$, the filtration by weight on $S$ and the filtration associated to the valuation $\nu$ on $R$, is the surjective map{\small
$${\rm gr}_w\pi\colon{\rm gr}_wS=k[U_1,\ldots ,U_N]\to k[U_1,\ldots ,U_N]/(U^{m^1}-\lambda_1 U^{n^1},\ldots ,U^{m^s}-\lambda_s U^{n^s})=\hbox{\rm gr}_\nu R.$$ }\noindent
b) Assuming that $k$ is algebraically closed, there exist birational toric maps \break$\pi(\Sigma)\colon Z(\Sigma)\to \A^N(k)$ which are embedded pseudo-resolutions of the irreducible affine binomial variety ${\rm Spec}\hbox{\rm gr}_\nu R$ and such that the strict transform by $\pi(\Sigma)$ of
$X\subset \A^N(k)$ is regular and transversal to the toric boundary at the point picked by $\nu$.\end{proposition}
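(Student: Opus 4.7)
The plan is to treat parts (a) and (b) separately, reducing (b) to the toric case via the overweight structure.

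For (a), I would first compute the associated graded of $R$ with respect to the filtration induced by $w$. The weight $w$ endows $S=k[[u_1,\ldots ,u_N]]$ with the filtration $\{\Qq_\phi\}$, and by construction ${\rm gr}_wS=k[U_1,\ldots ,U_N]$ with $\deg U_i=w(u_i)$. Denote by $F$ the ideal generated by $F_1,\ldots ,F_s$. The standard-basis hypothesis is exactly the statement that ${\rm in}_wF$ equals $(U^{m^1}-\lambda_1 U^{n^1},\ldots ,U^{m^s}-\lambda_s U^{n^s})$. Hence the induced filtration on $R=S/F$ has associated graded the semigroup algebra $k[t^\Gamma]$, which is an integral domain. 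A filtration on a ring whose associated graded is a domain comes from a valuation on the fraction field: for $\bar f\in R\setminus\{0\}$ with lift $f\in S$, set $\nu(\bar f)=w(f)$, which is well defined precisely because the graded is a domain. The value group is the group generated by the $\gamma_i=w(u_i)$, namely $\Phi$; rationality follows because each nonzero homogeneous component of $k[t^\Gamma]$ is one-dimensional over $k$. The formula for ${\rm gr}_w\pi$ then follows immediately from the identifications above.

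For (b), my strategy is to transport toric embedded pseudo-resolutions of $X_0={\rm Spec}\,k[t^\Gamma]$ through the overweight deformation. The references \cite{GP-T1} and (\cite{Te1}, 6.2) produce subdivisions $\Sigma$ of $\R_{\geq 0}^N$ whose toric maps $\pi(\Sigma)\colon Z(\Sigma)\to\A^N(k)$ are embedded pseudo-resolutions of $X_0$. I would choose $\Sigma$ so that the cone in $\Sigma$ that contains the center of $\nu$ is regular; in the corresponding affine chart one has regular parameters $v_1,\ldots ,v_N$ and nonnegative integer vectors $a_1,\ldots ,a_N$ with $u_i=v^{a_i}$. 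Since each difference $m^\ell-n^\ell$ lies in the kernel of the weight map $b$, the monomials $u^{m^\ell}$ and $u^{n^\ell}$ have the same exceptional factor $v^{c_\ell}$, and one may write
$$\pi(\Sigma)^*(u^{m^\ell}-\lambda_\ell u^{n^\ell})=v^{c_\ell}\bigl((u')^{m^\ell}-\lambda_\ell (u')^{n^\ell}\bigr),$$
where the $u'_i$ denote the non-exceptional parts of the $u_i$ on the chart. The pseudo-resolution property of $\Sigma$ is exactly the statement that the reduced binomials on the right define a smooth strict transform of $X_0$, transversal to the toric boundary at the center of $\nu$.

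The crucial remaining step is to verify that adding the overweight corrections does not spoil the picture. For a correction monomial $u^p$ appearing in $F_\ell$ with $w(p)>w(m^\ell)$, in the chart one has $\pi(\Sigma)^*(u^p)=v^{c_\ell+d_p}\cdot\text{(unit)}$, where the overweight inequality, read through the duality between weights and the dual cone of the chart, forces $d_p$ to have strictly positive entries along the exceptional divisors meeting the center of $\nu$. Hence $\pi(\Sigma)^*F_\ell=v^{c_\ell}\widetilde F_\ell$ with $\widetilde F_\ell=(u')^{m^\ell}-\lambda_\ell (u')^{n^\ell}+\text{(terms vanishing along the exceptional divisor)}$. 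These extra terms and their differentials both vanish at the center of $\nu$, so they contribute nothing to the tangent space there; regularity and transversality therefore transfer from the strict transform of $X_0$ to that of $X$. The main obstacle is the combined choice of $\Sigma$: it must make $\pi(\Sigma)$ an embedded pseudo-resolution of the binomial toric variety $X_0$ \emph{and} place the center of $\nu$ inside a regular chart where the factorization above is visible. This is precisely what the toric embedded resolution theory of \cite{GP-T1} and (\cite{Te1}, 6.2) provides; the overweight inequalities on the correction terms then propagate the toric conclusion to the deformed variety $X$.
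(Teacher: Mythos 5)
Your part (a) has a genuine gap at the very definition of $\nu$. Setting $\nu(\bar f)=w(f)$ "for a lift $f$" is not well posed: the weight of a lift depends on the lift (in Example \ref{ex1} the image of $u_2^2-u_1^3$ has one preimage of weight $12$ and another, namely $u_3$, of weight $13$), and this is not cured by the associated graded being a domain. The correct definition is $\nu(\bar f)=\sup\{w(f)\mid f\equiv\bar f\ {\rm mod}\ F\}$, and the substance of the proof is then to show (i) that this supremum is finite, which uses Chevalley's theorem together with the completeness and noetherianity of $R$ (if the weights of the lifts were unbounded, the lifts would lie in arbitrarily high powers of the maximal ideal and $\bar f$ would be $0$), and (ii) that it is \emph{attained}, which for value groups of rank $>1$ requires an induction on the rank using the finiteness of the set of weights between two elements having the same image in the rank-lowered quotient. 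Both steps are absent from your argument; only once they are in place does the domain property of ${\rm gr}_wS/{\rm in}_wF$ yield multiplicativity of the order function.

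In part (b) the step asserting that "the overweight inequality \ldots forces $d_p$ to have strictly positive entries" does not follow from choosing $\Sigma$ to be a pseudo-resolution of $X_0$. The overweight condition only gives $\langle {\mathbf w},p-n^\ell\rangle>0$, i.e., a weighted sum of the numbers $\langle a^i,p-n^\ell\rangle$ is positive; individual entries may well be negative, in which case $u^p$ is not divisible by the exceptional factor on the chart and the alleged strict transform is not even a power series. One must further refine the fan to be compatible with the duals of the cones generated by the vectors $p-n^\ell$ (after first replacing the possibly infinite set of correction exponents by a finitely generated rational cone, which uses noetherianity of $S$), and, when the rank of $\Phi$ exceeds one, also with the rational subspaces $S_j$ spanned by the components ${\mathbf w}(j)$ of the weight vector; one must then prove that such a fan still contains a $w$-centering regular cone whose intersection with the weight cone has the right dimension. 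That existence statement is the technical core of the paper's proof of (b) and is exactly why only \emph{some} pseudo-resolutions of $X_0$ extend to uniformizations of $\nu$; your proposal treats it as automatic.
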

\par\noindent
Before entering the proof, let us point out the:

\begin{corollary}\label{seeAb} The ring $R=S/F$ determined by an overweight deformation of a prime binomial ideal is an integral domain of the same dimension as the toric variety corresponding to the binomial ideal. In particular, the valuation $\nu$ is Abhyankar: the rational rank of its value group is equal to the dimension of $R$. \end{corollary}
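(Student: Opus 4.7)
The plan is to invoke Proposition \ref{resoverwght}(a) for the structural content and combine it with dimension-theoretic comparisons. Part (a) asserts that $\nu$ is a valuation on $R = S/F$, which at once implies that $R$ is an integral domain and identifies ${\rm gr}_\nu R$ with $k[U_1,\ldots,U_N]/(U^{m^\ell}-\lambda_\ell U^{n^\ell})_\ell$, the coordinate ring of the affine toric variety $X_0 = {\rm Spec}\,k[t^\Gamma]$. As recalled at the end of Section \ref{waff}, $\dim X_0 = r$, the rank of $\Phi = \Z^r$, and this coincides with the rational rank $r(\nu)$ of $\nu$, since its value group is $\Z^r$.

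The lower bound $\dim R \geq r$ is immediate from Abhyankar's inequality applied to the rational valuation $\nu$: $r(\nu)+{\rm tr}_k k_\nu\leq \dim R$ gives $r\leq \dim R$ since $k_\nu=k$.

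For the matching upper bound $\dim R\leq r$, I would compare Hilbert-Samuel functions from two filtrations on $R$. Since $F$ is finitely generated (so only finitely many weights are relevant), a standard argument with $\Q$-linear functionals provides a $\Z$-valued refinement $w'$ of the $\Z^r$-valued weight $w$ producing the same initial ideal $F_0$, so that ${\rm gr}_\nu R$ acquires a $\Z_{\geq 0}$-grading as a finitely generated $k$-algebra of Krull dimension $r$. Its Hilbert function $h(n)=\dim_k({\rm gr}_\nu R)_n$ grows polynomially of degree $r-1$, whence the partial sums $\ell(R/\Pp_n)=\sum_{m<n}h(m)$ grow as $n^r$. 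On the other hand, $w'(u_i)>0$ for each $i$ forces $\Pp_n\subset m^{\lceil n/\gamma_{\max}\rceil}$, where $\gamma_{\max}=\max_i w'(u_i)$, hence $\ell(R/\Pp_n)\geq \ell(R/m^{\lceil n/\gamma_{\max}\rceil})$, and the latter grows as $n^{\dim R}$ by the usual $m$-adic Hilbert-Samuel polynomial of the noetherian local ring $R$. Comparing growth orders forces $r\geq \dim R$.

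Combining, $\dim R=r=r(\nu)$, so $\nu$ is Abhyankar. The main obstacle is the upper bound $\dim R\leq r$: Zariski's example recalled in the introductory footnote shows that $\dim{\rm gr}_\nu R<\dim R$ can occur for general valuations, so the equality in our setting genuinely uses the finite generation of $\Gamma$ built into the overweight deformation hypothesis, which forces the Hilbert function of ${\rm gr}_\nu R$ to have the expected polynomial growth of order $r-1$.
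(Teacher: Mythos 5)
Your argument is correct, but it takes a different route from the paper's. The paper gives no separate argument for this corollary: the domain statement is read off from part a) of Proposition \ref{resoverwght} (the associated graded ring $k[U_1,\ldots,U_N]/F_0\simeq k[t^\Gamma]$ is a domain, hence so is $R$), and the dimension statement is justified by results imported from \cite{Te1} — the faithfully flat specialization of $R$ to ${\rm gr}_\nu R$, which forces them to have the same dimension, together with Piltant's theorem that $\dim{\rm gr}_\nu R$ equals the rational rank for zero-dimensional valuations. You instead prove $\dim R=r$ directly by a two-sided estimate: Abhyankar's inequality for $\dim R\geq r$, and a Hilbert--Samuel comparison between a $\Z$-indexed valuation filtration and the $m$-adic filtration for $\dim R\leq r$. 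This buys a self-contained proof whose only external input is Abhyankar's inequality; the paper's route is shorter but leans on the machinery of \cite{Te1}.

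Two steps should be tightened. First, "since $F$ is finitely generated, only finitely many weights are relevant" is too quick: each generator $F_\ell$ is a series with infinitely many tail terms, so a functional $\lambda\colon\Z^r\to\Z$ cannot be required to respect the order on all weights that occur. Either invoke the device of Lemma \ref{strictcon} (by noetherianity of $S$ the tail monomials of $F_\ell$ lie in a monomial ideal generated by finitely many of them, and positivity of $\lambda\circ b$ on $\N^N$ handles the rest), or, more simply, note that your argument never needs ${\rm in}_{w'}F=F_0$: it suffices to pick any homomorphism $\lambda\colon\Z^r\to\Z$ with $\lambda(\gamma_i)>0$ for all $i$ (possible since the $\gamma_i$ are finitely many positive elements of the ordered group $\Z^r$). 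Then $\Pp_n=\{x\in R\mid\lambda(\nu(x))\geq n\}$ is an ideal because $\lambda(\Gamma)\subseteq\Z_{\geq 0}$, and the computation ${\rm in}_\nu\Pp_n=\bigoplus_{\lambda(\phi)\geq n}({\rm gr}_\nu R)_\phi$ yields $\ell(R/\Pp_n)=\sum_{m<n}h(m)$ with $h$ the Hilbert function of the $\lambda$-regrading of $k[t^\Gamma]$, with no reference to initial ideals of $F$. Second, the inclusion $\Pp_n\subset m^{\lceil n/\gamma_{\max}\rceil}$ is not immediate because $\lambda$ need not be monotone on $\Phi$: one peels off lifts of successive initial forms (each a combination of monomials $\xi^e$ with $|e|\geq n/\gamma_{\max}$, all lying in $\Pp_n$), and uses the completeness of $R$, Chevalley's theorem and the closedness of $m^j$ to sum the resulting series inside $m^{\lceil n/\gamma_{\max}\rceil}$. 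With these points made explicit the proof is complete.
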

\begin{proof}
 \textbf{Proof of a) and comments:}\par\noindent Let us consider the group of rank one $\Phi_1$ which is a quotient of $\Phi$ by its largest non trivial convex subgroup. By composition the weight $w$ gives rise to a weight $w_1$ with values in the rank one group $\Phi_1$. Let us denote by $p_1$ the ideal of elements of $S$ whose $w_1$ weight is $>0$. If the preimages in $S$ of an element $x$ have unbounded weights, there is a sequence of such pre-images whose $w_1$ weights tend to infinity, say $\tilde x_i\in \Qq_{\phi_1(i)}$ with $\phi_1(i)$ tending to infinity with $i$ in $\Phi_1$. Since the intersection $\bigcap_{\phi_1\in \Phi_{1+}}\Qq_{\phi_1}$ is zero and $S/F$ is a complete noetherian local ring, by Chevalley's theorem (see \cite{Te1}, section 5, and \cite{B3}, Chap. IV, \S 2, No. 5, Cor.4), we have a sequence of integers $T(\phi_1)$ tending to infinity with $\phi_1$ and such that $\Qq_{\phi_1(i)}\subset F+\tilde m^{T(\phi_1(i))}$ for all $i$, where $\tilde m$ is the maximal ideal of $S$. Therefore the images in $R$ of the elements of the sequence $\tilde x_i$ tend to $0$ in the $m$-adic topology which contradicts the fact that they are all equal to $x\neq 0$.\par\noindent  Therefore the inverse images in $S$ of a non zero element of $R$ have bounded weight. \par\medskip Let now $\nu(x)$ be the smallest element of the set $$\{\phi\in \Phi\vert w(\tilde x)\leq \phi\  {\rm if}\  \tilde x \ {\rm mod.} F =x\}.$$ This minimum exists since the weights of the elements of the noetherian ring $S$ form a well ordered subset of $\Phi$, and we have just proved that this set is not empty.\par
Let us prove by induction on the rank of $\Phi$ that given $x\in R$ there exists a pre-image $\tilde x\in S$ such that $w(\tilde x)=\nu(x)$: assume that the rank of $\Phi$ is one; then the number of elements of $\Phi_{\geq 0}$ that are less than $\nu(x)$ is finite and therefore so is the number of the weights of elements of $S$ that are $\leq\nu(x)$. So the maximum weight of those whose image in $R$ is $x$ is attained and then it has to be equal to $\nu(x)$. If the rank of $\Phi$ is $>1$, let $\Phi_{h-1}$ be the quotient of $\Phi$ by its smallest non zero convex subgroup. Let $w_{h-1}$ be the corresponding weight. By induction we may assume that there exists an element $\tilde x\in S$ whose weight is the image of $\nu(x)$ in $\Phi_{h-1}$. Let $w(\tilde x)\in \Phi$ be the weight of this element. Again the number of weights of elements of $S$ that are between $w(\tilde x)$ and $\nu(x)$ is finite since they have the same image in $\Phi_{h-1}$ (see the proof of Lemma 3 of \cite{Z-S}, Appendix 3, or \cite{Te1}, proposition 3.17), so the maximum is attained and then must be equal to $\nu(x)$.\par\medskip\noindent
This map $x\mapsto\nu(x)$ clearly satisfies the inequalities $\nu (x+y)\geq \hbox{\rm min}(\nu(x),\nu(y))$ and $\nu(xy)\geq \nu(x)+\nu(y)$.  Thus, we have defined an order function on $R$. To prove that it is a valuation is to prove that the second inequality is an equality, and we argue as follows:\par\noindent
The order function $\nu$ determines a filtration of $R$ by ideals just as a valuation does, and the associated graded ring is an integral domain if and only if the order function is a valuation. By construction the associated graded ring $\hbox{\rm gr}_\nu R$ of $R$ with respect to this filtration is a quotient of the associated graded ring $k[U_1,\ldots ,U_N]$ of $S$ with respect to the weight filtration. Indeed if we denote by $\Qq_\phi$ the ideal of elements of weight $\geq \phi$ in $S$, we see that by definition of $\nu$ it maps onto the ideal of elements of $R$ which are of order $\geq \phi$. \par
The ideal $F_0$ defining the quotient is the ideal of $k[U_1,\ldots ,U_N]$ generated by the initial forms of the elements of $F$ with respect to the weight filtration of $S$. Since by hypothesis this initial ideal is generated by the binomials $U^{m^\ell}-\lambda_\ell U^{n^\ell}$, the graded algebra $\hbox{\rm gr}_\nu R$ is equal to $k[U_1,\ldots ,U_N]/F_0$. It is therefore an integral domain, which shows that the order function $\nu$ is actually a valuation.
\end{proof}\noindent
\begin{Remark}\label{nofin}\small{\begin{enumerate} \item This applies in particular to the trivial overweight deformation: a positive weight which is such that the binomials correspond to generators of the relations between the $w(u_i)$ induces a rational valuation on the ring of the associated toric variety.
\item This proof does not use the fact that $S$ is noetherian, but only the fact that $R$ is and that the semigroup generated by the weights of the variables is well ordered. So corollary \ref{seeAb} holds also in that case. We shall use this below in the proof of proposition \ref{irreducible}. We will also need the following, which does not use the fact that $S$ is noetherian either:\end{enumerate}}\end{Remark}
\begin{proposition}\label{initial} With the notations of proposition \ref{resoverwght}, given $x\in R$ and a preimage $\tilde x\in S$ of $x$, the equality $\nu(x)=w(\tilde x)$ holds if and only if ${\rm in}_w\tilde x\notin F_0=\hbox{\rm  Kergr}_w\pi$.
\end{proposition}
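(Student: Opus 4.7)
The key fact to have in mind is the one established in the proof of part (a): for any $x\in R$ the supremum
\[
\nu(x)=\max\{\,w(\tilde x)\mid \tilde x\in S,\ \pi(\tilde x)=x\,\}
\]
is actually attained by some preimage. The proposition therefore asserts that a preimage $\tilde x$ of $x$ realizes this maximum if and only if its initial form survives modulo $F_0$. I will prove the two implications separately, both by an initial-form argument using the standard-basis property built into the definition of an overweight deformation.

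\emph{The easy direction $(\Leftarrow)$.} Assume $w(\tilde x)<\nu(x)$. By the existence statement recalled above, choose a preimage $\tilde x'$ of $x$ with $w(\tilde x')=\nu(x)>w(\tilde x)$. Then $\tilde x-\tilde x'\in F$ and, because $w(\tilde x')$ is strictly larger than $w(\tilde x)$, one has $\mathrm{in}_w(\tilde x-\tilde x')=\mathrm{in}_w\tilde x$. Since $\mathrm{in}_w(\tilde x-\tilde x')\in \mathrm{in}_w F=F_0$ by the surjectivity statement of part (a), we conclude $\mathrm{in}_w\tilde x\in F_0$, proving the contrapositive.

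\emph{The main direction $(\Rightarrow)$.} Assume $\nu(x)=w(\tilde x)$ and, for contradiction, that $\mathrm{in}_w\tilde x\in F_0$. Using that $(F_1,\ldots,F_s)$ is a standard basis with respect to $w$—i.e.\ that the binomials $U^{m^\ell}-\lambda_\ell U^{n^\ell}$ generate $F_0$—write the homogeneous element $\mathrm{in}_w\tilde x$ as a sum $\sum_\ell h_\ell\,(U^{m^\ell}-\lambda_\ell U^{n^\ell})$ with each $h_\ell\in k[U_1,\dots,U_N]$ homogeneous of the appropriate degree. Lift each $h_\ell$ to an element $\tilde h_\ell\in S$ whose weight equals the degree of $h_\ell$ and whose initial form is $h_\ell$. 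Then $\sum_\ell \tilde h_\ell F_\ell\in F$ has initial form exactly $\mathrm{in}_w\tilde x$, so
\[
\tilde x'':=\tilde x-\sum_\ell \tilde h_\ell F_\ell
\]
is again a preimage of $x$, and its weight is strictly larger than $w(\tilde x)$. This contradicts the maximality characterization of $\nu(x)$.

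\emph{Where the work really sits.} The only nontrivial input is the standard-basis property, which is built into the definition of an overweight deformation and ensures the equality $\mathrm{in}_wF=F_0$ used in both directions; the rest is the usual reduction-of-leading-terms argument, which works here because $w$ takes values in a totally ordered group and the weights of preimages form a well-ordered subset of $\Phi_{\geq 0}$ (so a single reduction step suffices to produce the desired contradiction). No noetherianity of $S$ is used, which is consistent with the remark that this proposition will later be invoked in settings where $S$ itself is not noetherian.
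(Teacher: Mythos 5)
Your proof is correct and follows essentially the same route as the paper's: both directions reduce to the equivalence "$w(\tilde x)<\nu(x)$ iff there is a preimage of strictly larger weight," combined with the standard-basis identity $\mathrm{in}_wF=F_0$. Your explicit lifting of the homogeneous coefficients $h_\ell$ in the forward direction just spells out the step the paper states tersely ("if $\mathrm{in}_w\tilde x\in F_0$, it is the initial form of an element of $F$").
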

\begin{proof} We have $w(\tilde x)<\nu(x)$ if and only if there exists $\hat x\in S$ with $\tilde x-\hat x \in F$ and $w(\hat x)>w(\tilde x)$. But then by construction ${\rm in}_w(\tilde x-\hat x)= {\rm in}_w(\tilde x)$ has to be in the binomial ideal $F_0={\rm in}_wF$. Conversely, if ${\rm in}_w\tilde x\in F_0$, it is the initial form of an element $\tilde x-\hat x\in F$ with $w(\hat x)>w(\tilde x)$.
\end{proof}

\begin{example}\label{ex1} Consider the binomial ideal generated by $u_2^2-u_1^3, u_3^2-u_1^5u_2$ in $S=k[[u_1,u_2,u_3]]$ with $w(u_1)=4,w(u_2)=6,w(u_3)=13$, and the overweight deformation $F_1=u_2^2-u_1^3-u_3, F_2=u_3^2-u_1^5u_2$. The image in $S/(F_1,F_2)$ of the element $u_2^2-u_1^3$ has a representative $u_2^2-u_1^3\in S$ of weight $12$ and another one, $u_3\in S$ which is of weight 13 and gives the maximum since its initial form is not in the binomial ideal.
\end{example}
\begin{definition} In the situation of proposition \ref{resoverwght}, we shall say that the pair $(R,\nu)$ is an overweight deformation of the ring $k[[u_1,\ldots ,u_N]]/((u^{m^\ell}-\lambda_\ell u^{n^\ell})_{1\leq \ell \leq s})$ equipped with the weight induced by $w$. By abuse of language we shall also say that it is an overweight deformation of its associated graded ring $\hbox{\rm gr}_\nu R$, of which the previous one is the completion.
\end{definition}\noindent
\begin{Remark}\label{ROD}\small{\begin{enumerate}\item In Theorem \ref{Valco} below, we shall use the concept of overweight deformation in a wider context, where we may have weights on infinitely many variables, and deform infinitely many binomial equations generating a prime ideal by adding to each one a series of terms of strictly greater weight in such a way that the $w$-initial ideal of the ideal generated by the deformed equations is equal to the original binomial ideal.
\item The notion of overweight deformation\footnote{I have just learnt that a similar definition of an order function on a quotient is used independently by Ebeling and Gusein-Zade in \cite{E-GZ} for the purpose of computing Poincar\'e series.} extends to any system of equations that are homogeneous with respect to the given weight. Whenever the ideal they generate is prime, the procedure above produces a valuation of the quotient ring of the power series ring by the ideal created by overweight deformation. This valuation is not rational in general. In this text we concentrate on the extension of a resolution of singularities and therefore make an appropriate assumption on the homogeneous ideal.
\end{enumerate}}
\end{Remark}
\par\smallskip\noindent
\textbf{Proof of b) and comments:}\par\noindent
We want to find a regular fan $\Sigma$ subdividing $\check\R_{\geq 0}^N$, compatible with the hyperplanes $H_\ell$, and a cone $\sigma\in\Sigma$ such that the strict transform of each $F_\ell$ by the corresponding monomial map $\pi^*(\sigma)\colon k[[u_1,\ldots ,u_N]]\to k[[y_1,\ldots ,y_N]]$ is a deformation of the strict transform of its initial form at the point of the strict transform $X'$ of our formal space $X$ determined by the valuation. As we shall see, this will ensure the nonsingularity of $X'$ at that point.\par\medskip\noindent
The idea to prove b) is very simple to explain in the case where the value group $\Phi$ (or the order on $\Z^r$) is of rank one and we consider the case of a single equation $$F=u^m-\lambda u^n+\sum_{w(p)>w(m)}c_pu^p.$$
Let $$E'=\langle\{p-n/c_p\neq 0\},m-n\rangle\subset \R^N,$$
where as above $\langle a,b,...\rangle$ denotes the cone generated by $a,b,...$. Since there may be infinitely many exponents $p$, the smallest closed convex cone containing $E'$ may not be rational. However, the power series ring being noetherian, there exist finitely many exponents $(p_f-n)_{f\in F}$ as above, with $F$ finite, such that $E'$ is contained in the rational cone $E$ generated by $m-n$, the vectors $(p_f-n)_{f\in F}$ and the basis vectors of $\R^N$. This cone is strictly convex because all its elements have a strictly positive weight except the positive multiples of $m-n$.\Par
Since the order is of rank one, we can fix an ordered embedding of $\Z^r$ in $\check\R$ and define\footnote{The dual is there to conform to toric tradition.} the \emph{weight vector} ${\mathbf w}=(w(u_1),\ldots ,w(u_N))\in \check\R^N$. The weight of a monomial $u^m$ is then the evaluation, or scalar product, $\langle {\mathbf w},m\rangle$. We note that, by construction, we have ${\mathbf w}\in\check E$.
\par\medskip\noindent
Given a regular cone $\sigma=\langle a^1,\ldots ,a^N\rangle \subset \check\R_{\geq 0}^N$, set $Z(\sigma)={\rm Spec}k[\check\sigma\cap M]$.\par\noindent
The map $Z(\sigma)\to \A^N(k)$ corresponding to the inclusion $k[\R^N_{\geq 0}\cap M]\subset k[\check\sigma\cap M]$ is monomial and birational, and we write it as:
$$u_i\longmapsto y_1^{a^1_i}\ldots y_N^{a^N_i},\ 1\leq i\leq N,$$
\noindent where the $y_j$ are generators of the polynomial algebra $k[\check\sigma\cap M]$ and the matrix with column vectors $a^k$ corresponds to the expression of the basis vectors of $\R^N$ in the basis given by the generating vectors of the cone $\check\sigma$.\Par
More precisely:\Par
To a regular fan $\Sigma$ with support $\check\R_{\geq 0}^N$ corresponds a proper and birational toric map of non singular toric varieties $\pi(\Sigma)\colon Z(\Sigma)\to \A^N(k)$ . To each cone of maximal dimension $\sigma=\langle a^1,\ldots , a^N \rangle$ corresponds a chart $Z(\sigma)$ of $Z(\Sigma)$ which is isomorphic to $\A^N(k)$. If we choose adapted coordinates $y_1,\ldots ,y_N$ in that chart, the restriction
$$\pi (\sigma)\colon Z(\sigma)\to \A^N(k)$$ is described by monomials as above and so for each monomial $u^m$ we have
 $$u^m\longmapsto
y_1^{\langle a^1,m\rangle}\cdots y_N^{\langle a^N,m\rangle},$$ where $\langle a^i,m\rangle =\sum_{j=1}^Na^i_jm_j$. \par\noindent
Remarking that the monomial map tells us that we have the equality of vectors
$${\mathbf w}=\sum_{i=1}^N w(y_i)a^i,$$ we see that the center of the valuation $\nu$ of $S$ determined by the weight $w$ is in $Z(\sigma)$ if and only if the weights $w(y_i)$, which are uniquely determined by the monomial map since the $a^j$ form a basis, are all $\geq 0$, which is equivalent to the condition that ${\mathbf w}\in \sigma$.\par The vector ${\mathbf w}$ is in the hyperplane $H$ of $\check\R^N$ dual to the vector $m-n$; this implies that the weight $w(y_i)$ is $>0$ only if $\langle a^i,m-n\rangle=0$.
It also implies in view of our overweight hypothesis that ${\mathbf w}$ lies in the interior of the intersection with $H$ of the convex dual $\check E$ of $E$, which is of dimension $N$ as we saw above. The intersection $\check E\cap H$ is of dimension $N-1$ because the only vector space which can be contained in the cone $E+\R(m-n)=\langle \{p-n/c_p\neq 0\}, m-n,n-m\rangle$ is $\R(m-n)$. By convex duality, this means that $\check E\cap H$, which is the convex dual of $E+\R(m-n)$ cannot be contained in a linear subspace smaller that $H$.\par\noindent
Recall that a fan $\Sigma$ is said to be compatible with a rational convex cone $C\subset\check\R^N$ if the intersection of any cone $\sigma\in \Sigma$ with $C$ is a face of $\sigma$ (which may be $\{0\}$ or $\sigma$).
So if $\Sigma$ is a regular subdivision of $\check\R_{\geq 0}^N$ which is compatible with $H$ and $\check E$ it will contain a regular cone $\sigma$ of dimension $N$ whose intersection with $H$ is of dimension $N-1$, which contains ${\mathbf w}$ and is contained in $\check E$. By the resolution theorem for normal toric varieties (see \cite{KKMS}, Chap. III or \cite{Ewald}, Chap. VI), since $\check E$ is a rational convex cone and $H$ a rational hyperplane, we know that there exist such regular fans.\par As a first step, let us examine the transforms in the charts $Z(\sigma)$ corresponding to cones $\sigma=\langle a^1,\ldots ,a^N\rangle$ which contain ${\mathbf w}$ and are compatible with $\check E$ and $H$.
\begin{itemize} \item \textit{Because our fan is compatible with $H$, the convex cone $\sigma$ has to be entirely on one side of $H$ and its intersection with $H$ is a face}. We may assume that
$a^1,\ldots ,a^t$ are those among the $a^j$ which lie in the hyperplane $H$ and all the other $\langle a^j,m-n\rangle$ are of the same sign, say
$\langle a^j,m-n\rangle>0$.
We have then
$$u^m-\lambda u^n\longmapsto y_1^{\langle a^1,n\rangle} \cdots y_N^{\langle a^N,n\rangle}(y_{t+1}^{
\langle
a^{t+1},m-n\rangle}\cdots y_N^{\langle a^N,m-n\rangle}-\lambda).$$
 \item \textit{By compatibility with $\check E$ and since it contains ${\mathbf w}$ which is in $\check E$}, the cone $\sigma$ is contained in $\check E\subseteq \check E'$ so that \emph{all $\langle a^i,p-n\rangle$ are $\geq 0$}. After perhaps re-subdividing $\sigma$ and choosing a smaller regular cone containing ${\mathbf w}$ and whose intersection with $H$ does not meet the boundary of $\check E$, we have that the $\langle a^i,p-n\rangle$ are $> 0$ at least for those $i$ such that $a^i\in H$.
\end{itemize}  In the corresponding chart $Z(\sigma)$ the transform of our equation $F$ by the monomial map can then be written:
$$ y_1^{\langle a^1, n\rangle}\ldots y_N^{\langle a^N, n\rangle}\bigl( y_{t+1}^{\langle a^{t+1}, m-n\rangle}\ldots y_N^{\langle a^N,m-n\rangle}-\lambda+\sum_pc_py_1^{\langle a^1,p- n\rangle}\ldots y_N^{\langle a^N,p- n\rangle}\bigr).$$
Since $\sum_{i=1}^N\langle a^i,p-n\rangle w(y_i)=\langle {\mathbf w},p-n\rangle$,  this shows that the strict transform $F'$ of $F$ by the monomial map $Z(\sigma)\to \A^N(k)$, which is the quantity between parenthesis, is an overweight deformation of the strict transform, of weight zero, of the initial part of $F$.\par This implies the result we seek since the hypersurface defined by the initial part of $F'$ is non singular because $\lambda\neq 0$ and the $\langle a^i,m-n\rangle$ which are $\neq 0$ cannot be all divisible by the characteristic of $k$. The reason\footnote{The generalization of this to prime binomial ideals (see \cite{Te1}, Lemma 6.3 and the proof of proposition \ref{sepex} below) will play an important role in what follows.} is that the $\langle a^i,m-n\rangle$ must generate the group $\Z$ since the $a^i$ form a basis of $\check\Z^N$ and $m-n$ is a primitive vector of $\Z^N$ because, by our assumptions, the field $k$ is algebraically closed and the ideal generated by $u^m-\lambda u^n$ is prime (see \cite{Ei-S}, Theorem 2.1, c)). So $m-n$ generates a (one dimensional) lattice of relations, the kernel of a linear map $\Z^N\to \Z^{N-1}$. In short, the vector of $\Z^N$ with coordinates $\langle a^i,m-n\rangle$ must be primitive, and this is equivalent to saying that $y_{t+1}^{
\langle
a^{t+1},m-n\rangle}\cdots y_N^{\langle a^N,m-n\rangle}-\lambda=0$ is non singular, whatever the characteristic of $k$ is. Let us show that the hypersurface $F'=0$ itself, being an overweight deformation of its initial part, has to be non singular at the point picked by the valuation.\par
In the charts we consider, not only does the valuation have a center since ${\mathbf w}\in\sigma$, but also the strict transform of our hypersurface by the map $Z(\sigma)\to \A^N(k)$ meets the toric boundary (= the complement of the torus) because some of the vectors $a^i$ are in $H$. The other charts are of no import as far as smoothness of the strict transform at the point picked by the valuation is concerned.\par
 The charts where the strict transform intersects the maximal number of components of the toric boundary are obtained by choosing the regular cone $\sigma\in\Sigma$ in such a way that its intersection with the hyperplane $H$ is of maximal dimension $N-1$, which means that $N-1$ of the vectors $a^i$ are in $H$. The $N-1$ corresponding coordinates $y_i$ will be of positive value and provide a system of local coordinates for the strict transform of our hypersurface at the point picked by the valuation. In fact, if $a^N$ is the vector which is not in $H$, according to what we saw above we must have $\langle a^N,m-n\rangle=1$ and our local equation becomes $$F'=y_N-\lambda+\sum_{w(p)>w(n)}c_py_1^{\langle a^1,p- n\rangle}\ldots y_N^{\langle a^N,p- n\rangle}.$$ Since the weight of $y_N$ is zero and since this is an overweight deformation, we see immediately that $F'$ is a power series in  $y_1,\dots ,y_{N-1}$ and $w_N=y_N-\lambda$ and the hypersurface $F'=0$ is non singular and transversal to the toric boundary at the point $y_1=\dots=y_{N-1}=0,y_N=\lambda$, with local coordinates $y_1,\dots ,y_{N-1}$. This point is the point picked by the valuation because on $F'=0$ the valuation of $y_N-\lambda$ has to be positive.\par\medskip\noindent
The proof of b) in the general case follows the same general line: consider the system of hyperplanes $H_\ell =H_{m^\ell-n^\ell}$ of $\check \R^N$ dual to the vectors $m^\ell-n^\ell$ and remember from (\cite{Te1}, 6.2)-or see below- that if $\Sigma$ is a regular fan subdividing the first quadrant of $\check \R^N$ and compatible with all the $H_\ell$, then the strict transform by the toric modification $\pi(\Sigma)\colon Z(\Sigma)\to \A^N(k)$ of $\A^N(k)$ determined by $\Sigma$ of the toric variety $X_0$ corresponding to the binomial ideal is non-singular and transversal to the toric boundary. In other words, the toric modification $Z(\Sigma)\to \A^N(k)$ gives an embedded pseudo-resolution of singularities of the toric variety $X_0$. If the fan $\Sigma$ contains the regular faces of the \emph{weight cone} $\check\R_{\geq 0}^N\cap W$ with $W=\bigcap_{\ell=1}^sH_\ell$, the pseudo-resolution is a resolution. Remember also that the charts of $Z$ where the strict transform of $X_0$ meets the toric boundary are those corresponding to cones $\sigma=\langle a^1,\ldots ,a^N\rangle$ which meet the weight cone outside of the origin, i.e., at least one of the $a^i$ is in $W$.\par\noindent

Furthermore, if the intersection of $\sigma$ with $W$ is $d$-dimensional, then the equations of the strict transform depend on exactly $N-d$ variables, say $y_{d+1},\ldots ,y_N$ and in view of their binomial nature their only solutions in the charts $Z(\sigma)$ are given by $y_{d+j}=c_{d+j}\in k^*$.
\par\medskip
We are going to show that one can choose regular fans refining such a fan $\Sigma$ so that the corresponding toric modification resolves the strict transform of $X$ at the point picked by the valuation $\nu$. The problem is to generalize the weight vector and the convex set $\check E$ which we used in the simple case above to determine which regular fans would be adapted to the overweight deformation and to show the existence of such fans.\par
 Let us examine how overweight deformations behave with respect to toric modifications.\Par
 Let $w$ be a weight on the variables $u_1,\ldots , u_N$ with values in a well ordered subsemigroup of the positive semigroup $\Phi_{\geq 0}$ of a totally ordered group $\Phi$ of finite rational rank. Let us say that a regular cone $\sigma=\langle a^1,\ldots , a^N \rangle\subset \check\R^N$ is $w$-\textit{centering} if the monomial valuation on $k(u_1,\ldots , u_N)$ determined by $w$ has a center\footnote{As I found recently, if we consider the additive preorder determined on the group $M$ of Laurent monomials in $u_1,\ldots ,u_N$ by the weight $w$, this notion is related to what G. Ewald and M. Ishida define in \cite{Ewald-Ishida} as the \textit{domination} of $\sigma$ by that preorder: their definition of domination chooses in a given toric variety the smallest torus-invariant affine open set in which the "center" of the preorder is visible as an orbit. See also \cite{GP-T2}.} in $k[\check\sigma\cap M]$. This means that in the monomial map determined by $\sigma$ as written above, we have that the $w(y_i)$ are $\geq 0$. \par\noindent
 We remark that since the matrix of the $a^j_i$ is unimodular, the weights of the $y_j$ are in $\Phi$ and uniquely determined by the $w(u_i)$. \par\noindent
 If the weight $w$ is of rank one and we identify its value group $\Phi$ with a subgroup of $\R$, we can consider the vector ${\mathbf w}=(w(u_1),\ldots ,w(u_N))\in \check\R_{\geq 0}^N$. Then the positivity of the $w(y_j)$ is equivalent to the fact that ${\mathbf w}$ is in $\sigma$ as we have already noted; a regular convex cone $\sigma$ is $w$-centering if and only if it contains the vector ${\mathbf w}$. \par If the rank ${\rm h}$ of $\Phi$ is greater than one, we consider the sequence of convex subgroups, with the convention that $\Psi_0=\Phi$:

 $$(0)=\Psi_h\subset \Psi_{h-1}\subset\ldots \Psi_1\subset \Phi,$$ and we notice that since $\Phi$ has no torsion and we are interested only in inequalities we can work in the divisible hull $\Phi\otimes_\Z\Q$ of $\Phi$ with the natural extension of the ordering on $\Phi$ and use the fact that it is the lexicographic product of groups of rank one:
 $$\Phi\otimes_\Z\Q=\Xi_1\times \ldots\times\Xi_h$$
 with $\Psi_j\otimes_\Z\Q=\{0\}\times\cdots\times\{0\}\times\Xi_{j+1}\times \ldots\times\Xi_h$.\par\medskip\noindent
  Let us denote by $r_j$ the rational rank of the group $\Xi_j$.\par\noindent
 Now let us choose an ordered embedding of $\Phi\otimes_\Z\Q$ in $(\R^h)_{\hbox{\rm lex}}$. For each $j,\ 1\leq j\leq h$, we can define a vector ${\mathbf w}(j)\in \check\R^N$; it is the vector whose coordinates are the projections in $\Xi_j$ of the $w(u_i),\ 1\leq i\leq N$, viewed as real numbers through the chosen embedding.\par\noindent
 \begin{lemma}\label{rat} Given $N$ elements $w_1,\ldots ,w_N$ of $\R$, the rational rank of the subgroup of  $\R$ generated by the $w_i$ is equal to the dimension of the smallest vector subspace $\langle w\rangle_\Q$ of $\R^N$ defined over $\Q$ containing the vector ${\mathbf w}=(w_1,\ldots ,w_N)$.
 \end{lemma}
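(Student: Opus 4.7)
The plan is to prove this by linear-algebra duality applied to the natural evaluation map $\phi\colon\Q^N\to\R$ sending $(a_1,\ldots,a_N)\mapsto\sum a_iw_i$. The image of $\phi$ is by definition $\Q w_1+\cdots+\Q w_N$, whose $\Q$-dimension is exactly the rational rank $r$ of the subgroup $\Z w_1+\cdots+\Z w_N\subset\R$. The kernel of $\phi$ is the $\Q$-vector space of rational linear relations among the $w_i$. By rank-nullity, $\dim_\Q\ker\phi=N-r$.

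Next I would identify $\ker\phi$ with a space of $\Q$-linear forms on $\R^N$ by viewing $(a_1,\ldots,a_N)\in\Q^N$ as the form $\ell(v)=\sum a_iv_i$. Under this identification, $\ker\phi$ is precisely the space of $\Q$-linear forms on $\R^N$ that vanish at ${\mathbf w}$. Now set
$$V_0=\bigl\{v\in\R^N\,\bigm|\,\ell(v)=0\text{ for all }\ell\in\ker\phi\bigr\}.$$
This $V_0$ is a $\Q$-defined subspace of $\R^N$ (it is the common zero-set of a family of $\Q$-linear forms), it contains ${\mathbf w}$, and $\dim_\R V_0=N-\dim_\Q\ker\phi=r$.

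It remains to check that $V_0$ is the smallest $\Q$-defined subspace of $\R^N$ containing ${\mathbf w}$, i.e.\ that $V_0=\langle w\rangle_\Q$. Indeed, any $\Q$-defined subspace $W\subset\R^N$ is cut out by a collection of $\Q$-linear forms $\{\ell_\alpha\}$; if moreover ${\mathbf w}\in W$, then each $\ell_\alpha({\mathbf w})=0$, so each $\ell_\alpha$ belongs to $\ker\phi$, whence $W\supseteq V_0$. Applied to $W=\langle w\rangle_\Q$ this gives $\langle w\rangle_\Q\supseteq V_0$, and since $V_0$ itself is a $\Q$-defined subspace containing ${\mathbf w}$ the reverse inclusion follows from the minimality of $\langle w\rangle_\Q$. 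Combining, $\dim\langle w\rangle_\Q=\dim V_0=r$, which is the claim.

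There is no real obstacle: the lemma is a direct consequence of the duality between a vector (or rather the $\Q$-subspace it generates) and the $\Q$-linear forms annihilating it, together with the rank-nullity theorem. The only mild point to be attentive to is the translation between ``$\Q$-linear relations among the $w_i$'' (living in $\Q^N$) and ``$\Q$-linear forms on $\R^N$ vanishing at ${\mathbf w}$'' (living in the $\Q$-dual of $\R^N$), which is immediate once one fixes the standard basis.
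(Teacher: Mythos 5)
Your proof is correct and follows essentially the same route as the paper's: the paper considers the $\Z$-linear map $b\colon\Z^N\to\R$ sending basis vectors to the $w_i$ and identifies the rank of its kernel with the number of independent rational linear forms vanishing on ${\mathbf w}$, i.e.\ the codimension of $\langle w\rangle_\Q$, which is exactly your rank--nullity plus duality argument (over $\Z$ rather than $\Q$, an immaterial difference). Your write-up merely makes explicit the identification of $\langle w\rangle_\Q$ with the common zero locus $V_0$ of the relations, a step the paper leaves implicit.
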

 \begin{proof} Let $\Ll$ be the kernel of the $\Z$-linear map $b\colon \Z^N\to \R$ sending the $i$-th basis vector to $w_i$. The image of the map $b$ is the subgroup generated by the $w_i$ and by construction the rank of $\Ll$ is the number of independent $\Z$-linear forms vanishing on the vector ${\mathbf w}$, which is the codimension of $\langle w\rangle_\Q$ in $\R^N$.
 \end{proof}

 \begin{lemma}\label{linind} The vectors ${\mathbf w}(j),\ 1\leq j\leq h$, of $\check\R^N$ are linearly independent.
 \end{lemma}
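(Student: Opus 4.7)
The plan is to translate the statement into a linear algebra question in $\R^h$ via the chosen lex embedding and then show that the $N$ columns $w(u_1),\ldots,w(u_N)\in\R^h$ of the $h\times N$ matrix $W=(\pi_j(w(u_i)))_{1\leq j\leq h,\,1\leq i\leq N}$ $\R$-span all of $\R^h$; duality then forces its $h$ rows, which are exactly the vectors $\mathbf{w}(1),\ldots,\mathbf{w}(h)$, to be $\R$-linearly independent. Concretely, writing $w(u_i)=(w_{i,1},\ldots,w_{i,h})$ via the embedding $\Phi\otimes_\Z\Q\hookrightarrow(\R^h)_{\rm lex}$, the $i$-th coordinate of $\mathbf{w}(j)$ is $w_{i,j}$, so a relation $\sum_{j=1}^h c_j\mathbf{w}(j)=0$ in $\check\R^N$ says that the linear form $\ell\colon(x_1,\ldots,x_h)\mapsto\sum_{j=1}^h c_j x_j$ on $\R^h$ annihilates every $w(u_i)$. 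The problem becomes: show that $\ell=0$.

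For this I would exploit the lex decomposition. By the normalization of Section \ref{waff} the weight map $b$ is surjective onto $\Phi$, so the $w(u_i)$ generate $\Phi$ as a group, and their $\Q$-linear span inside $\R^h$ is precisely the image of $\Phi\otimes_\Z\Q$. Under the chosen embedding this image is the $\Q$-subspace $\Xi_1\oplus\cdots\oplus\Xi_h\subset\R^h$, with the $j$-th summand $\Xi_j$ sitting on the $j$-th coordinate axis. By the very definition of the real rank, each $\Xi_j$ has $\Q$-dimension $r_j\geq 1$, so picking one non-zero element in each produces $h$ vectors of $\Phi\otimes_\Z\Q$ supported on pairwise distinct coordinate axes of $\R^h$, hence automatically $\R$-linearly independent there. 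The $\R$-span of the $w(u_i)$ is therefore all of $\R^h$, $\ell$ vanishes identically, and all $c_j$ are zero.

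There is no real obstacle to the argument: the only bookkeeping is the compatibility between the convex-subgroup filtration $(\Psi_j)$ and the lex factors $(\Xi_j)$, together with the trivial observation that any non-zero $\Q$-subspace of $\R$ is already $\R$-spanning in $\R$. Lemma \ref{rat} gives the single-coordinate analogue of this density fact and could be cited en route, although it is not strictly necessary for the proof sketched above.
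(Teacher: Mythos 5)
Your proof is correct and is essentially the paper's own argument: both reduce the lemma to the claim that the columns $w(u_1),\ldots,w(u_N)$ span $\R^h$, so that a relation $\sum_{j}c_j\mathbf{w}(j)=0$ among the rows of the $h\times N$ matrix would produce a non-trivial linear form on $\R^h$ vanishing on a spanning set. Your justification of the spanning step (the group generated by the $w(u_i)$ is all of $\Xi_1\times\cdots\times\Xi_h$, which contains a non-zero element supported on each coordinate axis) is in fact a slightly tighter version of the paper's appeal to the fact that each $\Xi_k$ is generated by the components $\mathbf{w}(k)_i$.
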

 \begin{proof}  Consider the $\Z$-linear map $B\colon (\Z^N)^h\to \R^h$ which is the product for $1\leq k\leq h$ of the $h$ maps $b_k\colon \Z^N\to \R$ sending the basis vector $e_i^{(k)}$ of $\R^N$ to ${\mathbf w}(k)_i$ for $1\leq i\leq N$. Since each $\Xi_k\subset \R$ is generated by the ${\mathbf w}(k)_i,1\leq i\leq N $ the subgroup $\Phi=\Xi_1\times\cdots\times \Xi_h\subset \R^h$ is the image of this map.\par
 The kernel of $B$ is the lattice ${\mathcal M}$ in $(\Z^N)^h$ which is the product of the kernels of the maps $b_k$.\par The rank of the lattice ${\mathcal M}$ is $N-r_1+\cdots +N-r_h=hN-(r_1+\cdots +r_h)$. \par\noindent
 Remembering that for each $k,\ 1\leq k\leq h$ at least one of the elements $w(u_i)$ has a non zero image ${\mathbf w}(k)_i$ in $\Xi_k$, we see that the images by the map $B$ of the $N$ vectors $F_i=(e_i^{(1)},\ldots e_i^{(h)}),\ 1\leq i\leq N,$ of $(\Z^N)^h$ generate the $\R$-vector space $\R^h$.
 If there was a linear relation $\sum_{k=1}^hs_k{\mathbf w}(k)=0$, each image $B(F_i)=({\mathbf w}(1)_i,\ldots ,{\mathbf w}(h)_i)$  would lie in the hyperplane $\sum_{k=1}^hs_ky_k=0$ of $\R^h$. This contradiction ends the proof.\end{proof}
 For each $j,\ 1\leq j\leq h$, let us denote by $S_j$ the smallest vector subspace of $\check \R^N$ defined over $\Q$ and containing the ${\mathbf w}(k),\ 1\leq k\leq j$.\par\noindent
 Notice that all the vector spaces $S_j$ meet the first quadrant $\check \R_{\geq 0}^N$ outside of the origin since ${\mathbf w}(1)$ is in it. By the properties of the lexicographic order, the vector space $S_h$ meets the interior of $\check \R_{\geq 0}^N$, so that we have $\hbox{\rm dim}(S_h\cap \R_{> 0}^N)=\hbox{\rm dim}S_h$.\par

 \begin{lemma}\label{ratrank} a) For each $j,\  1\leq j\leq h$ the dimension of the vector space $S_j$ is $\sum_{k=1}^jr_j$.\par\noindent b) Let $\Sigma$ be a regular fan with support $\check\R_{\geq 0}^N$ which is compatible with the vector spaces $S_j$. There exist $N$-dimensional cones $\sigma$ of $\Sigma$ such that ${\mathbf w}(1)\in \sigma$ and for all $j$ the face $\sigma\cap S_j$ of $\sigma$ is a cone of maximal dimension in $S_j$.\par\noindent
 c) For such cones $\sigma$, for each $j$ the number of support hyperplanes of $\sigma$ which contain $S_{j-1}\cap\sigma$ but do not contain ${\mathbf w}(j)$ is equal to $r_j$.
 \end{lemma}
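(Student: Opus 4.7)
For part (a), I plan to compute the codimension of $S_j$ in $\check\R^N$ via the rank of its associated lattice of integer linear relations. Explicitly, set $\Ll_j=\{n\in\Z^N: \sum_{i=1}^N n_i{\mathbf w}(k)_i=0 \text{ for all } 1\leq k\leq j\}$; by a straightforward generalization of Lemma \ref{rat}, $\dim S_j=N-\mathrm{rank}(\Ll_j)$. The lex structure of $\Phi\otimes_\Z\Q=\Xi_1\times\cdots\times\Xi_h$ shows that the defining condition is equivalent to $\sum n_i w(u_i)\in\Psi_j$, so $\Ll_j=b^{-1}(\Psi_j)$, where $b\colon\Z^N\to\Phi$ is the weight map. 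Since $b$ is surjective (by the reduction at the end of section \ref{waff}), the induced map $\Z^N\to\Phi/\Psi_j=\Xi_1\times\cdots\times\Xi_j$ is also surjective with target of rank $r_1+\cdots+r_j$; hence $\mathrm{rank}(\Ll_j)=N-(r_1+\cdots+r_j)$ and $\dim S_j=r_1+\cdots+r_j$.

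For part (b), the key auxiliary tool will be the ``staircase'' point $p_j={\mathbf w}(1)+\epsilon_2{\mathbf w}(2)+\cdots+\epsilon_j{\mathbf w}(j)$, where $0<\epsilon_h\ll\epsilon_{h-1}\ll\cdots\ll\epsilon_2$ are chosen very small. Because $w(u_i)>0$ in lex order, its first nonzero component is strictly positive, and a direct check yields $p_{j,i}>0$ for $i\in I_j:=\{i: w(u_i)\notin\Psi_j\}$ and $p_{j,i}=0$ otherwise. Since $S_j$ is contained in the coordinate subspace $\{p: p_i=0\text{ for }i\notin I_j\}$, the point $p_j$ lies in the relative interior of $S_j\cap\check\R^N_{\geq 0}$ viewed as a cone in $S_j$; in particular $\dim(S_j\cap\check\R^N_{\geq 0})=\dim S_j$, so the restricted fan $\Sigma|_{S_j}:=\{\sigma\cap S_j:\sigma\in\Sigma\}$ is a complete fan on its support whose maximal cones have dimension $r_1+\cdots+r_j$. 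I then construct, inductively on $j$, a chain $\tau_1\subset\tau_2\subset\cdots\subset\tau_h$ of cones of $\Sigma$ with $\tau_j$ maximal in $\Sigma|_{S_j}$: take $\tau_1$ to be any maximal cone of $\Sigma|_{S_1}$ containing ${\mathbf w}(1)$ (it exists since in the complete fan $\Sigma|_{S_1}$ every cone is a face of a top-dimensional one); given $\tau_{j-1}\in\Sigma$, the equality $\tau_{j-1}\cap S_j=\tau_{j-1}$ shows that $\tau_{j-1}\in\Sigma|_{S_j}$, so by completeness it is a face of a maximal $\tau_j\in\Sigma|_{S_j}$, which lies in $\Sigma$ by compatibility. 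Taking $\sigma$ to be an $N$-dimensional cone of $\Sigma$ admitting $\tau_h$ as a face, the fan property and dimension counting force $\sigma\cap S_j=\tau_j$ for each $j$, while ${\mathbf w}(1)\in\tau_1\subset\sigma$.

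For part (c), write $\sigma=\langle a^1,\ldots,a^N\rangle$; then $\sigma\cap S_j$ is spanned by the $a^i\in S_j$, and the support hyperplane $H_i$ opposite $a^i$ contains $S_{j-1}\cap\sigma$ iff $a^i\notin S_{j-1}$, while it fails to contain ${\mathbf w}(j)$ iff the coefficient $c_i$ of $a^i$ in the expansion ${\mathbf w}(j)=\sum_l c_l a^l$ is nonzero. Since ${\mathbf w}(j)\in S_j$, automatically $c_i=0$ whenever $a^i\notin S_j$. The crux is to show $c_i\neq 0$ for every $a^i\in S_j\setminus S_{j-1}$: if some such $c_i$ vanished, then ${\mathbf w}(j)$ would lie in the $\Q$-subspace $T=\mathrm{span}\{a^l: l\neq i,\, a^l\in S_j\}$, which has dimension $r_1+\cdots+r_j-1$ and already contains $S_{j-1}$ (because $a^i\notin S_{j-1}$ ensures that all generators of $\sigma\cap S_{j-1}$ lie in $T$); the characterization of $S_j$ as the smallest $\Q$-subspace containing $S_{j-1}$ and ${\mathbf w}(j)$ would then force $S_j\subseteq T$, contradicting dimensions. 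The count is therefore precisely $|\{i: a^i\in S_j\setminus S_{j-1}\}|=r_j$.

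The main subtlety is establishing that the restricted fan $\Sigma|_{S_j}$ is complete of the expected dimension $r_1+\cdots+r_j$; this is where the lex compatibility of the $w(u_i)$ with the flag $S_1\subset\cdots\subset S_h$ enters essentially, through the staircase points $p_j$. With that in hand, (b) reduces to a combinatorial face-lifting in complete fans and (c) becomes a short algebraic consequence of the minimality characterization of the $S_j$.
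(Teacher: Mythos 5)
Your proof is correct and follows essentially the same route as the paper: (a) via the rank of the lattice of integer relations (the paper invokes Lemmas \ref{rat} and \ref{linind} for exactly this computation), (b) via the observation that $\Sigma$ restricted to $S_j$ is a fan with full-dimensional support inside $S_j$, and (c) by counting generators of the simplicial cone $\sigma$ lying in $S_j\setminus S_{j-1}$. In fact your write-up supplies details the paper leaves implicit --- the staircase point $p_j$ justifying $\dim(S_j\cap\check\R^N_{\geq 0})=\dim S_j$ for $j<h$ (the paper only states this explicitly for $S_h$), the nested chain $\tau_1\subset\cdots\subset\tau_h$ guaranteeing a single $\sigma$ works for all $j$ simultaneously and contains ${\mathbf w}(1)$, and the minimality argument for $c_i\neq 0$ in (c), which the paper dismisses as ``a reformulation of b)''.
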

 \begin{proof} Assertion a) follows directly from Lemmas \ref{rat} and \ref{linind}.\par\noindent
 To prove b) it suffices to remark that since $\Sigma$ is compatible with the $S_j$, for each $j$ the intersection $S_j\cap \Sigma$ is a fan of $S_j\cap \check\R_{\geq 0}^N$, which is of the same dimension as $S_j$ as we saw above. Since the support of $\Sigma$ is $\check\R_{\geq 0}^N$ this intersection must contain cones of the maximal dimension, which are faces of cones of $\Sigma$. These cones have the required property. \par\noindent Assertion c) is a reformulation of b).
 \end{proof}
 Assume that $\sigma$ is a regular convex cone of dimension $N$ belonging to a regular fan with support $\R^N_{\geq 0}$ which is compatible with the rational vector spaces $S_j,\ 1\leq j\leq h$. Assume that ${\mathbf w}(1)\in \sigma$. Let us denote by $(L_s)_{1\leq s\leq N}$ the hyperplanes  bounding $\sigma$. For each $j$ there is a largest subset $I(j)\subset \{1,\ldots ,N\}$ such that $S_j\subseteq \bigcap_{s\in I(j)}L_s$. By convention we set $I(0)=\{1,\ldots ,N\}$.\par Let us denote by $L_s^{\geq 0}$ the closed half space of $\R^N$ determined by $L_s$ which contains $\sigma$.\par
 \begin{lemma}\label{centering}
 In this situation, the $N$-dimensional regular convex cone $\sigma$ is $w$-centering if and only if the following holds:\par\noindent
 For each $j,\ 0\leq j\leq h-1$, we have ${\mathbf w}(j+1)\in \bigcap_{s\in I(j)}L_s^{\geq 0}$.
 \end{lemma}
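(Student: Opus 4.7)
The plan is to translate the geometric condition that $\sigma$ be $w$-centering into the lex-positivity of each $w(y_j)\in\Phi$, and then to exploit that $S_k$ is the \emph{smallest} $\Q$-subspace containing ${\mathbf w}(1),\ldots,{\mathbf w}(k)$. Since $(a^j_i)$ is unimodular, the substitution $u_i=\prod_j y_j^{a^j_i}$ gives the vector identity ${\mathbf w}=\sum_j w(y_j)\,a^j$, whose $k$-th lex-projection is
$$ {\mathbf w}(k)=\sum_{j=1}^N w(y_j)_{(k)}\,a^j,\qquad 1\le k\le h.$$
Because $L_s$ is spanned by $\{a^j:j\ne s\}$ and $L_s^{\ge 0}$ is the half-space where the coordinate of $a^s$ in the basis $(a^j)$ is non-negative, the set $I(k-1)$ equals $\{s:a^s\notin S_{k-1}\}$, and the inclusion ${\mathbf w}(k)\in\bigcap_{s\in I(k-1)}L_s^{\ge 0}$ reads precisely as $w(y_s)_{(k)}\ge 0$ for every $s$ with $a^s\notin S_{k-1}$.

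Next I would put $k_0(s)=\min\{k:a^s\in S_k\}$ and, invoking compatibility together with Lemma \ref{ratrank}(b), note that the face $\sigma\cap S_k$ has dimension $r_1+\cdots+r_k=\dim S_k$, is generated by exactly those $a^j\in S_k$, and these $a^j$ form an $\R$-basis of $S_k$. Since ${\mathbf w}(k)\in S_k$, uniqueness of coordinates in the basis $(a^j)$ forces $w(y_s)_{(k)}=0$ whenever $k<k_0(s)$, so the only possibly non-zero component of $w(y_s)$ at indices $\le k_0(s)$ appears at $k=k_0(s)$. For the $(\Rightarrow)$ direction this already suffices: if $\sigma$ is $w$-centering and $s\in I(k-1)$ (equivalently $k\le k_0(s)$), then $w(y_s)_{(k)}$ is either zero or the first potentially non-zero coordinate of the lex-non-negative element $w(y_s)$, hence $\ge 0$; assembling these over $s$ yields ${\mathbf w}(k)\in\bigcap_{s\in I(k-1)}L_s^{\ge 0}$.

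The crux for $(\Leftarrow)$ is the \emph{unconditional} claim that $w(y_s)_{(k_0(s))}\ne 0$ for every $s$. Setting $J_k=I(k-1)\setminus I(k)$, if this coefficient were zero then ${\mathbf w}(k_0(s))$ would lie in the $\Q$-defined subspace $T=S_{k_0(s)-1}+\mathrm{span}\{a^{s'}:s'\in J_{k_0(s)}\setminus\{s\}\}$, as would ${\mathbf w}(1),\ldots,{\mathbf w}(k_0(s)-1)\in S_{k_0(s)-1}\subseteq T$; but $T$ omits one basis vector of $S_{k_0(s)}$, so $T\subsetneq S_{k_0(s)}$, contradicting the minimality defining $S_{k_0(s)}$. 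Combined with the lemma's condition at $j=k_0(s)-1$, which gives $w(y_s)_{(k_0(s))}\ge 0$, we conclude $w(y_s)_{(k_0(s))}>0$; hence each $w(y_s)$ is lex-positive and $\sigma$ is $w$-centering. The main subtlety is precisely this minimality argument: without it, the weak-looking inequalities of the lemma would only yield non-negativity at the first candidate index, which by itself is insufficient for lex-positivity of $w(y_s)$.
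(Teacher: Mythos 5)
Your overall strategy — translating $w$-centering into lex-non-negativity of the barycentric coordinates $w(y_s)_{(k)}$ of the vectors ${\mathbf w}(k)$ in the basis $(a^j)$, and then using the minimality over $\Q$ of the spaces $S_k$ to force strict positivity at the first relevant index — is the same as the paper's, and you deserve credit for making explicit the one genuinely delicate point: in the converse direction the inequalities of the lemma only give non-negativity of $w(y_s)$ at its first candidate index, so one must show that this coordinate cannot vanish. The paper disposes of this with "the proof of the converse statement is obtained in the same way."

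There is, however, a gap in your combinatorial dictionary. You assert $I(k-1)=\{s:a^s\notin S_{k-1}\}$ and that the $a^j$ lying in $S_k$ form an $\R$-basis of $S_k$, "invoking compatibility together with Lemma \ref{ratrank}(b)". But Lemma \ref{ratrank}(b) is an existence statement: it produces \emph{some} cones of $\Sigma$ whose faces $\sigma\cap S_j$ have the maximal dimension $\dim S_j$; it does not say that \emph{every} $N$-dimensional regular cone compatible with the $S_j$ and containing ${\mathbf w}(1)$ has this property, and the hypotheses of Lemma \ref{centering} require only the latter. For instance, with $\sigma=\langle(1,1,1),(0,1,0),(0,0,1)\rangle$ and $S_2=\mathrm{span}_{\Q}\bigl((1,1,1),(0,1,-1)\bigr)$ one has $\sigma\cap S_2=\R_{\geq 0}(1,1,1)$, a face of dimension $1<2$, so $\sigma$ is compatible with $S_2$ while no $L_s$ contains $S_2$, i.e. $I(2)=\emptyset$, whereas $\{s:a^s\notin S_2\}=\{2,3\}$. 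Your argument also tacitly assumes that $k_0(s)$ exists for every $s$, which fails when $a^s\notin S_h$. The repair is to bypass the dictionary and work with $I(j)$ directly: $s\in I(j)$ means $S_j\subseteq L_s$, hence ${\mathbf w}(k)\in L_s$ and $w(y_s)_{(k)}=0$ for all $k\leq j$; and if $j_0$ is the largest $j$ with $s\in I(j_0)$ and $w(y_s)_{(j_0+1)}$ were $0$, then the rational hyperplane $L_s$ would contain ${\mathbf w}(1),\ldots,{\mathbf w}(j_0+1)$, hence $S_{j_0+1}$ by minimality, contradicting $s\notin I(j_0+1)$ (while if $s\in I(h)$ then $w(y_s)=0$). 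With this substitution both directions of your argument go through for every cone satisfying the lemma's hypotheses.
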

 \begin{proof} Since $\sigma$ is regular, the determinant of its generating vectors is $\pm 1$. According to the description $(*)$ of the monomial map associated to $\sigma$, the weights of the $u_i$ uniquely determine the weights of the $y_i$ in $\Phi$ since the determinant is $\neq 0$. Now writing that $w(y_i)$ is $\geq 0$ in the lexicographic product $\Xi_1\times\ldots\times \Xi_h$ reduces exactly to the expression given in the lemma. We observe that the projections in $\Xi_k$ of the valuations of the $(y_i)_{1\leq i\leq N}$ are the barycentric coordinates of the vector ${\mathbf w}(k)$ with respect to the generators of $\sigma$. If all the barycentric coordinates of ${\mathbf w}(1)$ are positive, then all the $w(y_i)$ are also positive and $\sigma$ is $w$-centering. If some of these barycentric coordinates are zero, it means that ${\mathbf w}(1)$ is in a face of $\sigma$ whose linear span is the intersection of the $L_s$ for $s\in I(1)$, by the definition of $I(1)$. Then, in order for the corresponding $w(y_j)$ to be non-negative in $\R^h$, it is necessary that the corresponding barycentric coordinates of ${\mathbf w}(2)$ are $\geq 0$, which is equivalent to the inclusion ${\mathbf w}(2)\in \bigcap_{s\in I(1)}L_s^{\geq 0}$, and so on. The proof of the converse statement is obtained in the same way.
 \end{proof}\noindent
\begin{Remark}\small{\begin{enumerate}\item If the group $\Phi$ is of rank one, the condition is simply that the vector ${\mathbf w}(1)$ is in $\sigma$, as we have noted above.
 \item The argument uses only the fact that $\sigma$ is simplicial and $N$-dimensional. \end{enumerate}}\end{Remark}
 \begin{lemma}\label{quadrant} Keep the notations introduced before Lemma \ref{rat}.\par Let ${\mathbf w}(1),{\mathbf w}(2),\ldots ,{\mathbf w}(h)$ be rationally independent vectors in an $r$-dimensional rational vector subspace $W\subset \R^N$, all lying in $W\cap \R_{\geq 0}^N$. Let $\sigma_0\subset \R_{\geq 0}^N$ be an $N$-dimensional regular cone of a fan $\Sigma$ supported in $\R_{\geq 0}^N$ which is compatible with $W$ and the vector spaces $S_j$ defined above and containing ${\mathbf w}(1)$. Assume that $\sum_{i=1}^h r_i=r$ and that $\sigma_0\cap W$ is of dimension $r$. Then there exists a regular $N$-dimensional cone $\sigma\subset \R_{\geq 0}^N$ of $\Sigma$ satisfying the conditions of Lemma \ref{centering} and whose intersection with $W$ is of dimension $r$.
 \end{lemma}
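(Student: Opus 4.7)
The plan is to reduce the problem to the $r$-dimensional subfan $\Sigma\cap W$ and to exhibit the desired cone via a lexicographic perturbation of $\mathbf{w}(1)$ by the remaining $\mathbf{w}(j)$.

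First I reformulate Lemma \ref{centering} purely inside $W$. Let $\sigma$ be an $N$-dimensional cone of $\Sigma$ with $\tau:=\sigma\cap W$ of dimension $r$. A bounding hyperplane $L_s$ of $\sigma$ either contains $W$, in which case $\mathbf{w}(j+1)\in W\subseteq L_s^{\geq 0}$ automatically for every $j$, or meets $W$ in a hyperplane $\ell_s:=L_s\cap W$ bounding $\tau$, and then $L_s^{\geq 0}\cap W$ is the closed half-space $\ell_s^{\geq 0}$ of $W$ containing $\tau$. Since $S_j\subseteq W$, the condition $S_j\subseteq L_s$ in the definition of $I(j)$ becomes $S_j\subseteq \ell_s$. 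Hence the conditions of Lemma \ref{centering} for $\sigma$ are equivalent to the intrinsic condition on $\tau$: for every $j\in\{0,\dots,h-1\}$ and every bounding hyperplane $\ell_s$ of $\tau$ with $\ell_s\supseteq S_j$, one has $\mathbf{w}(j+1)\in \ell_s^{\geq 0}$.

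To produce such a $\tau$, I consider for $\epsilon>0$ the vector
$$ v_\epsilon\;:=\;\mathbf{w}(1)+\epsilon\,\mathbf{w}(2)+\epsilon^2\,\mathbf{w}(3)+\cdots+\epsilon^{h-1}\,\mathbf{w}(h). $$
Since each $\mathbf{w}(j)$ lies in $W\cap\R^N_{\geq 0}$ and $\epsilon>0$, so does $v_\epsilon$. The existence of the cone $\sigma_0\cap W$ of dimension $r$ shows that the $r$-dimensional cones of $\Sigma\cap W$ cover a dense open part of $W\cap\R^N_{\geq 0}$; combined with the rational independence of $\mathbf{w}(1),\dots,\mathbf{w}(h)$ and the equality $\sum_{j=1}^h r_j=r$, this ensures that for all sufficiently small $\epsilon>0$ the vector $v_\epsilon$ lies in the relative interior of a single $r$-dimensional cone $\tau$ of $\Sigma\cap W$, and this $\tau$ stabilizes as $\epsilon\to 0^+$. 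By continuity, $\mathbf{w}(1)=\lim_{\epsilon\to 0^+}v_\epsilon\in \tau$.

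The intrinsic condition for this $\tau$ then follows by a standard order-of-vanishing argument in $\epsilon$. Let $\ell_s$ be a bounding hyperplane of $\tau$ defined by a linear form $f_s$ with $f_s\geq 0$ on $\tau$, and assume $\ell_s\supseteq S_j$; then $f_s(\mathbf{w}(k))=0$ for $1\leq k\leq j$, so
$$ 0\leq f_s(v_\epsilon)=\epsilon^j f_s(\mathbf{w}(j+1))+\epsilon^{j+1} f_s(\mathbf{w}(j+2))+\cdots+\epsilon^{h-1}f_s(\mathbf{w}(h)) $$
for small $\epsilon>0$; dividing by $\epsilon^j$ and letting $\epsilon\to 0^+$ yields $f_s(\mathbf{w}(j+1))\geq 0$, i.e.\ $\mathbf{w}(j+1)\in \ell_s^{\geq 0}$. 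By the compatibility of $\Sigma$ with $W$, the cone $\tau$ is of the form $\sigma\cap W$ for some $N$-dimensional cone $\sigma$ of $\Sigma$; this $\sigma$ satisfies Lemma \ref{centering} by the reduction, and has $\sigma\cap W=\tau$ of dimension $r$, as required. The main obstacle is to make rigorous that $v_\epsilon$ lies in the relative interior of an $r$-dimensional cone of $\Sigma\cap W$ for all small $\epsilon>0$: this relies on the rational independence of the $\mathbf{w}(j)$'s together with $\sum r_j=r$, which prevents the curve $\epsilon\mapsto v_\epsilon$ from being trapped in any proper rational subspace of $W$ cut out by a bounding hyperplane of the local fan near $\mathbf{w}(1)$.
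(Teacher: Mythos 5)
Your proof is correct in substance, but it takes a genuinely different route from the paper's. The paper argues combinatorially from $\sigma_0$: it lists the supporting hyperplanes $(L_s)$ of $\sigma_0$, uses Lemma \ref{ratrank} c) to count, at each stage $j$, the $r_j$ hyperplanes containing $S_{j-1}\cap\sigma_0$ but not ${\mathbf w}(j)$, and flips the corresponding half-spaces so that each ${\mathbf w}(j)$ lands on the correct side; the desired cone is then the intersection of the re-oriented half-spaces (a ``different quadrant'' of the same hyperplane arrangement). You instead use a lexicographic perturbation $v_\epsilon={\mathbf w}(1)+\epsilon{\mathbf w}(2)+\cdots+\epsilon^{h-1}{\mathbf w}(h)$ and take for $\tau$ the cone of $\Sigma\cap W$ whose relative interior eventually contains $v_\epsilon$; this directly realizes the intuition that a $w$-centering cone is one containing the ``lexicographic limit point'' of the weight, and it has the advantage of producing the cone as an honest cone of the fan $\Sigma$ from the start. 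Two points should be tightened. First, in the last step you verify the condition of Lemma \ref{centering} only for hyperplanes $L_s$ whose trace $\ell_s$ is facet-defining for $\tau$; the condition concerns \emph{all} bounding hyperplanes of $\sigma$ containing $S_j$, including those whose trace on $W$ is redundant for $\tau$ --- but your computation uses only that $f_s\geq 0$ on $\tau$ and $f_s|_{S_j}=0$, so it applies verbatim to these as well. Second, the claim that $\tau$ is $r$-dimensional deserves its one-line proof: if $v_\epsilon$ lay in a proper face for all small $\epsilon$, then (Vandermonde) every ${\mathbf w}(j)$ would lie in the rational linear span of that face, which would then contain $S_h$; but $\dim S_h=\sum_{i}r_i=r$ by Lemma \ref{ratrank} a), so $S_h=W$, a contradiction. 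With these additions your argument is complete and, if anything, sidesteps the question of whether the paper's re-oriented quadrant actually belongs to $\Sigma$.
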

 \begin{proof}let $(L_s)_{s\in I}, I=\{1,\ldots ,N\}$ be the collection of the supporting hyperplanes of $\sigma_0$. By construction there is a largest subset $I(1)\subset I$ such that $S_1= \bigcap_{s\in I(1)}L_s$.  In view of Lemma \ref{ratrank}, c) and the compatibility of $\Sigma$ with $S_1$, it is of cardinality $N-r_1$. By Lemma \ref{linind} we know that ${\mathbf w}(2)$ is not in $\bigcap_{s\in I(1)}L_s$. Let us denote by $I(2)\subset I(1)$ the set $\{s\in I(1)\vert {\mathbf w}(2)\in L_s\}$. For each $s\in I(1)\setminus I(2)$, which is of cardinality $r_2$ by Lemma \ref{ratrank}, c),  we denote by $L_s^{\geq 0}$ the closed half space determined by $L_s$ which contains ${\mathbf w}(2)$. Again by Lemma \ref{linind} we know that ${\mathbf w}(3)\notin \bigcap_{s\in I(2)}L_s$ so we can define a subset $I(3)\subset I(2)$ by the condition that ${\mathbf w}(2)\in \bigcap_{s\in I(3)}L_s$ and a closed half space $L_s^{\geq 0}$ for each $s\in I(2)\setminus I(3)$, and so on. \par In the end we have built a sequence of subsets $$\{1,\ldots ,N\}\supset I(1)\supset I(2)\supset\cdots \supset I(h)$$ such that $S_t=\bigcap_{s\in I(t)}L_s$ and we have determined half spaces $L_s^{\geq 0}$ corresponding to all the hyperplanes $L_s$ for $s\in \{1,\ldots ,N\}\setminus I(h)$ in such a way that each ${\mathbf w}(k)$ is always in the half space $L^{\geq 0}_s$ if $L_s$ vanishes on ${\mathbf w}(k-1)$. According to Lemma \ref{ratrank}, c), at step $i$ we define $r_i$ half-spaces. Since $\sum_{i=1}^hr_i=r$, the set $I(h)$ is the set of those hyperplanes $L_s$ which contain $W$. Thus $\bigcap_{s\in \{1,\ldots ,N\}\setminus I(h)}L_s^{\geq 0}$ is a rational cone which is the intersection of $d$ half spaces in $\R^N$. If now we define for $s\in I(h)$ the half-space $L_s^{\geq 0}$ as the one containing $\sigma$, we see that $\sigma'=\bigcap_{s\in \{1,\ldots ,N\}}L_s^{\geq 0}$ is a rational regular cone which satisfies the conditions of Lemma \ref{centering}; by construction it satisfies the positivity conditions with respect to the ${\mathbf w}(k)$, and its intersection with $W$ is the intersection with $\R^N_{\geq 0}$ of the $r$ half-spaces $(L_s^{\geq 0}\cap W)_{s\in \{1,\ldots ,N\}\setminus I(h)}$ of $W$ and is therefore a regular $r$-dimensional cone of the fan $\Sigma\cap W$. \noindent
\end{proof}
\begin{remark}\small{\emph{ In the case where $h=1$ the cone $\sigma$ is equal to $\sigma_0$ but if $h>1$ and we transform $\sigma_0$ into the first quadrant by a unimodular transformation, then $\sigma$ becomes a possibly different quadrant.}} \end{remark}

  \begin{proposition}\label{exist}
 Keeping the same notations, let $\Sigma$ be a regular fan with support $\check \R_{\geq 0}^N$. Assume that it is compatible with the $S_j$ and the $H_\ell$. Then there is a cone $\sigma$ of dimension $N$ of $\Sigma$ which is $w$-centering and whose intersection with $W$ is of dimension $r$.  \par\noindent
  \end{proposition}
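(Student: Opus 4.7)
My plan is to derive Proposition \ref{exist} directly from Lemma \ref{quadrant}, so the only real work is to exhibit an initial $N$-dimensional cone $\sigma_0 \in \Sigma$ satisfying that lemma's hypotheses. The first observation I would make is that all the vectors ${\mathbf w}(k)$, for $1 \leq k \leq h$, lie in $W$. Indeed, compatibility of the weight with the binomials says $\langle {\mathbf w}, m^\ell - n^\ell\rangle = 0$ in $\Phi$ for every $\ell$; projecting on each lexicographic factor $\Xi_k$ of $\Phi \otimes_\Z \Q$ yields $\langle {\mathbf w}(k), m^\ell - n^\ell\rangle = 0$, hence ${\mathbf w}(k) \in H_\ell$ for all $\ell$, and thus ${\mathbf w}(k) \in W$. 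Since $\dim W = r = \sum_{k=1}^h r_k = \dim S_h$ by Lemma \ref{ratrank}(a), this even forces the equality $S_h = W$.

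Next I would check that the compatibility of $\Sigma$ with each $H_\ell$ automatically entails compatibility with $W$: for every $\sigma \in \Sigma$, the intersection $\sigma \cap W = \bigcap_\ell (\sigma \cap H_\ell)$ is an intersection of faces of $\sigma$, hence again a face. Consequently $\Sigma \cap W$ is a regular fan in $W \cap \check\R_{\geq 0}^N$. Since $S_h = W$ meets the interior of the first quadrant (a property of the lexicographic order recorded just before Lemma \ref{ratrank}), the support of this fan is genuinely $r$-dimensional, so $\Sigma \cap W$ admits maximal cones of dimension $r$.

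To produce $\sigma_0$ I would then argue as follows. The vector ${\mathbf w}(1)$ lies in $W \cap \check\R_{\geq 0}^N$, and therefore belongs to some cone $\tau_0$ of $\Sigma \cap W$; since $\Sigma \cap W$ carries maximal cones of dimension $r$, $\tau_0$ is a face of some such $\tau$, and by the compatibility of $\Sigma$ with $W$ this $\tau$ arises as the face $\sigma_0 \cap W$ of an $N$-dimensional cone $\sigma_0 \in \Sigma$. By construction $\sigma_0$ contains ${\mathbf w}(1)$ and meets $W$ in dimension $r$, so all the hypotheses of Lemma \ref{quadrant} are satisfied; applying that lemma yields the desired cone $\sigma \in \Sigma$ of dimension $N$ which is $w$-centering (by Lemma \ref{centering}) and whose intersection with $W$ is of dimension $r$. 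The most delicate point in the plan is guaranteeing that the $r$-dimensional cone $\tau$ of $\Sigma \cap W$ containing ${\mathbf w}(1)$ really arises as $\sigma_0 \cap W$ for some $N$-dimensional $\sigma_0 \in \Sigma$; this relies on the regularity of $\Sigma$ together with its compatibility with $W$, which together ensure that no maximal cone of $\Sigma \cap W$ fails to appear as the trace of a top-dimensional cone of $\Sigma$.
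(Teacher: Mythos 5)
Your proof is correct and follows essentially the same route as the paper: both arguments reduce the proposition to Lemma \ref{quadrant} by exhibiting an $N$-dimensional cone $\sigma_0\in\Sigma$ containing ${\mathbf w}(1)$ with $\sigma_0\cap W$ of dimension $r$, the paper invoking Piltant's theorem for the equality $\dim W=r=\sum r_k$ and leaving the existence of $\sigma_0$ implicit. You merely make explicit the supporting facts the paper takes for granted — that each ${\mathbf w}(k)$ lies in $W$, hence $S_h=W$, and that compatibility with the $H_\ell$ yields compatibility with $W$ and a full-dimensional trace fan $\Sigma\cap W$ — which is a welcome amplification rather than a different method.
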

 \begin{proof} Let $\sigma_0$ be a cone of $\Sigma$ containing ${\mathbf w}(1)$ and whose intersection with $W$ is of dimension $d$. Piltant's theorem tells us that the dimension $d$ of the toric variety defined by the initial binomial ideal, which is also the dimension of the vector space $W$, is equal to the rational rank of the group $\Phi$:
 $$r_1+\cdots +r_h=\hbox{\rm rat.rk.} \Phi =r.$$
 It suffices now to apply Lemma \ref{quadrant}.
 \end{proof}
Given an overweight deformation as in $(OD)$ above, let us define for each $\ell$ indexing the binomial $u^{m^\ell}-\lambda_\ell u^{n^\ell}$ the following cones in $\R^N$:
 $$\begin{array}{lr}
E_\ell^{(1)}(j)=& \langle \{p-n^\ell/ \vert w (u^{p-n^\ell})\in \Psi_j\setminus\Psi_{j+1}\vert c_p^{(\ell)}\neq 0,\}, m^\ell-n^\ell\rangle\\ E_\ell^{(2)}(j)= &\langle \{p-m^\ell/\vert w(u^{p-m^\ell})\in \Psi_j\setminus\Psi_{j+1}\vert c_p^{(\ell)}\neq 0,\}, n^\ell-m^\ell\rangle\end{array}$$
  \begin{lemma}\label{strictcon} For all $\ell$ and $0\leq j\leq h-1$ the cone $E_\ell^{(1)}(j)$ (respectively $E_\ell^{(2)}(j)$) is contained in a strictly convex polyhedral rational cone whose elements satisfy  $\langle{\mathbf w}(k),q\rangle= 0$ for $1\leq k\leq j-1$ and $\langle{\mathbf w}(j),q\rangle\geq 0$, with $\langle{\mathbf w}(j),q\rangle=0$ if and only if $q$ is on the half-line generated by $m^\ell-n^\ell$ (respectively $n^\ell-m^\ell$).
\end{lemma}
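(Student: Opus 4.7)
I verify the asserted identities on the distinguished generating set of $E_\ell^{(1)}(j)$, propagate them by linearity, and then package the (possibly infinitely generated) cone inside a finitely generated rational one via a Noetherian argument. The case of $E_\ell^{(2)}(j)$ is identical after exchanging the roles of $m^\ell$ and $n^\ell$.

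For the generator $m^\ell - n^\ell \in \Ll = \ker b$ we have $w(u^{m^\ell - n^\ell}) = 0$, and hence $\langle {\mathbf w}(k), m^\ell - n^\ell \rangle = 0$ for every $k$. For a generator $p - n^\ell$ with $c^{(\ell)}_p \neq 0$ and $w(u^{p - n^\ell}) \in \Psi_j \setminus \Psi_{j+1}$, the description $\Psi_j \otimes_{\Z} \Q = \{0\} \times \cdots \times \{0\} \times \Xi_{j+1} \times \cdots \times \Xi_h$ combined with the positivity of $w(u^{p-n^\ell}) \in \Phi_+$ forces the first $j$ lex-projections to vanish and the $(j+1)$-th to be strictly positive, so that $\langle {\mathbf w}(k), p - n^\ell \rangle = 0$ for $k \leq j$ and $\langle {\mathbf w}(j+1), p - n^\ell \rangle > 0$. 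Linearity of the scalar product then propagates these identities to every $q = \alpha(m^\ell - n^\ell) + \sum_i \beta_i (p_i - n^\ell) \in E_\ell^{(1)}(j)$ with $\alpha, \beta_i \geq 0$: the vanishings persist, and $\langle {\mathbf w}(j+1), q \rangle$ is a sum of non-negative terms, zero if and only if every $\beta_i = 0$, equivalently if and only if $q$ lies on the half-line $\R_{\geq 0}(m^\ell - n^\ell)$.

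To confine $E_\ell^{(1)}(j)$ inside a finitely generated rational cone, invoke Noetherianity of $S = k[[u_1,\ldots,u_N]]$: the monomial ideal generated by those $u^p$ with $c^{(\ell)}_p \neq 0$ and $w(u^{p-n^\ell}) \in \Psi_j \setminus \Psi_{j+1}$ admits finitely many monomial generators $u^{p_1},\ldots,u^{p_F}$, so any other such $p$ writes $p = p_f + q$ with $q \in \N^N$, whence $p - n^\ell = (p_f - n^\ell) + q$. Adjoining to $m^\ell - n^\ell$ and the $(p_f - n^\ell)_{f}$ a finite rational generating system for the semigroup $\N^N \cap b^{-1}(\Psi_j)$, which is finite by Gordan's lemma, produces a rational polyhedral cone $E \supseteq E_\ell^{(1)}(j)$ whose generators all satisfy the sign conditions of the previous paragraph. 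Strict convexity of $E$ is then obtained by a weight argument: if $v$ and $-v$ both belong to $E$, the $\Phi$-valued pairings $\langle {\mathbf w}, v \rangle$ and $\langle {\mathbf w}, -v \rangle$ are in $\Phi_{\geq 0}$ and sum to zero, so each vanishes; this forces the coefficient of every weight-positive generator to be zero, and the residual component along $m^\ell - n^\ell$ collapses because a non-negative multiple of $m^\ell - n^\ell$ can be cancelled in $\R^N$ only by another such multiple of opposite sign.

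The delicate point is the finite-generation step: one must choose the padding generators so as not to introduce spurious zeros of $\langle {\mathbf w}(j+1), \cdot \rangle$ outside the ray $\R_{\geq 0}(m^\ell - n^\ell)$. A clean route is to work inside the rational subspace $V_j = \bigcap_{k \leq j} \ker \langle {\mathbf w}(k), \cdot \rangle$, which already contains $E_\ell^{(1)}(j)$, drawing the padding from the Hilbert basis of $V_j \cap \N^N$ and arguing that the Hilbert generators $v$ with $\langle {\mathbf w}(j+1), v \rangle = 0$ (namely those whose monomial has weight in $\Psi_{j+1}$) can be avoided, precisely because of the strict condition $w(u^{p-n^\ell}) \in \Psi_j \setminus \Psi_{j+1}$ imposed on the actual generators coming from the overweight deformation.
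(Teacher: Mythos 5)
Your construction follows the paper's own route, and for most of the statement it is sound. The pairing identities on the generators are exactly what the paper means by ``follows from the definition of the vectors ${\mathbf w}(k)$'' (your indexing, vanishing against ${\mathbf w}(k)$ for $k\leq j$ and strict positivity of $\langle{\mathbf w}(j+1),\cdot\rangle$ off the ray, is the one consistent with the printed definition of $E_\ell^{(1)}(j)$; the lemma's ``$k\leq j-1$, ${\mathbf w}(j)$'' is an internal index shift of the paper, which later numbers these cones by $k=1,\ldots,h$). The finiteness step is also the paper's: noetherianity of $k[[u_1,\ldots,u_N]]$ yields finitely many exponents $p^\ell_k$, and the paper encloses $E^{(1)}_\ell(j)$ in the cone generated by $m^\ell-n^\ell$ and the translated orthants $p^\ell_k-n^\ell+\R_{\geq 0}^N$, proving strict convexity exactly as you do, by positivity of the $\Phi$-valued weight off the ray $\R_{\geq 0}(m^\ell-n^\ell)$. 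Your finer padding by generators of $\N^N\cap b^{-1}(\Psi_j)$ is in fact closer to the literal statement than the paper's cone, since it preserves the vanishing of $\langle{\mathbf w}(k),\cdot\rangle$ for $k\leq j$, which the paper's cone (containing all coordinate directions) does not.

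The genuine gap is the one you flag in your last paragraph, and it cannot be repaired along the line you suggest: the clause ``$\langle{\mathbf w}(j+1),q\rangle=0$ only on the half-line'' can fail for \emph{every} polyhedral, hence closed, cone containing $E^{(1)}_\ell(j)$. Indeed, suppose the $\ell$-th series contains the terms $c_p u^p$ with $p=n^\ell+e_1+a e_3$ for all $a\in\N$, where $w(u_1)\in\Psi_j\setminus\Psi_{j+1}$ and $w(u_3)\in\Psi_{j+1}$ (a perfectly admissible overweight series when the rank is at least two). Then every $e_1+ae_3$ lies in $E^{(1)}_\ell(j)$, so any closed cone containing $E^{(1)}_\ell(j)$ contains the limit direction $e_3$, which pairs to zero with ${\mathbf w}(j+1)$ and is not proportional to $m^\ell-n^\ell$ because $b(e_3)\neq 0$. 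So the Hilbert-basis elements of weight in $\Psi_{j+1}$ cannot in general be avoided; the series itself may force them on you. The paper does not secure this strong property for its enclosing cone either; what its proof establishes, and what is actually used afterwards in Lemma \ref{position} and proposition \ref{approx}, is the weaker combination of (i) containment in a strictly convex rational polyhedral cone and (ii) the sign and vanishing conditions for the elements of $E^{(1)}_\ell(j)$ itself, in particular for the finitely many distinguished generators $p^\ell_k-n^\ell$ on which positivity is later checked directly. If you weaken your final claim accordingly (or replace it by the dual fact, which your cone does satisfy by your weight argument, that $\R(m^\ell-n^\ell)$ is the largest vector subspace contained in $E+\R(m^\ell-n^\ell)$), your construction is complete. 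One further small slip: $V_j=\bigcap_{k\leq j}\ker\langle{\mathbf w}(k),\cdot\rangle$ is cut out by real linear forms and is not rational in general (its dimension is $N-j$, whereas the rational span of $b^{-1}(\Psi_j)$ has dimension $N-(r_1+\cdots+r_j)$); the rational subspace to work in is $b^{-1}(\Psi_j)\otimes_\Z\R$, which is where your Gordan padding already lives.
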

\begin{proof} Since the ring $k[[u_1,\ldots , u_N]]$ is noetherian, for each $\ell$ the ideal generated by the monomials $u^p$ appearing in the $\ell$-th series is generated by finitely many of them, say $u^{p^\ell_1},\ldots u^{p^\ell_{s_\ell}}$. In view of the convexity of the subgroups $\Psi_j$ the cones $ E_\ell^{(1)}(j)$ and $E_\ell^{(2)}(j)$  are contained respectively in the convex cone generated by $m^\ell-n^\ell$ and the $p^\ell_k-n^\ell+\R_{\geq 0}^N$ and in the convex cone generated by $n^\ell-m^\ell$ and the $p^\ell_k-m^\ell+\R_{\geq 0}^N$  (for $1\leq k\leq s_\ell$). These cones are rational since the $p^\ell_k$ are finite in  number and they are strictly convex since they can be defined using strict inequalities and thus cannot contain a vector subspace. The second part of the statement follows from the definition of the vectors ${\mathbf w}(k)$.
\end{proof}
 Since what we want in the end is to find regular convex cones contained in the convex duals $ \check E_\ell^{(i)}(j)$ of the cones $ E_\ell^{(i)}(j)$, we may in view of this lemma assume that the cones $ E_\ell^{(i)}(j)$ themselves are polyhedral rational strictly convex cones, which we shall do henceforth.
\begin{lemma}\label{position}Still denoting by $H_\ell$ the hyperplane of $\check\R^N$ dual to $m^\ell-n^\ell$ and by $W$ the intersection of the $H_\ell$, for each $j$ and each $\ell$ we have :
\begin{itemize}
\item The cones $\check E_\ell^{(1)}(j)$ and $\check E_\ell^{(2)}(j)$ are $N$-dimensional, and their intersection\break $\check E_\ell^{(1)}(j)\cap\check E_\ell^{(2)}(j)$ is equal to $\check E_\ell^{(1)}(j)\cap H_\ell=\check E_\ell^{(2)}(j)\cap H_\ell$.
\item For $i=1,2$ the dimension of $\check E_\ell^{(i)}(j)\cap H_\ell$ is $N-1$.
\item The interior in $H_\ell$ of $\check E_\ell^{(1)}(j)\cap\check E_\ell^{(2)}(j)$ is contained in the interior of\break $\check E_\ell^{(1)}(j)\bigcup\check E_\ell^{(2)}(j)$.
\item For each $k$ the cone $\R {\mathbf w}(1)+\cdots +\R {\mathbf w}(k-1)+\R_{\geq 0}{\mathbf w}(k)$ is contained in $\check E_\ell^{(1)}(k)\bigcap\check E_\ell^{(2)}(k)$ and meets its relative interior in $H_\ell$.
\item The same statements are true if one replaces each $\check E_\ell^{(i)}(k)$ by $\bigcap_\ell \check E_\ell^{(i)}(k)$ and $H_\ell$ by $W$.
\end{itemize}

\end{lemma}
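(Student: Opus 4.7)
The plan is to verify the five bullets in turn by convex duality, exploiting the description of the generators of $E_\ell^{(i)}(j)$ given by Lemma~\ref{strictcon}.

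First I would dispose of bullets~1 and 2 via duality. Each $E_\ell^{(i)}(j)$ is strictly convex by Lemma~\ref{strictcon}, so its dual $\check E_\ell^{(i)}(j)$ is $N$-dimensional. The translation identity $p - m^\ell = (p-n^\ell) - (m^\ell - n^\ell)$ relates the overweight generators of the two cones, giving $E_\ell^{(1)}(j) + \R(m^\ell - n^\ell) = E_\ell^{(2)}(j) + \R(m^\ell - n^\ell)$, and dualising yields $\check E_\ell^{(1)}(j) \cap H_\ell = \check E_\ell^{(2)}(j) \cap H_\ell$. A vector in $\check E_\ell^{(1)}(j) \cap \check E_\ell^{(2)}(j)$ must pair non-negatively with both $m^\ell - n^\ell$ and $n^\ell - m^\ell$, hence lies in $H_\ell$; the same translation identity gives the converse inclusion. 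Bullet~2 then follows because the lineality space of $E_\ell^{(1)}(j) + \R(m^\ell - n^\ell)$ is exactly $\R(m^\ell - n^\ell)$ (strict convexity of $E_\ell^{(1)}(j)$), so the dual is $(N-1)$-dimensional.

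Bullets 3 and 4 are then routine. For bullet~3, take $v$ in the relative interior (in $H_\ell$) of the intersection; all defining inequalities for $\check E_\ell^{(1)}(j)$ other than $\langle v, m^\ell - n^\ell\rangle = 0$ are strict at $v$, so they remain valid under a small perturbation $v'$. If $\langle v', m^\ell - n^\ell\rangle \geq 0$, then $v' \in \check E_\ell^{(1)}(j)$; otherwise the translation identity makes $\langle v', p - m^\ell\rangle > 0$ and $v' \in \check E_\ell^{(2)}(j)$. For bullet~4, pair $v = \sum_{i<k}\alpha_i \mathbf{w}(i) + \alpha_k \mathbf{w}(k)$ (with $\alpha_k \geq 0$) against a generator $q$ of $E_\ell^{(1)}(k)$: since $m^\ell - n^\ell$ lies in the kernel of the weight map, $\langle \mathbf{w}(i), m^\ell - n^\ell\rangle = 0$ for every $i$, so $\langle v, m^\ell - n^\ell\rangle = 0$; and for the overweight generators Lemma~\ref{strictcon} gives $\langle \mathbf{w}(i), q\rangle = 0$ for $i < k$ and $\langle \mathbf{w}(k), q\rangle > 0$ whenever $q$ is not a multiple of $m^\ell - n^\ell$. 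Thus $\langle v, q\rangle = \alpha_k \langle \mathbf{w}(k), q\rangle \geq 0$, placing $v$ in $\check E_\ell^{(1)}(k) \cap H_\ell$ and hence in $\check E_\ell^{(1)}(k) \cap \check E_\ell^{(2)}(k)$ by bullet~1. Choosing $\alpha_k > 0$ makes all the non-degenerate pairings strict, so $v$ lies in the relative interior.

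For bullet~5 I would transport these arguments uniformly to $\bigcap_\ell$ and $W$. The pointwise nature of the intersection yields $\bigcap_\ell \check E_\ell^{(1)}(j) \cap \bigcap_\ell \check E_\ell^{(2)}(j) = \bigcap_\ell (\check E_\ell^{(1)}(j) \cap H_\ell) = (\bigcap_\ell \check E_\ell^{(1)}(j)) \cap W$, and similarly from the $\check E_\ell^{(2)}(j)$ side. For the dimension statement, I would use $\mathbf{w}(j) \in W$ (which lies in $W$ precisely because $\langle \mathbf{w}(i), m^\ell - n^\ell\rangle = 0$ for all $i$ and $\ell$) as a common relative-interior witness: its pairing with every non-trivial generator of every $E_\ell^{(1)}(j)$ is strictly positive by Lemma~\ref{strictcon}, showing that $\bigcap_\ell \check E_\ell^{(1)}(j) \cap W$ has the full dimension $r$ of $W$. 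The local perturbation argument of bullet~3 and the non-negativity computation of bullet~4 then carry over verbatim. The main obstacle I anticipate is precisely this bookkeeping for bullet~5: making sure that a single witness vector and the local perturbation survive the passage from the individual index $\ell$ to the simultaneous intersection over all $\ell \in L$, and in particular that no new linear dependencies among the overweight generators appear when cones are accumulated.
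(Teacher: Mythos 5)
Your proposal is correct and follows essentially the same route as the paper: convex duality applied to the generator description of Lemma \ref{strictcon}, the translation identity $p-n^\ell=(p-m^\ell)+(m^\ell-n^\ell)$ for the first bullet, the lineality-space observation for the dimension count, and a positivity/perturbation check for the last two bullets. The only difference is that you make explicit the perturbation in bullet~3 and the relative-interior witness $\mathbf{w}(j)\in W$ in bullet~5, which the paper leaves implicit under ``the same arguments apply to $W$''.
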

\begin{proof} The dimensionality statement is nothing but the fact that the $ E_\ell^{(i)}(j)$ are strictly convex. An element $a\in \check\R^N$ which is in $\check E_\ell^{(1)}(j)\cap\check E_\ell^{(2)}(j)$ has to be both $\geq 0$ and $\leq 0$ on $m^\ell-n^\ell$, so it is in $H_\ell$. Since $p-n^\ell=p-m^\ell +m^\ell-n^\ell$ an element of $H_\ell$ which is $\geq 0$ on $ E_\ell^{(1)}(j)$ is $\geq 0$ on $ E_\ell^{(2)}(j)$ and conversely.\par\noindent
The second statement follows by convex duality from Lemma \ref{strictcon} which implies that $\R \langle m^\ell-n^\ell\rangle$ is the largest vector space contained in $E_\ell^{(i)}(j)+\R \langle m^\ell-n^\ell\rangle$. \par\noindent
The third statement is true because we join two convex cones along a common face of codimension 1; the boundary of the union does not meet the interior of the face.\par\noindent
The fourth statement also follows by convex duality from Lemma \ref{strictcon} if one observes that convex duality, which reverses inclusions, transforms intersection into Minkowski sum. Furthermore, if we add to a vector $a{\mathbf w}(k)$ with $a>0$ a vector $\epsilon {\mathbf b}$ with $b\in H_\ell$, for $\vert \epsilon\vert$ small enough the vector $a{\mathbf w}(k)+\epsilon {\mathbf b}$ will still take positive values on the $p^\ell_k-n^\ell$, where the $p^\ell_k$ are defined in the proof of Lemma \ref{strictcon}, and therefore will belong to $\check E_\ell^{(1)}(k)\cap\check E_\ell^{(2)}(k)$.\par\noindent
Finally, the same arguments apply to $W$.
\end{proof}
\begin{proposition}\label{approx}
Given an overweight deformation as in $(OD)$,
there exist regular fans $\Sigma$ with support $\check\R_{\geq 0}^N$ compatible with the hyperplanes $H_\ell$, the vector spaces $S_j$ and all the cones $\check E^{(c)}_\ell(k), c=1,2,\ k=1,\ldots ,h$.\par\noindent
There exist such fans which contain  $w$-centering regular cones $\sigma =\langle a^1,\ldots,a^N\rangle$  contained for all $\ell$ and $k$ in one of the two cones $\check E_\ell^{(c)}(k)\ \ c=1,2$ and such that none of the vectors $a^i$ which are in $H_\ell$ is in the boundary. For such a cone we have $\langle a^i,p-m^\ell\rangle> 0$ if $\langle a^i, n^\ell-m^\ell\rangle\geq 0$ (resp. $\langle a^i,p-n^\ell\rangle> 0$ if $\langle a^i, m^\ell-n^\ell\rangle\geq 0$) for all monomials $p$ with $w(p)>w(m^\ell)=w(n^\ell)$ appearing in the overweight deformation.
\end{proposition}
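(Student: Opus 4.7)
The proof divides naturally into three stages, and the geometric input from Lemmas~\ref{strictcon}, \ref{position}, \ref{centering}, \ref{quadrant} and Proposition~\ref{exist} is exactly what is needed to carry each one out.

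\textit{Stage 1: a compatible regular fan.} By Lemma~\ref{strictcon} I may assume each $E_\ell^{(c)}(k)$ is a rational polyhedral strictly convex cone, and by Lemma~\ref{position} each dual $\check E_\ell^{(c)}(k)$ is then a rational polyhedral cone of dimension $N$. The hyperplanes $H_\ell$ and the subspaces $S_j$ are rational as well, so the total amount of rational data is finite. By the classical toric resolution theorem (see \cite{KKMS}, Ch.~III, or \cite{Ewald}, Ch.~VI), any finite collection of rational polyhedra in $\check\R_{\geq 0}^N$ admits a common regular refinement; this yields a regular fan $\Sigma$ supported on $\check\R_{\geq 0}^N$ simultaneously compatible with the $H_\ell$, the $S_j$, and all the $\check E_\ell^{(c)}(k)$.

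\textit{Stage 2: selecting a $w$-centering cone.} Applying Proposition~\ref{exist} to this $\Sigma$ I obtain an $N$-dimensional cone $\sigma_0\in\Sigma$ that is $w$-centering and satisfies $\dim(\sigma_0\cap W)=r$. By Lemma~\ref{position}, for every $k$ the cone $\R{\mathbf w}(1)+\cdots+\R{\mathbf w}(k-1)+\R_{\geq 0}{\mathbf w}(k)$ is contained in $\bigcap_\ell(\check E_\ell^{(1)}(k)\cap\check E_\ell^{(2)}(k))\subseteq W$ and meets its relative interior in $W$. Since $\sigma_0$ is compatible with each $H_\ell$ it must lie on one side of it, and compatibility with the $\check E_\ell^{(c)}(k)$ then forces $\sigma_0$ to be contained in $\check E_\ell^{(c)}(k)$ for exactly one $c\in\{1,2\}$ for every pair $(\ell,k)$.

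\textit{Stage 3: avoiding boundary positions.} Starting from $\sigma_0$, I refine $\Sigma$ further by stellar subdivisions along the finitely many facets of the cones $\check E_\ell^{(c)}(k)\cap H_\ell$ inside $H_\ell$ (extended regularly to the ambient $\check\R_{\geq 0}^N$) so that no generator $a^i$ lying in some $H_\ell$ sits on the relative boundary of $\check E_\ell^{(c)}(k)\cap H_\ell$ inside $H_\ell$. By Lemma~\ref{centering} and since $\sigma_0\cap W$ is of maximal dimension $r$ in $W$, the $w$-centering property survives the refinement, producing the desired $\sigma$. The stated strict inequalities then follow by convex duality on the generators of $E_\ell^{(c)}(k)$: if $a^i\in H_\ell$ lies in the relative interior of $\check E_\ell^{(c)}(k)\cap H_\ell$, then $\langle a^i,q\rangle>0$ for every extremal generator $q$ of $E_\ell^{(c)}(k)$ other than $\pm(m^\ell-n^\ell)$; if $a^i\notin H_\ell$, the sign of $\langle a^i,m^\ell-n^\ell\rangle$ is fixed by the choice of $c$, and strict positivity on the remaining pairings comes from $a^i$ being interior to $\check E_\ell^{(c)}(k)$, which can be arranged by a final generic refinement.

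The principal obstacle lies in Stage~3: the successive refinements must simultaneously preserve regularity of $\Sigma$, the compatibilities set up in Stage~1, and the $w$-centering property in the precise nested sense of Lemma~\ref{centering}. The rigidity comes from the fact that $w$-centering is a sequence of lexicographic positivity conditions on the generators of $\sigma$ read through the decomposition $\Phi\otimes_\Z\Q=\Xi_1\times\cdots\times\Xi_h$, and any refinement made inside the face $\sigma\cap W$ must respect this nesting, so I expect to use Lemma~\ref{quadrant} repeatedly to confirm that at each step an admissible $N$-dimensional regular cone satisfying all constraints continues to exist.
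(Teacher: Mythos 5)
Your proposal follows the paper's own proof essentially step for step: a common regular refinement via the toric resolution theorem, Proposition \ref{exist} to extract a $w$-centering cone meeting $W$ in dimension $r$, Lemma \ref{position} to place $\sigma$ inside one of $\check E^{(1)}_\ell(k)$ or $\check E^{(2)}_\ell(k)$ via the ray cone $\R{\mathbf w}(1)+\cdots+\R_{\geq 0}{\mathbf w}(k)$, a further refinement to move the generators in $H_\ell$ off the boundary, and convex duality for the strict inequalities. The argument is correct and matches the paper's route; your closing worry about preserving $w$-centering through the final refinement is legitimate but is resolved exactly as you suggest, by re-applying Lemma \ref{quadrant} to the refined fan.
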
 \noindent\textit{proof}:
By the resolution theorem for normal toric varieties (see \cite{KKMS}, Chap. III or \cite{Ewald}, Chap. VI) we know that there exist regular fans $\Sigma$ with support $\check\R_{\geq 0}^N$ compatible with the $H_\ell$, the $S_j$ and the $\check E^{(c)}_\ell(k)$, all of which determine rational cones in $\check\R_{\geq 0}^N$. According to Lemma  \ref{exist} such fans contain $w$-centering cones. Let us show that such a cone is contained in every $\check E^{(1)}_\ell(k)\bigcup \check E^{(2)}_\ell(k)$. In view of the compatibility, it suffices to show that $\sigma$ meets the interior of $\check E^{(1)}_\ell(k)\bigcup \check E^{(2)}_\ell(k)$. To prove that, in view of Lemma \ref{position} it is enough to check that $\sigma$ meets the interior of $\check E^{(1)}_\ell(k)\cap H_\ell$ in $H_\ell$.\par\noindent
We see that by construction the cone $\sigma$ contains points of the cone $\R {\mathbf w}(1)+\cdots +\R {\mathbf w}(k-1)+\R_{\geq 0}{\mathbf w}(k)$ which are in the interior of $\check E^{(1)}_\ell (k)\cap W$ in $W$. Thus the cone $\sigma$ meets the interior of each $\check E^{(1)}_\ell (k)\cap \check E^{(2)}_\ell (k)$ and by compatibility of $T$ with the $\check E^{(c)}_\ell (k)$ it is contained in the union $\check E^{(1)}_\ell (k)\bigcup \check E^{(2)}_\ell (k)$. By compatibility with the $H_\ell$ the cone $\sigma$ has to be entirely on one side of $H_\ell$, which means that it must be in
$\check E^{(1)}_\ell (k)$ or $\check E^{(2)}_\ell (k)$. But this is decided for each $\ell$ by the fact that one generating vector of $\sigma$ is on one side of $H_\ell$. Since $\sigma$ meets the interior of each $\check E^{(1)}_\ell (k)\cap \check E^{(2)}_\ell (k)$, after perhaps refining the fan into another regular fan we may assume that none of the vectors $a^i$ which are in $H_\ell$ is in the boundary of $\check E^{(1)}_\ell (k)\bigcup \check E^{(2)}_\ell (k)$.\par
If we write $\sigma=\langle a^1,\ldots ,a^N\rangle$ we see that it has the property that whenever for a given $\ell$ we have that $\langle a^i,m^\ell -n^\ell\rangle$ is $> 0$ for some $i$ then for all $j,\ 1\leq j\leq N$ we have $\langle a^j,m^\ell -n^\ell\rangle\geq 0$, and if $a^j\in W$ we have $\langle a^j,p-n^\ell\rangle > 0$ for all $p$ appearing in the $\ell$-th equation, and if $\langle a^i,n^\ell -m^\ell\rangle$ is $> 0$ for some $i$, then for all $j$ we have $\langle a^j,n^\ell -m^\ell\rangle\geq 0$ and  if $a^j\in W$ we have $\langle a^j,p-m^\ell\rangle> 0$ for those $p$. For each given $\ell$ there has to be an index $i$ for which $\langle a^i,m^\ell -n^\ell\rangle\neq 0$.\qed   \par\noindent
\emph{Let us now finish the proof of proposition \ref{resoverwght}}. Take a regular fan $T$ with support $\check\R^N_{\geq 0}$ and compatible with the $H_\ell$, the $S_j$ and the $\check E_\ell^{(i)}\ \ i=1,2$ (and so depending on the deformation), and a $w$-centering cone $\sigma=\langle a^1,\ldots, a^N\rangle$ of that fan as above. Let us write the transforms of the equations $F_1,\ldots, F_s$, with the convention that $y^{\langle a, m\rangle}=y_1^{\langle a^1, m\rangle}\ldots y_N^{\langle a^N, m\rangle}$.
 $$\begin{array}{lr}
\tilde F_1=y^{\langle a, m^1\rangle}-\lambda_1 y^{\langle a, n^1\rangle}+\Sigma_{w(p)>w(m^1)} c^{(1)}_py^{\langle a, p\rangle}\\
\tilde F_2=y^{\langle a, m^2\rangle}-\lambda_2 y^{\langle a, n^2\rangle}+\Sigma_{w(p)>w(m^2)} c^{(2)}_py^{\langle a, p\rangle}\\
.....\\
\tilde F_\ell=y^{\langle a, m^\ell\rangle}-\lambda_\ell y^{\langle a, n^\ell\rangle}+\Sigma_{w(p)>w(m^\ell)} c^{(\ell)}_py^{\langle a, p\rangle}\\
.....\\
\tilde  F_s=y^{\langle a, m^s\rangle}-\lambda_s y^{\langle a, n^s\rangle}+\Sigma_{w(p)>w(m^s)} c^{(s)}_py^{\langle a, p\rangle}\\
\end{array}
$$
Thanks to the properties of our cone $\sigma$ we may factor out of each $\tilde F_\ell$ either $y^{\langle a, m^\ell\rangle}$ or $y^{\langle a, n^\ell\rangle}$. This leaves us with a deformation of the strict transform of the toric variety, which is regular in the chart corresponding to $\sigma$. More precisely, if for convenience of notation we rearrange the binomials in such a way that all $\langle a^i,m^\ell-n^\ell\rangle$ are $\geq 0$, by writing $n^\ell -\lambda_\ell ^{-1}m^\ell$ if $ \langle a^k, m^\ell -n^\ell\rangle<0$ we can write after the corresponding change of notations
$$
\begin{array}{lr}
\tilde F_1=y^{\langle a,n^1\rangle} \bigl(y^{\langle a, m^1-n^1\rangle}-\lambda_1 +\Sigma_{w(p-n^1)>0} c^{(1)}_py^{\langle a, p-n^1\rangle}\bigr)\\
\tilde F_2=y^{\langle a,n^2\rangle} \bigl(y^{\langle a, m^2-n^2\rangle}-\lambda_2 +\Sigma_{w(p-n^2)>0} c^{(2)}_py^{\langle a, p-n^2\rangle}\bigr)\\
.....\\
\tilde F_\ell=y^{\langle a,n^\ell\rangle} \bigl(y^{\langle a, m^\ell-n^\ell\rangle}-\lambda_\ell +\Sigma_{w(p-n^\ell)>0} c^{(\ell)}_py^{\langle a, p-n^\ell\rangle}\bigr)\\
.....\\
\tilde  F_s=y^{\langle a,n^s\rangle} \bigl(y^{\langle a, m^s-n^s\rangle}-\lambda_s +\Sigma_{w(p-n^s)>0} c^{(s)}_py^{\langle a, p-n^s\rangle}\bigr).\\
\end{array}
$$

Since our binomial ideal is prime and $k$ is algebraically closed, by \cite{Ei-S}, Theorem 2.1, c), we know that the lattice $\Ll$ which is the kernel of the map $b\colon \Z^N\to \Z^r$ associated to our affine toric variety is saturated. The vectors $m^\ell-n^\ell$ are primitive as generators of a saturated lattice. By the argument we saw above in the case of a single equation, each of the "strict transform" series $$F'_\ell=y^{-\langle a,n^\ell\rangle}\tilde F_\ell=y^{\langle a, m^\ell-n^\ell\rangle}-\lambda_\ell +\Sigma_{w(p-n^\ell)>0} c^{(\ell)}_py^{\langle a, p-n^\ell\rangle},\ 1\leq \ell\leq s $$ defines a non singular hypersurface transversal to the toric boundary at the point picked by the valuation. Taken together, they define an overweight deformation of an irreducible binomial variety $X'_0$ corresponding to a prime binomial ideal generated by binomials of weight zero. In fact, by theorem 2.1 of \cite{Ei-S}, the strict transform $X'_0$ of $X_0$ is a reduced complete intersection, and its equations are the strict transforms of binomials corresponding to $N-r$ generators of the lattice $\Ll$.  Let us recall the proof of its non singularity at the point picked by the valuation, following proposition 6.2 of \cite{Te1}:\par\noindent
According to \textit{loc.cit.}, Lemma 6.3 (see also the proof of proposition \ref{sepex} below), since the lattice $\Ll$ is saturated the $(N-r)\times (N-r)$ minors of the matrix $(\langle a^j, m^\ell-n^\ell\rangle);\ 1\leq j\leq N,\ 1\leq \ell\leq s$ are not all zero and have no common factor. Up to renumbering the equations corresponding to the $N-r$ generators of $\Ll$ are are $F_{s-N+r+1},\ldots ,F_s$. All the $(N-r)\times (N-r)$ minors mentioned above are linear combinations of those corresponding to the last $N-r$ equations. Therefore, some of the the $(N-r)\times (N-r)$ minors of the matrix $A=(\langle a^j, m^\ell-n^\ell\rangle;\ 1\leq j\leq N,\ s-N+r+1\leq \ell\leq s$) are not divisible by the characteristic of the base field. Moreover, the $r$ variables $y_i$ corresponding to indices $i$ such that $a^i\in\bigcap_\ell H_\ell$ are the only ones with positive weight. We can number them $y_1,\ldots ,y_r$.\Par
For the same reason as in the case of one equation, the point picked by the valuation is the unique solution of the system of equations
$$
\begin{array}{lr}
y^{\langle a, m^{s-N+r+1}-n^{s-N+r+1}\rangle}-\lambda_{s-N+r+1} =\ldots =y^{\langle a, m^s-n^s\rangle}-\lambda_s =0\\
y_1=\cdots=y_r=0$$
\end{array}
$$ The first set of equations does not involve the variables $y_1,\ldots ,y_r$. The intersction point is unique because it is the transversal intersection in the chart $Z(\sigma)$ of a stratum of codimension $r$ of the toric boundary with an $r$-dimensional orbit of the torus action corresponding to the saturated lattice $\Ll$, a torus action which is trivial on the affine space with coordinates $y_{N-r+1},\ldots ,y_N$. \Par The non vanishing of the image in the base field of one of the $(N-r)\times (N-r)$ minors of $A$ implies the non vanishing at the point picked by the valuation of the corresponding jacobian minor of the equations defining $X'_0$ (see \cite{Te1}, 6.2). \Par This in turn shows that $X'_0$ is non singular at the point picked by the valuation and is transversal to the toric boundary at this point. Since the strict transform $X'$ of $X$ is an overweight deformation of $X'_0$ this non singularity extends to it as the consideration of the same jacobian minor for the deformed equations shows. Indeed, the jacobian minor of the deformed equations has the nonzero jacobian minor of binomials as its initial form, and therefore cannot vanish when the coordinates $y_j$ with $w(y_j)>0$ vanish. The reader may also look at a closely related argument in the proof of proposition \ref{sepex} below, and the paragraphs following it, especially the remark \ref{tame} and equation $(Jac)$, using the fact that the variables $U_k$ in equation $(Jac)$ correspond in our situation to the $y_i$ such that $a^i\notin\bigcap_\ell H_\ell$, so that $w(y_i)=0$.  \par Consider the dual $\check b\colon\check\Z^r\to \check\Z^N$ of the surjective map $b\colon\Z^N\to\Z^r$ obtained from the map $\N^N\to\Z^r$ describing the generators of the semigroup $\Gamma$. The image of the transform of the injection $\check b$ by the map $\check\Z^N\to \check\Z^N$ corresponding to the transpose of the unimodular matrix of the vectors $a^1,\ldots ,a^N$ is generated by the $r$ vectors $v_i=(\nu(y_1)_i,\ldots ,\nu(y_N)_i)$. Therefore the valuations of the $y_i$ such that $\nu(y_i)\neq 0$ must be rationally independent.
This end the proof of proposition \ref{approx} and therefore of proposition \ref{resoverwght}.\qed

And with the last paragraph we have also proved:
\begin{proposition} \label{quasim} At the point of the strict transform $X'$ of $X$ picked by the valuation $\nu$, the $r$ variables $y_i$ such that $\langle a^i, m^\ell-n^\ell\rangle =0$ for $1\leq \ell\leq L$ form a system of local coordinates and their valuations are rationally independent.\hfill\qedsymbol
\end{proposition}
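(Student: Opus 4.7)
The statement is essentially a packaging of what has already been proved in Propositions \ref{approx} and \ref{resoverwght}; my plan is to extract it in three steps.

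Step 1 (identifying the $r$ privileged coordinates). From Proposition \ref{approx} the $w$-centering cone $\sigma=\langle a^1,\ldots ,a^N\rangle$ is chosen so that its intersection with $W=\bigcap_\ell H_\ell$ has dimension $r$. Exactly $r$ of the generating vectors $a^i$ therefore lie in $W$, and by definition of $H_\ell$ these are precisely the indices for which $\langle a^i, m^\ell-n^\ell\rangle=0$ for every $\ell$. Using the relation $\mathbf{w}=\sum_i w(y_i)a^i$ recorded earlier, I would check that these are exactly the variables $y_i$ with $w(y_i)>0$, and relabel them $y_1,\ldots ,y_r$.

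Step 2 (local coordinates). In the chart $Z(\sigma)$ the strict transform $X'$ is cut out by the overweight-deformed series $F'_\ell=y^{\langle a, m^\ell-n^\ell\rangle}-\lambda_\ell+\sum_{w(p-n^\ell)>0} c^{(\ell)}_p y^{\langle a, p-n^\ell\rangle}$. Because $y_{r+1},\ldots ,y_N$ have weight zero while every perturbation term acquires a strictly positive power of some $y_j$ with $j\leq r$, setting $y_1=\cdots=y_r=0$ reduces the system to the toric equations $y^{\langle a, m^\ell-n^\ell\rangle}=\lambda_\ell$. The Jacobian analysis already carried out (using the saturatedness of $\Ll$ via \cite{Ei-S}, Theorem 2.1, and the non-vanishing in $k$ of a $(N-r)\times(N-r)$ minor of $(\langle a^j,m^\ell-n^\ell\rangle)$) produces a non-vanishing Jacobian minor of $(F'_\ell)$ with respect to $(y_{r+1},\ldots ,y_N)$ at the point picked by $\nu$. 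The formal implicit function theorem then lets me solve for $y_{r+1},\ldots ,y_N$ in terms of $y_1,\ldots ,y_r$, so the latter is a system of local coordinates for $X'$ at that point.

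Step 3 (rational independence). I would reuse the dual-map argument concluding the proof of Proposition \ref{resoverwght}. Dualising the surjection $b\colon\Z^N\twoheadrightarrow\Z^r$ yields an injection $\check b\colon\check\Z^r\hookrightarrow\check\Z^N$; composing with the transpose of the unimodular matrix $(a^j_i)$ transports the image to the rank-$r$ sublattice of $\check\Z^N$ spanned by the vectors $v_i=(\nu(y_1)_i,\ldots ,\nu(y_N)_i)$. Injectivity forces these vectors to be $\Z$-independent, and since the only non-zero $\nu(y_j)$ are $\nu(y_1),\ldots ,\nu(y_r)$, these must be rationally independent in $\Z^r=\Phi$. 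The main obstacle, namely the existence of a regular cone simultaneously $w$-centering and meeting $W$ in maximal dimension, was already dealt with upstream in Proposition \ref{approx}; given that input, the present proposition is immediate.
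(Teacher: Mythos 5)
Your proposal is correct and follows essentially the same route as the paper, which deduces this proposition directly from the final paragraphs of the proof of Proposition \ref{resoverwght}, b): the identification of the $r$ positive-weight variables with the $a^i\in W=\bigcap_\ell H_\ell$, the non-vanishing Jacobian minor of the $N-r$ deformed equations with respect to $y_{r+1},\ldots ,y_N$ inherited from the binomial case via the overweight condition, and the dual-map/unimodularity argument for rational independence. Nothing is missing.
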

\begin{remark} \small{\emph{We know from \cite{Ei-S} that the intersection of an affine toric subvariety of $\A^N(k)$ with the ambient torus is a complete intersection and from \cite{CCD} that its ideal in $k[U_1,\ldots ,U_N]$ in general does not even contain a regular sequence of binomials of length equal to its codimension. In the proof we have just given, the strict transform of our toric variety, in the chart containing the center of the valuation, is the product with an $r$-dimensional affine space of its intersection with the torus of an invariant $N-r$- dimensional affine space, and this intersection is the center of the valuation, which can be defined by the strict transforms of $N-r$ binomials as we saw above.}}
\end{remark}
\section{The valuative Cohen Theorem}\label{cohen}
If one gives it its full strength, in our equicharacteristic framework the classical Cohen structure theorem (see \cite{B1}) can be deemed to present any complete noetherian local ring $R$ as the ring of a formal subspace of an affine space over its residue field, and also to allow the encoding by equations of the specialization of this formal subspace to its tangent cone, which lives in an affine space of the same dimension.\par The purpose of the valuative Cohen theorem is to provide an analogous affine embedding (possibly in an infinite-dimensional space) of the formal space $(X,0)$ corresponding to $R$ such that $(X,0)$ can be specialized, within that affine ambient space, to its generalized tangent cone corresponding to ${\rm gr}_\nu R$, even though this algebra may not be finitely generated. The specialization can then be encoded by a (possibly infinite) system of equations corresponding to an overweight deformation of a prime binomial ideal. This is a necessary step if one wants to use (partial) toric embedded pseudo-resolutions of the toric variety ${\rm Spec}{\rm gr}_\nu R$ to obtain local uniformizations of $\nu$ on $R$.\Par For the convenience of the reader, we revisit the statement, referring to \cite{Te1}, \S 5, especially 5.3, for other developments, in particular the equational description of the faithfully flat specialization of $R$ to (the completion of) ${\rm gr}_\nu R$. \par\medskip
Given a field $k$ and a totally ordered abelian group $\Phi$, the Hahn ring of $\Phi_{\geq 0}$ with coefficients in $k$ is the $k$-vector space of all formal power series $ \sum_{\phi\in\Phi_{\geq 0}} c_\phi t^\phi$ with $c_\phi\in k$ and exponents in $\Phi_{\geq 0}$ which satisfy the condition that the set $\{\phi\in\Phi_{\geq 0}\vert c_\phi\neq 0\}$ is well ordered. This condition implies that we can multiply two such series, and this multiplication gives our $k$-vector space the structure of a $k$-algebra, which we denote by $k[[t^{\Phi_{\geq 0}}]]$. It is an interesting completion of the semigroup algebra $k[t^{\Phi_{\geq 0}}]$ with coefficients in $k$, which plays a significant role in valuation theory; it is the valuation ring of a maximal valued field $k[[t^\Phi]]$ \textit{\`a la} Krull (\cite{Kr}).\par
Consider the semigroup $\Gamma$ of values taken on a noetherian local domain $R$ by a valuation with value group $\Phi$ whose ring dominates $R$. Since the semigroup $\Gamma$ is well ordered, the ring of series with coefficients in $k$ and exponents in $\Gamma$ is a subalgebra $k[[t^\Gamma]]\subset k[[t^{\Phi_{\geq 0}}]]$.\par\medskip
Let $(u_i)_{i\in I}$ be variables indexed by the elements of the minimal system of generators $(\gamma_i)_{i\in I}$ of the semigroup $\Gamma$. Give each $u_i$ the weight $w(u_i)=\gamma_i$ and let us consider the set of power series $\sum_{e\in E} d_e u^e$ where $(u^e)_{e\in E}$ is any set of monomials in the variables $u_i$ and $d_e\in k$. By a theorem of Campillo-Galindo (see \cite{C-G}, \S 2), the semigroup $\Gamma$ being well ordered is combinatorially finite, which means that for any $\phi\in\Gamma$ the number of different ways of writing $\phi$ as a sum of elements of $\Gamma$ is finite. This is equivalent to the fact that the set of exponents $e$ such that $w(u^e)=\phi$ is finite: for any given series the map $w\colon E\to \Gamma,\ e\mapsto w(u^e)$ has finite fibers. Each of these fibers is a finite set of monomials in variables indexed by a totally ordered set, and so can be given the lexicographical order and order-embedded into an interval $1\leq i\leq n$ of $\N$. This defines an injection of the set $E$ into $\Gamma\times\N$ equipped with the lexicographical order and thus induces a total order on $E$, for which it is well ordered. When $E$ is the set of all monomials, this gives a total monomial order. \par  Just as for $k[[t^\Gamma]]$, the combinatorial finiteness implies that this set of series is a $k$-algebra, which we denote  by $\widehat{k[(u_i)_{i\in I}]}$. Since the weights of the elements of a series form a well ordered set and only a finite number of terms of the series have minimum weight, the associated graded ring of $\widehat{k[(u_i)_{i\in I}]}$ with respect to the filtration by weights is the polynomial ring $k[(U_i)_{i\in I}]$. The $k$-algebra $\widehat{k[(u_i)_{i\in I}]}$ is endowed with a topology coming from the same filtration, for which it is complete in a sense we shall see below. The combinatorial finiteness also implies that we can rearrange the terms of a series, for example\footnote{This makes the link with the construction of \cite{Te1}.} (using the notations introduced in the proof of b) of proposition \ref{resoverwght}) in order to write it $\sum_AD_Au^A$ where each monomial $u^A$ involves only variables of weight belonging to $\Phi\setminus\Psi_1$ and the $D_A$ belong to $\widehat{ k[(u_i)_{\{i\vert\gamma_i\in\Psi_1\}}]}$. The subalgebra $k[(u_i)_{i\in I}]\subset \widehat{k[(u_i)_{i\in I}]}$ corresponds to series indexed by a finite set, and it is dense for this topology in the scalewise sense of \cite{Te1}.\par
The application $u_i\mapsto t^{\gamma_i}$ defines a continuous and surjective map of topological $k$-algebras
$$\widehat{k[(u_i)_{i\in I}]}\longrightarrow k[[t^\Gamma]],\eqno{(N)}$$
which we can think of as corresponding to the natural affine embedding of the "formal toric variety" (possibly of infinite embedding dimension) associated to the semigroup $\Gamma$ and its ordering (which determines the minimal system of generators $\gamma_i$). Its kernel is the closure of the prime binomial ideal $(u^{m^\ell}-u^{n^\ell})_{\ell\in L}$ encoding the relations between the generators of $\Gamma$. The associated graded map is the usual presentation $k[(U_i)_{i\in I}]\longrightarrow k[t^\Gamma]$, $U_i\mapsto t^{\gamma_i}$, of the semigroup algebra. We can call the surjection $(N)$ the canonical presentation of the Hahn ring of $\Gamma$ over $k$ and call the ring $\widehat{k[(u_i)_{i\in I}]}$ the \emph{scalewise $w$-completion} (or just scalewise completion when there is no ambiguity) of the polynomial ring. Here the term scalewise refers to the transfinite character of the series rather that to their mode of construction.\Par For a sum of elements of $\widehat{k[(u_i)_{i\in I}]}$ to belong to that ring, it is necessary and sufficient that each monomial $u^a$ appears at most finitely many times in the sum.
\begin{proposition} Let $\Gamma =\langle(\gamma_i)_{i\in I}\rangle$ be the semigroup of values taken on a noetherian local domain $R$ by a valuation. The scalewise completion $S=\widehat{k[(u_i)_{i\in I}]}$ of the corresponding polynomial ring with respect to the weight $w(u_i)=\gamma_i$ is a formally smooth $k$-algebra.
\end{proposition}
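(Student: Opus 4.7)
The plan is to verify the topological formal smoothness of $S$ over $k$ directly, by constructing continuous lifts of continuous homomorphisms. Recall that $S$ carries the linear topology defined by the filtration $(\Qq_\phi)_{\phi\in\Phi_{\geq 0}}$, and a topological $k$-algebra $B$ is formally smooth over $k$ if, for every complete linearly topologized $k$-algebra $A$ and every closed ideal $J\subset A$ with $J^2=0$, every continuous $k$-algebra homomorphism $\psi\colon B\to A/J$ admits a continuous lift $\tilde\psi\colon B\to A$. The polynomial ring $k[(u_i)_{i\in I}]$ is formally smooth over $k$ because it is a free commutative $k$-algebra: one lifts simply by choosing arbitrary lifts of the images of the generators. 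The task is to promote this to the scalewise completion.

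Given $\psi\colon S\to A/J$ and the projection $\pi\colon A\to A/J$, I would first pick, for each $i\in I$, an element $a_i\in A$ with $\pi(a_i)=\psi(u_i)$. Continuity of $\psi$ for the weight topology on $S$ means that for every open neighborhood $V$ of $0$ in $A/J$ there is $\phi_V\in\Phi_{\geq 0}$ with $\psi(\Qq_{\phi_V})\subset V$; in particular, for the indices $i$ with $\g_i=w(u_i)\geq \phi_V$, the element $\psi(u_i)$ lies in $V$. Since $J^2=0$, the lifts $a_i$ may be adjusted by any element of $J$ without consequence for the construction, and one may arrange that the $a_i$ themselves lie in the corresponding neighborhood of $0$ in $A$. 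I would then define $\tilde\psi$ on a monomial by $\tilde\psi(u^e)=a^e:=\prod_i a_i^{e_i}$ and attempt to extend to any series $f=\sum_e d_e u^e\in S$ by $\tilde\psi(f)=\sum_e d_e a^e$.

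The heart of the proof is then the convergence of $\sum_e d_e a^e$ in $A$. The combinatorial finiteness of $\G$ recalled just before the statement guarantees that for each $\phi\in\G$ only finitely many exponents $e$ satisfy $w(u^e)=\phi$, so the series may be regrouped as a well-ordered transfinite sum $\sum_{\phi\in\G}\bigl(\sum_{w(e)=\phi}d_e a^e\bigr)$ whose partial sums along initial segments of $\G$ form a net in $A$. By the weight control on the $a_i$, the inner finite sums lie in arbitrarily small neighborhoods of $0$ in $A$ as $\phi$ grows, so the net is Cauchy and completeness of $A$ yields convergence. The resulting $\tilde\psi\colon S\to A$ coincides on the dense subring $k[(u_i)_{i\in I}]\subset S$ with the ring-theoretic lift provided by formal smoothness of the polynomial ring, so multiplicativity there, combined with continuity, yields a ring homomorphism on all of $S$; and by construction $\pi\circ\tilde\psi=\psi$ on every $u_i$, hence everywhere.

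The main obstacle I foresee is this convergence argument: one must carefully match the linear topology of the target $A$ with the scalewise topology on $S$ indexed by a possibly high-rank totally ordered group $\Phi$, and this requires simultaneously using the combinatorial finiteness of $\G$ (so that each weight stratum contributes only a finite sum) and the continuity-induced weight control on the $a_i$ modulo $J$ (so that successive weight strata contribute in ever smaller neighborhoods of $0$). A subsidiary technicality is ensuring that the freedom to modify $a_i$ by an element of $J$ suffices for the required smallness; here the hypothesis $J^2=0$ is essential, since products of two elements of $J$ would otherwise interfere with the multiplicative definition of $\tilde\psi$ on monomials.
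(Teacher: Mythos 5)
Your proof takes a genuinely different route from the paper's, which does not construct lifts at all: it invokes a graded criterion for formal smoothness (\cite{B1}, \S 7, No. 7, Prop. 7), observing that since the $u_i$ satisfy no relations the graded ring ${\rm gr}_m S$ is the symmetric algebra $k[({\rm in}_m u_i)_{i\in I}]$ of $m/m^2$, the remaining hypothesis being the vacuous $\Omega_{k/k}=(0)$. A direct verification of the infinitesimal lifting property would be a legitimate alternative, and your argument essentially works when $\Phi$ has rank one: there $S$ is the ordinary completion, only finitely many monomials have weight below any fixed $\phi$, and after replacing $a_i$ by $0$ for $\gamma_i\geq\phi$ one checks that $a^e=0$ for $w(e)$ large by splitting $u^e$ into two factors of weight $\geq\phi$ and using $J^2=0$. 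But it breaks down in rank $>1$, which is precisely the case the scalewise completion is introduced to handle.

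The gap is the convergence of $\sum_e d_e a^e$, together with the implicit assumption that $\psi$ is "substitution of the $\psi(u_i)$ into the series". When the rank of $\Phi$ is $>1$ there are infinitely many monomials of weight below a fixed $\phi$, the subring $k[(u_i)_{i\in I}]$ is only \emph{scalewise} dense in $S$ --- not topologically dense --- and a continuous homomorphism out of $S$ is not determined by the images of the $u_i$. Concretely, take two variables with $w(u_1)=(0,1)$ and $w(u_2)=(1,0)$ in $\Z^2$ ordered lexicographically, so that $S=k[[u_1,u_2]]$ and $f=\sum_{n\geq 0}u_1^n\in S$; the projection $S\to S/\Qq_{(1,0)}=k[[u_1]]$ is continuous and sends $f$ to $(1-u_1)^{-1}$, whereas your prescription asks for $\sum_n a_1^n$ with $\pi(a_1)=\psi(u_1)=u_1$, a series whose terms do not tend to $0$ (in a discrete test ring --- which is what the standard definition of formal smoothness uses, and which is a special case of your complete linearly topologized $A$ --- "tending to $0$" means "eventually $0$"). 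Continuity of $\psi$ only makes $\psi(\Qq_\phi)$ small for \emph{large} $\phi$; it gives no smallness for the infinitely many terms of bounded weight, so neither the unordered sum nor the transfinite sum over initial segments of $\Gamma$ exists, and grouping by weight strata does not help since the strata of weight $<(1,0)$ already contribute an infinite family of nonzero terms. For the same reason the concluding step "$\pi\circ\tilde\psi=\psi$ on every $u_i$, hence everywhere" is unjustified. A repair would have to lift $\psi$ as a whole through its factorization by the quotients $S/\Qq_\phi$, which in higher rank contain entire power series subrings; this is what the graded criterion used in the paper packages for you.
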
\begin{proof} This follows from (\cite{B1}, \S 7, No. 7, Prop. 7), applied to $k_0=k$ and $A=S$. Since there is no relation between the variables $u_i$, the graded ring ${\rm gr}_mS$ is equal to $k[({\rm in}_m u_i)_{i\in I}]$, which is the symmetric algebra over $k$ of $m/m^2$, and the second condition of the proposition is satisfied because $\Omega_{k_0/k_0}=(0)$.
\end{proof}
A (closed) \emph{ball} in $\widehat{k[(u_i)_{i\in I}]}$ equipped with the weight $w$ is a subset of the form $B(x,\gamma)=\{y\vert w(y-x)\geq \gamma\}$. Following F-V. Kuhlmann in (\cite{Ku2}, \S 1), with the ultrametric\footnote{This is an abuse of language, adapted to general valuation theory. Assuming that the valuation is of rank one, the ultrametric in the usual sense is of course $e^{-u(x,y)}$.}  $u(x,y)=w(y-x)$, we say that $\widehat{k[(u_i)_{i\in I}]}$ is \emph{spherically complete} with respect to the weight $w$ if every nested sequence of non empty balls has a non-empty intersection. As in \emph{loc.cit.}, it is convenient to denote by $B(x,y)$ the smallest closed ball $B(x,w(y-x))$ with center $x$ containing $y$, which is also the smallest closed ball containing $x$ and $y$.\par\medskip\noindent
The following result will enable us, in sequels to this paper, to use various forms of the implicit function theorem, and in particular those due to F.-V. Kuhlmann in \cite{Ku2}.
\begin{theorem}\label{sph} The ring $\widehat{k[(u_i)_{i\in I}]}$ is spherically complete with respect to the ultrametric $u(x,y)=w(y-x)$.
\end{theorem}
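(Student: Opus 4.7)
\textit{Plan.} The argument is a pointwise (coefficient-by-coefficient) construction, modeled on the classical proof that Hahn rings over a field are spherically complete. The decisive technical lever is that \emph{any} formal sum $\sum_{e\in\N^{(I)}} d_e u^e$, with arbitrary coefficients $d_e\in k$, automatically belongs to $\widehat{k[(u_i)_{i\in I}]}$: by the Campillo--Galindo combinatorial finiteness of $\Gamma$ already invoked above, for each $\phi\in\Gamma$ only finitely many exponents $e$ satisfy $w(u^e)=\phi$, so the weight-graded pieces of the formal sum are all finite and there is no convergence issue to overcome. Once this is noted, constructing an element in the intersection of the balls becomes a matter of assembling the right coefficients.

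Concretely, given a nested family of balls $B_\alpha=B(x_\alpha,\gamma_\alpha)$, I would reindex it by a well-ordered set, which makes the radii $\gamma_\alpha\in\Phi$ non-decreasing, and assume the chain is non-stabilizing (otherwise any point of the minimal ball works). Writing each $x_\alpha=\sum_e c_\alpha(e)u^e$, the nesting $w(x_\beta-x_\alpha)\geq\gamma_\alpha$ for $\beta\geq\alpha$ forces $c_\beta(e)=c_\alpha(e)$ whenever $\gamma_\alpha>w(u^e)$; hence for each fixed $e$ the value $c_\alpha(e)$ stabilizes as soon as $\gamma_\alpha$ exceeds $w(u^e)$. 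I would define $d_e$ to be this stable value when it exists and $d_e=0$ otherwise, set $x:=\sum_e d_e u^e$, and verify $x\in B_\alpha$ for every $\alpha$: for any monomial $u^e$ of weight strictly less than $\gamma_\alpha$ one has by construction $d_e=c_\alpha(e)$, so the coefficient of $u^e$ in $x-x_\alpha$ vanishes, and since $\Gamma$ is well ordered this forces $w(x-x_\alpha)\geq\gamma_\alpha$.

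The only real obstacle to a naive ``take-the-limit'' argument is the unrestricted nature of the support of the series one needs to manipulate: one cannot in general assemble transfinitely indexed coefficients into a single element of an ordinary power series ring. In this setting that obstacle is neutralized at the outset by combinatorial finiteness, after which the proof reduces entirely to the stabilization of $c_\alpha(e)$ at each fixed exponent $e$, which is local at $e$ and immediate from the nestedness hypothesis. No hypothesis on the cofinality of the indexing chain, nor on the rank of $\Phi$, is needed.
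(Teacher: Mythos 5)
Your proof is correct, but it follows a genuinely different route from the paper's. The paper builds the common point of the balls as a transfinite telescoping sum $y=\tilde y_1+\sum_{\iota}(\tilde y_{\iota+1}-\tilde y_\iota)$: in rank one this converges because the ordinal of $\Gamma$ is $\omega$ and the weights of the increments strictly increase, but in higher rank infinitely many increments could share monomials, so the paper must first replace each $y_\iota$ by a truncation $\tilde y_\iota$ (keeping only the terms of weight $\leq w(y_{\iota+1}-y_\iota)$) to guarantee that every monomial occurs in only finitely many summands. You avoid this entirely by never summing infinitely many ring elements: you define the candidate limit coefficient by coefficient, via the stabilization of $c_\alpha(e)$ once some $\gamma_\alpha$ exceeds $w(u^e)$. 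The lever you identify is exactly right and is literally the paper's definition of $\widehat{k[(u_i)_{i\in I}]}$: a series may have \emph{any} set of monomials as support, since combinatorial finiteness of $\Gamma$ makes the fibers of $e\mapsto w(u^e)$ finite automatically, so the formal sum $\sum_e d_eu^e$ you assemble is an element of the ring with no further verification, and the check that it lies in every ball is immediate from $d_e=c_\alpha(e)$ for $w(u^e)<\gamma_\alpha$ together with the well-orderedness of the set of weights. Your argument is shorter and uniform in the rank of $\Phi$ (indeed it does not even need the reindexing or the monotonicity of the radii --- only the relation $w(x_\beta-x_\alpha)\geq\gamma_\alpha$ for $B_\beta\subseteq B_\alpha$); the paper's argument is closer in spirit to pseudo-convergence and would transfer to completions where the support of a series is genuinely constrained (as in the Hahn ring itself, where a pointwise limit's membership is not automatic), whereas your route exploits the fact that in this particular ring the support condition is vacuous.
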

\begin{proof} Let $B(x_\iota,y_\iota),\ \iota\in H$, be a nested sequence of balls indexed by a well ordered set $H$. We denote by $1$ the smallest element of the set $H$, by $\iota +1$ the successor of $\iota$. The inclusions $B(x_{\iota +1},y_{\iota +1})\subseteq B(x_\iota,y_\iota)$ mean that $w(x_{\iota +1}-x_\iota)\geq w(y_\iota -x_\iota)$ and $w(y_{\iota +1}-x_{\iota +1})\geq w(y_\iota -x_\iota)$. Since $I$ is well ordered, by choosing an appropriate subset we may assume that the inclusions are strict without changing the intersection. The ordinal $H$ is thus order-embedded in $\Gamma$ because the radii of the balls must decrease ($w(y_{\iota +1}-x_{\iota +1})> w(y_\iota -x_\iota)$), and we may assume that the $x_\iota$ and $y_\iota$ are chosen in such a way that $x_{\iota +1}\in B(x_\iota,y_\iota)$ and $y_\iota\notin B(x_{\iota +1},y_{\iota +1})$. Then we have the inequalities $w(x_{\iota +1}-x_\iota)\geq w(y_\iota-x_\iota)$ and $w(y_\iota-x_{\iota +1})<w(y_{\iota +1} -x_{\iota +1})$. From this we deduce that $w(y_{\iota +1}-y_\iota)=w(y_\iota-x_{\iota +1})$. The inclusion $x_{\iota +1}\in B(x_\iota,y_\iota)$, gives us $w(y_\iota-x_{\iota +1})\geq w(y_\iota-x_\iota)$. Finally we have
$$w(y_{\iota +1}-x_{\iota +1})> w(y_{\iota +1}-y_\iota)\geq w(y_\iota-x_\iota).$$
These inequalities imply that $w(y_{\iota +1}-y_\iota)$ increases strictly with $\iota$. Thus, if the rank of $\Phi$ is one, the sum $y=y_1+\sum_{\iota\geq 1}(y_{\iota +1}-y_\iota)$ is an element of $\widehat{k[(u_i)_{i\in I}]}$ and we have  $y\in B(x_\iota,y_\iota)$ for all $\iota\in I$ since $y-y_\iota =\sum_{\kappa\geq \iota}(y_{\kappa +1}-y_\kappa)$.\Par
Here we have used the fact that for the sum to make sense it suffices that the set of elements of a given weight which appear in it should be finite, and this is true because the ordinal of $\Gamma$ in this case is $\omega$ (see \cite{Te1}, proposition 3.9).\par
 For higher ranks, we must avoid the possibility that infinitely many of the $y_{\iota +1}-y_\iota$ have some terms in common.\Par Let us write $y_\iota =\sum_ed^{(\iota)}_eu^e$ for each $\iota\in H$ and define $\tilde y_\iota =\sum_{e\vert w(e)\leq w(y_{\iota +1}-y_\iota)}d^{(\iota)}_eu^e$. We have by construction $w(\tilde y_{\iota +1}-\tilde y_\iota)=w(y_{\iota +1}-y_\iota)$ and $w(\tilde y_\iota-y_\iota)>w(y_{\iota +1}-y_\iota)\geq w(y_\iota -x_\iota)$, and therefore $w(\tilde y_\iota-x_\iota) =w(y_\iota-x_\iota)$. Moreover, we have $w(\tilde y_\iota -x_{\iota +1})=w( \tilde y_\iota -y _\iota +y_\iota -x_{\iota +1}) = w(y_\iota-x_{\iota +1})<w(y_{\iota +1}-x_{\iota +1})$ in view of the inequalities we have just seen. So we have $B(x_\iota, \tilde y_\iota)=B(x_\iota, y_\iota)$ and $\tilde y_\iota\notin B(x_{\iota +1}, y_{\iota +1})$. In our description of the balls we can replace each $y_\iota$ by $\tilde y_\iota$. By construction $\tilde y_{\iota +1}-\tilde y_\iota$ does not contain any term of weight $>w(y_{\iota +2}-y_{\iota +1})=w(\tilde y_{\iota +2}-\tilde y_{\iota +1})$ or of weight $<w(\tilde y_{\iota +1}-\tilde y_\iota)$, so that the terms of   $\tilde y_{\iota +1}-\tilde y_\iota$ which can appear also in  $\tilde y_{\iota +2}-\tilde y_{\iota +1}$ are those of weight $w(\tilde y_{\iota +2}-\tilde y_{\iota +1})$, which are finite in number, and they cannot appear in $\tilde y_{\iota' +1}-\tilde y_{\iota'}$ for $\iota'>\iota +1$.\Par
 The (possibly transfinite) sum $y=\tilde y_1+\sum_{\iota\in H}(\tilde y_{\iota +1}-\tilde y_\iota)$ is an element of $\widehat{k[(u_i)_{i\in I}]}$ which is in the intersection of all the balls since for each $\iota\in H$ we have $y-\tilde y_\iota=\sum_{\kappa\geq \iota}(\tilde y_{\kappa +1}-\tilde y_\kappa)$. \end{proof}\noindent
 \begin{Remark}\small{\begin{enumerate} \item A \emph{pseudo-convergent} sequence\footnote{They are also known as pseudo-Cauchy sequences. This concept is due to Ostrowski; see \cite{Ri}, \cite{Roq}.} of elements of $\widehat{k[(u_i)_{i\in I}]}$ is a sequence $(y_\tau)_{\tau\in T}$ indexed by a well ordered set $T$ without last element, which satisfies the condition that whenever $\tau<\tau' <\tau"$ we have $w(y_{\tau'}-y_\tau)<w(y_{\tau"}-y_{\tau'})$ and an element $y$ is said to be a \emph{pseudo-limit} of this pseudo-convergent sequence if $w(y_{\tau'}-y_\tau)\leq w(y-y_\tau)$ for $\tau ,\tau'\in T,\ \tau<\tau'$. One observes that if $(y_\tau)$ is pseudo-convergent, for each $\tau\in T$ the weight $w(y_{\tau'}-y_\tau)$ is independent of $\tau'>\tau$ and can be denoted by $w_\tau$. The balls $B(y_\tau, w_\tau)$ then form a strictly nested sequence of balls and their intersection is the set of pseudo-limits of the sequence. In particular, in our ring every pseudo-convergent sequence has a pseudo-limit. For example, assume that our value group $\Phi$  has rank $>1$ and let $\Psi_j$ be a non trivial convex subgroup. Denote by $T\subset I$ the set of indices $i\in I$ such that $\gamma_i\in\Psi_j$ and assume that it has no last element. Then the sum $y=\sum_{i\in T}u_i$ is a pseudo-limit of the pseudo-convergent sequence $y_\tau=\sum_{i\leq\tau} u_i$, but if we now take any series $z$ of terms involving the variables $u_k$ such that $\gamma_k\notin \Psi_j$, then $y+z$ is another pseudo-limit. If we assume that $\Psi_j$ is of rank one and consider the sequence  $\tilde y_\tau=\sum_{i\leq\tau} (u_i +z)$ it is still pseudo-convergent and certainly cannot have a limit in our ring\, but it still has $\sum_{i\in T}u_i$ as a pseudo-limit.
\item If we consider, for $0\leq s\leq h-1$, the ideal $P_s$ of elements if $\widehat{k[(u_i)_{i\in I}]}$ whose weight does not belong to the convex subgroup $\Psi_s$ of $\Phi$, the quotient map $\widehat{k[(u_i)_{i\in I}]}\to \widehat{k[(u_i)_{i\in I}]}/P_s$ induces an isomorphism from the $k$-subalgebras $\widehat{k[(u_i)_{i\vert \gamma_i\in \Psi_s}]}$ associated to the semigroup generated by the $\gamma_i$ which are in $\Psi_s$ onto the image $\widehat{k[(u_i)_{i\in I}]}/P_s$.
 \end{enumerate}}
 \end{Remark}
\noindent\textbf{The rank one case of the valuative Cohen Theorem:} Let us assume that $\nu$ is a rational valuation of rank one on the complete equicharacteristic noetherian local domain $R$ and pick a field of representatives $k\subset R$. Since the valuation is of rank one, we may fix an ordered embedding $\Gamma\subset \R$. If the set of generators of the semigroup $\Gamma$ is finite, and the variables $u_i$ correspond to a minimal set of generators as above, the scalewise completion of $k[u_1,\ldots ,u_N]$ with respect to the weight coincides with the usual completion as we shall see below in remark \ref{power}. If we choose representatives $\xi_i\in R$ of the generators of the $k$-algebra ${\rm gr}_\nu R$, they must generate the maximal ideal of $R$ since $R$ is complete for the $\nu$-adic topology as well as for the $m$-adic (\cite{Te1}, proposition 5.10). The map $u_i\mapsto\xi_i$ then induces a continuous surjection of topological $k$-algebras $k[[u_1,\ldots ,u_N]]\to R$ whose associated graded map is the surjection $k[U_1,\ldots ,U_N]\to {\rm gr}_\nu R$.\par
  If the set of generators is infinite, the semigroup $\Gamma$ is of ordinal $\omega$ (see \cite{Te1}, proposition 3.9) and cofinal in $\R_+$ since it has no accumulation point in $\R$ (see \cite{C-T}, Theorem 3.2). Since $\bigcap_{\phi\in\Gamma}\Pp_\phi (R)=(0)$, by Chevalley's theorem there exists an application $\beta\colon\Gamma\to \N$, whose value tends to infinity with $\gamma$ and such that $\Pp_\gamma (R)\subset m^{\beta(\gamma)}$. In particular, for each $i\in I$, we can write $\xi_i\in m^{\beta (\gamma_i)}$.\par
  Given an infinite series $\sum_{e\in E}d_eu^e\in\widehat{k[(u_i)_{i\in I}]}$, we order its terms as explained above, and their weights must increase indefinitely since there are only finitely many terms of a given weight.\par\noindent
The image in $R$ of a monomial $u^e$ of weight $\gamma$ is the monomial $\xi^e$ of valuation $\gamma$ in $R$, which is contained in $m^{\beta(\gamma)}$.\par
 This shows that the sum $\sum_{e\in E}d_e\xi^e$ must converge in $R$ since it is complete for the $m$-adic topology. Thus, the application $u_i\mapsto \xi_i$ extends to a continuous map $\widehat{k[(u_i)_{i\in I}]}\to R$ of topological $k$-algebras. Let us show that it is surjective. Given $x\in R$, its initial form in ${\rm gr}_\nu R$ is a term $d_{a_0}\overline\xi^{a_0}$ with $d_{a_0}\in k^*$. Let $x_1=x-d_{a_0}\xi^{a_0}\in R$; we have $\nu (x-x_1)>\nu (x)$ and applying the same treatment to $x_1$ and continuing in this manner we build a series $\sum_{k=0}^\infty d_{a_k}\xi^{a_k}$ which converges to $x$ in the $\nu$-adic topology and therefore, again by Chevalley's Theorem, in the $m$-adic topology (see also \cite{Te1}, proposition 5.10), and is the image of an element $\sum_{k=0}^\infty d_{a_k}u^{a_k}\in\widehat{k[(u_i)_{i\in I}]}$.\par
 So we have a continuous surjection of topological $k$-algebras $\widehat{k[(u_i)_{i\in I}]}\to R$ whose associated graded map is the surjection $k[(U_i)_{i\in I}]\to {\rm gr}_\nu R$. Topological generators of the kernel are then overweight deformations (see remark \ref{ROD}) of binomials generating the kernel of the associated graded map. This is the nature of the valuative Cohen Theorem.
    \begin{remark}\label{growth} \small{\emph{Taking a finite system of generators of the maximal ideal of $R$, say $\underline \xi=(\xi_{i_1},\ldots ,\xi_{i_n})$, by Chevalley's theorem we can for each $\xi_i$ choose a power series expression $\xi_i=\sum a^{(i)}_e\underline \xi^e$ of order $\geq \beta(\gamma_i)$. The convergence of the series $\sum_{e\in E}d_e\xi^e$ comes from substituting for each $\xi_i$ with $i\notin\{i_1,\ldots ,i_n\}$ the chosen expression.} }    \end{remark}
    \par\medskip\noindent
Before we deal with rational valuations of arbitrary rank we need some preliminaries, contained in the next subsection.
\subsection{More on the structure of $\hbox{\rm gr}_\nu R$ in the case where $R$ is complete}\label{morestruc}
 Recall the notations of section \ref{cohen}: Let $\Phi$ be a group of height (or rank) $h$ associated to a valuation of a complete local domain $R$ centered at the maximal ideal $m$ of $R$; set $k=R/m$. Let
$$(0)=\Psi_h\subset \Psi_{h-1}\subset\cdots\subset \Psi_1\subset \Psi_0=\Phi$$
be the sequence of isolated subgroups of $\Phi$ (including the trivial ones) and let $$(0)\subseteq p_1\subseteq p_2\ldots\subseteq p_{h-1}\subseteq p_h=m$$ be the corresponding sequence of the centers in $R$ of the valuations with which $\nu$ is composed, with $p_i=\{x\in R\vert\nu(x)\notin\Psi_i\}$. Let $\Gamma$ be
the semigroup of $\nu$ on $R$ and set
$\Gamma_i=\Gamma\cap \Psi_i$. Considering the valuation $\nu'$ of height $h-1$ with
center $p_{h-1}$ with which $\nu$ is composed, we have seen in subsection 3.3 of \cite{Te1}
that we could identify $\hbox{\rm gr}_{\overline \nu}\overline R_{h-1}$, where $\overline R_{h-1}=R/
p_{h-1}$ and $\overline\nu$ is the residual valuation induced by $\nu$, with the subalgebra $\bigoplus_{\psi\in \Psi_{h-1+}\cup \{0\}}(\hbox{\rm gr}_\nu
R)_\psi$ of $\hbox{\rm gr}_\nu R$. The generators of this subalgebra are the $(U_i)_{i\in I_1}$, where $I_1\subset I$,
is the set of indices of the $U_i$ whose degree lies in $\Psi_{h-1}$. Then we have:
\begin{proposition}\label{strucgr} Assume that $R$ is a complete equicharacteristic noetherian local domain. If $R\subset R_\nu\subset R_{\nu'}$, where $R_\nu$
dominates the local ring $R$ without residual extension and $\nu'$ is of height one less than $\nu$. Fix a set of elements $(\xi_i\in R)_{i\in I}$ whose initial forms generate the $k$-algebra ${\rm gr}_\nu R$. The map of $\overline R_{h-1}$-algebras
$$\overline R_{h-1}[(u_i)_{i\in I\setminus I_1}]\rightarrow {\rm gr}_{\nu'}R,\ u_i \mapsto {\rm in}_{\nu'}\xi_i$$
is surjective.
\end{proposition}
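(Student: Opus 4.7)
The goal is to show that every homogeneous element of ${\rm gr}_{\nu'}R$ lies in the image of the stated map. Such an element is of the form ${\rm in}_{\nu'}(y)$ for some $y\in R$ with $\nu'(y)=\phi\in(\Phi/\Psi_{h-1})_{\geq 0}$. The case $\phi=0$ is immediate, since then ${\rm in}_{\nu'}(y)$ is just the class of $y$ in the degree-zero component $\overline R_{h-1}$ of ${\rm gr}_{\nu'}R$.

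For $\phi>0$ the strategy is to separate variables along the partition $I=I_1\sqcup(I\setminus I_1)$. Using the hypothesis that the ${\rm in}_\nu(\xi_i)$ generate ${\rm gr}_\nu R$, write
\[ {\rm in}_\nu(y)=\sum_{e\in E_0}c_e\prod_{i\in I}{\rm in}_\nu(\xi_i)^{e_i},\qquad c_e\in k^*, \]
a finite sum of monomials all of $\nu$-degree $\nu(y)$. Decompose each exponent vector $e=e'+e''$ with $e'$ supported on $I_1$ and $e''$ on $I\setminus I_1$. Since $\sum e'_i\gamma_i\in\Psi_{h-1}$ and $\sum e_i\gamma_i=\nu(y)$ projects to $\phi$ in $\Phi/\Psi_{h-1}$, every $e''$ that appears satisfies $\nu'(\xi^{e''})=\phi$. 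Regrouping,
\[ \sum_{e\in E_0}c_e\xi^e=\sum_{e''}a^{(0)}_{e''}\xi^{e''},\qquad a^{(0)}_{e''}:=\sum_{e'}c_{(e',e'')}\xi^{e'}\in R, \]
and the difference $y_1:=y-\sum_e c_e\xi^e$ satisfies $\nu(y_1)>\nu(y)$, hence $\nu'(y_1)\geq\phi$. If $\nu'(y_1)>\phi$, we conclude at once with
\[ {\rm in}_{\nu'}(y)=\sum_{e''}\overline{a^{(0)}_{e''}}\cdot{\rm in}_{\nu'}(\xi^{e''}), \]
where $\overline{a^{(0)}_{e''}}\in\overline R_{h-1}$ denotes the image modulo $p_{h-1}$. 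Otherwise $\nu'(y_1)=\phi$ and we iterate the same construction on $y_1$.

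The main obstacle arises when this iteration fails to terminate: one obtains an infinite sequence $(y_k)$ with $\nu(y_k)$ strictly increasing within a single coset of $\Psi_{h-1}$, together with families $(a^{(k)}_{e''})\subset R$. Two observations make the argument go through. First, the value semigroup $\nu'(R\setminus\{0\})\subset\Phi/\Psi_{h-1}$ is well-ordered because $R$ is noetherian, so the Campillo--Galindo combinatorial finiteness recalled in Section \ref{cohen} ensures that only finitely many exponent vectors $e''$ supported on $I\setminus I_1$ satisfy $\nu'(\xi^{e''})=\phi$, and hence only finitely many monomials $\xi^{e''}$ occur across all stages of the iteration. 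Second, for each such $e''$ the images $\overline{a^{(k)}_{e''}}\in\overline R_{h-1}$ have $\overline\nu$-values $\nu(y_k)-\nu(\xi^{e''})$ strictly increasing in $\Psi_{h-1}^{\geq 0}$; by Chevalley's theorem applied to the complete noetherian local domain $\overline R_{h-1}$ this yields $m/p_{h-1}$-adic convergence, so the series $\sum_k\overline{a^{(k)}_{e''}}$ converges to an element $b_{e''}\in\overline R_{h-1}$, giving ${\rm in}_{\nu'}(y)=\sum_{e''}b_{e''}\,{\rm in}_{\nu'}(\xi^{e''})$. The delicate part is ensuring that the sequence $(\nu(y_k))$ is cofinal in the coset so that Chevalley applies; this is arranged by choosing at each stage the decomposition of ${\rm in}_\nu(y_k)$ that peels off the least-weight contribution to the $\phi$-graded piece, and can be formalized either by an induction on the height of $\overline\nu$ or by a scalewise argument in the spirit of Theorem \ref{sph}.
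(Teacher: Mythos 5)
Your reduction to homogeneous elements, the splitting of exponents $e=e'+e''$ along $I=I_1\sqcup(I\setminus I_1)$, and the terminating case of the iteration are all sound. The gap is the finiteness claim that the non-terminating case rests on. Campillo--Galindo combinatorial finiteness of the well-ordered semigroup $\nu'(R\setminus\{0\})\subset\Phi/\Psi_{h-1}$ bounds the number of ways of writing $\phi$ as a sum of elements of that \emph{semigroup}; it does not bound the number of exponent vectors $e''$ with $\nu'(\xi^{e''})=\phi$. Indeed, distinct indices $i\in I\setminus I_1$ may have $\gamma_i$ with the same image in $\Phi/\Psi_{h-1}$ (two minimal generators of $\Gamma$ can differ by a positive element of $\Psi_{h-1}$), and there may be infinitely many of them; moreover the values $\nu(\xi^{e''})$ of the monomials produced at stage $k$ lie in the coset $\phi+\Psi_{h-1}$ just below $\nu(y_k)$, so as $\nu(y_k)$ climbs cofinally in that coset the set of monomials you accumulate is in general infinite. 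Your limit expression $\sum_{e''}b_{e''}\,{\rm in}_{\nu'}(\xi^{e''})$ is then an infinite sum and does not exhibit ${\rm in}_{\nu'}(y)$ as the image of an element of the \emph{polynomial} ring $\overline R_{h-1}[(u_i)_{i\in I\setminus I_1}]$, which is what surjectivity requires. The Remarks following corollary \ref{goodgen} point at exactly this phenomenon: the graded pieces of the $\overline\Pp(\phi_{h-1})$-filtration need not be finitely generated over ${\rm gr}_{\overline\nu}\overline R_{h-1}$ even though the module $\Pp_{\phi_{h-1}}/\Pp^+_{\phi_{h-1}}$ is finitely generated.

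The paper extracts the needed finiteness from noetherianity at the module level rather than from semigroup combinatorics: $\Pp_{\phi_{h-1}}/\Pp^+_{\phi_{h-1}}$ is a finitely generated $R/p_{h-1}$-module because $\Pp_{\phi_{h-1}}$ is an ideal of the noetherian ring $R$ and $p_{h-1}$ kills the quotient. Lemma \ref{bargen} then takes a finite minimal generating set $e_1,\ldots ,e_s$, uses completeness of $R$ and Chevalley's theorem to arrange that the $\overline\Pp(\phi_{h-1})$-initial form of each $e_i$ is not the initial form of a combination of the earlier ones, invokes rationality of $\nu$ (Lemma 3.16 of \cite{Te1}) to see that these initial forms are monomials $\overline\xi^{\alpha_i}$, and concludes by a triangular change of generators that the finitely many ${\rm in}_{\nu'}\xi^{\alpha_i}$ generate the module. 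Your peeling-and-convergence scheme becomes correct only once each graded piece is first expressed in terms of such a fixed finite set of generating monomials — at which point you have essentially reproved Lemma \ref{bargen}, from which the proposition follows immediately.
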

\begin{proof}: Let us first recall that if we denote by $\lambda\colon \Phi\to\Phi_{h-1}=\Phi/\Psi_{h-1}$ the natural map, the valuation $\nu'$ is defined as $\lambda\circ\nu$. If we denote by $\Pp_{\phi_{h-1}}$ the ideal $\{x/\nu'(x)\geq \phi_{h-1}\}$, for all $\phi\in\lambda^{-1}(\phi_{h-1})$ we have the inclusions
$$\Pp^+_{\phi_{h-1}}\subseteq\Pp_\phi\subseteq \Pp_{\phi_{h-1}},$$
and these $\Pp_\phi$ induce a filtration $\overline\Pp(\phi_{h-1})$ on the quotient $\Pp_{\phi_{h-1}}/\Pp^+_{\phi_{h-1}}$ which is a finitely generated $R/
p_1$-module.\par\noindent
We need the following Lemma:
\begin{lemma}\label{bargen} In the situation of the proposition, given a set of representatives $\xi_i \in R$ of the generators of ${\rm gr}_\nu R$, setting $p=p_{h-1}=m_{\nu'}\cap R$, for each $\phi_{h-1}\in \Phi_{h-1}$ the $R/p$-module $\Pp_{\phi_{h-1}}/\Pp^+_{\phi_{h-1}}$ is generated by finitely many monomials in the ${\rm in}_{\nu'} \xi_i$.
\end{lemma}
\begin{proof} Let $e_1,\ldots ,e_s$ be a minimal set of generators for the finitely generated $R/p$-module $\Pp_{\phi_{h-1}}/\Pp^+_{\phi_{h-1}}$. Up to reordering, we may assume that their orders for the  $\overline\Pp(\phi_{h-1})$ filtration, which we denote by $\nu(e_i)$ since they coincide with the $\nu$ valuation of a representative in $R$, satisfy $\nu(e_1)\leq\cdots \leq \nu(e_s)$. We may further assume that for each $i$ the initial form ${\rm in}_\nu e_i$ for the $\overline\Pp(\phi_{h-1})$-filtration of $e_i$ is not the initial form of a linear combination of the $e_j$ for $j<i$, and in particular that $\nu(e_1)<\cdots <\nu(e_s)$. Indeed, if ${\rm in}_\nu e_i$ does not satisfy the condition, there are $\mu_\ell^{(1)}\in R/p,\ 1\leq \ell\leq i-1$, such that $\nu(e_i-\sum_{\ell=1}^{i-1}\mu_\ell^{(1)}e_\ell)>\nu (e_i)$, and we replace $e_i$ by $e_i-\sum_{k=1}^{i-1}\mu_k^{(1)}e_k$. If we have to continue indefinitely, we build a sequence $(\sum_{k=1}^t\sum_{\ell=1}^{i-1}\mu_\ell^{(k)}e_\ell)_{t\geq 1}$ of elements of $\Pp_{\phi_{h-1}}/\Pp^+_{\phi_{h-1}}$ which is a Cauchy sequence for the $\overline\Pp(\phi_{h-1})$ filtration, such that the $\nu$ value of the $e_i-\sum_{k=1}^t\sum_{\ell=1}^{i-1}\mu_\ell^{(k)}e_\ell$ increases indefinitely with $t$. Since $R$ is complete, so is $R/p$, and since the valuation $\overline\nu$ is of rank one, the sequence of the $\sum_{k=1}^t\sum_{\ell=1}^{i-1}\mu_\ell^{(k)}e_\ell$ converges thanks to Chevalley's theorem (see \cite{Te1}, section 5, and \cite{B3}, Chap. IV, \S 2, No. 5, Cor.4) to an element of the  submodule of $\Pp_{\phi_{h-1}}/\Pp^+_{\phi_{h-1}}$ generated by the $e_j,\ j<i$ which is closed since it is finitely generated, and this shows that $e_i$ belongs to the submodule generated by the $e_j,\ j<i$ and gives us a contradiction with the minimality of our set of generators. So after replacing $e_i$ with some $e_i-\sum_{k=1}^t\sum_{\ell=1}^{i-1}\mu_\ell^{(k)}e_\ell$, we may assume that its $\overline\Pp(\phi_{h-1})$-initial form is not  the initial form of a linear combination of the $e_j$ for $j<i$.\par In view of Lemma 3.16 of \cite{Te1}, up to multiplication by an element of $k^*$ the initial forms of the $e_i$ with respect to the $\overline\Pp(\phi_{h-1})$ filtration are monomials $\overline \xi^{\alpha_i}$ in the initial forms $\overline\xi_i\in{\rm gr}_\nu R$ of the $\xi_i$. We now prove that the ${\rm in}_{\nu'} \xi^{\alpha_i}$ also generate the module. We can write
$$e_1={\rm in}_{\nu'}\xi^{\alpha_1}+\sum_{j=1}^s\lambda^{(1)}_je_j,$$
with $\lambda^{(1)}_j\in R/p$. The element $1-\lambda^{(1)}_1$ is a unit in $R/p$ since otherwise the $\overline\Pp(\phi_{h-1})$ initial form of $e_1$ cannot be $\overline\xi^{\alpha_1}$. Now if we write $$e_2={\rm in}_{\nu'}\xi^{\alpha_2}+\sum_{j=1}^s\lambda^{(2)}_je_j,$$ we see that $\overline\nu(\lambda^{(2)}_1)>0$ because otherwise the right hand side has $\overline\Pp(\phi_1)$ order $\nu(e_1)$ which we have excluded, Moreover, we must have $\nu(\lambda^{(2)}_1e_1)\geq \nu(e_2)$ and equality is impossible because otherwise the ${\rm in}_{\overline\Pp(\phi_1)}$ initial form of $e_2$ would be a multiple of $\overline\xi^{\alpha_1}$ which we have also excluded. This implies that $1-\lambda_2^{(2)}$ is invertible. We continue like this and finally we see that the elements ${\rm in}_{\nu'} \xi^{\alpha_i}$ are expressed in terms of the $e_i$ by a matrix whose diagonal entries are invertible in $R/p$ and all entries below the diagonal are in the maximal ideal of $R/p$.
\end{proof}
 The proposition follows directly from the Lemma.\end{proof}
\begin{definition}\label{part} We define a finite partition of the index set $I$ by defining $I_t$ to be the set of indices in $I$ such that $\gamma_i$ lies in $\Psi_{h-t}\setminus\Psi_{h-t+1}$.
\end{definition}
Let us define a filtration of $\overline R_{h-1}[(u_i)_{i\in I\setminus I_1}]$ by $\Qq_\phi =\{\sum_{\alpha}a_\alpha u^\alpha | \nu(\sum_{\alpha}a_\alpha \xi^\alpha )\geq \phi\}$, and keep the $\overline\Pp(\phi)$ filtration on ${\rm gr}_{\nu'}R$. Passing to the associated graded rings recovers a weaker result (see \cite{Te1}, proposition 4.8, corollary 4.9) which does not needs the completeness assumption:
\begin{corollary}\label{goodgen} the natural map
$$\hbox{\rm gr}_{\overline \nu}\overline R_{h-1}[(U_i)_{i\in I\setminus I_1}]\to \hbox{\rm
gr}_\nu R$$ mapping $U_i$ to the generator $\overline\xi_i$ is surjective and its kernel is generated by the images in $\hbox{\rm gr}_{\overline \nu}\overline R_{h-1}[(U_i)_{i\in I\setminus I_1}]$ of those
binomials
$U^n-\lambda_{mn}U^n\in k[(U_i)_{i\in I}]$ which involve at least one
variable $U_i$ with
$i\in I\setminus I_1$. Applying this result successively to the quotients of $R$ by the prime ideals $p_{h-2}, p_{h-3},\ldots ,p_1$ and the corresponding residual valuations, we find in particular that if $\nu_1$ is the valuation of height one with which $\nu$ is composed, setting $\overline R =R/p_1$, equipped with the residual valuation $\overline\nu$, we have a similar presentation
$$\hbox{\rm gr}_{\overline \nu}\overline R[(U_i)_{i\in I_h}]\to \hbox{\rm
gr}_\nu R.$$ \end{corollary}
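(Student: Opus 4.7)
The plan is to derive this corollary by passing to associated graded rings in Proposition \ref{strucgr}. I filter the source $\overline R_{h-1}[(u_i)_{i\in I\setminus I_1}]$ by the ideals $\Qq_\phi$ defined immediately before the statement, and the target ${\rm gr}_{\nu'}R$ by the $\overline\Pp(\phi_{h-1})$-filtration of each graded piece $\Pp_{\phi_{h-1}}/\Pp^+_{\phi_{h-1}}$, so that the map of Proposition \ref{strucgr} becomes a morphism of filtered rings. I claim the associated graded rings are
$$\overline R_{h-1}[(u_i)_{i\in I\setminus I_1}]\;\rightsquigarrow\;{\rm gr}_{\overline\nu}\overline R_{h-1}[(U_i)_{i\in I\setminus I_1}],\qquad {\rm gr}_{\nu'}R\;\rightsquigarrow\;{\rm gr}_\nu R.$$
The first identification holds because the $u_i$ are free with assigned $\Qq$-degree $\gamma_i$ and $\Qq_\phi$ restricts on $\overline R_{h-1}$ to the residual $\overline\nu$-filtration; the second holds because refining the $\Phi/\Psi_{h-1}$-grading of ${\rm gr}_{\nu'}R$ by the rank-one residual $\overline\nu$-grading on each graded piece reconstructs the full $\Phi$-grading of ${\rm gr}_\nu R$.

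Surjectivity of the associated graded map then reduces to strict compatibility of the original map with the filtrations: every class of $\overline\Pp(\phi_{h-1})$-order exactly $\phi$ in $\Pp_{\phi_{h-1}}/\Pp^+_{\phi_{h-1}}$ should admit a preimage of $\Qq$-order exactly $\phi$. This is precisely what Lemma \ref{bargen} provides, since its generators of $\Pp_{\phi_{h-1}}/\Pp^+_{\phi_{h-1}}$ as an $\overline R_{h-1}$-module are monomials in the ${\rm in}_{\nu'}\xi_i$ of the correct $\nu$-order.

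To pin down the kernel I factor the canonical semigroup-algebra presentation of ${\rm gr}_\nu R$ through the associated graded map:
$$k[(U_i)_{i\in I}] \twoheadrightarrow {\rm gr}_{\overline\nu}\overline R_{h-1}[(U_i)_{i\in I\setminus I_1}] \twoheadrightarrow {\rm gr}_\nu R.$$
The kernel of the composition is the binomial ideal generated by all $U^m-\lambda_{mn}U^n$ encoding the relations among the $\gamma_i$. The kernel of the first surjection is generated exactly by those binomials involving only variables $U_i$ with $i\in I_1$, because ${\rm gr}_{\overline\nu}\overline R_{h-1}$ is itself (by the rational case applied to the residual valuation) the semigroup algebra of $\Gamma\cap\Psi_{h-1}$ with this binomial presentation. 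By the standard kernel-of-composition computation for surjections, the kernel of the second arrow is generated by the images of the remaining binomials, namely those involving at least one variable $U_i$ with $i\in I\setminus I_1$. The iterated statement in the second half of the corollary follows by applying the same argument successively to $\overline R_{h-1},\overline R_{h-2},\ldots,\overline R_1=\overline R$, each equipped with its residual valuation of strictly smaller height.

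The principal technical obstacle is the strict-compatibility step producing the surjective associated graded map, and this is exactly what Lemma \ref{bargen} is designed to handle; the remainder is routine bookkeeping on filtered and graded rings.
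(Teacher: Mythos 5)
Your overall route --- pass to associated graded rings in Proposition \ref{strucgr} and read off the kernel from the binomial presentation of ${\rm gr}_\nu R$ --- is the one the paper intends (the paper itself gives only a one-line indication and a reference), and your kernel computation is correct: since $k[(U_i)_{i\in I}]\to{\rm gr}_\nu R$ is surjective with kernel the binomial ideal, and since the binomials involving only variables $U_i$ with $i\in I_1$ already map to zero in ${\rm gr}_{\overline \nu}\overline R_{h-1}[(U_i)_{i\in I\setminus I_1}]$, the kernel of the second surjection is the image of the full binomial ideal, hence is generated by the images of the remaining binomials.

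Your surjectivity step, however, has a genuine gap, and it also hides why the statement needs no completeness hypothesis. First, with the filtration $\Qq_\phi$ as literally defined (a pullback filtration through the map to ${\rm gr}_{\nu'}R$), the associated graded ring of $\overline R_{h-1}[(u_i)_{i\in I\setminus I_1}]$ is \emph{not} the free polynomial ring ${\rm gr}_{\overline \nu}\overline R_{h-1}[(U_i)_{i\in I\setminus I_1}]$; it is already the associated graded of the image. The free polynomial ring arises only from the coarser monomial filtration in which $a_\alpha u^\alpha$ is given order $\overline\nu(a_\alpha)+w(\alpha)$, and the corollary's map is exactly the comparison map between these two graded rings. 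Second, Lemma \ref{bargen} does not deliver the strict compatibility you invoke: knowing that $\Pp_{\phi_{h-1}}/\Pp^+_{\phi_{h-1}}$ is generated as an $\overline R_{h-1}$-module by monomials of prescribed orders does not imply that an element of order $\psi$ admits an expression in those generators with every term of order $\geq\psi$; the remark placed immediately after the corollary warns precisely that generation of the module does not pass to the associated graded module. Fortunately surjectivity is immediate by a shorter route: the $\overline\xi_i$, $i\in I$, generate the $k$-algebra ${\rm gr}_\nu R$ by choice, and the subalgebra generated by those with $i\in I_1$ is identified with ${\rm gr}_{\overline \nu}\overline R_{h-1}$ (the identification recalled just before Proposition \ref{strucgr}), so ${\rm gr}_\nu R$ is generated over ${\rm gr}_{\overline \nu}\overline R_{h-1}$ by the $\overline\xi_i$ with $i\in I\setminus I_1$. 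This argument uses neither Lemma \ref{bargen} nor completeness, which is consistent with the paper's assertion that the corollary does not need the completeness assumption.
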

\noindent\begin{Remark}\small{\begin{enumerate}\item This statement complements \S 3.3 of \cite{Te1}.\par\noindent
\item In the statement of the corollary, if a binomial contains a variable $U_i$ with $i\in I\setminus I_1$ it must contain at least two, otherwise the weight of that variable would have to be in $\Psi_1$ and hence in $I_1$.
\item The fact that  the $R/p$-module $\Pp_{\phi_{h-1}}/\Pp^+_{\phi_{h-1}}$ is generated by finitely many monomials in the ${\rm in}_{\nu'} \xi_i$ does not imply that its associated graded module with respect to the $\overline \Pp(\phi_{h-1})$-filtration is a finitely generated module over ${\rm gr}_{\overline\nu}\overline R$. There are counterexamples in \cite{C-T}.
\end{enumerate}}
\end{Remark}
Using Lemma \ref{bargen} we can prove a stronger result than corollary \ref{goodgen}, which is the key to the valuative Cohen theorem in higher rank:
\begin{proposition}\label{fingen} Fix a set of representatives $(\xi_i)_{i\in I},\ \xi_i\in R$, of the generators of ${\rm gr}_\nu R$. Let $\nu_s$ be the valuation taking values in $\Phi_s=\Phi/\Psi_s$ with which $\nu$ is composed. For each $\phi_s\in \Phi_s$ the $R/p_s$-module $\Pp_{\phi_s}(R)/\Pp^+_{\phi_s}(R)$ is generated by finitely many initial forms of monomials in the elements $\xi_i$.
\end{proposition}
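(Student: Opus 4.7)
I would proceed by descending induction on $s$, with base case $s=h-1$ being exactly Lemma \ref{bargen}. For the inductive step, assuming the statement for $\nu_{s+1}$, I would adapt the argument of Lemma \ref{bargen} by replacing its rank-one residual valuation on $R/p_{h-1}$ by the rank-one valuation $\overline\nu$ on the complete local domain $R/p_s$ induced by $\nu_{s+1}$: it has value group $\Psi_s/\Psi_{s+1}$, which is the smallest non-trivial convex subgroup of $\Phi_{s+1}$, and center $p_{s+1}/p_s$ in $R/p_s$.

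Fix $\phi_s\in\Phi_s$. By noetherianity the $R/p_s$-module $M=\Pp_{\phi_s}(R)/\Pp^+_{\phi_s}(R)$ is finitely generated, and I pick a minimal system of generators $e_1,\ldots,e_r$. I would filter $M$ by the images of the ideals $\Pp_{\phi_{s+1}}(R)$ for $\phi_{s+1}$ in the well-ordered coset $\lambda^{-1}(\phi_s)\subset\Phi_{s+1}$, where $\lambda\colon\Phi_{s+1}\to\Phi_s$; the graded pieces are the $R/p_{s+1}$-modules $\Pp_{\phi_{s+1}}(R)/\Pp^+_{\phi_{s+1}}(R)$, which by the inductive hypothesis are finitely generated by monomials in the $\xi_j$. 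Following the argument of Lemma \ref{bargen}, I would reorder and iteratively modify the $e_i$ so that the values $\overline\nu(e_i)$ are strictly increasing and that no $\mathrm{in}_{\overline\nu}e_i$ belongs to the $\overline\nu$-initial submodule generated by the $e_j$ with $j<i$. The successive corrections form a Cauchy sequence for the $\overline\nu$-filtration on $M$; since $\overline\nu$ is of rank one and $R/p_s$ is complete, Chevalley's theorem (invoked exactly as in loc. cit.) guarantees that this sequence converges in the $m/p_s$-adic topology to an element of the closed submodule generated by the previous $e_j$, contradicting minimality if the process failed to terminate.

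Refining via the full $\nu$-filtration, Lemma 3.16 of \cite{Te1} gives that each $\mathrm{in}_\nu e_i$ equals $c_i\overline\xi^{\alpha_i}$ for some $c_i\in k^*$ and monomial $\overline\xi^{\alpha_i}$, so that inside the graded piece supplied by the inductive hypothesis $\mathrm{in}_{\overline\nu}e_i$ coincides with the $\overline\nu$-initial form of $\xi^{\alpha_i}$. The triangular-matrix argument concluding the proof of Lemma \ref{bargen} — expressing each $e_i$ as $\mathrm{in}_{\overline\nu}\xi^{\alpha_i}$ plus an $R/p_s$-linear combination of the other $e_j$ whose diagonal coefficients are units and whose subdiagonal coefficients lie in the maximal ideal of $R/p_s$ — then identifies $M$ with the $R/p_s$-submodule generated by the monomials $\mathrm{in}_{\overline\nu}\xi^{\alpha_i}$, completing the induction.

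The main obstacle I anticipate is controlling the convergence of the iterative modification when the residual rank $h-s$ exceeds one: in Lemma \ref{bargen} this is immediate because the residual valuation is already of rank one, whereas here one must organize the iteration along the single rank-one direction $\overline\nu$ so that rank-one Chevalley applies on the complete ring $R/p_s$, while importing the finer monomial structure from the inductive hypothesis on $\nu_{s+1}$ at the level of the rank-one graded pieces.
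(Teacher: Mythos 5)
Your overall architecture --- descending induction on $s$ with Lemma \ref{bargen} as base case, and organizing the inductive step along the single rank-one direction $\Psi_s/\Psi_{s+1}$ so that Chevalley's theorem applies on the complete ring $R/p_s$ --- matches the paper's. The gap is in the step where you pass from ``${\rm in}_\nu e_i=c_i\overline\xi^{\alpha_i}$'' to ``${\rm in}_{\overline\nu}e_i$ coincides with the $\overline\nu$-initial form of $\xi^{\alpha_i}$''. Write $e_i=c_i\xi^{\alpha_i}+r$ with $\nu(r)>\nu(e_i)$; nothing forces the difference $\nu(r)-\nu(e_i)$ to lie outside the convex subgroup $\Psi_{s+1}$, and when it lies inside, $r$ has the same $\nu_{s+1}$-value as $e_i$ and contributes a nonzero summand to ${\rm in}_{\nu_{s+1}}e_i$. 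The structural reason is that in Lemma \ref{bargen} the graded pieces of the filtration $\overline\Pp(\phi_{h-1})$ are homogeneous components of ${\rm gr}_\nu R$, hence one-dimensional $k$-vector spaces by rationality of $\nu$, so every initial form is a term; in the inductive step the graded pieces are the finitely generated $R/p_{s+1}$-modules $\Pp_{\phi_{s+1}}/\Pp^+_{\phi_{s+1}}$, and the inductive hypothesis only gives that ${\rm in}_{\nu_{s+1}}e_i$ is an $R/p_{s+1}$-\emph{linear combination} $\sum_u\overline a^{(i)}_u\,{\rm in}_{\nu_{s+1}}\xi^u$ of monomial initial forms, with no distinguished monomial guaranteed to carry a unit coefficient. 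Your triangular-matrix endgame requires exactly such a distinguished unit diagonal entry for each $e_i$, so it does not go through.

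The paper's proof sidesteps this by never singling out one monomial per generator: it lifts the coefficients $\overline a^{(i)}_u$ to $R/p_s$, notes that $e_i-\sum_u a^{(i)}_u\,{\rm in}_{\nu_s}\xi^u$ has strictly larger $\nu_{s+1}$-value, and iterates; Chevalley's theorem together with the completeness of $R/p_s$ makes the resulting series of combinations of monomial initial forms converge to $e_i$ in the finitely generated module $\Pp_{\phi_s}/\Pp^+_{\phi_s}$, which is therefore generated by monomial initial forms, hence by finitely many of them. The remark the author places after the proof --- that this successive-approximation argument could also have been used in Lemma \ref{bargen} --- is a hint that the two arguments are not interchangeable in the reverse direction, which is exactly where your adaptation stalls. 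To repair your write-up, replace the identification of ${\rm in}_{\overline\nu}e_i$ with a single monomial by this iterated substitution of linear combinations.
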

\begin{proof} Lemma \ref{bargen} gives the result for $s=h-1$. Let us assume the result is true for $s\geq k$ and prove that it holds for $s=k-1$. Let $\lambda_k$ denote the natural surjection $\Phi_k\to \Phi_{k-1}$ and let $\phi_{k-1}$ be an element of $\Phi_{k-1}$. For all elements $\phi_k\in\lambda_k^{-1}(\phi_{k-1})$ we have inclusions
$$\Pp^+_{\phi_{k-1}}\subset\Pp^+_{\phi_k}  \subset\Pp_{\phi_k}  \subset\Pp_{\phi_{k-1}},$$
where the indices indicate the valuation to which the valuation ideals are attached. For $\phi_k\in\lambda_k^{-1}(\phi_{k-1})$ this induces a filtration $\overline\Pp(\phi_{k-1})$ of the quotient $\Pp_{\phi_{k-1}}/\Pp^+_{\phi_{k-1}}$ indexed by elements of the rank one group $\Psi_{k-1}/\Psi_k$. This filtration may be finite if the centers in $R$ of $\nu_k$ and $\nu_{k-1}$ coincide (see \cite{Te1}, Section 3.3 and proposition 3.17). The associated graded $R/p_k$-module $\bigoplus_{\phi_k\in\lambda_k^{-1}(\phi_{k-1})}\Pp_{\phi_k}/\Pp^+_{\phi_k}$ is a sum of components of ${\rm gr}_{\nu_k}R$. \par If $(e_\ell)_{1\leq\ell\leq t}$ is a system of generators of the finitely generated $R/p_{k-1}$-module\break $\Pp_{\phi_{k-1}}/\Pp^+_{\phi_{k-1}}$, each has an initial form with respect to the $\overline\Pp(\phi_{k-1})$ filtration which is, by our inductive assumption, a linear combination $\sum_u \overline a^{(\ell)}_u{\rm in}_{\nu_k}\xi^u$ with coefficients $\overline a^{(\ell)}_u\in R/p_k$. If we take representatives $a^{(\ell)}_u\in R/p_{k-1}$ of the $\overline a^{(\ell)}_u$, we see that $\nu_k(e_\ell-\sum_u a^{(\ell)}_u{\rm in}_{\nu_{k-1}}\xi^u)>\nu_k(e_\ell)$. If we apply the same procedure again to $e_\ell-\sum_u a^{(\ell)}_u{\rm in}_{\nu_{k-1}}\xi^u$ and iterate, we build a series of elements that are combinations of ${\rm in}_{\nu_{k-1}}\xi^u$ and whose terms have increasing $\nu_k$ value. In view of Chevalley's theorem (see \cite{Te1},
Section 5, and \cite{B3}, Chap. IV, \S 2, No. 5, Cor.4), this series converges to $e_\ell$ because $R/p_{k-1}$ is a complete local ring and $\Pp_{\phi_{k-1}}/\Pp^+_{\phi_{k-1}}$ is a finitely generated module. Each $e_\ell$ being a combination of ${\rm in}_{\nu_{k-1}}\xi^u$, the $R/p_{k-1}$-module $\Pp_{\phi_{k-1}}/\Pp^+_{\phi_{k-1}}$ is generated by such "monomials", and since it is finitely generated, it is generated by finitely many of them.
\end{proof}
\begin{remark}\small{\emph{We could have used this argument in the proof of Lemma \ref{bargen}, but the approach used there may be useful in a subsequent work.}}
\end{remark}

\subsection{Statement and proof}
We keep the notations introduced in subsection \ref{morestruc}, use remark \ref{ROD} and still assume that $\nu$ is a rational valuation of $R$ and that the $(\overline \xi_i)_{i\in I}$ are a minimal system of homogeneous generators of the $k$-algebra ${\rm gr}_\nu R$.\par
\begin{theorem}\label{Valco}{\rm (Valuative Cohen theorem; compare with \cite{Te1}, 5.29)} Assuming that the local noetherian equicharacteristic domain $R$ is complete, and fixing a field of representatives $k\subset R$, there exist choices of representatives $\xi_i\in R$ of the $\overline{\xi_i}$ such that the surjective map of $k$-algebras $k[(U_i)_{i\in I}]\to {\rm gr}_\nu R,\ U_i\mapsto\overline{\xi_i}$, is the associated graded map of a continuous surjective map $\widehat{k[(u_i)_{i\in I}]}\to R,\ u_i\mapsto\xi_i$, of topological $k$-algebras, with respect to the weight and valuation filtrations respectively. The kernel of this map is generated up to closure by overweight deformations of binomials generating the kernel of $k[(U_i)_{i\in I}]\to {\rm gr}_\nu R$.
\end{theorem}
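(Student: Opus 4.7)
The plan is to extend to arbitrary rank the rank-one argument given just above, using Proposition \ref{fingen} as the structural input replacing the growth estimate $\xi_i\in m^{\beta(\gamma_i)}$ available in rank one, and the spherical completeness of $\widehat{k[(u_i)_{i\in I}]}$ (Theorem \ref{sph}) as the convergence tool for transfinite constructions. First I construct the map. Applied to the $\nu$-adic filtration of $R$, whose intersection is $(0)$, Chevalley's theorem furnishes a function $\beta\colon\Gamma\to\N$ tending to infinity as $\phi$ grows cofinally in $\Gamma$, such that $\Pp_\phi(R)\subseteq m^{\beta(\phi)}$. Choose representatives $\xi_i\in R$ of the $\overline{\xi_i}$ with $\nu(\xi_i)=\gamma_i$ and, following Remark \ref{growth}, select a finite subfamily $\xi_{i_1},\ldots,\xi_{i_n}$ generating $m$; each remaining $\xi_i$ is then a power series of $m$-adic order $\geq\beta(\gamma_i)$ in these generators. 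Given $f=\sum_e d_e u^e\in\widehat{k[(u_i)_{i\in I}]}$, substituting these expressions yields a series in $\xi_{i_1},\ldots,\xi_{i_n}$ which converges $m$-adically in the complete ring $R$; this defines a continuous $k$-algebra map $\pi\colon\widehat{k[(u_i)_{i\in I}]}\to R$ whose associated graded map is, by construction, the prescribed surjection $k[(U_i)_{i\in I}]\to{\rm gr}_\nu R$.

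Next I prove surjectivity by transfinite iteration. Given $x\in R$ with $\nu(x)=\phi_0\in\Gamma$, the rationality of $\nu$ forces $\Pp_{\phi_0}/\Pp^+_{\phi_0}$ to be one-dimensional over $k$, generated by the initial form of some monomial $\xi^{a_0}$, so there exists $d_0\in k^*$ with $\nu(x-d_0\xi^{a_0})>\phi_0$. Iterating produces a strictly increasing well-ordered sequence of valuations $\phi_\tau\in\Gamma$ and partial sums $s_\tau=\sum_{\tau'<\tau}d_{\tau'}u^{a_{\tau'}}$, whose consecutive differences have strictly increasing weight, so they form a pseudo-convergent sequence in $\widehat{k[(u_i)_{i\in I}]}$. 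At a limit ordinal Theorem \ref{sph} supplies a pseudo-limit $s_\tau$, and $\pi(s_\tau)$ is, by continuity of $\pi$, the $m$-adic limit of the $\pi(s_{\tau'})$; the remainder $x_\tau:=x-\pi(s_\tau)$ satisfies $\nu(x_\tau)>\phi_{\tau'}$ for all $\tau'<\tau$, so the iteration continues. Since $\Gamma$ has ordinal $\leq\omega^h$, the procedure terminates at some ordinal, either with $x_\tau=0$ or with the $\phi_{\tau'}$ cofinal in $\Gamma$; in the latter case $\beta(\phi_{\tau'})\to\infty$ gives $x-\pi(s_\tau)\in\bigcap_n m^n=0$. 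In either case, $x=\pi(s_\tau)$.

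Finally, for the kernel description, fix binomial generators $(U^{m^\ell}-\lambda_\ell U^{n^\ell})_{\ell\in L}$ of $F_0:=\ker(k[(U_i)]\to{\rm gr}_\nu R)$. By the surjectivity just established I may choose, for each $\ell$, a series $g_\ell\in\widehat{k[(u_i)_{i\in I}]}$ of weight $>w(m^\ell)$ with $\pi(g_\ell)=-\pi(u^{m^\ell}-\lambda_\ell u^{n^\ell})$; then $F_\ell:=u^{m^\ell}-\lambda_\ell u^{n^\ell}+g_\ell\in\ker\pi$ is an overweight deformation of the given binomial, in the generalized sense of Remark \ref{ROD}. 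To show the $F_\ell$ generate $\ker\pi$ up to closure, take $G\in\ker\pi$: by the reasoning of Proposition \ref{initial} its initial form ${\rm in}_w G$ must lie in $F_0$ (else $\pi(G)$ would have valuation $w(G)<\infty$), so ${\rm in}_w G=\sum_\ell c_\ell\,{\rm in}_w F_\ell$ for some $c_\ell\in k[(U_i)]$; subtracting a lift $\sum_\ell\widetilde{c}_\ell F_\ell$ yields $G_1\in\ker\pi$ of strictly greater weight, and iterating transfinitely with another appeal to Theorem \ref{sph} expresses $G$ as a scalewise-convergent combination of the $F_\ell$. The main obstacle throughout is justifying termination and convergence of these two transfinite algorithms in rank $>1$, where the weight filtration is not metric and the usual Cauchy-based completeness arguments are unavailable; it is precisely the spherical completeness of Theorem \ref{sph}, together with the one-dimensionality of the graded pieces afforded by the rationality of $\nu$, that makes the iterations well-defined at limit ordinals and forces the partial sums to be genuinely pseudo-convergent.
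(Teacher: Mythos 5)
There is a genuine gap, and it sits in the very first step: the construction of the map $\pi\colon\widehat{k[(u_i)_{i\in I}]}\to R$. You invoke Chevalley's theorem to get $\Pp_\phi(R)\subseteq m^{\beta(\phi)}$ and then claim that substituting the $\xi_i$ into an arbitrary series yields an $m$-adically convergent sum. But when the rank $h$ is $>1$, $\beta(\phi)$ tends to infinity only when $\phi$ is cofinal in $\Gamma$: for $\phi$ ranging in the positive part of a proper convex subgroup, say $\Psi_{h-1}$, every $\Pp_\phi$ contains the fixed nonzero prime $p_{h-1}=\{x\,\vert\,\nu(x)\notin\Psi_{h-1}\}$, so $\beta$ is bounded there. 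Consequently a perfectly legitimate element of $\widehat{k[(u_i)_{i\in I}]}$ such as $\sum_{i\in I_1}u_i$ (with $I_1$ infinite) has image a formal sum $\sum_{i\in I_1}\xi_i$ whose terms need not tend to $0$ in $m$-adic topology; Remark \ref{power}(2) exhibits representatives $\xi_i=\xi'_i+\eta$ with a fixed $\eta$ of value outside $\Psi_1$ for which this sum genuinely has no meaning in $R$. So your recipe does not define $\pi$ for an arbitrary choice of representatives — the existential clause ``there exist choices of representatives'' in the statement is carrying real weight, and this failure is not repairable by a better convergence argument: the representatives themselves must be re-chosen. The same unproved convergence is then used again in your surjectivity step, where you assert that $\pi(s_{\tau'})$ converges $m$-adically to $\pi(s_\tau)$ at a limit ordinal even though the values $\phi_{\tau'}$ may be bounded inside a proper convex subgroup. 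Note also that Theorem \ref{sph} concerns completeness of the \emph{source} ring and cannot substitute for convergence in the \emph{target} $R$.

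What is missing is precisely the bulk of the paper's proof: a descending induction on the convex subgroups in which, after the preliminary normalization of Lemma \ref{prepa}, one uses Proposition \ref{fingen} (finitely many monomials in the $\xi_i$ generate each $\Pp_{\phi_k}/\Pp^+_{\phi_k}$), Chevalley's theorem applied to these finitely generated modules over the complete rings $R/p_k$, and Zariski's theorem on symbolic powers, to replace the $\xi_i$ by representatives with the property that along \emph{any} infinite sequence of monomials $\xi^e$ of strictly increasing value — cofinal in $\Gamma$ or not — the $m$-adic orders tend to infinity. You announce Proposition \ref{fingen} as the structural input replacing the rank-one growth estimate, but it is never actually used in your argument. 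Once the representatives are chosen in this way, the remainder of your proposal (surjectivity by transfinite subtraction of initial forms, and the kernel description via Proposition \ref{initial} and iterated reduction modulo $F_0$) does agree with the paper's argument and goes through.
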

\begin{proof} We need the:

\begin{lemma}\label{prepa} A finite set $(\xi_j)_{j\in F}$ of the elements $\xi_i$ generate the maximal ideal of $R$. We can choose the representatives $\xi_i$ of the $\overline\xi_i$ in such a way that if $\nu(\xi_i)\in\Psi_k$ then it has an expression in terms  of the $(\xi_j)_{j\in F}$ which does not involve any term whose valuation is not in $\Psi_k$. If we assume that the image of $\xi_i$ in $R/p_k$ belongs to a high power of the maximal ideal, we may choose $\xi_i$ so that it belongs to the same power of the maximal ideal of $R$.
\end{lemma}
\begin{proof} By descending induction on the index $k$ of convex subgroups, using the fact that the initial form in ${\rm gr}_\nu R$ of a generator $\xi_i$ whose value is in $\Psi_k$ depends only upon its image in $R/p_k$, by corollary \ref{goodgen}. We can take a representative of $\overline\xi_i$ in $R/p_k$ and write it as a series in the images in $R/p_k$ of $(\xi_j)_{j\in F}$, and take as representative of $\overline\xi_i$ in $R$ the same expression in terms of the $(\xi_j)_{j\in F}$.
\end{proof}\noindent
In what follows we start from an initial choice of the $\xi_i$ done in this way. However, this is not sufficient to ensure that the $\xi_i$ belong to powers of the maximal ideal which increase with $i$. In order to achieve this, we have to improve our choice of representatives thanks to Chevalley's theorem, as follows: \par\medskip\noindent
Let $(\gamma_i)_{i\in I}$ be the minimal set of generators of the semigroup $\Gamma$ of $\nu$ on $R$, and $(\overline{\xi_i})_{i\in I}$ a minimal set of generators of the $k$-algebra ${\rm gr}_\nu R$, giving rise to a surjective map of $k$-algebras $k[(U_i)_{i\in I}]\to {\rm gr}_\nu R$. After definition \ref{part}, the polynomial algebra $k[(U_i)_{i\in I}]$ can be written $k[(U_i)_{i\in I_1}][(U_i)_{i\in I_2}]....[(U_i)_{i\in I_h}]$.\par
In a similar manner, since the subsemigroups of $\Gamma$ are well ordered and so combinatorially finite, the series in $\widehat{k[(u_i)_{i\in I}]}$ can be reorganized according to the decomposition of $I$ induced by the subgroups $\Psi_k$: a series in $\widehat{k[(u_i)_{i\in\bigcup_{k=1}^sI_k}]}$ can be written $\sum_AD_Au^A$ with $D_A\in\widehat{k[(u_i)_{i\in \bigcup_{k=1}^{s-1}I_k}]}$ and $u^A$ involving only variables with indices in $I_s$.\par
The weight $w$ gives rise to a monomial valuation on $\widehat{k[(u_i)_{i\in I}]}$ with associated graded ring $k[(U_i)_{i\in I}]$. The centers of the valuations with which it is composed are the ideals $q_t$ generated by $(u_i)_{\{i\vert\gamma_i\notin \Psi_t\}}$.\par
We inductively build a map of $k$-algebras $\widehat{k[(u_i)_{i\in I}]}\to R$ as follows: \par\noindent
From what we saw in the rank one case, we have a continuous surjective map $\pi_{h-1}\colon\widehat{k[(u_i)_{i\in I_1}]}\to R/p_{h-1}$ mapping $u_i$ to the image of $\xi_i$ in $R/p_{h-1}$. The image of a series $z\in\widehat{k[(u_i)_{i\in I_1}]}$ converges because the images of monomials of high weight belong to high powers of the maximal ideal of $R/p_{h-1}$ and so can be represented by series of high order in the images in $R/p_{h-1}$ of a finite number of generators $(\xi_j)_{j\in F}$ of the maximal ideal of $R$. Since by corollary \ref{goodgen} the initial forms of elements of $R$ (or $R/p_{h-2}$) whose valuation is in $\Psi_{h-1}$ depend only on the images of those elements in $R/p_{h-1}$,  as we lift to $R$ or $R/p_{h-2}$ the same series in the $(\xi_j)_{j\in F}$ or their images we obtain representatives in $R$ or $R/p_{h-2}$ of the $(\overline\xi_i)_{i\in I_1}$ which still belong to high powers of the maximal ideal of $R$ or $R/p_{h-2}$ (\emph{cf.} Lemma \ref{prepa}). Let us now seek representatives of the $\overline\xi_i$ for $i\in I_2$. Let us denote by $\lambda \colon \Psi_{h-2}\to\Psi_{h-2}/\Psi_{h-1}$ the canonical map and by $\nu_{h-2}$ the corresponding valuation on $R/p_{h-2}$. The residual valuation induced by $\nu$ on $R/p_{h-2}$ is  denoted by $\overline\nu_{h-2}$. \par We choose representatives in $R/p_{h-2}$ of the $\overline\xi_i$ for $i\in I_1$, which as we saw are series of increasing order in the images in $R/p_{h-2}$ of a finite number of generators $(\xi_j)_{j\in F}$ of the maximal ideal of $R$. Then we consider the smallest non zero element, say $\phi'_{h-1}$, of $\lambda(\Gamma\cap\Psi_{h-2})$. By proposition \ref{fingen} we know that given $\phi_{h-1}\in \Psi_{h-2}/\Psi_{h-1}$, the $R/p_{h-1}$-module $\Pp_{\phi_{h-1}}/\Pp^+_{\phi_{h-1}}$ attached to the valuation $\nu_{h-2}$ of $R/p_{h-2}$ is generated by the initial forms of finitely many monomials in our initial set of representatives in $R/p_{h-2}$ of the $(\overline \xi_i)_{i\in I_1\bigcup I_2}$.\par  We do not change the representatives of those finitely many elements and then the initial forms of all the other $\xi_s$ whose valuation has image $\phi'_{h-1}$ must be of the form ${\rm in}_{ \overline\nu_{h-2}}\xi_s={\rm in}_{\overline\Pp(\phi'_{h-1})}(\sum_{t=1}^n \overline B^{(s)}_{E_t}(\xi_i){\rm in}_{\nu_{h-2}} \xi^{E_t})$, where $\overline\Pp(\phi'_{h-1})$ is the filtration of $\Pp_{\phi'_{h-1}}/\Pp^+_{\phi'_{h-1}}$ induced by $\overline\nu_{h-2}$, the ${\rm in}_{\nu_{h-2}} \xi^{E_t}$ are the finitely many generating monomials and with $\overline B^{(s)}_{E_t}(\xi_i)\in R/p_{h-1}$ belonging to powers of the maximal ideal which tend to infinity with the valuation of $\xi_s$ by Chevalley's theorem. Again we can lift the $\overline B^{(s)}_{E_t}(\xi_i)$ as a series $B^{(s)}_{E_t}(\xi_i)$ in $R/p_{h-2}$ with the same property and choose as representative for $\xi_s$ the element $\sum_{t=1}^n B^{(s)}_{E_t}(\xi_i)\xi^{E_t}$, which belongs to higher and higher powers of the maximal ideal as $s$ increases.
We then repeat the same operation with the successor of  $\phi'_{h-1}$ in $\lambda(\Gamma\cap\Psi_{h-2})$ and so on. At each step, we have finitely monomials in the $\xi_i$ whose initial forms generate the corresponding $\Pp_{\phi_{h-1}}/\Pp^+_{\phi_{h-1}}$ and we keep the initial choice for the finitely many representatives which are used in these monomials and have not been chosen in the previous steps. As the values in $\lambda(\Gamma\cap\Psi_{h-2})$ increase the elements $\xi_s$ must belong to higher and higher symbolic powers of $p_{h-1}/p_{h-2}$ by proposition 5.3 of \cite{Te1} and thus to higher and higher powers of the maximal ideal by a result of Zariski (see \cite{Te1}, proposition 5.8). Using this we see that we can choose representatives $\xi_i\in R/p_{h-2}$ such that in any simple infinite sequence with increasing valuations the elements belong to powers of the maximal ideal tending to infinity, whether the valuations of the members that sequence have ultimately a constant image in $\lambda(\Gamma\cap\Psi_{h-2})$ or not. Writing the representatives as series in the images of the $(\xi_j)_{j\in F}$, we can lift them to $R$ or $R/p_{h-3}$.\Par
Continuing in this manner we can choose representatives in $R/p_{h-3}$ of the $\xi_i$ for $i\in I_3$ with the same property, and so on. At each step we use proposition \ref{fingen}, Chevalley's theorem and  Zariski's theorem on the symbolic powers.\par

Let us now assume that we have chosen representatives $\xi_i$ as above.\Par Assume that we have built a map $\widehat{k[(u_i)_{i\in \bigcup_{k=1}^{s-1}I_k}]}\to R$ which induces a surjection $\widehat{k[(u_i)_{i\in \bigcup_{k=1}^{s-1}I_k}]}\to R/p_{h-s+1}$ and take a series $\sum_AD_Au^A$, with $D_A\in\widehat{k[(u_i)_{i\in \bigcup_{k=1}^{s-1}I_k}]}$ and $u^A$ involving only variables with weight in $I_s$. We want to show that the series $\sum_AD_A(\xi)\xi^A$, where $D_A(\xi)$ is the image in $R$ of $D_A$, converges in $R$.\par
Let us first consider the case where $s=h$, the last one in the induction. Let us denote by $\nu_1$ the valuation of rank one with which $\nu$ is composed, with values in $\Phi_1=\Phi/\Psi_1$. We denote by $w_1(A)$ the image in $\Phi_1 $ of the weight of $D_Au^A$. By our inductive assumption, the $D_A(\xi)$ exist in $R$.\par\noindent If for every $\phi_1\in\Phi^+_1$ there are at most finitely many terms $D_A(\xi)\xi^A$ whose $\nu_1$ value is $\phi_1$, either the sum $\sum_AD_A(\xi)\xi^A$ is finite, or the $\nu_1$ values of the terms $D_A(\xi)\xi^A$ increase indefinitely, the series converges for the $\nu_1$ valuation, and therefore the sum exists in $R$, which is complete for the $\nu_1$-adic valuation (see \cite{Te1}, \S 5). If such is not the case, let $\phi_1\in \Phi_1$ be the least value for which there are infinitely many terms of the series whose $\nu_1$-value is $\phi_1$. As the value of $D_A(\xi)\xi ^A$ increases, at least one of three things must happen: the value of $D_A(\xi)$ increases and so it must belong to increasing powers of the maximal ideal, or the value of $\vert A\vert$ increases, with the same consequence, or the indices of the $\xi_i$ appearing in $\xi^A$ increase, and in view of our choice of representatives, again they belong to increasing powers of the maximal ideal. Therefore, the series $\sum_{w_1(A)=\phi_1} D_A(\xi)\xi ^A$ converges in $R$ to an element $\Sigma_{\phi_1}$.\Par The sum $\sum_{w_1(A)<\phi_1}D_Au^A$ is well defined by our induction hypothesis and the choice of $\phi_1$. So we have just shown that the image of the sum $\sum_{w_1(A)\leq \phi_1}D_Au^A $ is well defined in $R$.\Par Now we repeat the argument with the successor of $\phi_1$ in the image of $\Gamma$ in $\Phi/\Psi_1$, and continuing in this manner we build a series $\sum_{w_1(A)< \phi_1}D_A(\xi)\xi^A +\Sigma_{\phi_1}+\Sigma_{\phi_2}+\cdots $ of elements of $R$ indexed by the elements of the image of $\Gamma$ in the rank one group $\Phi/\Psi_1$ and where by construction the partial sum up to the index $\phi_k$ coincides with the image of $\sum_{w(A)\leq \phi_k}D_Au^a$.
\Par Either the series is a finite sum or the $\nu_1$ valuations of the images in $R$ of its terms must tend to infinity in the image of $\Gamma$ in the rank one group $\Phi_1$ and so its image converges for the $\nu$-adic topology and therefore, by corollary 5.9 of \cite{Te1} it converges for the $m$-adic topology of $R$ as well. By construction its sum is the image of our original series.\par
Let us now go back to our induction, and apply this result to $\widehat{k[(u_i)_{i\in \bigcup_{k=1}^sI_k}]}$ and $R/p_{h-s}$. It tells us that we can define a map $\widehat{k[(u_i)_{i\in \bigcup_{k=1}^sI_k}]}\to R/p_{h-s}$ where the image of an element of the first ring is the sum of a series made of terms in the images of the $u_i$, which converges in the $m/p_{h-s}$-adic topology. Lifting these terms to $R$ defines a series which converges in the $m$-adic topology and defines a map $\widehat{k[(u_i)_{i\in \bigcup_{k=1}^sI_k}]}\to R$. By induction we have now defined our map
$$\pi\colon\widehat{k[(u_i)_{i\in I}]}\to R.$$
Let is prove that it is surjective. Given $x\in R$ we follow exactly the same procedure as we did in the rank one case. The difference is that now it gives us a transfinite series since the steps of the procedure are indexed by $\Gamma$. This series determines an element of $\widehat{k[(u_i)_{i\in I}]}$. By what we have just seen this series converges to $x'\in R$. If $x-x'\neq 0$, its initial form is part of the series, which gives a contradiction. So the series converges to $x$, which proves the surjectivity.\Par By proposition \ref{initial} if $G$ is a non zero element of the kernel $F$ of $\pi$, its initial form belongs to the binomial ideal which is the kernel of ${\rm gr}_w\pi$. Set, with a slight abuse of notation, $G_1=G-{\rm in}_w G$ and iterate this process. We represent $G$ as the sum of a series of homogeneous elements of increasing weight, whose images in ${\rm gr}_w\widehat{k[(u_i)_{i\in I}]}$ belong to ${\rm Kergr}_w\pi$. This shows that if we take elements of $F$ whose initial forms generate the initial ideal ${\rm Kergr}_w\pi$, the closure of the ideal $\tilde F$ which they generate is $F$. The initial ideal of $F$ is, by construction, equal to the initial ideal of $\tilde F$.  We could also invoke the faithful flatness of the specialization of $R$ to ${\rm gr}_\nu R$ (see \cite{Te1}, proposition 2.3 and proposition 5.38).
\end{proof}
\begin{Remark}\label{power}\small{\begin{enumerate}
\item The argument given in the proof shows that when the set $I$ is finite, the scalewise completion $\widehat{k[(u_i)_{i\in I}]}$ of the polynomial ring $k[(u_i)_{i\in I}]$ coincides with the usual power series ring $k[[(u_i)_{i\in I}]]$. The point is again that if there are finitely many variables, in order for the weight to increase in a sequence of monomials, the degrees of the monomials must increase. To sum up:\Par$\bullet$\emph{ If the valuation $\nu$ is of rank one or if the semigroup $\Gamma$ is finitely generated, any choice of representatives $\xi_i\in R$ of the generators $\overline\xi_i$ of ${\rm gr}_\nu R$ will be suitable for the valuative Cohen theorem.}\item If $\nu$ has rank $>1$, some choices of the representatives $\xi_i$ can lead to sums of
$\widehat{k[(u_i)_{i\in I}]}$ having no image in $R$ by the map $u_i\to\xi_i$. The problem comes from infinite sets of representatives having value in some $\Pp_{\phi_i}$ with $\phi_i\in\Phi/\Psi_i$, but containing an uncontrollable "tail" with value in $\Pp^+_{\phi_i}$. For example if the valuation has rank two, we consider the sum $\sum_{i\in I_1}u_i$, and the corresponding $\xi_i$ are all of the form $\xi'_i +\eta$ with ${\rm in}_\nu \xi'_i={\rm in}_\nu \xi_i$, the $\xi'_i$ belonging to higher and higher powers of the maximal ideal, and $\nu(\eta)\in \Phi\setminus\Psi_1$.
\item The nature of the proof suggests that it can be extended to the non-equicharacteristic case.
\item By construction, we have surjective maps $\widehat{k[(u_i)_{i\in \bigcup_{k=1}^sI_k}]}\to R/p_{h-s}$. Their kernel is generated up to closure by the generators (up to closure) of the kernel of the map $\pi\colon\widehat{k[(u_i)_{i\in I}]}\to R$ from which one has removed all the terms containing a variable $u_i$ with $i\notin \bigcup_{k=1}^sI_k$.

 \end{enumerate}}
\end{Remark}
\begin{example}We revisit examples 3.19 and 5.27 of \cite{Te1}. Let $R$ be a complete noetherian equicharacteristic local domain with residue field $k$, and we fix a field of representatives $k\subset R$. Let $f\in R$ generate a non trivial prime ideal, and let us choose a rational valuation $\nu$ on $R/fR$ with value group $\Psi_1$. We can define a valuation $\mu$ on $R$ with value group $\Z\oplus\Psi_1$ ordered lexicographically, as follows: $\mu(x)=(\ell, \nu(f^{-\ell}x\ {\rm mod.}fR))$, where $\ell$ is the unique integer such that $x\in f^\ell R\setminus f^{\ell+1}R$. Then, by direct inspection or by invoking \textit{loc.cit.}, we have the equality ${\rm gr}_\mu R={\rm gr}_\nu( R/fR)[F]$, where $F={\rm in}_\mu f$. Let us choose elements $\xi_i\in R$ such that their images in $R/fR$ have $\nu$-initial forms which generate the $k$-algebra ${\rm gr}_\nu( R/fR)$ and for which we can apply the valuative Cohen theorem. Let us take variables $u_i$ corresponding to the $\xi_i$ and a variable $v$ corresponding to $f$. Given a series in the $u_i$ and $v$ in the ring $\widehat{k[(u_i)_{i\in I},v]}$, we can write it $\sum_AD_A(u)v^A$. By Theorem \ref{Valco} the $D_A(u)$ have images in $R$, which  we shall write $D_A(\xi , f)$, and then the series $\sum_AD_A(\xi , f)f^A$ converges in $R$.\par To prove that the map $\widehat{k[(u_i)_{i\in I},v]}\to R$ so defined is surjective it suffices to prove that any element $af^\ell$, with $a\notin fR$, is in its image. The $\mu$-initial form of $a$ is the same as the $\nu$-initial form of its image mod.$fR$. It is a term $c^{(\ell)}_1\overline\xi^{e_1}$, with $c^{(\ell)}_1\in k^*$. We consider $a_1=a-c^{(\ell)}_1\xi^{e_1}$, note that $\mu(a_1)>\mu (a)$ and iterate this procedure, obtaining a (possibly transfinite) series $\sum_jc^{(\ell)}_j\xi^{e_j}\in R$, whose image in $R/fR$  converges to the image of $a$.\par\noindent So we have $(a-\sum_jc^{(\ell)}_j\xi^{e_j})f^\ell\in f^{\ell+1}R$. Let $\ell+k$, with $k\geq 1$, be the $f$-adic value of this element and let us write it $a^{(\ell+k)}f^{\ell+k}$, with $a^{(\ell+k)}\notin fR$. We repeat the procedure, building a series $\sum_jc^{(\ell+k)}_j\xi^{e_j}$ whose image in $R/fR$ converges to the image of $a^{(\ell+k)}$, and continue in this manner. In this way we create a series $\sum_{t=\ell}^\infty(\sum_jc^{(t)}_j\xi^{e_j})f^t$ which converges to $af^\ell$ in $R$ since $\bigcap_{t=\ell}^\infty f^tR=(0)$ and is the image of the series $\sum_{t=\ell}^\infty(\sum_jc^{(t)}_ju^{e_j})v^t\in \widehat{k[(u_i)_{i\in I},v]}$.

\end{example}
 \section{Valued complete noetherian local domains as overweight deformations}\label{OW}
Let us now go back to the notations of the introduction; let $R
$ be a complete equicharacteristic noetherian local domain and $\nu$ a rational valuation of $R$ with value group $\Phi$. We assume that the residue field $k$ of $R$ is algebraically closed and choose once and for all a field of representatives $k\subset R$. We follow the notations of \cite{Te1}.\par\noindent
\textit{Let us assume that the semigroup $\Gamma$ attached to $(R,\nu)$ is finitely generated}. Let $\gamma_1,\ldots ,\gamma_N$ be a set of generators of $\Gamma$ and let $\xi_1,\ldots ,\xi_N$ be elements of $R$ with $\nu(\xi_i)=\gamma_i$ (see remark  \ref{power}. 1)). Their images $\overline\xi_i$ in ${\rm gr}_\nu R$ generate it as a $k$-algebra. The kernel of the surjective map of graded $k$-algebras $$k[U_1,\ldots ,U_N]\to \hbox{\rm gr}_\nu R$$ determined by  $U_i\mapsto\overline\xi_i$ is a prime binomial ideal $F_0$ (see \cite{Te1}, corollary 4.3). By proposition 5.49 and corollary 5.52 of \cite{Te1}, or the valuative Cohen Theorem of the previous section, since $R$ is complete and in view of remark \ref{power}, this presentation of ${\rm gr}_\nu R$ lifts to a continuous surjection of $k$-algebras
$$k[[u_1,\ldots ,u_N]]\to R,\ u_i\mapsto\xi_i$$ whose kernel is generated by an overweight deformation of the binomial ideal $F_0$ for the weight determined by $w(u_i)=\nu(\xi_i)=\gamma_i$, and which is such that the valuation $\nu$ is the valuation determined by this weight. This is summarized as follows:
\begin{proposition}\label{OWD} Let $R$ be a complete equicharacteristic noetherian local domain and let $\nu$ be a rational valuation on $R$. Assume that the semigroup $\nu(R\setminus\{0\})$ is finitely generated. Then $(R,\nu)$ is an overweight deformation of its associated graded ring $\hbox{\rm gr}_\nu R$.\hfill\qed
\end{proposition}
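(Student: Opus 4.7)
The plan is to assemble the proposition from pieces already in hand, with the valuative Cohen theorem doing most of the work. First I would pick a finite set of generators $\gamma_1,\ldots,\gamma_N$ of $\Gamma$ (which exists by hypothesis) and, for each $i$, a representative $\xi_i\in R$ with $\nu(\xi_i)=\gamma_i$. By the general structure results recalled earlier (in particular \cite{Te1}, corollary 4.3, applied to the rational valuation $\nu$), the initial forms $\overline{\xi_i}\in \hbox{\rm gr}_\nu R$ generate $\hbox{\rm gr}_\nu R$ as a $k$-algebra and the kernel $F_0$ of the resulting surjection
\[
k[U_1,\ldots,U_N]\twoheadrightarrow \hbox{\rm gr}_\nu R,\qquad U_i\mapsto \overline{\xi_i},
\]
is a prime binomial ideal. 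Thus the weight $w$ on $k[U_1,\ldots,U_N]$ defined by $w(U_i)=\gamma_i$ realizes $\hbox{\rm gr}_\nu R$ as the quotient of a weighted polynomial ring by a prime binomial ideal of the type considered in section \ref{OD}.

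The next step is to lift this presentation to $R$. Since $I=\{1,\ldots,N\}$ is finite and $R$ is complete, remark \ref{power}.1 ensures that the scalewise completion $\widehat{k[(u_i)_{i\in I}]}$ coincides with the ordinary formal power series ring $k[[u_1,\ldots,u_N]]$, and any choice of representatives $\xi_i$ will be admissible for the valuative Cohen theorem. Applying theorem \ref{Valco} to the map $U_i\mapsto\overline{\xi_i}$ and the chosen $\xi_i$, one obtains a continuous surjection of topological $k$-algebras
\[
\pi\colon k[[u_1,\ldots,u_N]]\twoheadrightarrow R,\qquad u_i\mapsto \xi_i,
\]
with respect to the weight filtration on the source and the $\nu$-adic filtration on the target, whose associated graded map is the given surjection onto $\hbox{\rm gr}_\nu R$. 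The kernel $F$ of $\pi$ is, up to closure, generated by overweight deformations of a system of binomial generators $u^{m^\ell}-\lambda_\ell u^{n^\ell}$ of $F_0$; that is, by series of the form
\[
F_\ell = u^{m^\ell}-\lambda_\ell u^{n^\ell}+\sum_{w(p)>w(m^\ell)}c_p^{(\ell)}u^p,\qquad 1\leq \ell\leq s,
\]
whose $w$-initial forms generate $F_0$. This is exactly the datum $(OD)$ in definition \ref{overweight}.

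It remains to identify the valuation $\nu$ on $R=k[[u_1,\ldots,u_N]]/F$ with the valuation associated to the overweight deformation by proposition \ref{resoverwght}(a). But this is built into the construction: by the valuative Cohen theorem the associated graded map of $\pi$ for the weight filtration and the $\nu$-adic filtration is the surjection $k[U_1,\ldots,U_N]\twoheadrightarrow\hbox{\rm gr}_\nu R$, so for every $x\in R\setminus\{0\}$ we have $\nu(x)=\min\{w(\tilde x)\mid \pi(\tilde x)=x\}$, which is precisely the valuation attached to the overweight deformation in proposition \ref{resoverwght}(a). Hence $(R,\nu)$ is an overweight deformation of $\hbox{\rm gr}_\nu R$ in the sense of the definition following proposition \ref{initial}.

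The only genuinely delicate point is the invocation of the valuative Cohen theorem; once that is available, everything else is either bookkeeping or a direct reference to earlier results. I do not expect any serious obstacle beyond verifying that the hypotheses of theorem \ref{Valco} are met, which, thanks to the finiteness of $\Gamma$ and the completeness of $R$, reduces to the observation in remark \ref{power}.1.
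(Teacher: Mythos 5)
Your argument is correct and follows the paper's own proof essentially verbatim: choose representatives of a finite generating set of $\Gamma$, use the binomial presentation of $\hbox{\rm gr}_\nu R$ from \cite{Te1} (corollary 4.3), and invoke the valuative Cohen theorem together with remark \ref{power}.1 to lift it to a surjection $k[[u_1,\ldots,u_N]]\to R$ whose kernel is generated by overweight deformations of the binomials, with $\nu$ recovered as in proposition \ref{resoverwght}(a). No gaps; nothing further to add.
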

\begin{Remark}\small{ \begin{enumerate}\item We have seen the converse of this proposition in proposition \ref{resoverwght}, a).
 \item Since we assume that $R$ is complete, as a consequence of the valuative Cohen Theorem, a system $(\xi_i)_{i\in I}$ of elements of the maximal ideal of $R$ such that their initial forms ${\rm in}_\nu\xi_i$ generate the $k$-algebra ${\rm gr}_\nu R$ is a system of generators for the maximal ideal of $R$. The valuations of the $\xi_i$ being positive, the weights of the variables $u_i$ are all $>0$.
\end{enumerate}}
\end{Remark}
\begin{theorem}\label{LU} Let $R$ be a complete equicharacteristic noetherian local domain with algebraically closed residue field $k$ and let $\nu$ be a rational valuation on $R$. Assume that the associated graded ring ${\rm gr}_\nu R$ is finitely generated as a $k$-algebra and let $(\xi_i)_{1\leq i\leq N}$ be elements of the maximal ideal of $R$ whose initial forms generate ${\rm gr}_\nu R$. Let us denote by $X$ the formal subspace of $\A^N(k)$ corresponding to the surjection $k[[u_1,\ldots ,u_N]]\to R$ determined by $u_i\mapsto\xi_i$. There exist regular fans $\Sigma$ with support $\R^N_{\geq 0}$ such that the strict transform $X'$ of $X$ by the birational toric map $Z(\Sigma)\to\A^N(k)$ is non singular and transversal to the toric boundary at the point picked by $\nu$.
\end{theorem}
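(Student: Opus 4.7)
The strategy is essentially to assemble two results already in place: Proposition \ref{OWD} realizes $(R,\nu)$ as an overweight deformation of its associated graded ring ${\rm gr}_\nu R$, and Proposition \ref{resoverwght} b) furnishes precisely the kind of toric modification whose existence we are asked to prove. So the plan is to verify that the hypotheses of the first proposition are met, extract the overweight deformation structure on $k[[u_1,\ldots,u_N]]$ determined by the $\xi_i$, and then apply the second proposition.

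First I would note that the hypotheses of Theorem \ref{LU} match exactly those of Proposition \ref{OWD}: $R$ is complete equicharacteristic noetherian local, $k$ is algebraically closed (so there is a field of representatives $k\subset R$), $\nu$ is rational, and the value semigroup $\Gamma=\nu(R\setminus\{0\})$ is finitely generated (since ${\rm gr}_\nu R$ is a finitely generated $k$-algebra and, $\nu$ being rational, the degrees of a minimal set of $k$-algebra generators of ${\rm gr}_\nu R$ are exactly a minimal set of semigroup generators of $\Gamma$, by \cite{Te1}, \S 4). Setting $w(u_i)=\gamma_i=\nu(\xi_i)>0$ puts a weight with values in $\Phi_{\geq 0}$ on $k[[u_1,\ldots,u_N]]$. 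By Proposition \ref{OWD} (whose proof uses the valuative Cohen Theorem \ref{Valco} together with remark \ref{power}.1, where finiteness of $I$ reduces the scalewise completion to the usual power series ring), the map $u_i\mapsto \xi_i$ gives a continuous surjection $k[[u_1,\ldots,u_N]]\to R$ whose kernel is generated by an overweight deformation $(F_1,\ldots,F_s)$ of the prime binomial ideal $F_0=(U^{m^\ell}-\lambda_\ell U^{n^\ell})_{1\leq\ell\leq s}$ defining ${\rm gr}_\nu R\subset k[U_1,\ldots,U_N]$, and the valuation $\nu$ on $R=S/(F_1,\ldots,F_s)$ is the one determined by this weight.

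At this point the formal subspace $X\subset\A^N(k)$ of Theorem \ref{LU} is exactly the formal space defined by such an overweight deformation (OD), with the generators $m^\ell-n^\ell$ of the lattice of relations among the $\gamma_i$, the constants $\lambda_\ell\in k^*$ arising from corollary 4.3 of \cite{Te1} (using that $k$ is algebraically closed), and $w(u_i)>0$ for every $i$. Therefore the hypotheses of Proposition \ref{resoverwght} b) are satisfied: $k$ is algebraically closed, and $(R,\nu)$ is an overweight deformation with positive weights. That proposition asserts the existence of regular fans $\Sigma$ subdividing $\check\R^N_{\geq 0}$ such that the corresponding birational toric map $\pi(\Sigma)\colon Z(\Sigma)\to\A^N(k)$ is an embedded pseudo-resolution of the affine toric variety ${\rm Spec}\,{\rm gr}_\nu R$, and the strict transform of $X$ by $\pi(\Sigma)$ is non singular and transversal to the toric boundary at the point picked by $\nu$. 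This is precisely the conclusion of Theorem \ref{LU}.

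There is really no main obstacle beyond bookkeeping, since the heavy lifting has been done in the preceding sections: the existence of the overweight deformation structure rests on Theorem \ref{Valco}, and the existence of the suitable fan rests on the combinatorial analysis culminating in propositions \ref{exist} and \ref{approx}. The only thing one must be a little careful about is to verify that the overweight deformation one obtains is indeed \emph{finite dimensional} in the sense of Definition \ref{overweight} (since $I$ is finite here, by the finite generation assumption), which allows a direct application of Proposition \ref{resoverwght} b) rather than its infinite-dimensional avatar alluded to in remark \ref{ROD}.1. Thus Theorem \ref{LU} follows as a clean corollary of Propositions \ref{OWD} and \ref{resoverwght}.
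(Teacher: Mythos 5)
Your proof is correct and follows exactly the route the paper takes: the paper's own proof of Theorem \ref{LU} is the one-line observation that it is the translation of Proposition \ref{resoverwght} b) via Proposition \ref{OWD}. Your additional verifications (that finite generation of ${\rm gr}_\nu R$ for a rational valuation gives finite generation of $\Gamma$, and that the deformation is finite dimensional) are accurate and merely make explicit what the paper leaves implicit.
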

\begin{proof}This is just the translation of proposition \ref{resoverwght}, b) using proposition \ref{OWD}.
\end{proof}
\begin{remark} In view of corollary \ref{numer} below, this applies in particular to all one dimensional complete equicharacteristic noetherian local domains with an algebraically closed residue field and their unique valuation induced by the normalization. If $R$ is an excellent one dimensional equicharacteristic local domain with an algebraically closed residue field, we shall see in subsection \ref{extcomp} below that each valuation $\nu$ on $R$ is induced by the unique valuation $\hat\nu$ of a quotient of the $m$-adic completion $\hat R$ by a minimal prime $H$. In this one-dimensional case the natural inclusion ${\rm gr}_\nu R\subset {\rm gr}_{\hat\nu}\hat R/H$ is an isomorphism (see \cite{HOST}, lemma 2.2) so that this last graded $k$-algebra is generated by the initial forms of elements of the maximal ideal of $R$. This determines a map ${\rm Spec} R\to \A^N(k)$ and ${\rm Spec} R$ has a local strict transform at the center of $\nu$ under each of the birational toric maps $Z(\Sigma)\to\A^N(k)$ which give an embedded local uniformization of $\hat\nu$. These strict transforms give embedded local uniformizations of the valuation $\nu$ on $R$. This will be generalized below in corollary \ref{anyAb}.
\end{remark}
\section{Some results on semigroups}
\emph{In the rest of this paper, by {\rm affine semigroup} we mean a subsemigroup of a finitely generated free abelian group.}
\subsection{On the finite generation of affine semigroups}
\begin{proposition}\label{transl} Given an extension $\Gamma\subset \Gamma'$ of affine semigroups, assume that there exist a system of generators $(\delta_j)_{j\in J}$ of $\Gamma'$, an integer $d$ and element $\gamma\in \Gamma$ such that $\gamma+d\delta_j\in\Gamma$ for all $j\in J$. If $\Gamma$ is finitely generated, so is $\Gamma'$.
\end{proposition}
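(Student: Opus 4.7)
Let $H := \langle \Gamma \rangle_{\Z}$ and $H' := \langle \Gamma' \rangle_{\Z}$ denote the subgroups of the ambient free abelian group generated by $\Gamma$ and $\Gamma'$. From the hypothesis, $d\delta_j = (\gamma + d\delta_j) - \gamma \in H$ for every $j \in J$, so $dH' \subseteq H$; hence $H'/H$ is a finite abelian group of exponent dividing $d$, and $\R \otimes H = \R \otimes H'$. The cone $C := \R_{\geq 0}\Gamma$ is rational polyhedral and full-dimensional in this common real vector space. For any $\delta' \in \Gamma'$ written as a finite sum $\delta' = \sum_j n_j \delta_j$, the key integrality identity
$$d\delta' + N\gamma = \sum_j n_j(\gamma + d\delta_j) \in \Gamma, \qquad N := \sum_j n_j,$$
shows that every element of $\Gamma'$ lies in $\Gamma$ after multiplication by $d$ and addition of a multiple of $\gamma$.

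The strategy is then to show that the set of \emph{$\Gamma$-minimal} elements of $\Gamma'$ --- those $\delta' \in \Gamma'$ with $\delta' - a_i \notin \Gamma'$ for every generator $a_i$ of a fixed finite generating set $\{a_1, \ldots, a_s\}$ of $\Gamma$ --- is finite. Call this minimal set $M$. Once this is established, the partial order on $\Gamma'$ defined by $x \le y \iff y - x \in \Gamma$ (well-founded because $\Gamma$ is pointed, admitting a linear functional strictly positive on $\Gamma \setminus \{0\}$) allows one to reduce any $\delta' \in \Gamma'$ to some element of $M$ by successive subtractions of generators of $\Gamma$. This gives $\Gamma' = M + \Gamma$, and hence $M \cup \{a_1, \ldots, a_s\}$ is a finite generating set for $\Gamma'$.

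Finiteness of $M$ is the combinatorial core of the argument. Because $C$ is full-dimensional in $\R \otimes H'$, some subset of size $r := \mathrm{rank}\, H'$ of the $a_i$'s is linearly independent. For each such $a_i$, an element $\delta' \in \Gamma'$ that is sufficiently ``deep'' in the $a_i$-direction must satisfy $\delta' - a_i \in \Gamma'$; thus the region $\{\delta' \in \Gamma' : \delta' - a_i \notin \Gamma'\}$ is bounded in the $a_i$-direction. Intersecting over the $r$ linearly independent directions bounds $M$ in a compact region of $\R \otimes H'$, and intersection with the discrete lattice $H'$ yields a finite set.

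\textbf{Main obstacle.} The delicate step is justifying the ``bounded in the $a_i$-direction'' claim when $\Gamma'$ need not be saturated (i.e., when $\Gamma' \neq C' \cap H'$, where $C' := \R_{\geq 0}\Gamma'$). Handling this requires combining the rational-polyhedrality of $C$, Gordan's lemma, the finiteness of $H'/H$, and the integrality identity above to bound uniformly how far $\Gamma'$ can deviate from its saturation in each direction. The hypothesis $\gamma + d\delta_j \in \Gamma$ enters precisely here --- without such an integrality condition, subsemigroups of finitely generated affine semigroups can easily fail to be finitely generated, so the whole reduction rests on this point.
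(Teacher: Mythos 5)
There is a genuine gap: your argument reduces the proposition to the finiteness of the set $M$ of $\Gamma$-minimal elements of $\Gamma'$, and that is precisely the point you leave unproved — your ``main obstacle'' paragraph only lists the tools that would be needed. The geometric claim you propose is moreover not correct as stated. For $\delta'$ far out along an extremal ray of the common cone $C$, the element $\delta'-a_i$ does lie in $C\cap H'$ for a suitable generator $a_i$, but membership in $\Gamma'$ does not follow, because $\Gamma'$ need not be saturated; no amount of ``depth in the $a_i$-direction'' resolves this without a uniform comparison between $\Gamma'$ and its saturation, which is the whole content of the proposition. In addition, even granting boundedness in each of $r$ linearly independent directions $a_{i_1},\ldots,a_{i_r}$, the intersection of the corresponding half-spaces with $C$ is compact only if these $a_{i_j}$ generate $C$ as a cone, which you have not arranged and cannot in general since $C$ need not be simplicial. (A secondary point: the proposition does not assume $\Gamma$ is pointed, while your descent needs a functional strictly positive on $\Gamma\setminus\{0\}$; the noetherian argument below avoids this.)

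The paper closes the gap with two tools you do not actually deploy. First, Dickson's lemma (noetherianity of $\Z[t^\Gamma]$) applied to the monoideal of $\Gamma$ generated by the $\gamma+d\delta_j$ shows that finitely many of them, say $\gamma+d\delta_{j_1},\ldots,\gamma+d\delta_{j_f}$, generate it; writing $\gamma+d\delta_j=\sum_i a_i^{(j)}(\gamma+d\delta_{j_i})+\epsilon^{(j)}$ with some $a_i^{(j)}\neq 0$ shows each $d\delta_j$ lies in the finitely generated semigroup $\tilde\Gamma$ generated by $\Gamma$ and the $d\delta_{j_i}$. Replacing $\Gamma$ by $\tilde\Gamma$ reduces to the case $\gamma=0$, i.e.\ $d\Gamma'\subseteq\Gamma\subseteq\Gamma'$. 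Second, one enlarges $d\Gamma$ inside $d\Gamma'$ to a finitely generated subsemigroup $\tilde\Gamma_1$ generating the group $dH'$ and the cone $C$, and invokes the existence of a conductor for its normalization $C\cap dH'$: there is $\kappa\in\tilde\Gamma_1$ with $\kappa+d\Gamma'\subseteq\kappa+(C\cap dH')\subseteq\tilde\Gamma_1$, and a second application of the Dickson argument (your hypothesis with $d=1$ and $\gamma=\kappa$) shows that $d\Gamma'$, hence $\Gamma'$, is finitely generated. If you prefer to salvage your formulation via $M$, the clean statement is that $\Z[t^{\Gamma'}]$ is a $\Z[t^{\Gamma}]$-submodule of the finite module $\Z[t^{C\cap H'}]$ and hence finitely generated, which is exactly the finiteness of $M$ — but this again rests on the conductor/normalization finiteness, not on a direct geometric bound.
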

\begin{proof}: By Dickson's Lemma\footnote{Or the fact that the semigroup ring $\Z[t^\Gamma]$ is a noetherian ring and the ideal generated by the elements $t^{\gamma+d\delta_j}$ is finitely generated.}, the monoideal of $\Gamma$ generated by the $\gamma+d\delta_j$ is finitely generated, say by $(\gamma+d\delta_{j_i})_{1\leq i\leq f}$. Thus for any $\delta_j$ we can write $\gamma+d\delta_j=a^{(j)}_1(\gamma+d\delta_{j_1})+\cdots +a^{(j)}_f(\gamma+d\delta_{j_f})+\epsilon^{(j)}$ with $\epsilon^{(j)}\in \Gamma$, $a^{(j)}_i\in \N$ and some $a^{(j)}_i\neq 0$. This shows that $d\delta_j$ is in the subsemigroup $\tilde\Gamma$ of $\Gamma'$ generated by $\Gamma$ and the $(d\delta_{j_i})_{1\leq i\leq f}$. Replacing $\Gamma$ by the finitely generated semigroup $\tilde\Gamma\subset \Gamma'$ we are reduced to the case where $\gamma=0$. In that case we have the inclusions $d\Gamma\subset d\Gamma'\subset \Gamma\subset\Gamma'$ and it suffices to prove that $d\Gamma'$ is finitely generated. Let us denote by $M$ (resp. $M'$) the group generated by $\Gamma$ (resp. $\Gamma'$). By our assumption we have $dM'\subset M$ and if we denote by $\check\sigma$ the cone generated by $\Gamma$ in $M_\R=M'_\R$, it is also the closed cone generated by $\Gamma'$ or $d\Gamma'$. We can add to $d\Gamma$ finitely many elements of $d\Gamma'$ so that the resulting subsemigroup $\tilde\Gamma_1\subset d\Gamma'$ generates the same group $dM'$. By the existence of a conductor for finitely generated affine semigroups, (see Theorem 1.4 of \cite{Ka-Kh1})\footnote{Or the fact that given a finitely generated affine semigroup $\Gamma$ generating a free abelian group $M$ and a rational convex cone $\check\sigma$ in $M_\R$, the semigroup algebra $k[t^{\check\sigma\cap M}]$ is the integral closure of $k[t^\Gamma]$ in $k[t^M]$ and a finitely generated graded $k[t^\Gamma]$-module, so that there exist homogeneous elements $t^\kappa\in k[t^\Gamma]$ such that $t^\kappa k[t^{\check\sigma\cap M}]\subset k[t^\Gamma]$.} , there exists an element $\kappa\in \tilde\Gamma_1$ such that $\kappa+\check\sigma\cap dM'\subset \tilde\Gamma_1$, and in particular $\kappa+d\Gamma'\subset \tilde\Gamma_1$. We can repeat with $\tilde\Gamma_1$ and $d\Gamma'$, now taking $d=1$ and $\gamma=\kappa$  in the hypothesis of the lemma, the argument used at the beginning, to prove that $d\Gamma'$, and hence $\Gamma'$, is finitely generated.\end{proof}
\begin{corollary}\label{fincone} An affine semigroup $\Gamma$ containing a finitely generated subsemigroup which generates the same cone is finitely generated.
\end{corollary}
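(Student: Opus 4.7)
The plan is to reduce the statement to a direct application of Proposition \ref{transl} by producing, for the finitely generated subsemigroup $\Gamma_0 \subset \Gamma$, an integer $d$ and an element $\gamma \in \Gamma_0$ such that $\gamma + d\delta \in \Gamma_0$ for every $\delta \in \Gamma$. Taking then the generating system of $\Gamma$ indexed by $\Gamma$ itself, Proposition \ref{transl} will immediately give the finite generation of $\Gamma$.

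First, let $M_0$ and $M$ be the free abelian groups generated by $\Gamma_0$ and $\Gamma$ respectively inside the common ambient free abelian group. Since $\Gamma_0 \subset \Gamma$, we have $M_0 \subset M$; and since both $\Gamma_0$ and $\Gamma$ generate the same cone $\check\sigma$, the $\R$-spans of $M_0$ and $M$ coincide, so $M_0$ and $M$ have the same rank. Therefore $M/M_0$ is finite, and there exists a positive integer $d$ with $dM \subset M_0$. For every $\delta \in \Gamma$ we then have $d\delta \in M_0 \cap \check\sigma$.

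Next I would use the existence of a conductor for finitely generated affine semigroups, exactly as invoked in the proof of Proposition \ref{transl}: since $\Gamma_0$ is a finitely generated subsemigroup of $M_0 \cap \check\sigma$ which generates both the group $M_0$ and the cone $\check\sigma$, there exists $\gamma \in \Gamma_0$ such that $\gamma + (M_0 \cap \check\sigma) \subset \Gamma_0$. In particular $\gamma + d\delta \in \Gamma_0$ for every $\delta \in \Gamma$.

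Finally, taking as the generating family $(\delta_j)_{j \in J}$ of $\Gamma$ the tautological family indexed by $J = \Gamma$, the hypotheses of Proposition \ref{transl} are satisfied for the extension $\Gamma_0 \subset \Gamma$ with the integer $d$ and the element $\gamma$ just constructed; the conclusion gives that $\Gamma$ is finitely generated. The only point where one needs to be a bit careful is verifying that the proof of Proposition \ref{transl} indeed accommodates an infinite generating family (it does: the Dickson's lemma step only extracts a finite sub-family from the monoideal generated by the $\gamma + d\delta_j$, regardless of the cardinality of $J$), so there is no genuine obstacle beyond assembling these ingredients.
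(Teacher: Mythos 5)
Your proof is correct and follows essentially the same route as the paper: both arguments rest on the existence of a conductor for a finitely generated affine semigroup and then reduce to Proposition \ref{transl}. The only difference is cosmetic — the paper first enlarges $\Gamma_0$ by finitely many elements of $\Gamma$ so that it generates the same group $M$ as $\Gamma$ and can then take $d=1$, whereas you keep $\Gamma_0$ unchanged and absorb the finite index of $M_0$ in $M$ into the integer $d$; your closing remark that the Dickson's lemma step in Proposition \ref{transl} tolerates an infinite generating family is a worthwhile observation, since the tautological family indexed by all of $\Gamma$ is indeed what one must use here.
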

\begin{proof} As in the proof of the proposition, we can add to the subsemigroup finitely many elements of $\Gamma$ to obtain a finitely generated subsemigroup $\Gamma_1\subset \Gamma$ which generates the same group $M$ and the same cone $\check\sigma$ as $\Gamma$. By the existence of a conductor, there is an element $\kappa\in \Gamma_1$ such that $\kappa + \Gamma\subset\kappa+\check\sigma\cap M\subset \Gamma_1$ and we can apply the proposition.\end{proof}
This corollary, which is perhaps well known, can be seen as a natural generalization to higher dimensions of the following classical result due to Dickson (see \cite{R} and \cite{Gi}):
\begin{corollary} \label{numer} Any subsemigroup $\Gamma$ of $\N$ is finitely generated.
\end{corollary}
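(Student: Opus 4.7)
The plan is to deduce this as an immediate application of Corollary \ref{fincone}. The key observation is that $\N$ itself is the non-negative part of the finitely generated free abelian group $\Z$, so any subsemigroup $\Gamma \subseteq \N$ qualifies as an affine semigroup in the sense used in this paper, and the machinery developed above applies without modification.

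First I would dispose of the trivial case $\Gamma = \{0\}$, which is generated by the empty set. Assuming then that $\Gamma$ contains some positive integer $a$, I pick any such $a$ and form the one-generator subsemigroup $\Gamma_0 = \N\cdot a = \{0, a, 2a, 3a, \ldots\} \subseteq \Gamma$. This $\Gamma_0$ is finitely generated by construction, and the closed convex cone it spans in $\R$ is $\R_{\geq 0}$. On the other hand every element of $\Gamma$ lies in $\N \subseteq \R_{\geq 0}$, so the cone spanned by $\Gamma$ is also contained in $\R_{\geq 0}$; since $\Gamma \supseteq \Gamma_0$ these two cones coincide and both equal $\R_{\geq 0}$.

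Corollary \ref{fincone} now applies verbatim with $\Gamma_0 \subseteq \Gamma$ as the finitely generated subsemigroup generating the same cone, and yields that $\Gamma$ is finitely generated, as required.

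There is essentially no obstacle to overcome here: the whole substantive content has already been packaged into Corollary \ref{fincone}, which itself rests on Proposition \ref{transl} and the existence of a conductor for finitely generated affine semigroups. In the rank-one situation the cone condition is automatic as soon as one excludes the trivial case, so the classical Dickson-type statement drops out as the simplest instance of the general phenomenon.
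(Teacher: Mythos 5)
Your proof is correct and is essentially the paper's proof, just spelled out in more detail: the paper simply notes that any nonzero element of $\Gamma$ generates the same cone as $\Gamma$ and then invokes Corollary \ref{fincone}. Your treatment of the trivial case $\Gamma = \{0\}$ and the explicit verification that both cones equal $\R_{\geq 0}$ are sound and add nothing beyond what the one-line argument already implies.
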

\begin{proof} Any non zero element of $\Gamma$ generates the same cone as $\Gamma$.\end{proof}
\subsection{Special extensions of groups}
The next result, which we will use later, is also known is special cases. For the classical case see \cite{GB-P}, Lemma 1-1 and the references therein. One finds rather general formulations in Lemma 3.1 of \cite{Ka} and in Lemma 2.1 of \cite{M3}.\par\noindent
 An element $i$ in a well ordered set $I$ has a \emph{predecessor} $i-1\in I$ if $i$ is the least element of $I$ which is $>i-1$. In the well ordered set $\N^2$ with the lexicographic order, the element $(1,0)$ has no predecessor.

\begin{proposition}\label{expression} Let $\Phi_0$ be a commutative torsion free group. Let $(\delta_i)_{i\in I}$ be a family, indexed by an ordinal $I$, of elements of a torsion free commutative group $\Phi$ containing $\Phi_0$ as a subgroup, and assume that for each $i\in I$ there exists an integer $n_i\in \N,\ n_i\geq 1$, such that $n_i\delta_i$ belongs to the subgroup $\Phi_i^-$ generated by $\Phi_0$ and the elements $\delta_j, j<i$. Assume also that for each $i\in I$ the set $E(i)=\{k\in I,k\leq i, n_k>1\}$ is finite. Then for each element $\phi$ of the subgroup $\Phi_i$ generated by $\Phi_0$ and the elements $\delta_j, j\leq i$, there exists  a presentation:
$$\phi=\phi_0+\sum_{k\in E(i)}t_k\delta_k,\ \phi_0\in \Phi_0,\ t_k\in\N,\ 0\leq t_k\leq n_k-1.$$
If we assume that each $n_i$ is the smallest integer such that $n_i\delta_i\in \Phi_i^-$, the presentation is unique.
\end{proposition}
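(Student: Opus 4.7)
The plan is to prove both existence and uniqueness by transfinite induction on $i \in I$. The key structural observation is that for every $i \in I$ --- whether $i$ has a predecessor or is a limit ordinal --- one has $\Phi_i = \Phi_i^- + \Z\delta_i$, so any $\phi \in \Phi_i$ can be written $\phi = \phi^- + t\delta_i$ with $\phi^- \in \Phi_i^-$ and $t \in \Z$. Because $n_i\delta_i \in \Phi_i^-$, a Euclidean division by $n_i$ absorbs the multiple $\lfloor t/n_i\rfloor\, n_i\delta_i$ into $\phi^-$, yielding $0 \leq t \leq n_i - 1$. When $n_i = 1$, the term $\delta_i$ is itself in $\Phi_i^-$ and we simply absorb $t\delta_i$ into $\phi^-$ so that $i$ contributes no term to the final sum; this is consistent with the fact that $i \notin E(i)$ in that case.

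For the existence step, having reduced to $\phi^- \in \Phi_i^-$, I would split on the nature of $i$. If $i$ has a predecessor $i-1$, then $\Phi_i^- = \Phi_{i-1}$ and the inductive hypothesis applied at $i-1$ yields a presentation indexed by $E(i-1) \subseteq E(i)$. If $i$ is a limit ordinal, then $\Phi_i^- = \bigcup_{j<i}\Phi_j$; since $\phi^-$ is a finite $\Z$-linear combination of $\Phi_0$ and finitely many of the $\delta_j$ with $j < i$, there exists $j<i$ with $\phi^- \in \Phi_j$, and induction at $j$ produces a presentation indexed by $E(j) \subseteq E(i)$. In either case we pad with zero coefficients at those indices of $E(i)$ missing from the inductive expression, which is legitimate precisely because $E(i)$ is finite by hypothesis.

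For the uniqueness under the minimality assumption on the $n_i$, the plan is the standard leading-term argument. Suppose
\[
\phi = \phi_0 + \sum_{k \in E(i)} t_k\delta_k = \phi_0' + \sum_{k \in E(i)} t_k'\delta_k
\]
are two presentations with $0 \leq t_k, t_k' \leq n_k - 1$ and $\phi_0, \phi_0' \in \Phi_0$. If they differ, let $k_0$ be the largest index in the finite set $E(i)$ where $t_{k_0} \neq t_{k_0}'$. Rearranging,
\[
(t_{k_0} - t_{k_0}')\delta_{k_0} = (\phi_0' - \phi_0) + \sum_{k \in E(i),\, k < k_0}(t_k' - t_k)\delta_k \in \Phi_{k_0}^-,
\]
with $0 < |t_{k_0} - t_{k_0}'| < n_{k_0}$, contradicting the minimality of $n_{k_0}$.

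The main obstacle is getting the limit step of the induction right: one must verify that the element $\phi^- \in \Phi_i^-$ is already captured in some $\Phi_j$ with $j<i$ so that the induction hypothesis can be invoked, and this uses crucially that the expression of $\phi^-$ in the generators involves only finitely many $\delta_j$. The finiteness hypothesis on $E(i)$ plays the dual role of ensuring that all the sums that appear remain finite and that the process of padding with zeros in the inductive step is legal; without it, the statement would fail because the desired presentation could a priori need infinitely many nonzero terms.
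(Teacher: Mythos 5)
Your proof is correct and follows essentially the same route as the paper: transfinite induction on $i$, Euclidean division of the coefficient of $\delta_i$ by $n_i$ using $n_i\delta_i\in\Phi_i^-$, reduction of the limit-ordinal case to some $\Phi_j$ with $j<i$, and a largest-differing-index argument for uniqueness under the minimality hypothesis. The only cosmetic difference is that in the limit case the paper identifies $\Phi_i^-$ with $\Phi_j$ for the largest $j<i$ having $n_j>1$, whereas you descend via the finitely many generators actually occurring in $\phi^-$; both are valid.
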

\begin{proof}Let us denote by $1$ the smallest element of $I$ and set $\Phi_1^-=\Phi_0$. If $n_1=1$, then $\Phi_1=\Phi_0$ and the result is true. Assume that $n_1>1$ and let $\phi$ be an element of $\Phi_1$; we can write $\phi=\phi'_0+t'\delta_1$ with $\phi'_0\in\Phi_0$ and $t'\in\Z$. Divide $t'$ by $n_1$ in the following sense: write $t'=cn_1+t,\ c\in \Z,\ t\in\N,\ 0\leq t\leq n_1-1$. We can rewrite $\phi= \phi'_0+cn_1\delta_1+t\delta_1$ which has the required form since $n_1\delta_1\in\Phi_0$. Now we proceed by induction. If $n_i=1$ and if $i$ has a predecessor $i-1$ in $I$, by induction there is nothing to prove since $\Phi_i=\Phi_i^-=\Phi_{i-1}$. If $n_i=1$ and $i$ has no predecessor, the set $\{j\in I, j<i\}$ is infinite and by our hypothesis there is a largest element $j$ in it such that $n_j>1$. Then $\Phi_i=\Phi_j$ and we apply the induction to $\Phi_j$. If $n_i>1$, each element of $\Phi_i$ can be written as $\phi_i^-+t_i\delta_i$ with $\phi_i^-\in \Phi_i^-,\ 0\leq t_i\leq n_i-1$, and we apply the induction hypothesis to $\Phi_i^-$, which again is equal to $\Phi_{i-1}$ if $i$ has a predecessor $i-1$ or to some $\Phi_j$ with $j<i$ and $n_j>1$ if not.\par\noindent
The uniqueness under the minimality hypothesis follows from the fact that at each passage from $\Phi_i$ to $\Phi^-_i$ in the construction of the presentation a non zero difference of two presentations would produce a smaller factor than $n_i$.\end{proof}

\section{Valuations with finitely generated semigroup and \\Abhyankar valuations}\label{finiteAbh}
Let $R$ be a complete noetherian equicharacteristic local domain with an algebraically closed residue field, endowed with a rational valuation $\nu$. Assume that the semigroup $\nu(R\setminus\{0\})$ is finitely generated. Then by Theorem \ref{OWD} the ring $R$ is an overweight deformation of its associated graded ring, and in particular they have the same dimension. By a result of Piltant (see \cite{Te1}, proposition 3.1), the dimension of $\hbox{\rm gr}_\nu R$ is the rational rank of $\nu$, so that the valuation $\nu$ has to be Abhyankar. A slightly different argument was given in corollary \ref{seeAb}. Note that the semigroup of an Abhyankar valuation may be finitely generated with a rational rank $<{\rm dim}R$, but then the valuation is not rational.\par\medskip
The purpose of this section is to prove, in the situation studied here, a form of converse: if the rational valuation $\nu$ of the complete equicharacteristic local domain $R$ is Abhyankar, then after replacing $R$ by the completion of a toric $\nu$-modification of $R$, its semigroup becomes finitely generated. We know of no example where the semigroup of $R$ itself is not finitely generated.
\subsection{Composition of Abhyankar valuations}\label{Compo}
Recall that a valuation $\nu$ on a local domain $R$ is said to be \emph{zero dimensional} if $R$ is dominated by the valuation ring of $\nu$ and the residual extension is algebraic. Rational valuations are zero dimensional.
\begin{proposition}\label{compAb} Given a noetherian catenary local domain $R$ and an Abhyankar valuation $\nu$ of $R$, let $\nu'$ be a valuation with which $\nu$ is composed and $p'\subset R$ its center. The valuation $\nu'$ induces a zero dimensional Abhyankar valuation of $R_{p'}$ and $\nu$ induces an Abhyankar residual valuation of $R/p'$. Conversely, the composition of two Abhyankar valuations of a noetherian catenary local domain is Abhyankar, and it is zero dimensional if both valuations are.
\end{proposition}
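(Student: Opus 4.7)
The plan is to combine the additivity of rational ranks under composition of valuations with Abhyankar's inequality applied separately to $\nu'$ on the localization $R_{p'}$ and to the residual valuation $\bar\nu$ on the quotient $R/p'$, using the catenarity identity $\mathrm{ht}(p')+\dim R/p'=\dim R$ to compare the two dimension counts.

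Setting up, let $\Psi\subset\Phi$ be the convex subgroup of the value group of $\nu$ that cuts out $\nu'$ as the composed quotient $\nu'=\lambda\circ\nu$, $\lambda\colon\Phi\to\Phi/\Psi$; then $p'=\{x\in R\mid\nu(x)\notin\Psi\}$ is the center of $\nu'$ in $R$ and $\bar\nu$ is the residual valuation induced by $\nu$ on $R/p'$, with value group in $\Psi$. Throughout I use two standard facts about composed valuations: the rational ranks add, $r(\nu)=r(\nu')+r(\bar\nu)$, and the residue field is preserved at the last step, $k_\nu=k_{\bar\nu}$.

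For the forward implication, Abhyankar's inequality applied to $\nu'$ on $R_{p'}$ (whose residue field is $\kappa(p')=\mathrm{Frac}(R/p')$) and to $\bar\nu$ on $R/p'$ gives
\[ r(\nu')+\mathrm{tr}_{\kappa(p')}k_{\nu'}\leq \mathrm{ht}(p'),\qquad r(\bar\nu)+\mathrm{tr}_k k_{\bar\nu}\leq \dim R/p'. \]
Summing these and invoking the catenarity identity yields
\[ r(\nu)+\mathrm{tr}_{\kappa(p')}k_{\nu'}+\mathrm{tr}_k k_\nu\leq \dim R. \]
The Abhyankar hypothesis $r(\nu)+\mathrm{tr}_k k_\nu=\dim R$ forces the nonnegative slack $\mathrm{tr}_{\kappa(p')}k_{\nu'}$ to vanish and turns both original inequalities into equalities, so $\nu'$ is zero-dimensional Abhyankar on $R_{p'}$ and $\bar\nu$ is Abhyankar on $R/p'$.

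The converse is the same computation run in reverse: assuming $\nu'$ is zero-dimensional Abhyankar on $R_{p'}$ and $\bar\nu$ is Abhyankar on $R/p'$, their two Abhyankar equalities add, by catenarity, to $r(\nu)+\mathrm{tr}_k k_\nu=\dim R$, so $\nu$ is Abhyankar on $R$; if moreover $\bar\nu$ is itself zero-dimensional, then $k_\nu=k_{\bar\nu}$ is algebraic over $k$ and $\nu$ is zero-dimensional as well. The only pressure points in this argument are the catenarity identity $\mathrm{ht}(p')+\dim R/p'=\dim R$ (exactly what the catenary hypothesis is there to provide) and the precise setup of composition giving $r(\nu)=r(\nu')+r(\bar\nu)$ and $k_\nu=k_{\bar\nu}$; once these are in place the rest is pure bookkeeping, and I expect no serious obstacle.
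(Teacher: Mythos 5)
Your argument is correct and is essentially the paper's own proof: both apply Abhyankar's inequality separately to $\nu'$ on $R_{p'}$ and to the residual valuation on $R/p'$, combine them via the catenarity identity $\mathrm{ht}(p')+\dim R/p'=\dim R$ and the additivity of rational ranks, and let the Abhyankar hypothesis collapse all inequalities to equalities (forcing $\mathrm{tr}_{\kappa(p')}k_{\nu'}=0$); the converse is the same count run backwards in both treatments. The only cosmetic difference is that you sum the two inequalities while the paper chains them into a single displayed string.
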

\begin{proof} $k$ be the residue field of $R$ and $k_\nu$ the residue field of the ring $R_\nu$ of $\nu$. Let $\Psi'$ be the convex subgroup of $\Phi$ corresponding to $\nu'$. Since the residual valuation induced by $\nu$ on the quotient $R/p'$ has $\Psi'$ as value group, by Abhyankar's inequality we have ${\rm tr}_kk_\nu +{\rm rat.rk.}\Psi'\leq {\rm dim}R/p'$. Since $R$ is a catenary and local domain we have the equality ${\rm dim}R={\rm dim}R_{p'}+{\rm dim}R/p'$ by (\cite{EGA4.1}, 16.1.4.2). Since $\nu$ is Abhyankar, using Abhyankar's inequality for $\nu'$ on $R_{p'}$ gives {\small
$${\rm tr}_kk_\nu +{\rm rat.rk.}\Psi'\leq {\rm dim}R-{\rm dim}R_{p'}\leq {\rm rat.rk.}\Phi+{\rm tr}_kk_\nu-{\rm rat.rk.}\Phi/\Psi'-{\rm tr}_{R_{p'}/p'R_{p'}}R_{\nu'}/m_{\nu'},$$} which implies that all inequalities must be equalities and ${\rm tr}_{R_{p'}/p'R_{p'}}R_{\nu'}/m_{\nu'}=0$. This shows that $\nu'$ is a zero dimensional Abhyankar valuation of $R_{p'}$. The residual valuation of $\nu$ in $R/p'$ is zero dimensional if $\nu$ is, and the first equality above shows that it is Abhyankar. The converse follows from a similar dimension count. \end{proof}
\subsection{Extension of Abhyankar valuations to the completion}\label{extcomp}
Given a rational valuation of a local domain $R$, let us consider the inductive system of local birational $\nu$-extensions of $R$, that is, local rings $R'$ containing $R$, essentially of finite type over $R$ and dominated by $R_\nu$; it is a tree in the sense of \cite{HOST}, to which we refer for details.\par
The next proposition proves a (very) special case of Conjecture 9.1 of \cite{HOST}, to which we refer for basic facts concerning extensions of a valuation on an excellent local domain $R$ to a quotient of its formal completion ${\hat R}^m$ by an ideal $H$ such that $H\cap R=(0)$. In particular, to such an extension is associated a sequence of convex subgroups $$(0)={\hat\Psi}_{2h+1}\subset {\hat\Psi}_{2h}\subset\cdots\subset{\hat\Psi}_{2\ell+1}\subset{\hat\Psi}_{2\ell}\subset \cdots \subset {\hat\Psi}_1\subset {\hat\Psi}_0=\hat\Phi.$$ where $h$ is the rank of the valuation $\nu$ and $\hat\Phi$ is the value group of the extended valuation $\hat\nu_-$.
The study of extension of valuations to the completion is relatively straightforward in the rank one case, and was already known to Zariski at least in special cases as explained before proposition 5.19 in \cite{Te1}. It is dealt with in \cite{HOST} and also appears as Lemma 3.9 in \cite{JM} in the special case of regular local $k$-algebras essentially of finite type and quasi monomial valuations.
\begin{proposition}\label{Abext} Let $\nu$ be a rational Abhyankar valuation of an excellent equicharacteristic local domain $R$. There exist birational $\nu$-extensions $R\to R'$ in the inductive system, or tree, defined above, such that for any birational $\nu$-extension $R'\to R"$ the valuation $\nu\vert R"$ extends uniquely to a valuation $\hat\nu_-$ of a quotient of ${\hat {R"}}^{m"}$ by a minimal prime ideal determined by $\nu$, with the same semigroup of values. The minimal prime is equal to $H"=\bigcap_{\phi\in \Phi_+}\Pp_\phi (R"){\hat {R"}}^{m"}$.\end{proposition}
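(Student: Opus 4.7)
The plan is to exploit two ingredients established earlier in the paper: proposition \ref{finapp} together with remark \ref{unimod}, which show that after finitely many birational toric $\nu$-modifications the value group $\Phi\cong \Z^r$ acquires a $\Z$-basis lying inside the value semigroup; and Chevalley's theorem, repeatedly invoked above, which controls the comparison between the $\nu$-adic and $m$-adic topologies. For brevity let us denote by $\widehat{R''}$ the $m''$-adic completion of $R''$.

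First I would produce the $\nu$-extension $R\to R'$ of the statement. Since $R$ is excellent and $\nu$ is Abhyankar with $\Phi\cong \Z^r$, I pick $r$ rationally independent elements $\gamma_1,\ldots,\gamma_r\in \Gamma(R,\nu)$ and, using the algorithmic construction behind proposition \ref{finapp} together with remark \ref{unimod}, perform iterated blow-ups along the successive centres of $\nu$ so that after finitely many steps the indices of the nested free subsemigroups $\N_{(h)}^r$ become one; this produces a birational $\nu$-extension $R\to R'$ whose value semigroup $\Gamma(R',\nu)$ already contains an actual $\Z$-basis of $\Phi$, realised by elements of $m'$. For any further birational $\nu$-extension $R'\to R''$, I then set $H'' := \bigcap_{\phi\in\Phi_+}\Pp_\phi(R'')\widehat{R''}$ and, on a nonzero class $\hat x$ of $\widehat{R''}/H''$, define $\hat\nu_-(\hat x) := \sup\{\phi\in\Phi\,\vert\,\hat x\in \Pp_\phi(R'')\widehat{R''}+H''\}$. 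I would then verify three assertions in sequence: (i) $H''$ is a prime ideal of $\widehat{R''}$; (ii) $\hat\nu_-$ is a well-defined valuation of $\widehat{R''}/H''$ extending $\nu\vert R''$ and is the unique such extension; (iii) the value semigroup of $\hat\nu_-$ coincides with $\Gamma(R'',\nu)$.

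The crux is (i), which amounts to identifying the $\Phi$-graded ring $\bigoplus_\phi \Pp_\phi(R'')\widehat{R''}/\Pp^+_\phi(R'')\widehat{R''}$ with $\mathrm{gr}_\nu R''$. Chevalley's theorem (as used in the proof of proposition \ref{fingen}) furnishes a function $\beta\colon \Gamma\to \N$ with $\beta(\phi)\to\infty$ and $\Pp_\phi(R'')\subset (m'')^{\beta(\phi)}$, which shows that the valuation filtration of $R''$ is cofinal with the $m''$-adic one and hence induces the ``same'' associated graded ring after completion; since $\mathrm{gr}_\nu R''$ is a domain, $H''$ is prime. Granted (i), the supremum defining $\hat\nu_-$ is actually attained, because $\beta(\phi)\to\infty$ forces any $m''$-adically convergent series representing a class in $\widehat{R''}/H''$ to have its initial form stabilize after finitely many valuation levels; this immediately gives multiplicativity and uniqueness of $\hat\nu_-$, as well as the equality of semigroups in (iii), since the initial form of $\hat x$ with $\hat\nu_-(\hat x)=\phi$ lies in the nonzero homogeneous component of $\mathrm{gr}_\nu R''$ of degree $\phi$. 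The main obstacle I anticipate is (i): when $\Phi$ has rank greater than one, the sequence $\hat\Psi_{2h+1}\subset \hat\Psi_{2h}\subset\cdots$ appearing in the opening remarks of subsection \ref{extcomp} warns that in general the rank may double upon completion, producing several incomparable extensions of $\nu$. The role of the preliminary modification $R\to R'$ is precisely to rule out this doubling by exploiting the Abhyankar hypothesis: once a $\Z$-basis of $\Phi$ is realised by valuations of $r$ elements of $m''$, the filtration $\Pp_\phi(R'')$ is controlled by these $r$ elements, which prevents the appearance of new convex subgroups in the completion.
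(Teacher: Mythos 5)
Your reduction of everything to the identification of $\bigoplus_\phi \Pp_\phi(R'')\widehat{R''}/\Pp^+_\phi(R'')\widehat{R''}$ with ${\rm gr}_\nu R''$ rests on the claim that Chevalley's theorem furnishes a function $\beta\colon\Gamma\to\N$ with $\beta(\phi)\to\infty$ and $\Pp_\phi(R'')\subset (m'')^{\beta(\phi)}$. This is false as soon as the rank of $\nu$ exceeds one: if $\Psi$ is a nontrivial convex subgroup of $\Phi$ and $p=\{x\in R''\mid \nu(x)\notin\Psi\}$ is the corresponding center, then $\Pp_\phi(R'')\supseteq p$ for every $\phi\in\Psi_+$, so these ideals are never contained in high powers of $m''$ however large $\phi$ becomes inside $\Psi$. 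The valuation filtration is therefore not cofinal with the $m''$-adic filtration, the supremum defining $\hat\nu_-$ is not obviously attained, and your steps (i)--(iii) all break down precisely in the higher-rank case, which is the only hard one. (The paper is careful to invoke Chevalley only for the rank-one residual valuations, as in Lemma \ref{bargen} and proposition \ref{fingen}.)

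The second gap is the choice of the preliminary modification $R\to R'$. Realising a $\Z$-basis of $\Phi$ inside the value semigroup via proposition \ref{finapp} and remark \ref{unimod} does not by itself rule out the rank-doubling phenomenon $\hat\Psi_{2\ell+1}\subsetneq\hat\Psi_{2\ell}$: that phenomenon is governed by the behaviour of the centers $p_\ell$ and of the ideals $p_\ell\widehat{R''}$ (their minimal primes, and the tightness of the extension), not by the unimodularity of the semigroup. The paper's proof instead proceeds by induction on the rank $h$, applying the inductive hypothesis to $R/p_1$ and choosing $R'$ sufficiently far in the tree so that the structural results of \cite{HOST} (Lemma 5.1 and propositions 5.3, 6.7, 6.9) apply; the Abhyankar hypothesis then enters through the observation that $\Phi\subset\hat\Phi$ is a finite-index inclusion of copies of $\Z^r$, which forces $\hat\Psi_{2\ell+1}=\hat\Psi_{2\ell}$, hence $\tilde H'_{2\ell}=H'_{2\ell+1}$, then uniqueness, minimality and tightness of the extension, and finally the equality of semigroups via the graded isomorphism and corollary \ref{goodgen}. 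Your outline would need to import, or reprove, this machinery to close the argument.
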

\begin{proof} According to \S 5 of \cite{HOST}, a valuation $\nu$ of rank one centered at the maximal ideal of $R$ extend uniquely to a valuation of ${\hat R}^m/H$ with the same semigroup of values, where $H=\bigcap_{\phi\in \Phi_+}\Pp_\phi{\hat R}^m$ (note that $H\cap R=(0)$). If the valuation $\nu$ is Abhyankar an extension to a quotient ${\hat R}^m/H$ has to be Abhyankar too, so that the ideal $H$ has to be a minimal prime of ${\hat R}^m$ and the value group $\hat\Phi$ of the extended valuation must also be equal to $\Z^r$.  Now let $h$ be the rank of our valuation and let us assume that the result is true for all valuations of lower rank. Let
$$(0)\subset \Psi_{h-1}\subset\cdots\subset \Psi_1\subset \Psi_0=\Phi$$ be the sequence of convex subgroups of $\Phi$. If $p_1$ is the center in $R$ of the rank one valuation $\nu_1$ with which $\nu$ is composed, by our induction assumption for $R'$ sufficiently far in the tree of $\nu$-modifications of $R$ we may assume that the valuation $\overline\nu$ on $\overline{R'}_1=R'/p'_1$ extends uniquely to a valuation $\hat{\overline\nu}_-$ on the quotient ${\overline{R'}_1}^{(\overline\nu)}$ of ${\hat {R'}}^{m'}/p'_1{\hat {R'}}^{m'}$ by the ideal $\bigcap_{\phi\in \Psi_{1+}}\Pp_\phi(R'){\hat {R'}}^{m'}/p'_1{\hat {R'}}^{m'}$, with the same semigroup.\par
The inclusion $\Phi\subset\hat\Phi$ of value groups is then $\Z^r\subset\Z^r$. As a consequence there is an integer $f$ such that $f\hat\Phi\subset\Phi$ so that the two groups have the same real rank. From this and Lemma 5.1 of \cite{HOST} it follows that ${\hat\Psi}_{2\ell+1}={\hat\Psi}_{2\ell}$ for $0\leq \ell\leq h$, and so the corresponding sequence of prime ideals of ${\hat R}^m$ associated in section 5 of \cite{HOST} to the extension $\hat\nu_-$  also satisfies $\tilde{H}_{2\ell}=\tilde{H}_{2\ell+1}$. By proposition 5.3 of \cite{HOST}  we have for an $R'$ sufficiently far in the tree described above the inclusions $H'_i\subset \tilde{H}'_i$ for $0\leq i\leq 2h$ and $$H'_{2\ell+1}=\bigcap_{\phi\in\Psi_\ell} \Pp'_\phi {\hat {R'}}^{m'} $$ and $H'_{2\ell}$ is the unique minimal prime of $p'_\ell {\hat {R'}}^{m'}$ contained in $H'_{2\ell+1}$. Since $\hat\nu_-$ is Abhyankar  $\tilde{H}'_{2\ell}$ has to be both a minimal prime of $p'_\ell {\hat {R'}}^{m'}$ and equal to $H'_{2\ell+1}$ and finally we must have $\tilde{H}'_i=H'_i$ for all $i$ and $H'_{2\ell}=H'_{2\ell+1}$ for all $\ell$. We can apply proposition 6.9 of \cite{HOST} which tells us that the extension of $\nu$ to ${\hat {R'}}^{m'}/H'_0$ is unique, minimal and tight (see Definition 6.1 of \cite{HOST}) so that in particular by proposition 6.7 of \emph{loc.cit.} the groups of values of $\nu$ and $\hat\nu_-$ are the same. In what follows we set $H'_0=H'$ and ${\hat {R'}}^{(\nu)}={\hat {R'}}^{m'}/H'$.\par
Now the valuation $\nu_1$ extends to a valuation $\hat\nu_{-,1}$ of ${\hat {R'}}^{(\nu)}$, after perhaps choosing an $R'$ further in the tree. This valuation is the valuation of rank one with which $\hat\nu_-$ is composed. Imitating the proof of Lemma 2.3 of \cite{HOST}, we see that for $z\in{\hat {R'}}^{(\nu)}$ we have  $\hat\nu_{-,1}(z)={\rm max}\{\phi_1\in\Phi_1\vert \tilde z\in\Pp_{\phi_1}(R'){\hat {R'}}^{m'}\}$, where $\tilde z$ is a representative in ${\hat {R'}}^{m'}$ of $z$. This makes sense as follows: by construction there exists an element $\phi^+_1\in\Phi_1$ such that the element $\tilde z$ is not in  $\Pp_{\phi^+_1}(R'){\hat {R'}}^{m'}$. Since $\nu_1$ is of rank one, the set of elements of the semigroup of values of $\nu_1$ which are $\leq\tilde\phi^+_1$ is finite (see \cite{Z-S}, Vol. II, App. 3, Lemma 3), so that there is a $\phi_1\leq \phi^+_1$ with $\tilde z\in\Pp_{\phi_1}(R') {\hat {R'}}^{m'}\setminus\Pp^+_{\phi_1}(R') {\hat {R'}}^{m'}$. In view of the definition of $H$, this $\phi_1$ is independent of the choice of the representative $\tilde z$. As a consequence, the semigroup of values $\Gamma'_1$ of $\hat\nu_{-,1}$ on ${\hat {R'}}^{(\nu)}$ is the same as that of $\nu_1$ and we have for $\phi_1\in \Gamma_1$ the equality $\Pp_{\phi_1}({\hat {R'}}^{(\nu)})=\Pp_{\phi_1}(R'){{\hat {R'}}^{(\nu)}}$. Thus, the natural map of graded $\overline{R'}_1$-algebras
$${\rm gr}_{\nu_1}R'\otimes_{\overline{R'}_1}\hat{\overline{R'}_1}^{(\overline\nu)}\rightarrow {\rm gr}_{\hat\nu_{-,1}}{{\hat {R'}}^{(\nu)}}$$
is an isomorphism. Since it is graded, it has to be also an isomorphism of $\hat{\overline{R'}_1}^{(\overline\nu)}$-algebras. We know that the value groups of $\nu$ and $\hat\nu_-$ are the same and we assume by induction that the value semigroups of $\overline\nu$ on $\overline{R'}_1$ and $\overline{\hat\nu_-}$ on $\hat{\overline{R'}_1}^{(\overline\nu)}$ are the same. By the structure result of corollary \ref{goodgen} the fact that the sets of generators as $\hat{\overline{R'}_1}^{(\overline\nu)}$-algebras of both algebras have to be the same implies that the semigroups of $R'$ and ${\hat {R'}}^{(\nu)}$ are equal. \end{proof}\noindent
\begin{Remark}\label{ir}\small{\begin{enumerate} \item According to lemma 7.3  of \cite{HOST}, by taking a smaller cofinal tree we can even assume in the statement of the proposition that $R'$ is analytically irreducible, so that $H'=(0)$.\item The results of \cite{HOST} assume that $R$ is excellent, which explains the hypothesis made in the proposition.\item One may ask whether if the semigroup of values of a rational Abhyankar valuation $\nu$ on $R$ is finitely generated, and $R$ is analytically irreducible, there is a unique extension $\hat\nu$ of $\nu$ to $\hat R^m$ and it has the same semigroup. According to \cite{Te1}, 7.11, in that case a birational toric map $R\to R'$ which induce a local uniformization $\hat R^m\to \hat R'^{m'}$ of $\hat\nu$ uniformizes $\nu$.
\end{enumerate}}
\end{Remark}
\subsection{Key polynomials for Abhyankar valuations}\label{Kpol}
 Assume that $R$ is complete and equicharacteristic with an algebraically closed residue field and that $\nu$ is Abhyankar and rational, and fix a field of representatives $k\subset R$. There are elements $x_1,\dots ,x_r$ in $R$, with $r=\hbox{\rm dim}R$, such that $\Phi_0=\Z\nu(x_1)\oplus\cdots\oplus\Z\nu(x_r)$ is a subgroup of finite index in $\Phi\simeq \Z^r$ (see \cite{V0}, Th\'eor\`eme 9.2). The $x_i$ are analytically independent so we have an injection $R_0=k[[x_1,\ldots ,x_r]]\to R$ which, with respect to the valuation $\nu_0=\nu\vert R_0$, corresponds to a finite extension of the value group and a trivial extension of the residue field. The valuation $\nu_0$ is a monomial valuation; its associated graded ring is ${\rm gr}_{\nu_0}R_0=k[X_1,\ldots ,X_r]$ with degree of $X_i$ equal to $\nu(x_i)$. In this subsection we show that after base change $(R,m)\to (R',m')$ on $R$ by a birational toric map $k[[x_1,\ldots ,x_r]]\to k[[x'_1,\ldots ,x'_r]]$ in the coordinates $x_1,\ldots ,x_r$ (followed by localization at the center of the valuation and completion), we obtain a situation where the transformed ring $\hat R'^{m'}$ is a finite $k[[x'_1,\ldots ,x'_r]]$-module. For a suitable choice of the $x'_i$ the extension of value groups is tame and the extension of fraction fields corresponding to $k[[x'_1,\ldots ,x'_r]]\subset \hat R'^{m'}$ is finite and separable. The description for each $y\in \hat R'^{m'}$ of the valuation $\nu\vert k[[x'_1,\ldots ,x'_r]][y]$ with key polynomials plays an important role. We can then apply the same description when $y\in \hat R'^{m'}$ is a primitive element of the separable field extension, and deduce that the semigroup $\nu (R'\setminus\{0\})$ is finitely generated.
 \begin{proposition}\label{sepex} Let $\nu$ be a rational Abhyankar valuation on $R$. After a $\nu$-modification $(R,m)\to (R',m')$, it is possible to choose elements $x'_1,\dots ,x'_r$ as above in such a way that $\hat R'^{m'}$ is a finite $k[[x'_1,\ldots ,x'_r]]$-module, the field $K'$ is a finite extension of $K'_0=k((x'_1,\ldots ,x'_r))$ and the index in $\Phi$ of the subgroup generated by $\nu(x'_1),\dots ,\nu(x'_r)$ is not divisible by the characteristic of $k=R/m$.
\end{proposition}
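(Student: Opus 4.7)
The proposition imposes two conditions on the $\nu$-modification $R'$ and the elements $x'_1,\ldots,x'_r$: module-finiteness of the inclusion of completions $k[[x'_1,\ldots,x'_r]]\hookrightarrow\hat R'^{m'}$, and tameness of the induced extension of value groups (index coprime to $p=\mathrm{char}\,k$). I would prove both by combining the Jacobi--Perron style approximation of Proposition~\ref{finapp} with the equational smoothness of the torus $\mathrm{Spec}\,\Z[t^{\Phi}]$ over $\mathrm{Spec}\,\Z$.

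\emph{Step 1 (tameness via Jacobi--Perron).} Starting from the given $x_1,\ldots,x_r$ with $\Phi_0=\bigoplus_i\Z\nu(x_i)$ of finite index $d$ in $\Phi\cong\Z^r$, apply Proposition~\ref{finapp} to $\Phi_{\geq 0}$ using the $\nu(x_i)$ as initial rationally independent elements. This yields a nested sequence of free subsemigroups $\N^r_{(0)}\subset\N^r_{(1)}\subset\cdots\subset\Phi_{\geq 0}$; by Remark~\ref{unimod}, for $h_0$ sufficiently large the inclusion $\N^r_{(h_0)}\hookrightarrow\Phi$ is unimodular, so its generators $\gamma_1^{(h_0)},\ldots,\gamma_r^{(h_0)}$ form a $\Z$-basis of $\Phi$, giving index one and hence trivially coprime to $p$. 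By Corollary~\ref{grLU}, each step of the algorithm corresponds on the graded side to a birational toric map sending each variable to a term. Lifting these transformations to $R_\nu$ and adjoining the resulting Laurent monomials to $R$, localizing at the center of $\nu$ at each stage, produces a $\nu$-modification $R''$ of $R$ containing elements $y_1,\ldots,y_r$ with $\nu(y_i)=\gamma_i^{(h_0)}$, so that $\bigoplus\Z\nu(y_i)=\Phi$.

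\emph{Step 2 (finiteness via smoothness of the torus).} Since the $\nu(y_i)$ are rationally independent, the $y_i$ are analytically independent, and the injection $k[[y_1,\ldots,y_r]]\hookrightarrow\hat R''^{m''}$ is well defined. Module-finiteness is equivalent to $(y_1,\ldots,y_r)$ being $m''$-primary. If it is not already so, one performs additional toric $\nu$-modifications corresponding to further subdivisions of the weight cone in $\check\R^r$ spanned by $\nu(y_1),\ldots,\nu(y_r)$. The smoothness of $\mathrm{Spec}\,\Z[t^{\Phi}]$ over $\mathrm{Spec}\,\Z$ enters here: the inclusion of semigroup algebras $k[t^{\bigoplus\Z\nu(y_i)}]\subset k[t^{\Phi}]$ being tame (index one in this case), the corresponding map of tori is étale over any prime, and in particular over the residue characteristic $p$. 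Applying Hensel's lemma to lift this étale structure to the formal completions, together with the toric modifications preserving unimodularity of the lattice (since the change-of-variables matrices are unimodular integer matrices), one reaches after finitely many steps a $\nu$-modification $R'$ in which the images $x'_i$ of the $y_i$ form an $m'$-primary sequence. By the Cohen structure theorem, $\hat R'^{m'}$ is then a finite module over $k[[x'_1,\ldots,x'_r]]$, and the lattice generated by the $\nu(x'_i)$ remains equal to $\Phi$, yielding tameness.

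\emph{Main obstacle.} The delicate point is Step~2: making finiteness compatible with tameness. Step~1 manipulates combinatorics in $\Phi$ and only passes through the graded ring, while Step~2 must work with the local ring $R''$ itself. The bridge is provided by the overweight deformation picture of Section~\ref{OD}: once the lattice is tame, $R''$ admits, at least formally and after further $\nu$-modifications, a description as an overweight deformation of the affine toric variety $\mathrm{Spec}\,k[t^{\Gamma}]$, and the embedded toric pseudo-resolutions of that variety provide the required $\nu$-modifications to achieve the $m'$-primary property without disturbing the unimodularity of the lattice. The equational smoothness of the torus over $\mathrm{Spec}\,\Z$ is what guarantees that the toric resolutions behave uniformly across all residue characteristics, so that tameness of the index is precisely what is needed to keep the étale lift of Hensel's lemma valid in characteristic $p$.
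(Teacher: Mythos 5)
Your Step 1 is a legitimate variant of what the paper does, but it is not the paper's route: the paper obtains tameness without any preliminary modification, by taking a finite set of elements of $R$ whose values generate $\Phi$ and showing, via the primitivity of $\Lambda^{N-r}\Ll$ (the maximal minors of the relation lattice are coprime because $\Ll$ is saturated, $k$ being algebraically closed), that some choice of $r$ of them generates a subgroup of index prime to $p$; the index is in general not one. Your route through Proposition \ref{finapp} and Remark \ref{unimod} to reach index one after a $\nu$-modification could be made to work, but as written it is imprecise: the generators of $\N^r_{(h_0)}$ do not lie in $\Phi_0$ in general, so they are not values of Laurent monomials in $x_1,\ldots ,x_r$; you must instead adjoin arbitrary elements of $R_\nu$ with the prescribed values and localize at the center.

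The genuine gap is Step 2. Once the $\nu(y_i)$ are rationally independent (a fortiori once they generate $\Phi$), no further manipulation of the value lattice can force $(y_1,\ldots ,y_r)$ to become $m'$-primary: the obstruction to finiteness is that the closed fiber of $\mathrm{Spec}\,\hat R''^{m''}\to \mathrm{Spec}\,k[[y_1,\ldots ,y_r]]$ may be positive-dimensional, and subdividing the cone spanned by the $\nu(y_i)$, or invoking smoothness of the torus over $\mathrm{Spec}\,\Z$ together with Hensel's lemma, does not touch this; the finite map one is after is highly ramified, so \'etaleness is not the relevant notion here (the paper uses the torus-smoothness/Jacobian argument later, for separability in Proposition \ref{sepsep}, not for finiteness). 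The paper's proof of finiteness is the real work of Proposition \ref{sepex}: it applies Hironaka's flattening theorem to $R_0=k[[x_1,\ldots ,x_r]]\subset R$, blowing up a (monomial) flattener ideal in the base and passing to strict transforms until the map is flat at the point picked by $\nu$, whence, dimensions being equal, the module is finite and free; it then needs Proposition \ref{transl} to keep the base semigroup finitely generated, Lemma 7.3 of \cite{HOST} and Proposition \ref{Abext} to pass to completions without changing value groups, and Theorem \ref{LU} applied to the base to make it again a power series ring. Your proposal contains no substitute for the flattening step. Finally, your closing paragraph is circular: presenting $R''$ as an overweight deformation of $\mathrm{Spec}\,k[t^{\Gamma}]$ (Proposition \ref{OWD}) requires $\Gamma$ to be finitely generated, which is precisely what Proposition \ref{sepex} is a step toward proving in Theorem \ref{AbhFin}, and so cannot be assumed at this stage.
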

\begin{proof}
Since $\Phi$ is finitely generated, there is a finite set of generators $\gamma_1,\ldots ,\gamma_N$ of the semigroup $\Gamma=\nu(R\setminus\{0\})$ which generates $\Phi$ as a group. We consider the semigroup $\Gamma_1$ which they generate and apply to it what we recalled just before section \ref{OD}. A subset of $r$ linearly independent generators of this semigroup generates a lattice $\Phi_0$ of rank $r$ in the group $\Phi\simeq\Z^r$. These generators are the images of $r$ vectors of the canonical basis of $\Z^N$ by the surjective morphism of groups $b\colon\Z^N\to\Z^r\simeq\Phi$ which sends the basis vectors of $\Z^N$ to the $N$ generators of the semigroup.\par The kernel of $b$ is a saturated sub-lattice $\Ll$ of $\Z^N$ of rational rank $N-r$, and hence a direct factor of $\Z^N$. There are $L$ generators $m^\ell-n^\ell$ of $\Ll$, with $L\geq N-r$, which correspond to binomials generating the ideal of the affine toric variety associated to $\Gamma_1$. The $(N-r)$-th exterior power $\Lambda^{N-r} \Ll\subset\Lambda^{N-r}\Z^N$ is also a direct factor, so it must be a primitive vector which means that the $(N-r)\times (N-r)$ minors of the matrix $M(\Ll)$ whose columns are the coordinates of the system of generators $m^\ell-n^\ell$ of the lattice $\Ll$ in the canonical basis of $\Z^N$ are coprime (compare with \cite{Te1}, Prop. 6.2).\par Indeed, the lattice $\Ll$ is the image of a map $c\colon \Z^L\to\Z^N$ so that $\Lambda^{N-r}\Ll\subset \Lambda^{N-r}\Z^N$ is the image of $\Lambda^{N-r}c\colon \Lambda^{N-r}\Z^L\to\Lambda^{N-r}\Z^N$, a sublattice generated by vectors corresponding to the choices of $N-r$ basis vectors of $\Z^L$ ($N-r$ "columns") and whose coordinates are the $(N-r)\times (N-r)$ minors which involve those columns in the matrix describing $c$ which, in the canonical basis of $\Z^N$, is the matrix of the $ L$ vectors $m^\ell-n^\ell$. The fact that $\Lambda^{N-r}\Ll\simeq \Z$ implies that up to a change of the generators $m^\ell-n^\ell$ of $\Ll$, we may assume that $\Lambda^{N-r}\Ll$ is the image of a single basis vector of $\Lambda^{N-r}\Z^L$, which means that its coordinates in $\Lambda^{N-r}\Z^N$ are the minors corresponding to a single set of $N-r$ binomials and different choices of $N-r$ basis vectors of $\Z^N$.  \par\noindent Let us denote by $\check b\colon \check\Z^r\subset \check\Z^N$ the inclusion which is dual to the surjection $b$. Its image is a direct factor in $\check\Z^N$ so the image of $\Lambda^r\check b\colon \Lambda^r\check\Z^r\subset \Lambda^r\check\Z^N$ is also a direct factor, which means that it is a primitive vector; each of its coordinates corresponds to a choice $\{i_1,\ldots ,i_r\}$ of $r$ basis vectors of $\Z^N$ which by duality of the corresponding injection $\Z^r\subset \Z^N$ gives a projection $\Lambda^r\check\Z^N\to  \Lambda^r\check\Z^r\simeq\Z$. \par\noindent
The choice of $r$ basis vectors, of indices $\{i_1,\ldots ,i_r\}$, of $\Z^N$ corresponds to the choice of a basis vector in $\Lambda^r\Z^N$. By duality the  images by the canonical map $\Lambda^r\Z^N\to\Lambda^r\Z^r$ of the basis vectors of $\Lambda^r\Z^N$ are the coordinates of the vector $\Lambda^r\check\Z^r\in\Lambda^r\check\Z^N$. The image in $\Lambda^r\Z^r\simeq\Z$ by the map $\Lambda^r b$ of a basis vector of $\Lambda^r\Z^N$ is the determinant of the matrix whose columns are the images by $b$ of the corresponding $r$ basis vectors of $\Z^N$. The absolute value of each of these coordinates, then, is the index of the subgroup of $\Z^r$ generated by the images of the corresponding $r$ basis vectors of $\Z^N$.\par
The isomorphism between $\Lambda^r\check\Z^N$ and $\Lambda^{N-r}\Z^N$ (\cite{B2}, \S 11, No. 11, Prop. 12) already used in (\cite{GP-T2}, proof of Prop. 10.1) maps the primitive vector $\Lambda^r\check\Z^r\subset \Lambda^r\check\Z^N$ to the primitive vector $\Lambda^{N-r} \Ll\subset\Lambda^{N-r}\Z^N$ : each coordinate of the vector $\Lambda^r\check\Z^r\subset \Lambda^r\check\Z^N$ is equal to an $(N-r)\times (N-r)$ minor of the matrix $M(\Ll)$ and the $N-r$ coordinates of $\Z^N$ appearing in that minor are those indexed by the complementary set of $\{i_1,\ldots ,i_r\}$ in $\{1,\ldots ,N\}$.\par\medskip
 \emph{For any prime $p$ there must be such minors which are not divisible by $p$.}
\begin{example}\label{ex2} A simple example, closely related to Example \ref{ex1}, is given by the numerical semigroup $\Gamma=\langle 4,6,13\rangle\subset \N$. Each of these integers can be seen as the index of the injection $\Z\subset \Z$ corresponding to the choice of a generator of the semigroup. The lattice $\Ll\subset \Z^3$ of relations between the generators can be generated by the vectors $(-3,2,0)$ and $(-5,-1,2)$. The $2\times 2$ minors of the $3\times 2$ matrix whose columns are these vectors are, up to sign, $4,6,13$.
\end{example}
\noindent
 \begin{Remark}\label{tame}\small{\begin{enumerate} \item Recall the description found in \cite{Te1}, before Prop. 6.2 and in \cite{GP-T2}, Prop. 10.1 of the jacobian ideal of an affine toric variety defined by a prime binomial ideal $ P\subset k[U_1,\ldots
,U_N]$. The jacobian determinant
$J_{G,\mathbf L'}$ of rank
$c=N-r$ of the generators $(U^{m^\ell}-\lambda_\ell U^{n^\ell})_{\ell\in\{1,\ldots , L\}}$ of $P$, associated to a sequence $G=(k_1,\ldots , k_c)$ of distinct elements of $\{1,\ldots, N\}$ and a subset $\mathbf L'\subseteq \{1,\ldots , L\}$ of
cardinality $c$, satisfies the congruence
$$U_{k_1}\ldots U_{k_c}.J_{G,\mathbf L'}\equiv
\big(\prod_{\ell\in \mathbf L'} U^{m^\ell}\big)\hbox{\rm Det}_{G,\mathbf L'}(\langle m-n\rangle ) \ \
\hbox{\rm mod.} P,\eqno{(Jac)}$$ where $\big(\langle m-n\rangle\big)$ is the matrix of the
vectors $(m^\ell-n^\ell)_{\ell\in \{1,\ldots , L\}}$,  and $\hbox{\rm Det}_{G,\mathbf L'}$ indicates the minor
in question. If the field $k$ is of characteristic $p$, choosing a minor which is not divisible by $p$ amounts to choosing $r$ of the coordinates such that the corresponding projection to $\A^r(k)$ of a certain binomial variety containing the toric variety as one of its irreducible components (see \cite{Ei-S}) is \'etale outside of the coordinate hyperplanes.\par
 This is the equational aspect of the smoothness over ${\rm Spec}\Z$ of the torus ${\rm Spec}\Z[t^{\Z^r}]$ of the affine toric variety over $\Z$ corresponding to the subsemigroup $b(\N^N)$ of $\Z^r$; it has the advantage that it deforms with overweight deformations (see proposition \ref{sepsep} below). The gist of the linear algebra detailed above is that for a projection of a toric variety (equipped with a weight) to an affine space of the same dimension, over an algebraically closed field, separability and tameness of the corresponding valued fields extension (see remarks \ref{nofin}, 1)) go together. From this point of view, tameness in our case appears as a portable (with respect to strict transforms and immediate extensions, such as henselizations) equational version of etaleness, which ensures that after a birational toric modification, \emph{and only near the point picked by the valuation}, the map from the strict transform to a $r$-dimensional affine space is still etale outside of the coordinate hyperplanes.  \item As a special case, we have that given an algebraically closed field $k$ and an affine (toric) semigroup $\Gamma$, the affine toric variety ${\rm Spec}k[t^\Gamma ]$ can always be presented as a \textit{separable and tame} "weakly" quasi-ordinary singularity: if $r={\rm rat.rk.\Gamma}={\rm dim}k[t^\Gamma ]$ there exist $r$ rationally independent generators $\gamma_{i_1}, \ldots ,\gamma_{i_r}$ of $\Gamma$ such that the corresponding map $\varpi\colon {\rm Spec}k[t^\Gamma ]\to \A^r(k)$ is \'etale outside of the toric boundary and hence induces a separable extension $k(x_1,\ldots ,x_r)\to {\rm Frac} k[t^\Gamma ]$. For example, taking a field $k$ of characteristic $p$ and $\Gamma=\langle p-1,p\rangle\subset \N$, the inclusion $k[t^{p-1}]\subset k[t^\Gamma]$ has this property while the inclusion $k[t^p]\subset k[t^\Gamma]$ does not. We shall see a consequence of this at the end of Section \ref{AS} and a more interesting example of numerical semigroup in remark \ref{keyrem}.\par\noindent
The map $\varpi$ is finite and makes ${\rm Spec}k[t^\Gamma ]$ into a truly quasi-ordinary singularity if and only if the semigroup is contained in the cone generated by the vectors $\gamma_{i_1}, \ldots ,\gamma_{i_r}$. This is the case for example for the semigroups of the toric varieties to which an irreducible quasi-ordinary hypersurface specializes (in characteristic zero) as explained in \cite{GP2}.
\end{enumerate}}
\end{Remark}
Let us come back to the proof of proposition \ref{sepex}. Applying what we have seen above to the kernel of the corresponding surjective map $b\colon \Z^N\to\Z^r$ we see that we can choose $r$ of these generators such that taking elements $x_1,\ldots ,x_r\in R$ with these valuations gives us an injection $R_0=k[[x_1,\ldots ,x_r]]\subset R$ with the property that the index of the value group $\Phi_0$ of $\nu\vert R_0$ in the group $\Phi$ is not divisible by the characteristic of $k$. We shall denote by $\Gamma_0$ the free subsemigroup
$$\Gamma_0=\langle\nu(x_1),\ldots ,\nu(x_r)\rangle\subset \Phi_0$$ The ring $R$ is a quotient of a power series ring $k[[x_1,\ldots ,x_r,y_1,\ldots, y_t]]$ and we can apply the Hironaka flattening theorem in the formal case, which relies on the Hironaka division theorem of \cite{Hi} and \cite{Hi2}; see \cite{ACH} for an algorithmic characteristic-free proof and \cite{Hi} for the application to flattening. It is summarized in the Appendix, section \ref{appendix}.\par
Since we want to flatten at the point picked by one valuation in the strict transform, we only need the existence of a local flattener at a point, which follows from the division theorem for power series, and the fact that after blowing-up the flattener in the base, if the map was not already flat, the fiber of the strict transform of the map at the point picked by the valuation strictly decreases so that after finitely many such steps, the strict transform has to be flat at the point picked by the valuation (see \cite{Hi}, \cite{H-L-T}).\par The flattening theorem then tells us that there exists a $\nu$-blowing up of local rings $R_0\to R^e_0$, which we may assume to be the blowing-up of a monomial ideal (see \cite{Te1}, corollary 7.5) of $R_0$, such that $R^e$, defined as $R\otimes_{R_0}R^e_0$ divided by its $R^e_0$-torsion, and localized at the point picked by the valuation $\nu$, is a flat $R^e_0$-module. By construction, its field of fractions is the same as that of $R$. Then, since ${\rm dim}R_0 ={\rm dim}R$, the $R^e_0$-module $ R^e$ is finite and free so that the extension of fraction fields is algebraic.
\par By construction, (since it is the blowing-up of a monomial ideal, see \cite{GP-T2}, part 1, section 5)  and there is no relation between the values of the variables there exist an element $m \in \Gamma_0$ and a system of generators $(\delta^e_j)$ of the semigroup $ \Gamma^e_0$ of the values of $\nu_0$ on $R^e_0$ such that $m+\delta^e_j\in\Gamma_0$ for all $j$, so that by proposition \ref{transl} the semigroup $\Gamma^e_0$ is finitely generated. This property is preserved under further birational $\nu$-modifications of $R^e_0$. Since flatness is also preserved under further blowing-ups of $R^e_0$ we can assume that $R^e_0$ is analytically irreducible by lemma 7.3 of \cite{HOST} and use proposition \ref{Abext} to extend the valuation $\nu_0$ to a complete $\hat{R^e_0}$, with a finitely generated semigroup.\Par Here we also use lemma 1.1 of \cite{HOST} which states that in a $\nu$-modification $R\to R'$, the ideal $N=\hat m\otimes _R1+1\otimes_R m'$ is maximal in the $R$-algebra $\hat R^m\otimes_R R'$ and the injection $(\hat R^m\otimes_R R')_N\to \hat R'^{m'}$ is the completion homomorphism. Then we can apply Theorem \ref{LU} and assume that $\hat{R^e_0}$ is a power series ring with a system of local coordinates having rationally independent values $\tilde\gamma_1,\ldots ,\tilde\gamma_r$ generating a semigroup $\tilde\Gamma_0\simeq \N^r$.\par
Since $R^e$ is finite and free over $R^e_0$, and the power series ring $\hat{R^e_0}$ is henselian, the $\hat{R^e_0}$-module $R^e\otimes_{R^e_0}\hat{R^e_0}$ is again finitely generated and free and contains as a summand a complete local domain corresponding to the maximal ideal picked by $\nu$, which we denote by $\hat{R^e}$. The map $\hat{R^e_0}\to \hat{R^e}$ is finite and injective. After what we have just seen and the results of subsection \ref{extcomp}, throughout these birational maps and completions the value groups have not changed and both maps $R_0\to \hat{R^e_0}$ and $R\to \hat{R^e}$ are birational $\nu$-modifications of complete local domains followed by completion in the sense of subsection \ref{extcomp}, passing eventually to a quotient of the maximal-adic completion. \end{proof}
\begin{remark}\label{existab} \emph{Any complete equicharacteristic local domain $R$ admits rational Abhyankar valuations: it is in many ways a finite module over a power series ring $R_0=k[[x_1,\ldots ,x_r]]$ with the same residue field and by general facts of valuation theory rational monomial valuations of $R_0$ extend to $R$. This implies that local equicharacteristic noetherian domains also admit rational Abhyankar valuations since they are subrings of a quotient of their completion by a minimal prime.} \end{remark}

\emph{Until the end of this subsection we assume that $R$ satisfies the conclusion of proposition \ref{sepex}}: The ring $R_0$ is $k[[x_1,\ldots ,x_r]]$, the ring $R$ is a finite $R_0$-module and the extension of their valued fraction fields is tame. \par\vskip.1in\noindent
  Set $R_0=k[[x_1,\ldots ,x_r]]$ and denote by $\nu_0$ the restriction of $\nu$ to $R_0$.\par \noindent
Any element $y\in R$ has a unitary minimal polynomial $p(y)\in R_0[y]$ over $K_0$. Consider the subring $R_1=R_0[y]/(p(y))\subset R$ generated by $y$. The ring $R$ is integral over $R_1$. Since $R_0$ is complete it is henselian and since $R_1$ is an integral domain, it is a complete local ring and a free $R_0$-module with generators $1,y,\ldots, y^{e-1}$ where $e$ is the degree $[K_0(y):K_0]={\rm deg}p(y)$.\par\medskip
We fix an element $y\in R\setminus R_0$ and temporarily restrict our attention to the corresponding $R_1$ to show that the semigroup of the valuation $\nu\vert R_1$ is finitely generated. In the proof of Theorem \ref{AbhFin} below we shall see that this implies the result we seek. For the sake of simplicity, we continue to write $\nu$, $\Gamma$, and $\Phi\simeq\Z^r$ for the valuation and semigroup of $R_1$ and for the corresponding group of values, and $K$ for the field of fractions of $R_1$. This will last until proposition \ref{sepsep} where we shall see that we can take for $y$ a primitive element of the extension $K_0\subset K$, so that there should be no confusion.\Par
 Let us choose a minimal system of generators of the $k$-algebra ${\rm gr}_\nu R_1$. It is well ordered, in bijection with a minimal set of generators of the well ordered semigroup $\Gamma$ which, as we recalled in the Introduction, is of ordinal $\leq \omega^h$ where $h$ is the rank of $\nu$.\par
\begin{definition}\label{ord} Let $\Gamma_0\subset \Gamma$ be an inclusion of affine semigroups. Assume that the group $\Phi$ generated by $\Gamma$ is totally ordered with $\Gamma\subset\Phi_{\geq 0}$. Assume moreover that $\Gamma\setminus ((\Gamma_0\setminus\{0\})+\Gamma)$ is well ordered\footnote{We may have to use this construction in a situation where $\Gamma_0$ itself is not well ordered.}. Then we define a system of generators of $\Gamma$ by adding to $\Gamma_0$ the elements defined inductively as follows: $\gamma_1$ is the least nonzero element of $\Gamma$ which is not in $\Gamma_0$,...., $\gamma_{i+1}$ is the least nonzero element of $\Gamma$ which is not in $\Gamma_i=\langle\Gamma_0,(\gamma_k)_{1\leq k\leq i}\rangle$. This is a transfinite construction, and the resulting set of generators $(\gamma_i)_{i\in I}$ is minimal by construction and is indexed by an ordinal (see \cite{Te1}, corollary 3.10). In short, some $\gamma_i$ may be less that a generator of $\Gamma_0$, but $\gamma_i<\gamma_{i+1}$.\par\smallskip
\emph{We also make in the sequel a convenient abuse of notation: since the $\nu(x_i)$ are rationally independent, for each value $s\in\Phi_0$ there is a unique Laurent monomial in $k((x_1,\ldots ,x_r))$ with this value. We denote it by $x^s$. In other words, we identify $k((x_1,\ldots ,x_r))$ with a subfield of $k((t^{\Phi_0}))$.}\end{definition}
Let us denote by $(\overline\xi_i)_{i\in I}$ the generators of the $k$-algebra ${\rm gr}_\nu R_1$ which are not the initial forms $X_1,\ldots ,X_r$ of the $x_j$. An element of $R_1$ whose valuation is not in $\Gamma_0$ must involve the initial form of $y$ in its initial form and thus has a value $\geq\nu(y)$. Since $R_1$ is complete we may, up to a change of the variable $y$, assume that $\nu(y)$ is the smallest element of $\Gamma$ which is not in the semigroup $\Gamma_0=\langle\nu(x_1),\ldots ,\nu(x_r)\rangle$. Indeed, if $\nu(y)\in\Gamma_0$ there exist a monomial $x^{r_1}$ and $\rho_1\in k^*$ such that $\nu(y-\rho_1x^{r_1})>\nu (y)$. We repeat the argument with $y_1= y-\rho_1x^{r_1}$. If $\nu$ is of rank one and after finitely many steps we do not reach an $y_k$ such that $\nu(y_k)\notin \Gamma_0$, we have built an expression of $y$ as a series in $x$ and so $y\in R_0$ and $R_1=R_0$. If $\nu$ is of rank $>1$, we build a transfinite series and use the fact we saw in section \ref{cohen} that $R_0$ is complete with respect to $\nu_0$, to reach the same conclusion. Thus we may take $y=\xi_1$ and $\gamma_1=\nu(y)$.\par Except in the case where $r=1$, this does not exclude the possibility that $\nu(y)\in\Phi_0$ or that $\nu(y)<\nu(x_i)$ for some $i$. We set $x=(x_1,\ldots ,x_r)$ and $X=(X_1,\ldots ,X_r)$ and denote by $\Phi_0\simeq\Z^r$ the value group of $\nu_0=\nu\vert R_0$, and apply what is said in remark \ref{ord}. There is an integer $f$ such that $f\Phi\subset \Phi_0$ and so for each $\gamma_i$ there is a smallest positive integer $n_i$ such that $n_i\gamma_i$ is in the subgroup $\Phi_i^-$ of $\Phi$, with the notations of proposition \ref{expression}. If the $\Psi_k, \ 1\leq k\leq h-1$ are the non trivial convex subgroups of $\Phi$, there is no claim that $\Phi_0\subset\Psi_{h-1}$, but we note that if $n_i\gamma_i\in\Psi_k$, then $\gamma_i\in\Psi_k$ since $0<\gamma_i\leq n_i\gamma_i$. In particular if $\Phi_0\subset \Psi_{h-1}$ then $\Phi\subset\Psi_{h-1}$ and $h=1$.\par Since $\Phi_0$ is of finite index in $\Phi$, and this index is the product of all the $n_i$, only a finite number of the $n_i$ can be $>1$. In view of proposition \ref{expression} this implies that the kernel of the surjective map of $k$-algebras
$$k[X, (U_i)_{i\in I}]\stackrel{{\rm gr}_w\pi}\longrightarrow {\rm gr}_\nu R_1,\ {\rm determined\  by} \ \ X_j\mapsto X_j,\ U_i\mapsto\overline\xi_i$$
contains binomials $X^{s_i}U_i^{n_i}-\lambda_i X^{r_i}\prod_{k\in E(i)}U_k^{t^{(i)}_k}$, with $s_i,r_i\in \N^r$, $\lambda_i\in k^*$ and $0\leq t^{(i)}_k<n_k$. These binomials encode the presentations $n_i\gamma_i=\phi^{(i)}_0+\sum t^{(i)}_k\gamma_k$, with $\phi^{(i)}_0\in\Phi_0$ of proposition \ref{expression}. The element $\phi^{(i)}_0$ is uniquely determined and we assume that it is written $\phi^{(i)}_0=r_i-s_i$ with $r_i,s_i\in \N^r$  by separating non negative and negative coordinates. This uniquely determines $r_i,s_i$ and is equivalent to saying that the corresponding binomial is not divisible by any of the $X_j$. The minimality of $n_i$ implies that for each binomial the set of exponents of the variables $X_j, U_i, U_k$ is a set of coprime integers so that the binomial is an irreducible element of the polynomial ring by \cite{Ei-S}.\par\noindent
Recall that according to the valuative Cohen Theorem \ref{Valco}, the map ${\rm gr}_w\pi$ lifts to a continuous surjective map $\pi\colon\widehat{k[x,(u_i)_{i\in I}]} \to R_1$, where the first ring is the scalewise completion of the polynomial ring.

\begin{proposition}\label{nokey} Set $R_0=k[[x_1,\ldots , x_r]]$. Let $\nu_0$ be a rational valuation of $R_0$ such that the $\nu(x_i)$ are rationally independent. Let $R_1=R_0[y]/(p(y))$ where $p(y)\in R_0[y]$ is a unitary irreducible polynomial, and let $\nu$ be a rational Abhyankar valuation on $R_1$ which extends $\nu_0$. With the notations just introduced we have:
\begin{enumerate}
\item The binomials $(X^{s_i}U_i^{n_i}-\lambda_i X^{r_i}\prod_{k\in E(i)}U_k^{t^{(i)}_k})_{i\in I}$ generate the kernel $F_0$ of ${\rm gr}_w\pi$.
\item If the set $I$ has no largest element, up to a change of the representatives $\xi_i\in R_1$ of the $\overline\xi_i$, the kernel $F$ of the continuous surjective map $\pi\colon\widehat{k[x,(u_i)_{i\in I}]} \longrightarrow R_1$ determined by $x_j\mapsto x_j,\ u_i\mapsto \xi_i$ according to the valuative Cohen theorem is generated up to closure by elements $$H_i=x^{s_i}u_i^{n_i}-\lambda_i x^{r_i}\prod_{k\in E(i)}u_k^{t^{(i)}_k}-g_i-u_{i+1} \ \ \ \ {\rm for}\ i\in I,$$ where $i+1={\rm min}\{j\in I\vert j>i\}$, $g_i\in\widehat{k[x,(u_j)_{j\leq i}]}$ with $w(x^{r_i}\prod_{k\in E(i)}u_k^{t^{(i)}_k})< w(g_i)<\gamma_{i+1}$, each term of $g_i$ is of weight $<\gamma_{i+1}$,  and ${\rm in}_w(g_i)\notin F_0$. If $I$ has a largest element $\overline i$ all the equations $H_i$ can be assumed to be in the form above except the last one which is $$H_{\overline i}=x^{s_{\overline i}}u_{\overline i}^{n_{\overline i}}-\lambda_{\overline i} x^{r_{\overline i}}\prod_{k\in E(\overline i)}u_k^{t^{(\overline i)}_k}-g_{\overline i},$$ with $g_{\overline i}=\sum_{w(x^{m_p}u^p)>w(x^{s_{\overline i}}u_{\overline i}^{n_{\overline i}})}c^{(\overline i)}_px^{r_p}u^p,\ \ c^{(\overline i)}_p\in k$.
\end{enumerate}
\end{proposition}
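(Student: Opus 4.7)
The plan is to prove (1) via the uniqueness of normal-form presentations established in Proposition \ref{expression}, and then to prove (2) by simultaneously inductively adjusting the representatives $\xi_i$ along the structure suggested by that semigroup presentation.

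For (1), the kernel $F_0$ of the surjection $k[X,(U_i)_{i\in I}]\to\hbox{gr}_\nu R_1\simeq k[t^\Gamma]$ is a prime binomial ideal (\cite{Ei-S}). By Proposition \ref{expression}, every $\phi\in\Gamma_i$ admits a unique presentation $\phi=\phi_0+\sum_{k\in E(i)}t_k\gamma_k$ with $\phi_0\in\Phi_0$ and $0\leq t_k<n_k$. The given binomials are precisely the rewriting rules that reduce any monomial in $X,U$ with finite support to the unique normal-form monomial $X^{\phi_0}\prod U_k^{t_k}$ with $0\leq t_k<n_k$ having the same image in $k[t^\Gamma]$; the reduction terminates because each monomial has finite support and, within that support, only finitely many indices satisfy $n_k>1$ (the hypothesis that $E(i)$ is finite for each $i$). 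Since $F_0$ is generated by differences of monomials with equal $\Gamma$-image, and any such pair reduces to the same normal form up to a scalar, the given binomials generate $F_0$.

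For (2), the valuative Cohen theorem \ref{Valco} ensures that $F$ is generated up to closure by overweight deformations of the binomial generators of $F_0$. The task is to put these into the prescribed form by inductively adjusting the representatives $\xi_i$. Given $\xi_1,\ldots,\xi_i$, consider
$$\eta_i=\xi_i^{n_i}x^{s_i}-\lambda_ix^{r_i}\prod_{k\in E(i)}\xi_k^{t_k^{(i)}}\in R_1;$$
cancellation of the two initial forms in $\hbox{gr}_\nu R_1$ forces $\nu(\eta_i)>n_i\gamma_i+w(x^{s_i})$. Iteratively strip off the initial form of the current remainder: as long as its $\nu$-value lies in $\Gamma_i$ and is strictly less than $\gamma_{i+1}$, its initial form is, by Part (1), a scalar multiple of a unique normal-form monomial in the $\overline{\xi_k}$ ($k\leq i$) and $X_j$. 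Lift this monomial and subtract. The accumulated subtractions assemble into a series $g_i\in\widehat{k[x,(u_j)_{j\leq i}]}$ whose terms all have weight strictly between $w(x^{r_i}\prod u_k^{t_k^{(i)}})$ and $\gamma_{i+1}$, each a normal-form monomial, so that ${\rm in}_w(g_i)$ is itself a normal-form monomial, not in $F_0$. Redefine $\xi_{i+1}$ to be the terminal remainder $\eta_i-g_i(\xi,x)$, rescaled by a unit of $k^*$ so that its initial form is $\overline{\xi_{i+1}}$. The resulting $H_i$ lies in $F$ and has the required form; when $I$ has a largest element $\bar i$, there is no successor, so the entire tail of the overweight deformation of the final binomial is absorbed into $g_{\bar i}$.

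The principal obstacle is to verify that the terminal remainder $\eta_i-g_i(\xi,x)$ has $\nu$-value exactly $\gamma_{i+1}$, not strictly greater, so that it is a legitimate representative of $\overline{\xi_{i+1}}$. The key observation is that $R_1=R_0[\xi_1]$ is generated over $R_0$ by $\xi_1=y$ alone, so any element of $R_1$ of value $\gamma_{i+1}\notin\Gamma_i$ must arise from cancellations of initial forms within the subring generated by $x$ and $\xi_1,\ldots,\xi_i$; the minimality of $n_i$ in Proposition \ref{expression} and the definition of $\gamma_{i+1}$ as the least element of $\Gamma$ outside $\Gamma_i$ then force the cancellation pattern producing $\overline{\xi_{i+1}}$ to coincide with the $i$-th binomial relation up to lower-weight corrections, which is exactly what the greedy subtraction packages into $g_i$. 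A secondary technical point is convergence of the transfinite subtraction process when $\nu$ has rank greater than one and $I$ is infinite: here one invokes the scalewise completeness of $\widehat{k[x,(u_j)_{j\leq i}]}$ (Theorem \ref{sph}) and the $m$-adic completeness of $R_1$ through Chevalley's theorem, mirroring the convergence arguments already employed in the proof of Theorem \ref{Valco}.
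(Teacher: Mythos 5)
Your normal-form argument for part (1) has a genuine gap: it ignores the $x$-part of the leading terms of the binomials, and this matters precisely when some $s_i\neq 0$, a case which really occurs (see Rond's example and proposition \ref{semg}: all $s_i$ vanish exactly when $p(y)$ is $\nu_0$-analytically irreducible). The rewriting rule attached to $X^{s_i}U_i^{n_i}-\lambda_iX^{r_i}\prod_{k\in E(i)}U_k^{t^{(i)}_k}$ can only be applied to a monomial divisible by $X^{s_i}U_i^{n_i}$, so a monomial whose $U_i$-exponent is $\geq n_i$ but which is not divisible by $X^{s_i}$ cannot be reduced at all; moreover the ``normal form'' $X^{\phi_0}\prod_k U_k^{t_k}$ you invoke from proposition \ref{expression} need not exist in $k[X,(U_i)_{i\in I}]$, since $\phi_0\in\Phi_0$ is only a group element and may have negative coordinates (it is $r_i-s_i$, not an element of $\Gamma_0$). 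This is exactly the difficulty the paper's proof is organized around: one multiplies the binomial to be tested by a suitable power $X^{qs_i}$, reduces modulo the listed binomials, and must then remove that power of $X$, which requires knowing that the ideal generated by the listed binomials in the finitely many variables involved contains no monomial and is saturated with respect to the coordinates --- in fact that it is prime. That primality is proved by an induction showing the exponent lattice is saturated (each new exponent vector is primitive because $n_i$ is minimal, and the binomials form a regular sequence), using Theorem 2.1 and Corollary 2.3 of \cite{Ei-S}. Your proposal omits this step, which is the heart of (1).

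For part (2), your greedy construction stands or falls on the claim that the terminal remainder $\eta_i-g_i(\xi,x)$ has value \emph{exactly} $\gamma_{i+1}$, i.e.\ that the new generator $\overline\xi_{i+1}$ arises from deforming the $i$-th binomial relation rather than from some other cancellation. You say this is ``forced'' by $R_1=R_0[y]$ and the minimality of $n_i$, but that is precisely the content of the proposition and is not proved: nothing in your argument excludes that the stripping process overshoots, $\nu(\eta_i-g_i)>\gamma_{i+1}$ (possibly after a transfinite limit), in which case your redefined $\xi_{i+1}$ is not a representative of $\overline\xi_{i+1}$ and your $H_i$ lacks the required linear term $u_{i+1}$. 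The paper argues in the opposite direction: it starts from the generators of $F$ furnished by theorem \ref{Valco}, which are overweight deformations of the binomials of (1), and uses the fact that the maximal ideal of $R_1$ is generated by $x_1,\ldots,x_r,y$ to show that each $u_j$ with $j>1$ must occur linearly with a unit coefficient in one of the series $H_i$; weight comparisons together with the minimality of the generating system of $\Gamma$ rule out its occurrence in $H_l$ for $l\geq j$, and the subsequent changes of representatives push the terms of weight $>\gamma_{i+1}$ out of $H_i$, replace $G_i$ by a $g_i$ with $w(g_i)=\nu(\pi(g_i))<\gamma_{i+1}$, and reduce $\pi(g_i)$ modulo $p(y)$ --- with a separate verification (via the convex subgroups and Chevalley's theorem) that the new representatives remain admissible for the valuative Cohen theorem, a point you mention only in passing. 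Finally, you assert but do not verify that the family you construct generates $F$ up to closure; in the paper this comes for free because one modifies a generating system already provided by theorem \ref{Valco}, whereas in your construction it would require an additional argument.
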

\begin{proof} We have noted that the valuation of an element of $R_1$ which is not in the semigroup $\Gamma_0$ generated by the $\nu(x_j)$ has to be $\geq \nu(y)$. Let us denote by $F_0$ the kernel of the map ${\rm gr}_w\pi$ and let $n_1\geq 1$ be the smallest integer such that $n_1\nu(y)\in\Phi_0$. We have $X^{s_1}U_1^{n_1} -\lambda_1X^{r_1}\in F_0$. Let $X^{s'}U_1^{n'}-\lambda'X^{r'}\in F_0$ be any other relation. If we divide $n'$ by $n_1$ and write $n'=qn_1+r,\ 0\leq r<n_1$, we see that $r=0$ by the minimality of $n_1$. Now the product $X^{qs_1}(X^{s'}U_1^{qn_1}-\lambda'X^{r'})$ is in $F_0$ and is congruent modulo $X^{s_1}U_1^{n_1} -\lambda_1X^{r_1}$ to $\lambda_1^qX^{s'+qr_1}-\lambda'X^{r'+qs_1}$. So this last binomial has to be in $F_0\cap k[X_1,\ldots ,X_r]$ which is the zero ideal by our assumption. If we remember that whenever we write $r-s$ it is shorthand for the decomposition of a vector of $\Z^r$ according to its non negative and negative coordinates in the canonical basis so that $r, s$ both have non negative coordinates, we see that $s'+qr_1=r'+qs_1$, rewritten $r'-s'=q(r_1-s_1)$ implies, since $q>0$, that $s'=qs_1,r'=qr_1$. Thus we must have $\lambda'=\lambda_1^q$ and $r'=qr_1,s'=qs_1$ so that $X^{s'}U_1^{n'}-\lambda'X^{r'}\in F_0$ is a multiple of $X^{s_1}U_1^{n_1} -\lambda_1X^{r_1}$. This proves that $F_0\cap k[X_1,\ldots ,X_r,U_1]$ is the prime ideal generated by $X^{s_1}U_1^{n_1} -\lambda_1X^{r_1}$.\par We now proceed by transfinite induction on the largest index of a variable appearing in a binomial relation. Given a binomial relation $B\in F_0$ we denote by $i$ the largest index of a variable $U_k$ appearing in it. Again by division of the exponent of $U_i$ by $n_i$ we find that the exponent of $U_i$ in the relation has to be a multiple of $n_i$ and so our binomial is of the form $B=X^{s'_i}U_i^{qn_i}U(i)^{s'(i)}-\lambda 'X^{r'_i}U(i)^{r'(i)}$, where $U(i)$ represents the variables of index $<i$. The binomial $B$ involves only finitely many variables and our inductive assumption is that the ideal $F_0\cap k[X, (U_j)_{j< i}]$ is generated by the binomials of our list which involve only the variables $X, (U_j)_{j< i}$. Each of the variables $U_j$ other than $U_i$ which appear in our binomial $B$ is involved in one such relation with variables of lower weight. Each of these variables in turn is involved in one such relation with variables of lower weight, and so on. By induction we may assume that for each such relation involving a variable of weight $<\gamma_i$ the total number of variables appearing in this iterative process is finite. Then it is also finite for our binomial $B$ since it involves a finite number of variables. We do the same thing with the binomial relation $X^{s_i}U_i^{n_i}-\lambda_iX^{r_i}\prod_{k\in E(i)}U_k^{t^{(i)}_k}$ and add the corresponding variables to the set of variables associated to $B$.\Par Note that if the rank of our valuation is one, the set of variables of weight less than the weight of some $U_i$ is finite anyway.\par
At this stage we have a finite subset $(U_l)_{l\in A_i}$ of the variables $(U_l)_{l<i}$ which has the property that the kernel of the map $k[X,(U_l)_{l\in A_i}]\to{\rm gr}_\nu R_1$ is generated by the binomial relations in our list which involve these variables, since by our inductive assumption they generate all the relations between them. The group generated by the weights of the variables $X, (U_l)_{l\in A_i}$ is still $\Phi_{i}^-$ so that our $n_i$ is still minimal. \par Then, the binomial $X^{qs_i}B$ is equal to $X^{s'_i}(X^{s_i}U_i^{n_i})^qU(i)^{s'(i)}-\lambda 'X^{r'_i}U(i)^{r'(i)}$ and thus, modulo the relation $X^{s_i}U_i^{n_i}-\lambda_iX^{r_i}\prod_{k\in E(i)}U_k^{t^{(i)}_k}$, is equal to a relation between variables of indices which belong to our set $A_i$. By the induction hypothesis this product $X^{qs_i}B$ is in the ideal $I$ of $k[X, (U_l)_{l\in A_i},U_i]$ generated by the binomials $(X^{s_l}U_l^{n_l}-\lambda_lX^{r_l}\prod_{k\in E(l)}U_k^{t^{(l)}_k})_{l\in A_i}$ and $X^{s_i}U_i^{n_i}-\lambda_iX^{r_i}\prod_{k\in E(i)}U_k^{t^{(i)}_k}$.\Par The set of binomials which generate $I$ is a regular sequence since, as we build it, to each added variable corresponds one equation which involves that variable.\par The ideal $F_0$ can contain no monomial such as $X^{qs_i}$ since ${\rm gr}_\nu R$ is a domain and minimally generated by the $X_j$ and $\overline\xi_i$.\par By corollary 2.3 of \cite{Ei-S} and with its notations, our ideal is of the form $I=I_+(\rho)$, which means that it has no associated prime containing a variable $U_l$, or that $(I\colon (\prod_{j=1}^rX_j\prod_{l\in A_i\cup\{i\}}U_l)^\infty)=I$, an equality which already suffices for our purpose at this step. But for the induction we need to prove that the ideal $I$ is prime. In the case of a binomial ideal generated by a regular sequence of binomials and containing no monomial, by theorem 2.1 of \cite{Ei-S} it is enough to show that the lattice generated by the vectors corresponding to the binomials is saturated. Let $N_i$ denote the cardinality of the set $A_i$. By induction on $i$ we may assume that the lattice $\Ll_{i-1}$ generated in $\Z^{N_i}$ by the exponents of the binomials $(X^{s_l}U_l^{n_l}-\lambda_lX^{r_l}\prod_{k\in E(l)}U_k^{t^{(l)}_k})_{l\in A_i}$ is a direct factor, or equivalently is saturated. We have to prove that the same is true of the lattice $\Ll_i= \Ll_{i-1}+\Z v\subset \Z^{N_i+1}$ where $v$ the vector of exponents of the binomial $X^{s_i}U_i^{n_i}-\lambda_iX^{r_i}\prod_{k\in E(i)}U_k^{t^{(i)}_k}$. This vector is  primitive since $n_i$ is minimal. To prove that $\Ll_i$ is saturated we take a primitive vector $m\in \Z^{N_i+1}\setminus \Ll_i$ such that $qm\in \Ll_i$, and the least such $q$. Since $\Ll_{i-1}$ is saturated, we have $qm=q\ell_{i-1}+tv$ with $\ell_{i-1}\in\Ll_{i-1}$. The integers $q,t$ must be coprime since $q$ is minimal, but then $q$, which is $>1$ since $m\notin \Ll_i$, must divide $v$ which is primitive. This contradiction shows that $\Ll_i$ is saturated and the ideal $I$ is prime, so the binomial $B$ has to be in the ideal generated by the binomials $(X^{s_j}U_j^{n_j}-\lambda_jX^{r_j}\prod_{k\in E(j)}U_k^{t^{(j)}_k})_{j\leq i}$. This ends the proof of $(1)$.\par
To prove $(2)$ we first assume that we have chosen representatives $\xi_i$ for which the theorem is valid and recall that in view of the valuative Cohen theorem of section \ref{cohen} and of $(1)$, the ideal $F$ is generated, up to closure, by overweight deformations $$x^{s_i}u_i^{n_i}-\lambda_ix^{r_i}\prod_{k\in E(i)}u_k^{t^{(i)}_k}+\sum_{w(x^{m_p}u^p)>w(x^{s_i}u_i^{n_i})}c^{(i)}_px^{m_p}u^p.$$
In the ring $R_1$, the elements $\xi_i$ with $i>1$ must be series in $x_1,\ldots ,x_r, y$ since these generate the maximal ideal. This implies that each $u_j$ with $j>1$ must appear linearly with a non zero constant coefficient, which we may take equal to $1$, in one of the series $H_i$. Now $u_2$ cannot appear linearly in the series $H_2$ because the overweight condition would imply that $s_2=0$ and $n_2=1$ and then $\gamma_2$ would be in the semigroup generated by the previous ones, which contradicts the minimality of our set of generators. The overweight condition also prevents $u_2$ from appearing linearly in any $H_i$ with $i\geq 3$. So $u_2$ must appear in $H_1$. Of course some $u_j$ with $j>2$ might also appear linearly in $H_1$ but if it does not appear linearly in any other equation, ultimately it will not be expressible in terms of $x_1,\ldots ,x_r,y$; as we shall see below it can be eliminated from the equation by a change of the representative $\xi_i\in R_1$ of the generator $\overline\xi_i$ of the graded algebra. The same argument shows that $u_{i+1}$ must appear linearly in $H_i$ for all $i\in I$.\par
Our next step is to show that we can modify the $H_i$ into another system of equations generating the same closed ideal, and which have the form:
$$x^{s_i}u_i^{n_i}-\lambda_ix^{r_i}\prod_{k\in E(i)}u_k^{t^{(i)}_k}-g_i-u_{i+1}+\sum_{w(x^{r_p}u^p)>\gamma_{i+1}}c^{(i)}_px^{r_p}u^p.\eqno{(*)}$$
Let us write, perhaps at the price of replacing the representatives $\xi_{i+1}$ by $\rho_{i+1}\xi_{i+1}$, with $\rho_{i+1}\in k^*$, the equations $(H_i)$ above in the form:
$$x^{s_i}u_i^{n_i}-\lambda_ix^{r_i}\prod_{k\in E(i)}u_k^{t^{(i)}_k}-G_i(x,u)-u_{i+1}+\sum_{w(x^{r_p}u^p)>\gamma_{i+1}}c^{(i)}_px^{r_p}u^p.\eqno{(*+)},$$
where $G_i$ does not contain any term of weight $\geq \gamma_{i+1}$. The equality $\nu(G_i(x,u))=\gamma_{i+1}$ is impossible because of the minimality of the system of generators of $\Gamma$. If $\nu(G_i(x,u))>\gamma_{i+1}$, by adding an element of the kernel $F$ of the surjection $\pi\colon \widehat{k[x,(u_i)_{i\in I}]}\to R_1$ we can eliminate $G_i$ from the equation by incorporating it in the last sum. If that is not the case, again by adding an element of $F$ we can replace $G_i$ in the expression $(*+)$ by an element $g_i$ such that $w(g_i)=\nu(\pi(g_i))<\gamma_{i+1}$. Since we have chosen representatives $\xi_i$ which are appropriate for the valuative Cohen theorem, we can also assume that for all $i$ except possibly a finite number, the sum $\sum_{w(x^{r_p}u^p)>\gamma_{i+1}}c^{(i)}_px^{r_p}u^p$ does not contain variables $u_k$ whose weight is not in the smallest convex subgroup containing $\gamma_{i+1}$.\par
Once our equations are all in the form $(*)$ we can iteratively make the changes of variables $u'_{i+1}=u_{i+1}-\sum_{w(x^{r_p}u^p)>\gamma_{i+1}}c^{(i)}_px^{m_p}u^p$, which corresponds to a change of the representatives $\xi_i\in R_1$ of the $\overline{\xi_i}$. These new representatives still satisfy the conditions for the validity of the valuative Cohen theorem. The reason is that  in the equations $(*+)$, except for finitely many of them, we may assume that all the variables $u_k$ which appear have their indices in the smallest union $\bigcup_{k=1}^sI_k$ which contains the index $i$ of $u_i$. In that case, the terms of the last sum belong to powers of the maximal ideal which increase with $i$ as was shown in the proof of the valuative Cohen theorem, and the change of representatives is harmless. The exceptions correspond to the equations where $i+1$ is not in the smallest union $\bigcup_{k=1}^sI_k$ which contains the index $i$ of $u_i$. This happens only if there are, for the quotient $R/p_{h-s}$, a last element $\overline\iota$ in $I_s$ and a last equation $$x^{s_{\overline\iota}}u_{\overline\iota}^{n_{\overline\iota}}-\lambda_{\overline\iota}x^{s_{\overline\iota}}\prod_{k\in E(\overline\iota)}u_k^{t^{(\overline\iota)}_k}-g_{\overline\iota}=0.$$ This equation has to be the trace of an equation
$$x^{s_{\overline\iota}}u_{\overline\iota}^{n_{\overline\iota}}-\lambda_{\overline\iota}x^{s_{\overline\iota}}\prod_{k\in E(\overline\iota)}u_k^{t^{(\overline\iota)}_k}-g_{\overline\iota}-u_{\overline\iota+1}+\sum_{w(x^{r_p}u^p)>\gamma_{\overline\iota+1}}c^{(\overline\iota)}_px^{r_p}u^p$$ for $R$, in which we have lost control of the $m$-adic order of the last sum because $\gamma_{\overline\iota +1}\notin I_s$ and Chevalley's theorem cannot be brought to bear. But this concerns only finitely many of the variables $u_i$ and therefore has no consequence for the valuative Cohen theorem. \par At this point, dividing $\pi(g_i)$ by the unitary polynomial $p(y)$, we may assume that $\pi(g_i)$ is a polynomial of degree $<{\rm deg}p(y)$.\Par  If the set $I$ has no largest element, ultimately all the equations are in the form required by the proposition. If the set $I$ has a largest element $\overline i$, then the last equation is not in that form, but in the form
$$x^{s_{\overline i}}u_{\overline i}^{n_{\overline i}}-\lambda_{\overline i} x^{r_{\overline i}}\prod_{k\in E(\overline i)}u_k^{t^{(\overline i)}_k}+\sum_{w(x^{r_p}u^p)>w(x^{s_{\overline i}}u_{\overline i}^{n_{\overline i}})}c^{(\overline i)}_px^{r_p}u^p,$$
where the last sum includes the result of all the previous changes of variables.
 \end{proof}
\begin{remark}\label{inf}\small{\emph{Let us keep the notation $$ H_i=x^{s_i}u_i^{n_i}-\lambda_ix^{r_i}\prod_{k\in E(i)}u_k^{t^{(i)}_k}-g_i-u_{i+1}\in\widehat{k[x, (u_i)_{i\in I}]}.$$
If there is an infinite segment $j_1<j<j_2$ in $I$, since the number of $k\in I$ with $n_k>1$ is finite, for all but finitely many $j$ in the segment we must have $n_j=1$, and when $n_j=1$ the minimality of the system of generators implies $s_j\neq 0$.}}
\end{remark}
Let us consider the valuation $\mu$ of $k[[x, y]]$ which is composed of the $p(y)$-adic valuation and our valuation $\nu$ on the quotient ring $R_1=k[[x, y]]/(p(y))=k[[x]][y]/(p(y))$. Its rank is $r+1$, its group is $\Z\oplus\Phi$ with the lexicographic order, and its associated graded ring is ${\rm gr}_\mu k[[x, y]]={\rm gr}_\nu R_1[P]$, where $P$ is the $\mu$-initial form of $p(y)$ (see \cite{Te1}, Example 3.19).\par By abuse of language, let us continue to denote by $\xi_j$ elements in $k[[x,y]]$, polynomials of degree $<{\rm deg}p(y)$,  which lift the corresponding ones in $R_1$. Similarly, since for elements of $k[[x,y]]$ which are not in the ideal generated by $p(y)$ we may identify the valuation $\mu$ with the value of $\nu$, we will sometimes use the notation $\nu$ instead of $\mu$ for such elements.\Par According to the valuative Cohen theorem we have a continuous surjective map of $k$-algebras
$$\widehat{k[x,(u_i)_{i\in I},v]}\to k[[x, y]]$$ determined by $x_j\mapsto x_j, u_i\mapsto \xi_i\in k[[x,y]]\ (u_1\mapsto y)\  {\rm and}\  v\mapsto p(y)$. In what follows we will deal mostly with elements of $k[[x,y]]$ that are not divisible by $p(y)$, and write $\nu$ instead of $\mu$.\par\noindent
We now eliminate the variables $u_i,\ i>1$ from the equations $H_i$.\par
Given $i\in I$, consider the ideal $\Ff_i$ which is the closure in $\widehat{k[x,(u_i)_{i\in I}]}$ of the ideal generated by $\{(H_j)_{j<i}, H_i+u_{i+1}\}$. Elimination produces generators for each $\Ff_i\cap k[[x,u_1]]$.
 Since the expression of $u_{i+1}$ depends only on the variables $x,u_1,\ldots ,u_i$, the elimination of the variables $u_1,\ldots ,u_i$ consists in successively replacing for $j=1,\ldots ,i$ each $u_j$ by its expression as a polynomial in $y$ in the expression of $u_{i+1}$ given by the equation $H_i$. Then we replace the result of the elimination by the remainder of its division by the unitary polynomial $p(y)$. In this manner we build a sequence of polynomials $Q_i(y)\in k[[x]][y]\subset k[[x,y]]$, with $Q_1=y,Q_2= x^{s_1}y_1^{n_1}-\lambda_1x^{r_1}-g_1(x,y), Q_3=x^{s_2}Q_2^{n_2}-\lambda_2x^{r_2}Q_1^{t^{(2)}_1}-g_2(x,y,Q_1(x,y),Q_2(x,y)),..., Q_{i+1}=x^{s_i}Q_i^{n_i}-\lambda_ix^{r_i}\prod_{k\in E(i)}Q_k^{t^{(i)}_k}-g_i(x,y,\ldots , Q_i(x,y)),\ldots $ which are all of degree $<{\rm deg}p(y)$.\par
 We remark that by construction the polynomial $Q_{i+1}$ obtained by elimination of the variables $u_j, j\geq 2$, between the generators of the ideal $\Ff_i$ has the property that $Q_{i+1}(y)-u_{i+1}$ is in the ideal $F$, so that \emph{the variables $x_j$ and the polynomials $(Q_i)_{i\in I}, p(y)$ form a generating sequence for the valuation $\mu$ of $k[[x,y]]$: every element $q(y)$ of $k[[x,y]]$ is represented by a series in the $Q_i(y)$ and $p(y)$ with coefficients in $k[[x]]$, where every term has $\mu$-valuation $\geq \mu(q(y))$.}\par\noindent
 \begin{proposition}\label{irreducible} The polynomials $Q_{i+1}(y),\ i\geq 0$, are irreducible in $k[[x,y]]$.
 \end{proposition}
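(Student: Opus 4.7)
The plan is to argue by contradiction, exploiting the inductive characterization of $\gamma_{i+1}$ (Definition \ref{ord}) as the least element of $\Gamma$ that does not belong to the sub-semigroup $\Gamma_i=\langle\Gamma_0,\gamma_1,\ldots,\gamma_i\rangle$. The key point is that $Q_{i+1}(y)$ has $\mu$-value $(0,\gamma_{i+1})$ in $\Z\oplus\Phi$, so any nontrivial factorization in $k[[x,y]]$ would split $\gamma_{i+1}$ into a sum of two strictly smaller positive elements of $\Gamma$, which by minimality would both lie in $\Gamma_i$, forcing $\gamma_{i+1}\in\Gamma_i$.

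First I would record that $\mu(Q_{i+1}(y))=(0,\gamma_{i+1})$. Indeed, $\deg_y Q_{i+1}<\deg_y p(y)$, so $Q_{i+1}$ cannot be divisible by the Weierstrass polynomial $p(y)$ in $k[[x,y]]$, i.e.\ its image in $R_1=k[[x,y]]/(p(y))$ is non-zero. That image coincides with $\xi_{i+1}$, because the recursive construction of $Q_{i+1}$ by successive substitution into the equations $H_j,\ j\leq i$, and reduction modulo $p(y)$ realizes precisely the relation $u_{i+1}\equiv Q_{i+1}(y)$ that is imposed modulo the ideal $F$ of the valuative Cohen presentation. Hence $\mu(Q_{i+1})=(0,\nu(\xi_{i+1}))=(0,\gamma_{i+1})$.

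Now assume $Q_{i+1}=A\cdot B$ with $A,B$ non-units of the regular local ring $k[[x,y]]$, and write $\mu(A)=(k_A,\phi_A)$, $\mu(B)=(k_B,\phi_B)$. Since the first coordinate of $\mu$ is the non-negative $p(y)$-adic order, the equality $k_A+k_B=0$ forces $k_A=k_B=0$; in particular neither $A$ nor $B$ lies in $(p(y))$, and $\phi_A=\nu(\bar A)$, $\phi_B=\nu(\bar B)$ are elements of $\Gamma$. Because $A,B$ belong to the maximal ideal of $k[[x,y]]$, their images $\bar A,\bar B$ lie in the maximal ideal of $R_1$ and are non-zero, so $\phi_A,\phi_B>0$. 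From $\phi_A+\phi_B=\gamma_{i+1}$ with both summands strictly positive it follows that $\phi_A,\phi_B<\gamma_{i+1}$ in $\Gamma$, so the minimality property of $\gamma_{i+1}$ yields $\phi_A,\phi_B\in\Gamma_i$, whence $\gamma_{i+1}=\phi_A+\phi_B\in\Gamma_i$, contradicting the definition of $\gamma_{i+1}$.

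The most delicate step to my mind is the identification of the image of $Q_{i+1}(y)$ in $R_1$ with $\xi_{i+1}$, ensuring $\mu(Q_{i+1})=(0,\gamma_{i+1})$; once this is justified from the explicit elimination procedure described just before the statement, the rest of the argument is a direct calculation using the composition of $\mu$ with the $p(y)$-adic valuation together with the defining minimality of $\gamma_{i+1}$.
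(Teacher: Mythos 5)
Your proof is correct and rests on exactly the same fact as the paper's one-line argument, namely that $\gamma_{i+1}$ is by construction not a sum of two nonzero elements of $\Gamma$ (the paper phrases this as: the $\mu$-initial form of $Q_{i+1}(y)$ is $\overline\xi_{i+1}$, which is irreducible because it is part of a minimal set of generators of ${\rm gr}_\mu k[[x,y]]$, hence $Q_{i+1}(y)$ is irreducible). Your version simply unwinds that statement into an explicit computation with $\mu$-values, and the identification of the image of $Q_{i+1}(y)$ in $R_1$ with $\xi_{i+1}$ that you flag as the delicate step is precisely the observation $Q_{i+1}(y)-u_{i+1}\in F$ recorded in the paper in the paragraph immediately preceding the proposition.
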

 \begin{proof} The $\mu$-initial form of $Q_{i+1}(y)$ in ${\rm gr}_\mu k[[x,y]]$ is $\overline\xi_{i+1}$ which is irreducible since it is part of a minimal set of generators of that algebra, and so $Q_{i+1}(y)$ has to be irreducible. \end{proof}
 We shall later make use of the following:
 \begin{proposition}\label{ineq} For each $i\in I$ the inequality $r_i-s_i>0$ holds.
 \end{proposition}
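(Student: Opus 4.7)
The plan is to establish $\phi_0^{(i)}:=(r_i-s_i)\cdot\nu(x)=n_i\gamma_i-\sum_{k<i,\,n_k>1}t_k^{(i)}\gamma_k>0$ in $\Phi$ by transfinite induction on $i\in I$, exploiting the uniqueness of the reduced presentation given by proposition \ref{expression} together with the characterization of $\gamma_i$ as $\min(\Gamma\setminus\Gamma_{i-1})$, where $\Gamma_{i-1}=\langle\Gamma_0,\gamma_1,\ldots,\gamma_{i-1}\rangle$.

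For the base case, when $i$ is the least element of $I$, the sum over $k<i$ is empty and so $\phi_0^{(i)}=n_i\gamma_i$, which is $>0$ since $\gamma_i>0$ in $\Phi$ and $n_i\geq 1$.

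For the inductive step, assume $\phi_0^{(k)}>0$ (equivalently $n_k\gamma_k>\sum_{l<k}t_l^{(k)}\gamma_l$ in $\Phi$) for all $k<i$, and suppose for contradiction that $\phi_0^{(i)}\leq 0$. Since $n_i\gamma_i\in\Gamma$, it admits at least one presentation as a non-negative integer combination $\sum_j a_j\nu(x_j)+\sum_l b_l\gamma_l$ of semigroup generators, and I first reduce to the case where $b_l=0$ for all $l\geq i$: such a choice is possible because any $b_l>0$ for $l>i$ would appear with $b_l<n_l$ (since $b_l\gamma_l\leq n_i\gamma_i<n_l\gamma_l$ as $\gamma_l>\gamma_i$), and one can use the minimality of $n_i$ (no $m\gamma_i$ with $0<m<n_i$ lies in $\Phi_i^-$) to exclude a term $b_i\gamma_i$ unless $b_i=n_i$, whereupon the presentation collapses to the trivial one. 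Now starting from a presentation $n_i\gamma_i=\sum_j a_j\nu(x_j)+\sum_{l<i}b_l\gamma_l$ with $a_j,b_l\in\mathbb{N}$, I reduce by repeatedly replacing $n_k\gamma_k$ (whenever $b_k\geq n_k$) by its minimal expression $\phi_0^{(k)}+\sum_{l<k}t_l^{(k)}\gamma_l$; each such substitution strictly increases the $\Phi_0$-part by the positive quantity $\phi_0^{(k)}$ and leaves the $\gamma$-coefficients $<n_k$ for $k<i$. By the uniqueness in proposition \ref{expression} the endpoint of this reduction is exactly the minimal presentation $\phi_0^{(i)}+\sum t_k^{(i)}\gamma_k$, and so $\phi_0^{(i)}\geq\sum_j a_j\nu(x_j)\geq 0$ with strict inequality as soon as any reduction step takes place or any $a_j>0$; a separate argument rules out the degenerate case $\phi_0^{(i)}=0$ (which would force $r_i=s_i=0$ by rational independence of the $\nu(x_j)$ and hence $\overline\xi_i^{n_i}=\lambda_i\prod\overline\xi_k^{t_k^{(i)}}$ in ${\rm gr}_\nu R_1\simeq k[t^\Gamma]$, contradicting the choice of the $\overline\xi_i$ as a minimal generating system once the semigroup structure is unpacked).

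The principal obstacle is to make rigorous the reduction to an initial presentation supported in $\{\nu(x_j)\}\cup\{\gamma_l:l<i\}$ only; in particular one must argue carefully, using the minimality of $n_i$ and the well-ordering of $\Gamma$, that no genuine contribution from $\gamma_l$ with $l\geq i$ is forced. Once this is settled, the accumulation of the positive terms $\phi_0^{(k)}$ through the reduction steps gives the desired strict positivity of $\phi_0^{(i)}$.
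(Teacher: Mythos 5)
Your induction never invokes the overweight inequalities $\gamma_{j+1}>w(x^{s_j}u_j^{n_j})=s_j+n_j\gamma_j$ that come from the equations $H_j$ of proposition \ref{nokey} (where $u_{j+1}$ occurs as a term of strictly higher weight than the leading binomial), and this omission is fatal: the inequality $r_i-s_i>0$ is \emph{not} a consequence of the combinatorics of the generating system alone. Take $r=1$, $\Gamma_0=\langle 1024\rangle$ and $\Gamma=\langle 1024,3,4\rangle$. The construction of definition \ref{ord} gives $\gamma_1=3$, $\gamma_2=4$, with $n_1=1024$, $\phi_0^{(1)}=3072>0$, $n_2=1$, and the unique reduced presentation of proposition \ref{expression} for $\gamma_2$ is $4=-2\cdot 1024+684\cdot 3$ (with $0\le 684\le n_1-1$), so $r_2=0$, $s_2=2$ and $\phi_0^{(2)}=-2048<0$. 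Every ingredient you use — minimality of the $\gamma_i$, minimality of the $n_i$, uniqueness of the reduced presentation, positivity of the earlier $\phi_0^{(k)}$ — holds in this configuration, so no argument built only from them can prove the proposition. What excludes this semigroup as the value semigroup of a valuation on $k[[x]][y]/(p(y))$ with $\nu(x)=1024$ is precisely the chain of inequalities $\gamma_{i+1}>s_i+n_i\gamma_i$ forced by the fact that $u_{i+1}$ must appear linearly, as a term of higher weight, in $H_i$ (the ring has embedding dimension $\le r+1$; compare remark \ref{curve}). The paper's proof consists exactly in telescoping these inequalities: $n_i\gamma_i\ge\gamma_i>n_{i\setminus 1}\gamma_{i\setminus 1}=(n_{i\setminus 1}-1)\gamma_{i\setminus 1}+\gamma_{i\setminus 1}>(n_{i\setminus 1}-1)\gamma_{i\setminus 1}+n_{i\setminus 2}\gamma_{i\setminus 2}>\cdots$, which yields $n_i\gamma_i>\sum_{k\in E(i)}(n_k-1)\gamma_k\ge\sum_{k\in E(i)}t^{(i)}_k\gamma_k$.

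Two secondary points in your reduction also fail. First, $n_i\gamma_i<n_l\gamma_l$ for $l>i$ does not follow from $\gamma_l>\gamma_i$ when $n_l<n_i$. Second, and more importantly, when $s_i\neq 0$ (which does occur, cf. example \ref{rond} and proposition \ref{semg}) the element $n_i\gamma_i$ need not lie in the subsemigroup $\langle\Gamma_0,\gamma_1,\ldots,\gamma_{i-1}\rangle$ at all; the only non-negative presentation of $n_i\gamma_i$ in $\Gamma$ is then the trivial one $n_i\cdot\gamma_i$, so your reduction has no admissible starting point, and its purported endpoint cannot be matched with the presentation of proposition \ref{expression}, whose $\Phi_0$-part is a group element that in general does not belong to $\Gamma_0$.
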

 \begin{proof} The statement is equivalent to: $\nu(Q_i^{n_i})>\nu (\prod_{k\in E(i)}Q_k^{t^{(i)}_k})$. In fact, remembering the inequalities $0\leq t^{(j)}_k<n_k$, we are going to prove the stronger inequality $\nu(Q_i^{n_i})>\nu (\prod_{k\in E(i)}Q_k^{n_k-1})$.  By the equations of Proposition \ref{nokey}, denoting by $j\setminus 1$ the predecessor of $j$ in the finite set $E(i)\bigcup \{i\}$, we have the inequalities and equalities  $\nu(Q_i^{n_i})\geq \nu(Q_i)>\nu(Q_{i\setminus 1}^{n_{i\setminus 1}})= \nu(Q_{i\setminus 1}^{n_{i\setminus1}-1})+\nu(Q_{i\setminus1})> \nu(Q_{i\setminus1}^{n_{i\setminus1}-1})+\nu(Q^{n_{i\setminus 2}}_{i\setminus 2})$. Note that the second and last inequalities are valid because the successor of $j\setminus 1$ in $I$ is less than or equal to $j$. Now we can again write $\nu(Q^{n_{i\setminus 2}}_{i\setminus 2})>\nu(Q^{n_{i\setminus 2}-1}_{i\setminus 2})+\nu(Q^{n_{i\setminus 3}}_{i\setminus 3})$ and so on. This stops when we have exhausted $E(i)$ and proves the proposition.
 \end{proof}
 \begin{remark}\label{s=0}\small{\emph{The proposition implies that when $r=1$ all the $s_i$ are $0$. It is also closely related to the proposition of E. Garc\'\i a Barroso and A. P\l oski quoted in remark \ref{curve} below.}}
 \end{remark}
 \noindent
 We now resume the proof of the fact that the semigroup is finitely generated.
 \begin{proposition}\label{finhyp} In the situation of proposition \ref{nokey} the semigroup of the Abhyankar valuation $\nu$ on the ring $R_1=k[[x_1,\ldots ,x_r]]/(p(y))$ is finitely generated.
\end{proposition}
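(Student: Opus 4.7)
My plan is to argue by contradiction, assuming the semigroup $\Gamma = \nu(R_1 \setminus \{0\})$ is not finitely generated, so that the minimal generating set $(\gamma_i)_{i \in I}$ is infinite. The main tools will be: the irreducible key polynomials $Q_i(y) \in k[[x]][y]$ of degree $< e = \deg p(y)$ produced in proposition \ref{nokey}; the binomial structure of the initial ideal $F_0$ given in proposition \ref{nokey}(1); the positivity statement $(r_i - s_i) \cdot \nu(x) > 0$ of proposition \ref{ineq}; the uniqueness statement of proposition \ref{expression}; and the finite-dimensionality of $K = \mathrm{Frac}(R_1)$ over $K_0 = \mathrm{Frac}(R_0)$, combined with the tameness and separability secured by proposition \ref{sepex}.

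First I would reduce to an asymptotic regime. The product $\prod_i n_i$ equals the index $[\Phi : \Phi_0]$, which is bounded by $e$ via the fundamental inequality (since the residue extension is trivial and the ramification is tame by the choice of $x_1, \ldots, x_r$). Hence only finitely many $n_i$ exceed $1$, and there exists $N$ such that $n_i = 1$ for all $i \geq N$; by remark \ref{inf}, $s_i \neq 0$ for such $i$. Next, I would exploit the finiteness of $\Phi / \Phi_0$: by pigeonhole, infinitely many $\gamma_i$ with $i \geq N$ lie in a single coset of $\Phi_0$. Picking two such indices $i < j$, I write $\gamma_j - \gamma_i = (\alpha^+ - \alpha^-) \cdot \nu(x)$ with $\alpha^+, \alpha^- \in \N^r$. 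Since $k$ is the common residue field, there exists $c \in k^*$ such that $\nu(x^{\alpha^-} Q_j - c\, x^{\alpha^+} Q_i) > \gamma_j + \nu(x^{\alpha^-})$, which translates into the binomial $X^{\alpha^-} U_j - c\, X^{\alpha^+} U_i$ belonging to the kernel $F_0$.

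The contradiction should then come from matching this new binomial against the explicit system of generators of $F_0$ supplied by proposition \ref{nokey}(1). Because $n_j = 1$, the only generator of $F_0$ that can contribute the $U_j$-linear term $X^{\alpha^-} U_j$ is the one indexed by $j$ itself, which forces, after cancellation, that $\lambda_j X^{\alpha^- - s_j + r_j} \prod_{l \in E(j)} U_l^{t^{(j)}_l} \equiv c\, X^{\alpha^+} U_i$ modulo binomials involving only variables $U_k$ with $k < j$. Comparing the two resulting presentations of $\gamma_j$ in the group $\Phi$ - one from the defining relation $X^{s_j} U_j - \lambda_j X^{r_j} \prod_{l \in E(j)} U_l^{t^{(j)}_l}$, the other from the new binomial $X^{\alpha^-} U_j - c\, X^{\alpha^+} U_i$ - and invoking the uniqueness clause of proposition \ref{expression}, should yield the desired contradiction.

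The main obstacle lies in making the last step fully rigorous. The subtlety is that $\gamma_j - \gamma_i$ belongs to the group $\Phi_0 \cong \Z^r$ but generally \emph{not} to the semigroup $\Gamma_0 \cong \N^r$, so the new binomial relation does not directly exhibit $\gamma_j$ as redundant in the additive semigroup $\Gamma$. Handling this requires a careful combinatorial analysis of the lattice of exponents of $F_0$, together with the primitivity of the vectors $m^k - n^k$ already used in the proof of proposition \ref{nokey}(1) and, if needed, an inductive descent on the set $E(j)$ to reach a level where either the minimality of the generating family $(\gamma_i)$ or the primitivity of an exponent vector is directly violated.
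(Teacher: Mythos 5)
Your preliminary reductions are sound and agree with observations already in the paper: only finitely many $n_i$ exceed $1$, eventually $s_i\neq 0$ (remark \ref{inf}), and for indices $i<j$ with $\gamma_j-\gamma_i\in\Phi_0$ the element $x^{\alpha^-}Q_j-c\,x^{\alpha^+}Q_i$ does acquire higher value for a suitable $c\in k^*$, so the binomial $X^{\alpha^-}U_j-c\,X^{\alpha^+}U_i$ lies in $F_0$. The gap is that this yields no contradiction. That binomial is simply a consequence of the generators of proposition \ref{nokey}(1): writing $\gamma_j=(r_j-s_j)\cdot\nu(x)+\sum_{k\in E(j)}t^{(j)}_k\gamma_k$ and likewise for $\gamma_i$, the uniqueness clause of proposition \ref{expression} only forces $t^{(j)}_k=t^{(i)}_k$ for all $k$ and $r_j-s_j=(r_i-s_i)+(\alpha^+-\alpha^-)$, which is perfectly consistent; and since $\gamma_j-\gamma_i$ need not lie in $\Gamma_0$ (as you yourself note), nothing contradicts the minimality of $\gamma_j$ as a \emph{semigroup} generator. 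More fundamentally, the data you manipulate --- the binomial ideal $F_0$, the chain of groups, the primitivity of the exponent vectors --- is exactly the data of an arbitrary well-ordered minimal system of generators satisfying $n_i\gamma_i\in\Gamma_{i-1}$ and $n_i\gamma_i<\gamma_{i+1}$, and such systems can be infinite (this is precisely what happens for non-Abhyankar valuations of two-dimensional regular local rings). No amount of lattice bookkeeping or descent on $E(j)$ can rule out infiniteness, because the obstruction is not combinatorial.

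The input your argument never uses, and which drives the paper's proof, is the finiteness of $R_1$ over $R_0$ in quantitative form: each $Q_i(y)$ is a polynomial of degree $<\deg p(y)$ in $y$, and Chevalley's theorem (lemma \ref{value}) forces its coefficients into powers of $(x)$ tending to infinity with $i$. One then expands $p(u_1)$ modulo $F$ along the generating sequence, repeatedly trading each occurrence of a binomial initial form for $g_i+u_{i+1}$ so as to raise the weight; if $I$ were infinite, the bounded $y$-degree forces the remainder terms $B^{(i-1)}$ to involve only variables $u_j$ of large index, and lemma \ref{value} then places the coefficients of $p(y)$ in arbitrarily high powers of $(x_1,\ldots ,x_r)$ --- that is the contradiction. (In rank $>1$ one further inducts on the rank using the centers $p_k$ and Chevalley's theorem for the modules $\Pp_{\phi}/\Pp^+_{\phi}$.) Without an analytic finiteness statement of this kind, your approach cannot close.
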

\begin{proof}
\emph{We begin by the case where the valuation $\nu$ is of rank one}, so that for any index $i$, there are only finitely many elements between $\nu(y)$ and $\gamma_i$ and $i$ has a predecessor denoted by $i-1$.\par\noindent
 \begin{lemma}\label{value}There is a function $\beta\colon \N\to\N$ with $\beta(i)$ tending to infinity with $i$ and such that for each $i\geq 2$  the coefficients in $k[[x]]$ of the polynomial $Q_i(y)\in k[[x]][y]$ are in $(x)^{\beta(i)}$.
\end{lemma}\begin{proof} By Chevalley's theorem the polynomials $Q_i(y)$ must belong to powers of the maximal ideal of $R_1$ which tend to infinity with $i$. Since the maximal ideal of $R_1$ is generated by $(x,y)$ and the degrees of the $Q_i(y)$ are bounded the powers of the ideal $(x)$ to which the $Q_i(y)$ belong must tend to infinity with $i$.
\end{proof}

To produce a contradiction with the assumption that the semigroup is not finitely generated, we  proceed as follows, using the valuation $\mu$ of $k[[x,y]]$ introduced above:\par\medskip
In the value group $\Z\oplus\Phi$ of the valuation $\mu$ the valuation $\mu(p(y))=(1,0)$ is certainly larger than the weight of $p(u_1)\in \widehat{k[x,(u_i)_{i\in I},v]}$ which belongs to $\{0\}\oplus\Phi$. By an immediate extension of proposition \ref{initial} to this context, since $p(u_1)-v\in F$, we have ${\rm in}_wp(u_1)\in F_0$. Since $p(u_1)$ contains only the variables $x,u_1$, we must have an expression ${\rm in}_wp(u_1)=A^{(1)}_1(x,u_1)(x^{s_1}u_1^{n_1}-\lambda_1x^{r_1})^{e^{(1)}_1}$ with $A^{(1)}_1(x,u_1)\notin F_0$ and $e^{(1)}_1\geq 1$. Applying the same treatment to $p(u_1)-A^{(1)}_1(x,u_1)(x^{s_1}u_1^{n_1}-\lambda_1x^{r_1})^{e^{(1)}_1}$ and continuing in the same manner, we build a series\break $\sum_{i\geq 1} A^{(1)}_i(x,u_1)(x^{s_1}u_1^{n_1}-\lambda_1x^{r_1})^{e^{(1)}_i}$ with $A^{(1)}_i(x,u_1)\notin F_0$ and $e^{(1)}_i\geq 1$, and we see that only two things can happen:\par\medskip\noindent
- Either we never reach a point where the series stops, and then since the weight increases at each step and the group is of rank one, $p(u_1)$ is a series in $x^{s_1}u_1^{n_1}-\lambda_1x^{r_1}$. Factoring out the smallest power of this binomial, we can write it $p(u_1)=A^{(1)}(x,u_1)(x^{s_1}u_1^{n_1}-\lambda_1x^{r_1})^{e^{(1)}}$. Thus $p(u_1)$ is divisible by the binomial and since it is irreducible it means that $p(u_1)=x^{s_1}u_1^{n_1}-\lambda_1x^{r_1}$, necessarily $s_1=0$ and $n_1={\rm deg}p(u_1)$. The semigroup of $R_1$ is generated by $\nu(x_1),\ldots ,\nu(x_r),\nu(y)$.\par\medskip\noindent
- Or such is not the case, and there is an integer $k$ such that
$$p(u_1)-A^{(1)}(x,u_1)(x^{s_1}u_1^{n_1}-\lambda_1x^{r_1})^{e^{(1)}}=B^{(1)}(x,u_1),\ {\rm with}\ {\rm in}_wB^{(1)}(x,u_1)\notin F_0,$$
where $A^{(1)}(x,u_1)(x^{n_1}u_1^{s_1}-\lambda_1x^{m_1})^{e^{(1)}}=\sum_{i=1}^k A^{(1)}_i(x,u_1)(x^{n_1}u_1^{s_1}-\lambda_1x^{m_1})^{e^{(1)}_i}$. We note that $B^{(1)}(x,u_1)$ is a polynomial in $u_1$ of degree $\leq{\rm deg}p(u_1)$ and $A^{(1)}(x,u_1)$ is not a multiple of $x^{n_1}u_1^{s_1}-\lambda_1x^{m_1}$. Moreover, by construction, every term of $B^{(1)}(x,u_1)$ is of weight larger than the weight of any term of $A^{(1)}(x,u_1)(x^{n_1}u_1^{s_1}-\lambda_1x^{m_1})^{e^{(1)}}$. In this case, it is impossible that the valuation of $x^{s_1}u_1^{n_1}-\lambda_1x^{r_1}$ is equal to $(1,0)$ and since it has to be larger than the weight of $x^{s_1}u_1^{n_1}-\lambda_1x^{r_1}$ it is equal to the valuation of $g_1+u_2$. Then, modulo the ideal $F$, we can substitute $g_1+u_2$ for the binomial, in order to increase the weight of the expression of $p(u_1)$, obtaining an expression in $\widehat{k[x, (u_i),v]}$:
$$p(u_1)= A^{(1)}(x,u_1)(g_1(x,u_1)+u_2)^{e^{(1)}}+B^{(1)}(x,u_1)\ {\rm mod.} F,\eqno{(E_1)}$$
or equivalently, taking images in $R_1$,
$$p(y)= A^{(1)}(x,y)(g_1(x,y)+Q_2(y))^{e^{(1)}}+B^{(1)}(x,y).$$
If the $\mu$-value of the right hand side, which is $(1,0)$, is equal to the weight of the expression $(E_1)$ of $p(u_1)$ mod.$F$ given just above, there must be in that expression a term of weight $(1,0)$, and by definition of the generators of the semigroup, this implies that $u_2$ must have weight $(1,0)$ and $e^{(1)}=1$. So $u_1$ is the last of the $u_i$ and in fact $u_2=v$ and equation $(E_1)$ reduces to $p(u_1)-u_2\in F$. This implies that $ A^{(1)}(x,u_1)g_1(x,u_1)+B^{(1)}(x,u_1)=0\ {\rm mod.} F$ and $p(u_1)= A^{(1)}(x,u_1,u_2)u_2\ {\rm mod.} F$ so that $p(y)$ is a multiple of $Q_2(y)$ and they must be equal since $p(y)$ is irreducible, and $ A^{(1)}(x,u_1)=1\ {\rm mod.} F$. In this case we have the equality
$$p(y)= x^{s_1}y^{n_1}-\lambda_1x^{r_1}-g_1(x,y),$$ and the semigroup $\nu(R_1\setminus \{0\})$ is generated by $\nu(x_1),\ldots ,\nu(x_r),\nu(y)$, while the semigroup $\mu(k[[x,y]]\setminus\{0\})$ is generated by $\nu(x_1),\ldots ,\nu(x_r),\nu(y), (0,1)$.\par\noindent
 If the weight of $(E_1)$ is $<(1,0)$, its initial form must be in the ideal generated by $x^{s_1}u_1^{n_1}-\lambda_1x^{r_1}$ and $x^{s_2}u_2^{n_2}-\lambda_2x^{r_2}u_1^{t^{(2)}_1}$ but since modulo $F$ each occurrence of the first binomial can be replaced by $g_1+u_2$, we can modify this right hand side until its initial form is $A^{(2)}_1(x,u_1,u_2)(x^{s_2}u_2^{n_2}-\lambda_2x^{r_2}u_1^{t^{(2)}_1})^{e^{(2)}_1}$ without changing the image in $R_1$. We apply the same treatment to the difference of $p(u_1)$ and this initial form, and continuing in this manner, we find ourselves with the same alternatives as before: either the process never ends and then for the same reason as above  we have $p(u_1)=x^{s_2}u_2^{n_2}-\lambda_2x^{r_2}u_1^{t^{(2)}_1} {\rm mod.}F$ and the semigroup of $R_1$ is generated by $\nu(x_1),\ldots ,\nu(x_r), \nu(y), \nu(x^{s_1}y^{n_1}-\lambda  _1x^{r_1})$, or we reach a form
$$p(u_1)= A^{(2)}(x, u_1, u_2)(g_2(x,u_1,u_2)+u_3)^{e^{(2)}}+B^{(2)}(x,u_1,u_2)\  {\rm mod.}F, \eqno{(E_2)}$$ with $ {\rm in}_wB^{(2)}(x,u_1,u_2)\notin F_0$.\Par
If the weight of the right hand side of $(E_2)$ is $(1,0)$ then this must be the weight of $u_3$ because the weight of all other terms is necessarily smaller than $(1,0)$. By the same argument we used for $u_2$ we have $p(u_1)-u_3\in F$ and $u_3=v$ so that $p(y)=x^{s_2}Q_2(y)^{n_2}-\lambda_2x^{r_2}y^{t^{(2)}_1}-g_2(x,y,Q_2(y))$. Otherwise the initial form of the right hand side is in $F_0$ and we can continue the process to build a sequence of presentations
$$p(u_1)= A^{(i-1)}(x, u_1, u_2,\ldots ,u_{i-1})(g_{i-1}+u_i)^{e^{(i-1)}}+B^{(i-1)}(x,u_1,\ldots ,u_{i-1})\ {\rm mod.}F,$$
with ${\rm in}_wB^{(i-1)}(x,u_1,\ldots u_{i-1})\notin F_0$, ${\rm deg}B^{(i-1)}(x,y,Q_2(y),\ldots ,Q_{i-1}(y))\leq{\rm deg}p(y)$ and ever increasing weights. Assuming that the set $I$ is infinite, when the index $i$ is so large that the degrees of the polynomials $Q_i(u_1)$ are constant and the $s_i$ are $\neq 0$ (see remark \ref{inf}), the only possibility for the weights to increase with bounded degree is that
$B^{(i-1)}(x,u_1,\ldots u_{i-1})$ involves only monomials which contain variables $u_j$ of high index. But  since, as we saw in Lemma \ref{value}, we orders in $(x)$ of the polynomials $Q_{i+1}(y)$ tend to infinity with $i$, this would imply that the coefficients of the polynomial $p(y)$ belong to arbitrarily high powers of the ideal $(x_1,\ldots ,x_r)$.
 This contradiction shows that the set $I$ must be finite in this case.\par\medskip
Assume now that the rank of the group is $>1$. Using the notations of subsection \ref{morestruc}, let $$(0)\subseteq p_1\subseteq p_2\ldots\subseteq p_{h-1}\subseteq p_h=m$$ be the sequence of the centers in $R$ of the valuations with which $\nu$ is composed. The residual valuations on the $R/p_k$ are Abhyankar by the results of subsection \ref{Compo}. By convexity for each $k$ the generators of the semigroup $\Gamma\cap\Psi_k$ are exactly the generators of $\Gamma$ which are in $\Psi_k$ and since $\Gamma\cap\Psi_k$ is the semigroup of values of the residual valuation on $R/p_k$ which is Abhyankar it has to generate the group $\Psi_k$. The indices of the $\Psi_k\bigcap\Phi_0$ in the $\Psi_k$ are divisors of $[\Phi :\Phi_0]$ and so prime to the characteristic of $k$.\par
We may then partition the set of indices $\{1,\ldots ,r\}$ of the variables $x_1,\ldots ,x_r$ as\break $\{1,\ldots ,r\}=T_1\bigcup\ldots\bigcup T_h$ with $T_k=\{i\in \{1,\ldots ,r\}\vert\nu(x_i)\in \Psi_{h-k}\setminus\Psi_{h-k+1}$, and we have the equality $p_j\cap k[[x_1,\ldots ,x_r]]= ((x_j)_{j\in T_1\bigcup\ldots\bigcup T_j})k[[x_1,\ldots ,x_r]]$. Then $R/p_1$, which is a finite module over $R_0/p_1\cap R_0=k[[(x_j)_{j\notin T_1\bigcup\ldots\bigcup T_{h-1}}]]$, has to be of the form $(R_0/p_1\cap R_0)[y]/(p_1(y))$ where $p_1(y)$ is an irreducible factor of the image of $p(y)$ in $(R_0/p_1\cap R_0)[y]$. By induction on the dimension (using the arguments of proposition \ref{fingenn} below) we may assume that the semigroup $\Gamma\cap\Psi_1$ is finitely generated since it is the semigroup of values of the residual valuation on $R/p_1$.\par
The smallest $\gamma_i$ of $\Phi\setminus\Psi_1$ therefore has a predecessor, and if we denote the corresponding polynomial by $Q_{b+1}(y)$,  we have an equation
$$Q_{b+1}=x^{s_b}Q_b^{n_b}-\lambda_bx^{r_b}\prod_{1\leq k\leq b-1}Q_k^{t^{(b)}_k}-g_b.$$
If, as we build successively the polynomials $(Q_{i})_{i>b}$, the corresponding unitary polynomials have increasing valuations in the rank one group $\Phi/\Psi_1$, we find a contradiction exactly as in the rank one case. If not, for some $\gamma_1\in \Phi/\Psi_1$ their values in $\Pp_{\gamma_1}/\Pp_{\gamma_1}^+$  must increase, and applying Chevalley's theorem to this finitely generated complete $R_1/p_1$-module shows by the same argument as in the rank one case, in view of remark \ref{inf}, when $i$ becomes very large this implies that the coefficients of $p(y)$ should belong to arbitrarily high powers of the ideal generated by $(x_j)_{j\in T_1\bigcup\ldots\bigcup T_{h-1}}$ and gives us a contradiction. \par We have tacitly assumed that the ideals $p_i$ are distinct.  A result of Zariski (see \cite{Z-S}, Vol.2, Appendix 3, lemma 4 or \cite{Te1}, 3.17; see also the proof of proposition \ref{fingen}) implies that if two consecutive $p_i$ are equal the conditions of finiteness of generation for the corresponding residual semigroups are equivalent. \par\noindent
 This shows that the semigroup of values of $\nu$ on $R_1$ is finitely generated.\end{proof}

 \begin{proposition}\label{sepsep} Assuming that we are in the situation created by proposition \ref{sepex}, the finite fields extension $K_0\subset K_0(y)$ is separable.
\end{proposition}
\begin{proof} The proof is based on the jacobian interpretation given in remark \ref{tame}, 1) above of the non vanishing ${\rm mod.} p$ of the minors studied above. Since the semigroup of values is finitely generated, the valuative Cohen theorem presents the noetherian ring $R_0[y]/(p(y))$ as a quotient of a power series ring in $N$ variables by an ideal which is an overweight deformation of a prime binomial ideal, indeed the ideal of proposition \ref{nokey}, and one uses, exactly as in the proof of proposition \ref{approx} the $N-r$ overweight deformations of binomials whose jacobian determinant is non zero to show that the projection from the formal space $X$ corresponding to $R_0[y]/(p(y))$ to the affine space $\A^r(k)$ corresponding to $R_0$ is generically \'etale because the jacobian minor of the deformed equations has the nonzero jacobian minor of binomials as its initial form (see \cite{B4}, \S 7, No. 9, Th.3), and this property of being generically \'etale corresponds to the separability of the extension $K_0\subset K(y)$ (\emph{loc.cit.}, \S 7, No. 3, Remarques, 2)).
\end{proof}
\begin{corollary}\label{ext} The extension $K_0\subset K$ of the fields of fractions of $R_0$ and $R$ is separable.
\end{corollary}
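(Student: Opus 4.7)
The plan is to deduce the corollary from proposition \ref{sepsep} by an elementwise reduction. A finite field extension in positive characteristic is separable if and only if every element of the larger field is separable over the smaller, so since $K = \text{Frac}(R)$ and every element of $K$ is a ratio of elements of $R$, it suffices to prove that every $y \in R$ is separable over $K_0$. I will then verify that the hypotheses of proposition \ref{sepsep} are satisfied for the intermediate ring $R_1 = R_0[y]/(p(y))$ attached to an arbitrary $y \in R \setminus R_0$, and conclude.

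Fix such a $y$, and let $p(y) \in R_0[y]$ be its unitary minimal polynomial over $K_0$. The ring $R_1 = R_0[y]/(p(y))$ is a free $R_0$-module of rank $\deg p$, so it is complete as a module over the complete ring $R_0$. Since $R_0$ is henselian and $R_1$ is a domain sandwiched between $R_0$ and the local finite $R_0$-algebra $R$, the ring $R_1$ is local, with residue field equal to $k$ (the residue field of $R$, which is algebraically closed). The restriction $\nu_1 = \nu|_{R_1}$ is rational (its ring, induced from $R_\nu$, dominates $R_1$ without residual extension) and its value group $\Phi_1$ satisfies $\Phi_0 \subseteq \Phi_1 \subseteq \Phi$. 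Hence $\text{rat.rk.}\Phi_1 = r = \dim R_1$, showing that $\nu_1$ is Abhyankar. Moreover the index $[\Phi_1 : \Phi_0]$ divides $[\Phi : \Phi_0]$, which is prime to the characteristic of $k$ by proposition \ref{sepex}.

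Thus the pair $(R_1,\nu_1)$ satisfies all the assumptions under which proposition \ref{finhyp} was proved, so the semigroup $\nu_1(R_1 \setminus \{0\})$ is finitely generated. Proposition \ref{sepsep} then applies to $R_1$ and yields the separability of the finite extension $K_0 \subset K_0(y)$; in particular $y$ is separable over $K_0$. Since $y$ was an arbitrary element of $R$ (the case $y \in R_0$ being trivial), and since every element of $K$ is a ratio of elements of $R$, the whole extension $K_0 \subset K$ is generated by separable elements, hence is separable.

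There is no substantive obstacle here: the argument is a direct packaging of proposition \ref{sepsep}, the only care being the verification that the hypotheses of that proposition — completeness, Abhyankar rational valuation, tameness of the index, and finite generation of the value semigroup — all descend to any intermediate ring of the form $R_0[y]/(p(y)) \subseteq R$.
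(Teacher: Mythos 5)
Your proposal is correct and follows essentially the same route as the paper: the paper's proof is the one-line observation that every element of $K$ is a quotient of two elements of $R$, each separable over $K_0$ by proposition \ref{sepsep} applied with that element in the role of $y$. Your explicit verification that the hypotheses (completeness, rationality and the Abhyankar property of $\nu\vert R_1$, tameness of the index) descend to an arbitrary $R_1=R_0[y]/(p(y))$ merely spells out what the paper leaves implicit in fixing an arbitrary $y\in R\setminus R_0$ at the start of the subsection.
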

\begin{proof} By proposition \ref{sepsep}, each element $z\in K$ is a quotient of two elements of $R$ which are both separable over $K_0$ in view of proposition \ref{sepsep}.
\end{proof}
\begin{corollary} \label{fingenn} Let $\nu$ be a rational Abhyankar valuation of a complete equicharacteristic noetherian local domain. Assume that there are $r={\rm dim}R$ elements $x_i\in R$ whose values are rationally independent and such that the injection $k[[x_1,\ldots ,x_r]]=R_0\subset R$ makes $R$ into a finite $R_0$-module, with a separable fields extension $K_0\subset K$. Then the semigroup of values of the valuation $\nu$ on $R$ is finitely generated.
\end{corollary}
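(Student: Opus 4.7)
The plan is to reduce from $R$ to a hypersurface ring via the primitive element theorem, invoke Proposition \ref{finhyp} there, and then propagate finite generation back up to $R$ by a conductor argument using Proposition \ref{transl}.

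First I would choose a primitive element $y \in R$ for the finite separable extension $K_0 \subset K$. A primitive element of $K/K_0$ exists by separability, and after multiplying by a suitable nonzero element of $R_0$ (which preserves primitivity, since $R_0 \subset K_0$) one may assume it lies in $R$. As $R$ is finite over $R_0$ the element $y$ is integral over the normal ring $R_0$, so its minimal polynomial $p \in K_0[Y]$ already has coefficients in $R_0$ and is unitary irreducible. Set $R_1 = R_0[y]/(p(y)) \subset R$. Being a domain integral over the henselian ring $R_0$, $R_1$ is a complete local noetherian domain, free of rank $\deg p$ over $R_0$, with fraction field $K$. The restriction $\nu|_{R_1}$ is a rational valuation, and its value group, being sandwiched between $\Phi_0$ and the full value group attached to $K$, is $\Phi$; hence it is Abhyankar on $R_1$. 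The hypotheses of Proposition \ref{finhyp} are satisfied and yield that $\Gamma_1 := \nu(R_1 \setminus \{0\})$ is finitely generated.

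Next I would bridge from $R_1$ up to $R$ by producing a conductor. Since $R$ is finite over $R_1$ (it is finite even over $R_0 \subset R_1$) and $\operatorname{Frac}R = \operatorname{Frac}R_1 = K$, writing a finite $R_1$-generating set of $R$ as fractions $a_j/b_j$ with $a_j,b_j \in R_1$ and setting $c := \prod_j b_j \in R_1 \setminus \{0\}$ gives an element with $cR \subset R_1$. For every nonzero $z \in R$ this forces $cz \in R_1 \setminus \{0\}$, so $\nu(c) + \nu(z) \in \Gamma_1$. With $\gamma := \nu(c)$ the condition $\gamma + \delta \in \Gamma_1$ then holds for every $\delta$ in $\Gamma = \nu(R \setminus \{0\})$, which we may take as its own (trivial) system of generators. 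Proposition \ref{transl} applied with $d = 1$ and this $\gamma$ delivers finite generation of $\Gamma$.

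The heavy lifting in this scheme is Proposition \ref{finhyp} itself, whose proof via the key polynomials $Q_i$ and the valuative Cohen theorem is the main obstacle; the two ring-theoretic moves above, namely the primitive-element reduction (which crucially uses separability of $K_0 \subset K$) and the conductor argument, are of a formal nature, resting on normality of $R_0$, henselianness, and finiteness of $R$ over $R_1$.
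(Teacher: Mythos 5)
Your proposal is correct and follows essentially the same route as the paper's own proof: primitive element for the separable extension $K_0\subset K$, Proposition \ref{finhyp} applied to the hypersurface ring $R_1=R_0[y]/(p(y))$, then a conductor $fR\subset R_1$ giving $\gamma+\Gamma\subset\Gamma_1$ and Proposition \ref{transl} with $d=1$. The extra details you supply (integrality forcing $p(y)\in R_0[y]$ by normality of $R_0$, the explicit construction of the conductor) are consistent with, and merely expand on, the paper's argument.
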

\begin{proof} By corollary \ref{ext}, we can choose a primitive element $y\in R$ for the separable extension $K_0\subset K$ and by proposition \ref{finhyp} the semigroup $\Gamma_1$ of $\nu$ on $R_1=R_0[y]/(p(y))$ is finitely generated. The inclusion $R_1\subset R$ is integral since $R$ is integral over $R_0$, and birational because $y$ is a primitive element, so that there is a conductor, an element $f\in R_1$ such that $fR\subset R_1$. This implies that there is a $\gamma\in \Gamma_1$ such that $\gamma+\Gamma\subset\Gamma_1$ and we can apply proposition \ref{transl} to deduce that $\Gamma$ is finitely generated.
\end{proof}
\begin{remark}\label{keyrem}
\small{\emph{Let $p$ be a prime number. If we consider the numerical semigroup $$\Gamma= \langle p^3,\  p^3+p^2,\ p^4+p^3+p^2+p, \ p^5+p^4+p^3+p^2+p+1\rangle$$ as in (\cite{Te1}, 6.3)\footnote{This reference is a development of the study of Example 3.5.4, p. 114 of \cite{C}.}, and we choose a field $k$ of characteristic $p$, then $\Gamma$ is the semigroup of a plane branch defined parametrically by $x=t^{p^3},\ y=t^{p^3+p^2}+t^{p^3+p^2+p+1}$, and implicitly by a unitary polynomial of degree $p^3$ in $y$ with coefficients in $k[[x]]$. With the current notations, this polynomial can be obtained by eliminating $u_2,u_3$ between three equations which are: $y^p-x^{p+1}-u_2=0, u_2^p-y^{p^2+1}-u_3=0, u_3^p-y^{p^3}u_2-x^{p+1}u_2^{p^2}=0$; compare with \emph{loc.cit.}  From the viewpoint taken here it is the last generator of $\Gamma$ which must be chosen because it is prime to $p$. But then we are no longer in the situation of proposition \ref{nokey} because the corresponding element of the ring of the curve is not part of a minimal system of generators of the maximal ideal, and a new primitive element must be chosen, corresponding to another curve.}\par \emph{The situation may be summed up by saying that once we have re-embedded, using the valuative Cohen theorem, our plane curve in the space spanned by the associated monomial curve, new projections to coordinate axis are available, whose kernel may have high contact with the curve, but which are separable \emph{and tame}, and then a suitable projection of our curve to a new plane is a plane curve for which this axis is a coordinate axis with separable \emph{and tame} projection, and which is birationally equivalent to our original curve. This is not in the spirit of the classical approaches to resolution of singularities since in this operation the multiplicity of the plane curve considered may increase in a way that is not controllable by classical invariants. What we have done above in the proof of the quasi finite generation of semigroups of Abhyankar valuations is a generalization of this.}}

\end{remark}

 \subsection{Abhyankar valuations and quasi monomial valuations}\label{results}
 We have already defined birational $\nu$-modifications and toric $\nu$-modifications $R\to R'$ of our local ring $R$. In the case where $R$ is complete and our valuation is rational and Abhyankar, let us agree to call with the same name the morphism $R\to R"$ obtained by completing $R'$ and extending the valuation to the quotient of $\hat{R'}$ by a minimal prime, with the same semigroup, according to subsection \ref{extcomp}.
 \begin{definition} We say that the semigroup of values of a valuation $\nu$ on a local domain $R$ which is dominated by the valuation ring $R_\nu$ of $\nu$  is \emph{quasi finitely generated} if there exists a birational $\nu$-modification $R\to R'$ such that the semigroup of values of $\nu$ on $R'$ is finitely generated.
\end{definition}
\begin{theorem}\label{AbhFin} The semigroup of a rational valuation of a complete equicharacteristic noetherian local ring $R$ with algebraically closed residue field is quasi finitely generated if and only if the valuation is Abhyankar.
\end{theorem}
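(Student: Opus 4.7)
The plan is to assemble the theorem from the preceding results; the forward and reverse implications pull on different parts of the machinery.

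For $(\Leftarrow)$ (quasi finite generation implies Abhyankar), I would start from a $\nu$-modification $R\to R'$ with value semigroup $\Gamma'=\nu(R'\setminus\{0\})$ finitely generated. Because $\nu$ is rational, ${\rm gr}_\nu R'$ is a finitely generated $k$-algebra, and Piltant's theorem gives $\dim{\rm gr}_\nu R'=r(\nu)$. The Abhyankar equality $r(\nu)=\dim R$ is then extracted by passing to completion: by Remark \ref{ir} one may replace $R'$ by an analytically irreducible further $\nu$-modification, so that $\hat{R'}^{m'}$ is a complete local domain of the same dimension and with the same semigroup. Proposition \ref{OWD} then identifies $\hat{R'}^{m'}$ as an overweight deformation of ${\rm gr}_\nu R'$, and Corollary \ref{seeAb} gives $\dim\hat{R'}^{m'}=\dim{\rm gr}_\nu R'=r(\nu)$. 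Since $\nu$-modifications preserve Krull dimension, $\dim R=\dim R'=r(\nu)$, so $\nu$ is Abhyankar.

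For $(\Rightarrow)$ (Abhyankar implies quasi finite generation), I would apply Proposition \ref{sepex} to produce a birational $\nu$-modification $R\to R'$ after which $\hat{R'}^{m'}$ becomes a finite module over $R_0=k[[x'_1,\ldots,x'_r]]$, with tame extension of value groups. The goal is then to feed this setup into Corollary \ref{fingenn}, which requires separability of the fraction field extension $K_0\subset K$. The chain supplying this separability has three steps: Proposition \ref{finhyp} shows that for any $y\in\hat{R'}^{m'}$ the semigroup of $\nu$ on $R_1=R_0[y]/(p(y))$ is finitely generated, via the key-polynomial construction combined with Chevalley's theorem; Proposition \ref{sepsep} upgrades this finite generation, through the jacobian interpretation of tameness in Remark \ref{tame}, to separability of $K_0\subset K_0(y)$; and Corollary \ref{ext} lets $y$ range to conclude separability of the entire extension $K_0\subset K$. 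Corollary \ref{fingenn} then yields that $\hat{R'}^{m'}$ has finitely generated semigroup, and Proposition \ref{Abext} guarantees that after perhaps a further $\nu$-modification the same holds on $R'$ itself, witnessing quasi finite generation of $R$.

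The main obstacle lies entirely on the reverse direction, and specifically inside Proposition \ref{finhyp}: running the key-polynomial construction to extract an infinite sequence of irreducible unitary polynomials $Q_i(y)$ whose coefficients belong to arbitrarily high powers of the maximal ideal, and then deriving a contradiction with the fixed minimal polynomial $p(y)$, is delicate in the higher-rank case because one must coordinate the $\nu$-adic and $m$-adic topologies through Chevalley's theorem and exploit the finiteness of indices $i$ with $n_i>1$ (Remark \ref{inf}). By contrast, the forward direction reduces to a dimension count once Proposition \ref{OWD} and Corollary \ref{seeAb} are at hand, and the tameness-to-separability passage in the reverse direction is a clean invocation of Remark \ref{tame} once finite generation on the $R_1$'s is established.
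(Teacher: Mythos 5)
Your proposal is correct and follows the paper's own route in both directions: the ``quasi finitely generated $\Rightarrow$ Abhyankar'' implication is the dimension count via Proposition \ref{OWD}, Piltant's theorem and Corollary \ref{seeAb} (exactly the argument given at the opening of Section \ref{finiteAbh}), and the converse is the chain Proposition \ref{sepex} $\to$ Proposition \ref{sepsep} $\to$ Corollary \ref{fingenn}, with Proposition \ref{Abext} handling the passage between $R'$ and its completion. You merely unpack the intermediate steps (Propositions \ref{finhyp}, \ref{ext}) that the paper leaves implicit inside the cited results.
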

\begin{proof} We have seen the "only if" part at the beginning of the section.\par\noindent
By propositions \ref{sepex} and \ref{sepsep} we can after a $\nu$-modification assume that $R$ is a finite extension of $R_0=k[[x_1,\ldots, x_r]]$ with a separable fraction fields extension. We now apply corollary \ref{fingenn}.\end{proof}

\begin{corollary}\label{finfin} The semigroup of values of an Abhyankar valuation $\mu$ of a complete equicharacteristic noetherian local ring $R$ with algebraically closed residue field is quasi finitely generated.
\end{corollary}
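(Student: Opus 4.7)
The plan is to reduce Corollary \ref{finfin} to Theorem \ref{AbhFin} by composing $\mu$ with a rational Abhyankar valuation of its residue field. Let $t={\rm tr}_k k_\mu$, so that Abhyankar's equality reads $r(\mu)+t=\dim R$. If $t=0$, then $k_\mu$ is algebraic over the algebraically closed field $k$, hence $k_\mu=k$; thus $\mu$ is rational and Theorem \ref{AbhFin} applies directly. From now on we assume $t\geq 1$, and use the classical fact that for an Abhyankar valuation $k_\mu$ is a finitely generated extension of $k$, necessarily of transcendence degree $t$.

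Choose a transcendence basis $z_1,\ldots,z_t$ of $k_\mu/k$ and equip $k(z_1,\ldots,z_t)$ with the monomial valuation sending $z_i$ to the $i$-th vector of the lexicographically ordered $\Z^t$, trivial on $k$. Extending through the finite algebraic extension $k(z_1,\ldots,z_t)\subset k_\mu$ yields a valuation $\nu'$ of $k_\mu$ whose value group still has rational rank $t$, and hence (being finitely generated torsion-free of rank $t$) is isomorphic to $\Z^t$; by algebraic closedness of $k$ its residue field is $k$, so $\nu'$ is a rational Abhyankar valuation of $k_\mu$, trivial on $k$. Let $\tilde\mu$ denote the composition of $\mu$ with $\nu'$: its ring $R_{\tilde\mu}$ is the preimage of $R_{\nu'}$ under the residue map $R_\mu\to k_\mu$. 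Since $\nu'$ is trivial on $k$, the image in $k_\mu$ of every unit of $R$ is a $\nu'$-unit, so $\tilde\mu$ dominates $R$. Its value group is $\Phi_\mu\oplus_{\rm lex}\Z^t$, of rational rank $r(\mu)+t=\dim R$, and its residue field is $k$; hence $\tilde\mu$ is a rational Abhyankar valuation of $R$.

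By Theorem \ref{AbhFin} there exists a $\tilde\mu$-modification $R\to R'$ such that $\tilde\Gamma_{R'}:=\tilde\mu(R'\setminus\{0\})$ is finitely generated. Because $R_{\tilde\mu}\subset R_\mu$, the center $\mathfrak{q}$ of $\mu$ in $R'$ is a prime contained in the maximal ideal of $R'$, and the localization $R'':=R'_\mathfrak{q}$ is a local ring essentially of finite type over $R$ and dominated by $R_\mu$, hence a $\mu$-modification of $R$. Any element of $R''$ can be written $x/s$ with $s\notin\mathfrak{q}$, so $\mu(s)=0$, whence $\mu(R''\setminus\{0\})=\mu(R'\setminus\{0\})$. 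But this last set is the image of $\tilde\Gamma_{R'}$ under the quotient map $\Phi_{\tilde\mu}\to\Phi_\mu$ modulo the convex subgroup $\Z^t$, and therefore is finitely generated. This shows that $\mu$ is quasi finitely generated.

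The principal non-trivial points, beyond bookkeeping, are verifying that the composed valuation $\tilde\mu$ genuinely satisfies the hypotheses of Theorem \ref{AbhFin}---dominance of $R$ (ensured by choosing $\nu'$ trivial on $k$), rational rank equal to $\dim R$, and residue field $k$---and converting the $\tilde\mu$-modification supplied by Theorem \ref{AbhFin} into a $\mu$-modification by localizing at the center of $\mu$, while checking that the semigroup of $\mu$ on the localization inherits finite generation from that of $\tilde\mu$.
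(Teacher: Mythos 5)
Your proof is correct and follows essentially the same route as the paper: refine $\mu$ to a rational Abhyankar valuation composed with it (the paper gets the residual valuation from remark \ref{existab} and proposition \ref{compAb} rather than your explicit monomial construction on a transcendence basis of $k_\mu/k$), apply Theorem \ref{AbhFin}, and observe that the value semigroup of $\mu$ is the image of the finitely generated semigroup under the quotient of value groups. Your explicit localization at the center of $\mu$ and the check that this does not change the semigroup simply spell out what the paper leaves implicit.
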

\begin{proof} Using \S 3.6 of \cite{Te1}, remark \ref{existab} and proposition \ref{compAb}, which applies because complete noetherian local rings are catenary, we see that there is a rational Abhyankar valuation $\nu$ of $R$ which is composed with $\mu$. To uniformize $\mu$ it is sufficient to uniformize $\nu$ since if the excellent ring $R'$ obtained from $R$ by a birational modification is regular at the center of $\nu$ it must be regular at the center of $\mu$, and once we have made the semigroup of $\nu$ finitely generated, the semigroup of $\mu$ is an image of the semigroup of $\nu$.
\end{proof}
\begin{definition}\label{defqm} A valuation on $R$ is said to be quasi monomial if there is a birational $\nu$-extension $R\to R'$ where $R'$ is a regular local ring and there is a system of generators of its maximal ideal with respect to which $\nu$ is a monomial valuation.
\end{definition}
A rational quasi monomial valuation is obviously Abhyankar and its semigroup is quasi finitely generated. The conjunction of corollary \ref{finfin}, proposition \ref{resoverwght} and proposition \ref{OWD}, with the addition of proposition \ref{quasim}, show the:
\begin{proposition}\label{AQ} Any rational Abhyankar valuation of a noetherian complete equicharacteristic local domain with an algebraically closed residue field is quasi monomial.\hfill\qedsymbol\end{proposition}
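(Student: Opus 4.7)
The strategy is to chain together the four results already established: Corollary \ref{finfin}, Proposition \ref{OWD}, Proposition \ref{resoverwght}(b), and Proposition \ref{quasim}. Let $\nu$ be a rational Abhyankar valuation of the complete equicharacteristic noetherian local domain $R$ with algebraically closed residue field $k$, and set $r=\dim R={\rm rat.rk.}\Phi$.

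First I would invoke Corollary \ref{finfin}: there exists a birational $\nu$-modification $R\to R'$ (in the enlarged sense of subsection \ref{extcomp}, i.e.\ a $\nu$-modification followed by completion and passage to the quotient by the appropriate minimal prime) such that the semigroup $\nu(R'\setminus\{0\})$ is finitely generated. By construction $R'$ is again a complete equicharacteristic noetherian local domain with residue field $k$, the valuation $\nu$ extends with the same value group, and is still rational and Abhyankar (so in particular $\dim R'=r$). Apply Proposition \ref{OWD} to $(R',\nu)$: choosing $\xi_1,\ldots,\xi_N\in R'$ whose initial forms generate the $k$-algebra ${\rm gr}_\nu R'$ yields a continuous surjection $k[[u_1,\ldots,u_N]]\to R'$, $u_i\mapsto\xi_i$, whose kernel is an overweight deformation of the prime binomial ideal defining ${\rm Spec}\,{\rm gr}_\nu R'$ for the weight $w(u_i)=\nu(\xi_i)$.

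Next I apply Proposition \ref{resoverwght}(b): there exists a regular fan $\Sigma$ with support $\check\R_{\geq 0}^N$ such that the toric birational map $\pi(\Sigma)\colon Z(\Sigma)\to \A^N(k)$ is an embedded pseudo-resolution of ${\rm Spec}\,{\rm gr}_\nu R'$, and the strict transform $X'$ of $X={\rm Spec}\,R'$ by $\pi(\Sigma)$ is nonsingular and transversal to the toric boundary at the point picked by $\nu$. Localizing $X'$ at that point provides a birational $\nu$-extension $R'\to R''$, with $R''$ a regular local ring of dimension $r$ dominated by $R_\nu$.

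Finally, Proposition \ref{quasim} furnishes, among the coordinates $y_1,\ldots,y_N$ produced in the chart $Z(\sigma)$, exactly $r$ of them, say $y_{i_1},\ldots,y_{i_r}$ (those indexed by $i$ with $a^i\in\bigcap_\ell H_\ell$), which constitute a system of local coordinates of $R''$ and whose valuations $\nu(y_{i_1}),\ldots,\nu(y_{i_r})$ are rationally independent. Since $R''$ is regular of dimension $r$ and these $r$ parameters have rationally independent (positive) valuations generating a subgroup of finite index in $\Phi\cong \Z^r$, the valuation on $R''$ is the monomial valuation attached to this regular system of parameters: every element of $R''$ has a unique power series expansion in $y_{i_1},\ldots,y_{i_r}$, and because the associated graded of $R''$ with respect to $\nu$ is a polynomial ring in the $r$ initial forms, $\nu$ of a series is the minimum of the weighted exponents. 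Thus $\nu$ satisfies Definition \ref{defqm} and is quasi monomial. The only nontrivial point in this assembly is matching the notion of ``$\nu$-modification'' across completion steps, which is precisely what subsection \ref{extcomp} (proposition \ref{Abext}) was designed to handle; the remainder is book-keeping.
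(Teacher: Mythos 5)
Your proposal is correct and follows exactly the route the paper intends: the paper's entire "proof" of this proposition is the one-line assertion that Corollary \ref{finfin}, Proposition \ref{OWD}, Proposition \ref{resoverwght} and Proposition \ref{quasim} together yield it, and your text simply spells out how those four results chain together (including the necessary remark that the $\nu$-modifications are taken in the enlarged sense of subsection \ref{extcomp}). The only point worth flagging is cosmetic: the "unique power series expansion in $y_{i_1},\ldots,y_{i_r}$" argument in your last paragraph really takes place in the completion of $R''$, but since the relevant local ring in the construction is already complete and the rational independence of the $\nu(y_{i_j})$ forces the value of a series to equal the minimum of the values of its monomials, the conclusion stands.
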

\begin{corollary}\label{anyAb} Any Abhyankar valuation $R\subset R_\nu$ of an an excellent equicharacteristic local domain with an algebraically closed residue field can be uniformized by a birational $\nu$-modification: there exists a regular local ring $R'$, essentially of finite type over $R$ and dominated by $R_\nu$. If it is rational, it is also quasi monomial.
\end{corollary}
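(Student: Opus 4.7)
The aim is twofold: first to reduce general Abhyankar valuations to rational Abhyankar valuations, and then to extend the rational Abhyankar case from complete local domains (proposition \ref{AQ}) to excellent local domains by exploiting the good behavior of Abhyankar valuations under completion (subsection \ref{extcomp}).

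For the first reduction I would imitate corollary \ref{finfin}: by remark \ref{existab} and proposition \ref{compAb}, the Abhyankar valuation $\mu$ is refined by a rational Abhyankar valuation $\nu$, i.e.\ $R_\nu \subset R_\mu$. Then the centers in $R$ satisfy $p_\mu = m_\mu \cap R \subset m_\nu \cap R = p_\nu$, because $\mu(x)>0$ forces the $\nu$-value of $x \in R$ to lie outside the distinguished convex subgroup and in particular to be positive. Consequently, on any birational $\nu$-modification $R \to R'$ (automatically dominated by $R_\mu$ as well), regularity of $R'_{p_\nu}$ implies regularity of $R'_{p_\mu}$ since the latter is a further localization of the former. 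It therefore suffices to treat rational $\nu$.

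For the rational Abhyankar case over excellent $R$, I would begin by invoking remark \ref{ir} to replace $R$ by a $\nu$-modification $R'$ that is analytically irreducible; proposition \ref{Abext} then extends $\nu$ uniquely to a rational Abhyankar valuation $\hat\nu$ on $\hat{R'}^{m'}$ with the same semigroup of values. Theorem \ref{AbhFin}, applied to this complete local domain, yields a further birational $\hat\nu$-modification making the semigroup finitely generated. The construction in the proof of proposition \ref{sepex} realizes this via blow-ups of monomial ideals in well-chosen elements of the maximal ideal that already live in $R'$; by lemma 1.1 of \cite{HOST} such a modification is induced from a birational $\nu$-modification $R' \to R''$ that is essentially of finite type over $R$ and dominated by $R_\nu$. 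Passing once more to an analytically irreducible model (remark \ref{ir}, proposition \ref{Abext}), the semigroup of $\nu$ on $R''$ equals that of $\hat\nu$ on $\hat{R''}^{m''}$ and is finitely generated. Now proposition \ref{OWD} presents $\hat{R''}^{m''}$ as an overweight deformation, and theorem \ref{LU} produces a regular fan $\Sigma$ with support $\check\R^N_{\geq 0}$, together with an explicit embedding of the formal space in $\A^N(k)$ via elements $\xi_i \in R''$, such that the strict transform under $Z(\Sigma)\to\A^N(k)$ is non-singular and transversal to the toric boundary at the center of $\hat\nu$. Since the $\xi_i$ and the combinatorial data defining $\Sigma$ all live over $R''$, the same toric morphism can be realized algebraically on ${\rm Spec}\,R''$: the strict transform, localized at the center of $\nu$, is a local ring $R'''$ essentially of finite type over $R''$ (hence over $R$) and dominated by $R_\nu$, whose completion is the local ring of the strict transform on $Z(\Sigma)$. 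Faithful flatness of completion transfers regularity back to $R'''$, and proposition \ref{quasim} exhibits $r$ of the chart coordinates as a regular system of parameters with rationally independent $\nu$-values, giving the quasi-monomial conclusion.

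The main obstacle I anticipate is the descent of the successive modifications, built naturally on the formal completion, back to algebraic models essentially of finite type over $R$. Each descent depends on the modification in question being determined by data already present on a finite-type model: for the blow-ups in proposition \ref{sepex} this holds because they are blow-ups of monomial ideals in the coordinates $x_1,\ldots,x_r \in R'$, and for the final toric morphism of theorem \ref{LU} it holds because $\Sigma$ is a finite fan and the ambient embedding uses only finitely many elements $\xi_i \in R''$. The careful bookkeeping needed to ensure this at every step, combined with the identification of semigroups before and after completion provided by subsection \ref{extcomp}, is what makes the passage from complete to excellent go through.
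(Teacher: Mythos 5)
Your proposal is correct and follows essentially the same route as the paper: reduction to the rational case via composition with a rational Abhyankar valuation (as in corollary \ref{finfin}), passage to an analytically irreducible model and extension to the completion with the same semigroup (remark \ref{ir}, proposition \ref{Abext}), application of theorem \ref{AbhFin} and theorem \ref{LU}, and descent of the monomial blow-ups and the final toric map to algebraic models via the isomorphism of graded rings and faithful flatness of completion. The only cosmetic addition is your explicit appeal to proposition \ref{quasim} for the quasi-monomial statement, which the paper leaves to the earlier proposition \ref{AQ}.
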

\begin{proof} Let $R$ be an excellent equicharacteristic local domain with an algebraically closed residue field and let $\mu$ be an Abhyankar valuation of $R$. By the same argument as in the proof of corollary \ref{finfin}, which applies because excellent local domains are catenary, we obtain a rational Abhyankar valuation $\nu$ of $R$ which is composed with $\mu$. Using the fact proved in (\cite{HOST}, Lemma 7.3) that sufficiently far in the tree of $\nu$-modifications of $R$, the local ring $R'$ becomes analytically irreducible and proposition \ref{Abext}, after renaming $R'$ into $R$ we can extend $\nu$ to an Abhyankar valuation $\hat\nu$ of $\hat R^m$ \emph{with the same semigroup}. Now we can apply Theorem \ref{AbhFin} to the valuation $\hat\nu$ on $\hat R^m$ and obtain a complete $\nu$-modification of $R$ with a finitely generated semigroup. The blowing-ups we make are dominated by monomial blowing-ups in the variables generating the maximal ideals of the complete local rings, but they are obtained by completion from the blowing-ups of the same monomial ideals in the rings we consider before completion. So the reduction of the general case of an Abhyankar valuation on $R$ to the case where $\hat R^m$ is a finite module over a power series ring with a separable and tame fraction fields extension is achieved by blowing-up an ideal in $R$, localizing at the point picked by the valuation, and taking the completion.  Similarly, once we have thus reached the situation where the semigroup is finitely generated, the toric map in the coordinates $(\xi_i)$ which uniformizes $\hat\nu$ according to Theorem \ref{LU} is an algebraic map for $R$ since in view of the isomorphism ${\rm gr}_\nu R\simeq {\rm gr}_{\hat\nu}\hat R^{(\nu)}$ we can choose, as representatives of the generators $\overline\xi_i$ of ${\rm gr}_{\hat\nu}\hat R^{(\nu)}$, elements $\xi_i\in R$, and the completion of the local ring of the transform of ${\rm Spec}R$ at the point picked by $\nu$ is the (completion of the) transform of $\hat R^{(\nu)}$. After applying Theorem \ref{LU}, the regularity of the transform of $\hat R^{(\nu)}$ implies the regularity of $\hat R'^{m'}$ and hence the fact that the transform of ${\rm Spec}R$ is regular at that point. Here we use the fact that $R$ and its transforms are excellent, that if $R$ is analytically irreducible and sufficiently far in the tree of $\nu$-modifications, so are its $\nu$-modifications and finally that in this case an Abhyankar valuation extends uniquely to the completion, with the same semigroup. Alternatively, we might have used this last fact and the argument of (\cite{Te1}, \S 7).\end{proof}\noindent
\begin{Remark}\label{weak}
\small{\begin{enumerate} \item Local uniformization for Abhyankar valuations of rank one of algebraic function fields in characteristic zero was proved by Dale Cutkosky (see \cite{ELS}, proposition 2.8 and \cite{JM}, Prop. 3.7) as a consequence of embedded resolution of singularities. It can also be deduced, in arbitrary characteristic and for algebraic function fields separable over the base field and arbitrary Abhyankar valuations, from the main result (Theorem 1.1) of Knaf-Kuhlmann (\cite{KK1}).
\item Assuming that Conjecture 9.1 of \cite{HOST} is true, or even only the "asterisked proposition" 5.19 of \cite{Te1}, for a ring $R$ as in corollary \ref{anyAb}, there is an ideal $H$ in $\hat R^m$ with $H\cap R=(0)$ such that $\nu$ extends to a valuation $\hat\nu_-$ on $\hat R^m/H$ with the same value group. We defined in \cite{Te1}, Remarks 7.3, the fact that $R$ is \emph{weakly Abhyankar} by the equality ${\rm dim}\hat R^m/H={\rm rat.rk.}\nu$. The proof of \cite{Te1}, 7.11 shows that local uniformization of Abhyankar valuations of complete local rings implies local uniformization of weakly Abhyankar valuations.
\end{enumerate}}
\end{Remark}
In the special case of rational Abhyankar valuations we have the ungraded analogue of corollary \ref{grLU}:
\begin{theorem}\label{LUTOR}{\rm (The toroidal nature of rational Abhyankar valuations)} Let $R\subset R_\nu$ be a rational Abhyankar valuation of an excellent equicharacteristic local domain $R$ of dimension $r$ with algebraically closed residue field $k$. Assume that $R$ contains a field of representatives of $k$. Choose $r$ homogeneous elements $x_1^{(0)},\ldots ,x_r^{(0)}\in {\rm gr}_\nu R$ whose valuations are rationally independent. The ring $R_\nu$ is the union of a nested family indexed by $\N$ of local domains $R^{(h)}$ essentially of finite type over $R$, which are regular for $h\geq 1$:
$$R=R^{(0)}\subset R^{(1)}\subset\ldots \subset R^{(h)}\subset R^{(h+1)}\subset \ldots\subset R_\nu ,$$ where each inclusion $R^{(h)}\subset R^{(h+1)}$, including the first one $R^{(0)}\subset R^{(1)}$, is obtained by localizing at the point picked by the valuation a birational map of finite type, which for $h\geq 1$ is monomial with respect to suitable minimal sets of generators of the maximal ideals of $R^{(h)}$ and $R^{(h+1)}$, and the associated graded rings ${\rm gr}_\nu R^{(h)}$ for $h\geq 1$ form a nested system of polynomial rings in $r$ variables over $k$ containing $k[x_1^{(0)},\ldots ,x_r^{(0)}]$, where the inclusion maps
$$k[x_1^{(0)},\ldots ,x_r^{(0)}]\subset \ldots \subset k[x_1^{(h)},\ldots ,x_r^{(h)}]\subset k[x_1^{(h+1)},\ldots ,x_r^{(h+1)}]\subset\ldots\subset {\rm gr}_\nu R_\nu\eqno{(gr)}$$
send each variable to a term, and the inclusions are birational, with the possible exception of the first one, which is composed: $k[x_1^{(0)},\ldots ,x_r^{(0)}]\subset {\rm gr}_\nu R\subset k[x_1^{(1)},\ldots ,x_r^{(1)}]$.
\end{theorem}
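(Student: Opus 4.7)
The construction proceeds in two phases, after which the main task is to verify that the resulting tower exhausts $R_\nu$.

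First I would build $R^{(1)}$ by local uniformization. Starting from the data of the theorem, I invoke corollary \ref{anyAb}: the rational Abhyankar valuation $\nu$ of $R$ is quasi-monomial by proposition \ref{AQ}, so there is a birational $\nu$-modification $R\to R^{(1)}$ such that $R^{(1)}$ is a regular local ring dominated by $R_\nu$, equipped with a regular system of parameters $\xi_1^{(1)},\ldots,\xi_r^{(1)}$ in which $\nu$ is monomial; then ${\rm gr}_\nu R^{(1)}=k[x_1^{(1)},\ldots,x_r^{(1)}]$ with $x_i^{(1)}={\rm in}_\nu \xi_i^{(1)}$, and the values $\nu(\xi_i^{(1)})$ generate a free rank-$r$ subsemigroup $\N^r_{(1)}\subset\Phi_{\geq 0}$. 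The composite inclusion $k[x_1^{(0)},\ldots,x_r^{(0)}]\subset{\rm gr}_\nu R\subset k[x_1^{(1)},\ldots,x_r^{(1)}]$ amounts to expressing the initially chosen generators of $\Phi_0$ (from proposition \ref{sepex}) as non-negative integer combinations of the $\nu(\xi_i^{(1)})$, which writes each $x_i^{(0)}$ as a scalar times a monomial in the $x_j^{(1)}$ and accounts for the \emph{variable-to-term} property of the map in the one exceptional case $h=0$.

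Next I would inductively extend the tower using the Jacobi--Perron-type approximation. Proposition \ref{finapp}, applied with the $r$-tuple $\nu(\xi_i^{(1)})$, yields a nested sequence $\N^r_{(1)}\subset\N^r_{(2)}\subset\cdots\subset\Phi_{\geq 0}$ of free rank-$r$ subsemigroups whose union is $\Phi_{\geq 0}$. By remark \ref{unimod}, after discarding finitely many initial terms all inclusions $\N^r_{(h)}\subset\N^r_{(h+1)}$ are unimodular, hence each corresponds (up to a homothety on each variable) to a birational toric map between $r$-dimensional affine spaces encoded by a unimodular integer matrix $B_h$. Given $R^{(h)}$ with regular parameters $\xi_i^{(h)}$, I define $\xi_i^{(h+1)}\in K$ by the Laurent-monomial substitution determined by $B_h^{-1}$ (with appropriate scalar adjustments in $k^*$ to match the binomial constants appearing in the graded presentation), and let $R^{(h+1)}$ be the localization at the center of $\nu$ of the $R^{(h)}$-subalgebra of $K$ they generate. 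Because $B_h$ is unimodular and $\nu$ is monomial in the $\xi_i^{(h)}$, the $\xi_i^{(h+1)}$ again form a regular system of parameters in which $\nu$ is monomial, so $R^{(h+1)}$ is regular with ${\rm gr}_\nu R^{(h+1)}=k[x_1^{(h+1)},\ldots,x_r^{(h+1)}]$, and the inclusion $R^{(h)}\subset R^{(h+1)}$ is the required monomial birational map.

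The main obstacle is showing $\bigcup_h R^{(h)}=R_\nu$. One inclusion is automatic since each $R^{(h)}$ is dominated by $R_\nu$. For the converse, given $z\in R_\nu$ I would write $z=a/b$ with $a,b\in R$ and $\nu(a)\geq\nu(b)$, and argue that for $h$ large enough, possibly after refining the Jacobi--Perron sequence (which does not change its union and therefore preserves the rest of the statement), both $a$ and $b$ become principal monomial in the $\xi_i^{(h)}$. Granting this, $a=(\xi^{(h)})^\alpha u_a$ and $b=(\xi^{(h)})^\beta u_b$ with $u_a,u_b\in(R^{(h)})^*$; the inequality $\nu(a)\geq\nu(b)$ and the rational independence of the $\nu(\xi_i^{(h)})$ force $\alpha\geq\beta$ coordinatewise, so $z=(\xi^{(h)})^{\alpha-\beta}u_au_b^{-1}\in R^{(h)}$. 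The principalization is the crux: it is a version of the standard fact that any rational polyhedral subdivision compatible with the supports of $a$ and $b$ in the dual weight lattice monomializes them in the chart containing the weight vector of $\nu$; the technical difficulty is to realize such a subdivision inside the specific sequence given by proposition \ref{finapp} rather than an ad hoc one. Corollary \ref{grLU} yields the graded counterpart ${\rm gr}_\nu(\bigcup R^{(h)})=\bigcup k[x_1^{(h)},\ldots,x_r^{(h)}]={\rm gr}_\nu R_\nu$, which is strong evidence of cofinality but, absent completeness, does not by itself imply the ring-theoretic equality.
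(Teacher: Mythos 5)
Your construction of the tower $R^{(h)}$ is essentially the paper's: corollary \ref{anyAb} produces the regular $R^{(1)}$ with monomial valuation, and the unimodular Jacobi--Perron steps of proposition \ref{finapp} (together with corollary \ref{grLU} and the field of representatives) lift to Laurent-monomial changes of regular parameters defining the $R^{(h+1)}$. The gap is exactly where you flag it: the principalization step in the exhaustion argument is left open, and your proposed route --- refining the sequence so that a polyhedral subdivision compatible with the supports of $a$ and $b$ is realized inside it --- is neither carried out nor needed. The observation that closes it is that no refinement is required: since $\Phi_{\geq 0}=\bigcup_h\N^r_{(h)}$ by proposition \ref{finapp}, for any two monomials $(\tilde x^{(h)})^m$, $(\tilde x^{(h)})^n$ the difference of their values lies, up to sign, in some $\N^r_{(h')}$, so at stage $h'$ one of the two monomials divides the other in $R^{(h')}$. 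Now take a nonzero element of $R^{(1)}$ (not of $R$: work in the regular ring where $\nu$ is monomial, which has the same fraction field), and expand it as a power series in $\hat R^{(1)}$. The ideal generated by the monomials occurring in this expansion is finitely generated, so by pairwise comparability it becomes principal, generated by one of the occurring monomials $(\tilde x^{(h)})^E$, in some $\hat R^{(h)}$; factoring it out leaves a series with nonzero constant term, i.e.\ a unit $U$ of $\hat R^{(h)}$, and $U$ descends to a unit of $R^{(h)}$ by faithful flatness of completion (\cite{B3}, Chap.\ III, \S 3, no.\ 5, Cor.\ 4). Hence every element of the common fraction field becomes, at some stage, a ratio of monomials times a unit; one further application of comparability shows that $\bigcup_h R^{(h)}$ is a valuation ring with value group $\Phi$, hence equal to $R_\nu$.

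There is also a smaller error in your final deduction: from $a=(\xi^{(h)})^\alpha u_a$, $b=(\xi^{(h)})^\beta u_b$ and $\nu(a)\geq\nu(b)$ you cannot conclude $\alpha\geq\beta$ coordinatewise. Rational independence of the $\nu(\xi_i^{(h)})$ gives uniqueness of the exponent of a monomial of a given value, not positivity of $\alpha-\beta$: for instance $2\nu(\xi_1^{(h)})-\nu(\xi_2^{(h)})$ can be positive while $(2,-1)$ is not nonnegative. What rescues the step is again that $\nu(a)-\nu(b)\in\Phi_{\geq 0}=\bigcup_h\N^r_{(h)}$, so after passing to a larger $h'$ the monomial of value $\nu(a)-\nu(b)$ acquires nonnegative exponents and $b$ divides $a$ in $R^{(h')}$. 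Both defects are thus cured by the same mechanism, which is the real content of the Jacobi--Perron description of $\Phi_{\geq 0}$ and the reason the statement is formulated in terms of that specific nested family.
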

\begin{proof} Using corollary \ref{anyAb}, let us begin with a birational $\nu$-modification $R\to R^{(1)}$ such that $R^{(1)}\subset R_\nu$ is regular and $\nu\vert R^{(1)}$ is monomial in coordinates $\tilde x_1^{(1)},\ldots ,\tilde x_r^{(1)}$. Define the $x_i^{(1)}$ as the $\nu$-initial forms of the $\tilde x_i^{(1)}$. Now use the inductive limit presentation of ${\rm gr}_\nu R_\nu$ given by corollary \ref{grLU} and, starting from $h=1$,  inductively choose representatives $\tilde x_i^{(h+1)} \in R_\nu$ of the $x_i^{(h+1)}$ in such a way that they satisfy the same equations $\tilde x_i^{(h)}-\mu_i^{(h)}(\tilde x^{(h+1)})^{b_i^{(h)}}=0$, with $\mu_i^{(h)}\in k^*$ and $b_i^{(h)}\in\N^r$, as their images in ${\rm gr}_\nu R$ satisfy. To achieve this, it suffices to use the fact that the inclusions of polynomial rings are birational for $h\geq 1$ because $\nu\vert R^{(1)}$ is monomial (see also remark \ref{unimod}) so that the matrix of the vectors $(b_i^{(h)})_{1\leq i\leq r}$ is unimodular and the $x^{(h+1)}_i$ are Laurent terms (=constant times a Laurent monomial) in the $x_i^{(h)}$. Since $k$ has a field of representatives in $R$, we can use  the same expression to define the $\tilde x_i^{(h+1)}$ in terms of the $\tilde x_i^{(h)}$. Define inductively $R^{(h+1)}$ for $h\geq 1$ to be the $R^{(h)}$-subalgebra of $R_\nu$ obtained by localizing the subalgebra $R^{(h)}[\tilde x_1^{(h+1)},\ldots ,\tilde x_r^{(h+1)}]$ of $R_\nu$ at the maximal ideal which is its intersection with $m_\nu$. Since $R^{(h+1)}$ is a localization at the origin of $R^{(h)}[X_1,\ldots ,X_r]/(\tilde x_i^{(h)}-\mu_i^{(h)}X^{b_i^{(h)}})_{1\leq i\leq r}$, we see that the $R^{(h+1)}$ are regular local rings with coordinates $\tilde x_1^{(h+1)},\ldots ,\tilde x_r^{(h+1)}$. \par
As a consequence of proposition \ref{finapp}, given $h\in \N\setminus\{0\}$, and two terms in $k[x_1^{(h)},\ldots ,x_r^{(h)}]$, there is an $h'\geq h$ such that the image in $k[x_1^{(h')},\ldots ,x_r^{(h')}]$ of one of the two terms becomes a multiple of the image of the other. Therefore the same is true of any two monomials $(\tilde x^{(h)})^m$, $(\tilde x^{(h)})^n$ of $R^{(h)}$. We can write any element of $R^{(1)}$ as a series in $x_1^{(1)},\ldots ,x_r^{(1)}$ in $\hat R^{(1)}$. The ideal of monomials appearing in this series is finitely generated, and so it becomes principal in some $\hat R^{(h)}$, so that our element of $ R^{(1)}$ can be written $(\tilde x^{(h)})^EU(\tilde x^{(h)})$ in $\hat R^{(h)}$, with $U(\tilde x^{(h)})$ a unit. By (\cite{B3}, Chap. III, \S 3, no.5, Corollaire 4) it follows from this that $U((\tilde x^{(h)})\in R^{(h)}$, and finally that the union of the $R^{(h)}$'s is a valuation ring, and since its value group is $\Phi$ it has to be $R_\nu$.\end{proof}\noindent
\begin{Remark}\label{toroidal}\small{\begin{enumerate} \item We cannot say that ${\rm gr}_\nu R$ stands at the left of the inclusions $(gr)$ because we do not know that its semigroup is finitely generated. Before we can do that we have to make a $\nu$-modification $R\to R'$ of $R$.\Par One may hope that the semigroup of a rational Abhyankar valuation of an equicharacteristic excellent noetherian local domain with an algebraically closed residue field is always finitely generated, so that the valuation has an embedded local uniformization given by a toric map with respect to suitable generators of the maximal ideal, whose $\nu$-initial forms generate the $k$-algebra ${\rm gr}_\nu R$.\par\medskip
\item Even in the case where the ring $R$ is regular, the first inclusion $R\subset R^{(1)}$ may be necessary to make the valuation monomial. For a regular two dimensional local ring, theorem \ref{LUTOR} reduces to the fact that in the sequence of blowing-ups of centers of a rational Abhyankar valuation, after finitely many steps there are only satellite points, which means that  from there on the sequence of blowing-ups is toroidal, and this characterizes rational Abhyankar valuations (see \cite{Favre-Jonsson}, 6.2).\par\medskip
\item This theorem states in a precise way that rational Abhyankar valuations are those which are toroidal in nature, which is somewhat more precise than "quasi monomial". In the toric world, say over a field $k$, the analogues of valuations are (additive) preorders on the lattice $\Z^r$, and they are all "Abhyankar": see \cite{Ewald-Ishida} and \cite{GP-T2}, \S 13. The analogue of the valuation ring is the semigroup algebra over $k$ of the non-negative part $\Z^r_{\geq 0}$ with respect to the preorder. The theorem gives us a relationship between the nature of the valuation as expressed by $\Phi_{\geq 0}$ and specific sequences of birational toroidal modifications.
\end{enumerate}}
\end{Remark}
Going back to Theorem \ref{LUTOR}, if we assume that $R$ is analytically irreducible, we have injections $\hat R^m\subset k[[\tilde x_1^{(h)},\ldots ,\tilde x_r^{(h)}]]$ and we may view this last ring, say for $h=1$, as $k[[t^{\N^r}]]$, which is itself a subalgebra of $k[[t^{\Phi_{\geq 0}}]]$, with $\Phi=\Z^r$. The valuation on $k[[\tilde x_1^{(h)},\ldots ,\tilde x_r^{(h)}]]$ and therefore also the valuation $\hat\nu$ on $\hat R^m$ extending the valuation $\nu$ on $R$, is induced by the $t$-adic valuation of $k[[t^{\Phi_{\geq 0}}]]$. Up to multiplication by a nonzero constant, the image in $k[[t^{\N^r}]]$ or $k[[t^{\Phi_{\geq 0}}]]$ of each $\xi_i\in R$ is of the form $\xi_i (t)=t^{\gamma_i}+\sum_{\delta>\gamma_i}c^{(i)}_\delta t^\delta$ with $c^{(i)}_\delta\in k$, and we may view $\hat R^m$ as the image of the map $\widehat{k[(u_i)_{i\in I}]}\to k[[t^{\Phi_{\geq 0}}]]$ determined by $u_i\mapsto \xi_i (t)$.\Par
\emph{We can say that just like in the case of curves (see \cite{Te0}), the formal space corresponding to $R$ is obtained by deforming the parametrization $u_i\mapsto t^{\gamma_i}$ of the formal space corresponding to $\Gamma$, inside the space with coordinates $(u_i)_{i\in I}$.} \par In particular the valuation $\nu$ is induced by the embedding $R\subset \hat R^m\subset k[[t^{\Phi_{\geq 0}}]]$. According to what we saw before Lemma \ref{rat} or in \S 13 of \cite{GP-T2}, if the order on $\Phi\simeq \Z^r$ is of rank (or height) $h$ it is induced by an embedding $\Phi\subset (\R^h)_{\rm lex}$.\par
Putting together these remarks and the fact that we have the inequality $h\leq {\rm dim}R$ and for $h\leq h'$ natural convex inclusions $(\R^h)_{\rm lex}\subset (\R^{h'})_{\rm lex}$ we see that we have proved the following result\footnote{Belonging to a tradition in valuation theory which goes back to Ostrowski in \cite{O}. A recent result for valuations of rank one on complete regular local rings, including the mixed characteristic case and making significant use of key polynomials, is due to San Saturnino in \cite{SSa}.}, which partially answers a question of D.A. Stepanov.  Here the ring $k[[t^{(\R^{{\rm dim}R})_{\rm lex,+}}]]$ is endowed with its natural $t$-adic valuation.
 \begin{proposition}\label{Kap} Let $R$ be an excellent equicharacteristic local domain with algebraically closed residue field $k$. Every rational Abhyankar valuation $\nu$ of $R$ is induced by an injective map $f_\nu\colon R\longrightarrow k[[t^{(\R^{{\rm dim}R})_{\rm lex,+}}]]$ of local rings such that the rank $h$ of the valuation determines the smallest subring $k[[t^{(\R^h)_{\rm lex,+}}]]$ containing its image.\hfill {\qedsymbol}
 \end{proposition}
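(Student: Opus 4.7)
The plan is to bootstrap from Theorem \ref{LUTOR} together with the extension theory for Abhyankar valuations to the completion developed in subsection \ref{extcomp}. First, I would pass to an analytically irreducible $\nu$-modification $R\to R'$ using Lemma 7.3 of \cite{HOST}; this is a local injection of local domains dominated by $R_\nu$, and by proposition \ref{Abext} the valuation extends uniquely to an Abhyankar valuation $\hat\nu$ of $\hat{R'}^{m'}$ with the same rank $h$, the same value group $\Phi\simeq\Z^r$ (where $r=\dim R$), and the same semigroup. Since $R'$ is analytically irreducible and completion is faithfully flat, the composite $R\hookrightarrow R'\hookrightarrow \hat{R'}^{m'}$ is an injection of local rings.

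Next, apply Theorem \ref{LUTOR} to $R'$ to obtain, for some $h_0\geq 1$, a regular local ring $R^{(h_0)}$ in the toric tower with coordinates $\tilde x_1^{(h_0)},\ldots,\tilde x_r^{(h_0)}$ whose $\nu$-values are rationally independent generators of a finite-index subgroup of $\Phi$. As noted in the paragraph immediately preceding the statement, analytic irreducibility of $R'$ gives an inclusion $\hat{R'}^{m'}\hookrightarrow k[[\tilde x_1^{(h_0)},\ldots,\tilde x_r^{(h_0)}]]$. Identifying this power series ring with $k[[t^{\N^r}]]$ via $\tilde x_i^{(h_0)}\mapsto t^{\nu(\tilde x_i^{(h_0)})}$ and viewing $\N^r$ as a subsemigroup of $\Phi_{\geq 0}$ by rational independence, embeds it into the Hahn ring $k[[t^{\Phi_{\geq 0}}]]$, on which the $t$-adic valuation restricts to $\nu$. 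Finally, any total order of rank $h$ on the free group $\Phi\simeq\Z^r$ arises, via the lexicographic decomposition $\Phi\otimes_\Z\Q\simeq\Xi_1\times\cdots\times\Xi_h$ of rank-one pieces recalled in section \ref{OD}, from an order-preserving embedding $\Phi\hookrightarrow(\R^h)_{\rm lex}$. Composing with the convex inclusion $(\R^h)_{\rm lex}\hookrightarrow(\R^{\dim R})_{\rm lex}$ (padding with $r-h$ leading zero coordinates) yields the desired injection
\[
f_\nu\colon R\hookrightarrow \hat{R'}^{m'}\hookrightarrow k[[t^{\N^r}]]\hookrightarrow k[[t^{\Phi_{\geq 0}}]]\hookrightarrow k[[t^{(\R^h)_{\rm lex,+}}]]\hookrightarrow k[[t^{(\R^{\dim R})_{\rm lex,+}}]].
\]
This is a local homomorphism since $\nu$ is centered at $m_R$, hence positive on $m_R\setminus\{0\}$, and by construction it induces $\nu$ as the pullback of the $t$-adic valuation. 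The minimality assertion is then immediate: containment of the image in $k[[t^{(\R^h)_{\rm lex,+}}]]$ is built in, while no Hahn subring of lexicographic rank $<h$ can accommodate the image, since the valuations of images of elements of $R$ already generate a subgroup of $(\R^{\dim R})_{\rm lex}$ of real rank exactly $h$.

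The main obstacle I anticipate is bookkeeping the invariance of the rank $h$ and the value group $\Phi$ through the chain of reductions: passage to the $\nu$-modification $R'$, to its completion, and through the toric tower of Theorem \ref{LUTOR} must neither alter $h$ nor enlarge $r$. This is precisely what proposition \ref{Abext} guarantees (the equalities $\hat\Psi_{2\ell+1}=\hat\Psi_{2\ell}$ force $\hat\nu$ to be Abhyankar of the same rank and group), and what the inductive-limit description $R_\nu=\bigcup R^{(h)}$ in Theorem \ref{LUTOR} makes explicit. Once this bookkeeping is in place, the injectivity, locality, and valuation-compatibility of each arrow in the display are routine.
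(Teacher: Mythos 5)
Your proposal is correct and follows essentially the same route as the paper: Theorem \ref{LUTOR} supplies a regular local ring in the tree of $\nu$-modifications on which $\nu$ is monomial in coordinates with rationally independent values, its completion is identified with $k[[t^{\N^r}]]\subset k[[t^{\Phi_{\geq 0}}]]$, and the rank-$h$ order on $\Phi\simeq\Z^r$ is realized by an ordered embedding into $(\R^h)_{\rm lex}$ followed by the convex inclusions $(\R^h)_{\rm lex}\subset(\R^{{\rm dim}R})_{\rm lex}$. The only divergence is your preliminary detour through an analytically irreducible modification $R'$ and its completion $\hat{R'}^{m'}$, which mirrors the paper's preceding discussion of $\hat R^m$ but is not needed for the statement about $R$ itself, since $R\subset R^{(1)}\subset \widehat{R^{(1)}}\cong k[[t^{\N^r}]]$ already gives the required injection, the completion of the regular ring $R^{(1)}$ being a domain.
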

Any local injective map defines a rational valuation on $R$ but if it is Abhyankar the same valuation can be defined via an injection satisfying the condition of the proposition.\par\noindent
Stepanov asks\footnote{See \cite{St} for motivation and results.} whether \emph{given a field $k$ with a rank one valuation $\nu$ and a $k$-algebra $R$ there exists an extension $K$ of the field $k$ and a valuation $\mu\colon K\to \R\cup\{\infty\}$ extending $\nu$ such that for any valuation $v$ on $R$ extending $\nu$ there exists a morphism $f_v\colon R\to K$ of $k$-algebras such that the valuation $v$ is induced from the valuation $\mu$ on $K$ via the morphism $f_v$.} \Par
This embedding of $R$ in a power series (or Hahn) ring is of course related to the classical theorem of Kaplansky (see \cite{K}) about embeddings of valued fields in fields of power series. It would be interesting to compare the proof given here with the recent constructive proof of San Saturnino (see \cite{SSa}) for valuations of rank one of complete regular local rings without the equicharacteristic condition, using key polynomials.
\section{An example of G. Rond, the defect, analytic irreducibility and the structure of the semigroups of hypersurfaces}\label{precisions}
In this section, we give applications of proposition \ref{nokey} to the study of the extensions of a rational monomial valuation $\nu_0$ on $k[[x_1,\ldots ,x_r]]$ to $k[[x_1,\ldots ,x_r]][y]/(p(y))$, where $p(y)$ is an irreducible unitary polynomial. We show that some classical results for plane branches can be understood in this context.
\begin{example}\label{rond} {\rm The following example is due to Guillaume Rond}: Let $k$ be an algebraically closed field of characteristic $\neq 2$. Set $R_0=k[[x_1,x_2]]$, equip it with a monomial valuation $\nu_0$ of rank one such that $w_2=\nu_0 (x_2)>\nu_0(x_1)=w_1$, $w_2\notin\langle w_1\rangle$ and $3w_2-w_1\notin\langle w_1,w_2\rangle$.  Consider the irreducible polynomial $p(y)=y^2-x_1^2-x_2^3\in R_0[y]$. If $\nu$ is a valuation on $R_0[y]/(p(y))$ which extends the valuation $\nu_0$ then $\nu(y)=\nu_0(x_1)$ and either the value of $y-x_1$ or the value of $y+x_1$ has to be equal to $3w_2-w_1$, which is in $\Phi_{0,\geq 0}$ but not in $\Gamma_0$. Here we have the equality $\Phi=\Phi_0$ while the degree of $p(y)$ is equal to two. In this case the polynomial $p(y)$ becomes reducible in the completion (or in the henselization) of the valuation ring $R_{\nu_0}\subset k((x_1,x_2))$ of $\nu_0$, with roots $y=\pm x_1\sqrt{1+\frac{x_2^3}{x_1^2}}$, and there are two extensions of $\nu_0$ to $K_0(y)$, so that indeed the extension is defectless in view of Ostrowski's ramification formula (see \cite{Ku}, \cite{Roq} and \cite{V2}):
$$[K_0(y):K_0]=\sum_{\nu_s}e(\nu_s/\nu_0)f(\nu_s/\nu_0)d(\nu_s/\nu_0),\eqno{(O)}$$
where $\nu_s$ runs through the extensions of $\nu_0$, and the integers $e(\nu_s/\nu_0)$, $f(\nu_s/\nu_0)$, and $d(\nu_s/\nu_0)$ are respectively the ramification index $[\Phi_s:\Phi_0]$, the degree of the residual extension, and the defect of the extension of valuations. Here for both extensions we have $e(\nu_s/\nu_0)=f(\nu_s/\nu_0)=1$. The semigroup of values of both valuations on the ring $R_0[y]/(p(y))$ is the semigroup minimally generated by $w_1, w_2, 3w_2-w_1$, the last one being the valuation of $y\pm x_1$ depending on the valuation chosen, so that it is not true in this case that all the $s_i$ are zero (see proposition \ref{semg}). The kernel of the map $\widehat{k[x_1,x_2, u_1,u_2]}\to k[[x_1,x_2,y]]/(p(y))$ provided by the valuative Cohen theorem for the valuation which gives value $3w_2-w_1$ to $y-x_1$ is generated by the overweight deformation $F_1=u_1-x_1-u_2=0,\ F_2= 2x_1u_2-x_2^3+u_2^2$ of the binomial ideal $(u_1-x_1, 2x_1u_2-x_2^3)$. For the valuation which gives value $3w_2-w_1$ to $y+x_1$ (and value $w_1$ to $y-x_1$) , the generators are $u_1+x_1-u_2, 2x_1u_2+x_2^3-u_2^2$. For both extensions of $\nu_0$ we must have $d(\nu_s/\nu_0)=1$: they are \emph{defectless}.
\end{example}
 Let us go back to our general notations, with $R_0=k[[x_1,\ldots ,x_r]]$ and $\nu_0$ a valuation such that the $\nu(x_i)$ are rationally independent. Denote by $R_{\nu_0}\subset K_0=k((x_1,\ldots ,x_r))$ the valuation ring of $\nu_0$ and by $\tilde R_{\nu_0}\subset \tilde K_0$ the valuation ring of a fixed henselization $(\tilde K_0, \tilde{\nu_0})$ of the valued field $(K_0,\nu_0)$. Let $\overline{\tilde K_0}$ be an algebraic closure of $\tilde K_0$ equipped with the unique extension $\overline{\tilde{\nu_0}}$ of $\tilde{\nu_0}$. It is classical \footnote{See \cite{Ri2}, 6.2 for the special case of valuations of rank one; see also \cite{Ri3}, Chapitre F and Chapitre G, proposition 2.} that the extensions of $\nu_0$ to a finite algebraic extension $K_0[y]/(p(y))$ are obtained by restriction of  $\overline{\tilde{\nu_0}}$ via $K_0$-injections of fields $K_0[y]/(p(y))\subset\overline{\tilde K_0}$ given by $y\mapsto\rho(y)\in \overline{\tilde K_0}$ and two injections $\rho, \rho'$ define the same extension of $\nu_0$ if and only if $\rho (y)$ and $\rho' (y)$ are roots of the same irreducible factor of $p(y)$ in $\tilde K_0[y]$. Indeed, if $\rho (y)$ and $\rho' (y)$ determine the same extension of $\nu_0$ to $K_0[y]/(p(y))$, they determine the same henselizations of this field, so that $\tilde K_0(\rho (y))$ and $\tilde K_0(\rho' (y))$ must be isomorphic by a unique $K_0$-isomorphism, which sends $\rho (y)$ to $\rho' (y)$ since they are the images of $y$. Therefore they must also be $\tilde K_0$-isomorphic (we could also use the fact that $K_0$ is scalewise dense in $\tilde K_0$ in the sense of \cite{Te1} according to \cite{Ku3}) and so $\rho (y)$ and $\rho' (y)$ must be roots of the same irreducible polynomial in $\tilde K_0[y]$. It is also classical that the valuation rings of the extensions of $\nu_0$ all contain the integral closure of $R_{\nu_0}$ in $K_0(y)/(p(y))$ and therefore also contain $R_0[y]/(p(y))$.\Par
  \begin{definition}\label{hensel} We say that a polynomial $q(y)\in K_0[y]$ (or $R_{\nu_0}[y]$) is \emph{$\nu_0$-analytically irreducible} if it is irreducible in $\tilde K_0[y]$ (or $\tilde R_{\nu_0}[y]$). By the universal property of henselization, this property does not depend upon the choice of the henselization. If the valuation is of rank one and $p(y)$ defines a separable extension of $K_0$, it is the same as being irreducible over the completion of $(K_0,\nu_0)$. If $r=1$ it is the same as being irreducible in $k((x))[y]$. The example of G. Rond given at the beginning of this section is irreducible in $k((x_1,x_2))[y]$ but not $\nu_0$-analytically irreducible.
 \end{definition}
 Let us now denote by $\mathopen H_i^\circ,\ 1\leq i\leq t,$ the equations of proposition \ref{nokey} where we have omitted the summand $g_i$ for each $i$, that is
 $$H_i^\circ=x^{s_i}u_i^{n_i}-\lambda_i x^{r_i}\prod_{k\in E(i)}u_k^{t^{(i)}_k}-u_{i+1},\eqno{(E^\circ)}$$
 and by $Q^\circ_i$ the polynomials in $y:=u_1$ obtained by elimination of the $u_k$, $2\leq k\leq i$ between the $H_j^\circ$ for $2\leq j\leq i$, forgetting the variables of index $\geq i+1$. By proposition \ref{resoverwght}, the overweight deformation described by the $H_i^\circ$ gives rise to a valuation $\nu^\circ$ on $k[[x]][y]/(Q^\circ_t(y))$.
 \begin{lemma}\label{order} With the notations just defined, we have the following:
 \begin{enumerate}
 \item For $1\leq j\leq t-1$, the coefficient of lowest value of the polynomial $Q^\circ_{j+1}(y)\in R_0[y]$ is the coefficient of the highest power of $y$, which is $y^{n_1\ldots n_j}$ and it is the only one with this value, which is $S_{j+1}(1):=s_j+n_js_{j-1}+n_jn_{j-1}s_{j-2}+\cdots +n_j\ldots n_3s_2+n_j\ldots n_3n_2s_1=s_j+n_jS_j(1)$.
\item For $1\leq j\leq t-1$ the valuation of $Q^\circ_{j+1}(0)\in R_0$ is $s_j+n_js_{j-1}+n_jn_{j-1}s_{j-2}+\cdots +n_j\ldots n_3s_2+n_j\ldots n_3n_2r_1$ and the term with the lowest valuation comes from $x^{s_j}Q^{\circ n_j}_j(y)$.
\end{enumerate}
 \end{lemma}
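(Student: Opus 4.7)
The plan is to prove both parts jointly by induction on $j$, exploiting the recursion
\[
Q_{j+1}^\circ = x^{s_j}(Q_j^\circ)^{n_j} - \lambda_j x^{r_j}\prod_{k\in E(j)}(Q_k^\circ)^{t_k^{(j)}},
\]
which is inherent in the elimination procedure defining the $Q_i^\circ$. The base case $j=1$ is direct from $Q_2^\circ = x^{s_1}y^{n_1}-\lambda_1 x^{r_1}$: its two coefficients have $\nu_0$-values $s_1=S_2(1)$ and $r_1=S_2(x_r)$, and Proposition \ref{ineq} yields $r_1>s_1$, settling both parts. For $j=1$ the first summand vanishes at $y=0$, so the statement of part 2 degenerates to $\nu_0(Q_2^\circ(0))=r_1$.

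For the inductive step I would first handle the $y$-degree bookkeeping. By induction $\deg_y Q_j^\circ = d_j := n_1\cdots n_{j-1}$, so $(Q_j^\circ)^{n_j}$ has degree $d_{j+1}:=n_1\cdots n_j$, whereas the second summand has degree $\sum_{k\in E(j)} t_k^{(j)} d_k$; the telescoping identity $(n_k-1)d_k = d_{k+1}-d_k$ together with $t_k^{(j)}<n_k$ shows this is at most $d_j-1 < d_{j+1}$. Hence the leading-in-$y$ coefficient of $Q_{j+1}^\circ$ comes purely from the first summand and equals $\alpha_{j+1}=x^{s_j}\alpha_j^{n_j}$, with $\nu_0$-value $s_j + n_j S_j(1)=S_{j+1}(1)$ by induction.

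The uniqueness statement in part 1 requires strict inequalities on the two summands separately. From the first summand, any coefficient of $y^k$ with $k<d_{j+1}$ is a sum of products of $n_j$ coefficients of $Q_j^\circ$ whose degrees total $k$; since at least one factor has degree $<d_j$, by induction it has value $>S_j(1)$, so each product has value $>n_j S_j(1)$, and multiplying by $x^{s_j}$ yields $>S_{j+1}(1)$. From the second summand, the minimum coefficient value is $r_j+\sum_{k\in E(j)} t_k^{(j)} S_k(1)$, attained at its top $y$-degree. The main obstacle is proving this exceeds $S_{j+1}(1)$. Using the binomial identity $s_j+n_j\gamma_j = r_j+\sum t_k^{(j)}\gamma_k$ in $\Phi$, this becomes
\[
n_j(\gamma_j - S_j(1)) > \sum_{k\in E(j)} t_k^{(j)}(\gamma_k - S_k(1)).
\]
I would establish it by an auxiliary induction: setting $\gamma_1=\nu(y)$, one shows $\gamma_k - S_k(1) \geq d_k\gamma_1$ from the inequality $\gamma_{k+1}\geq s_k+n_k\gamma_k$ (coming from the binomial part of $H_k^\circ$), and combines this with the telescoping bound $\sum t_k^{(j)} d_k < d_{j+1}=n_jd_j$ and the strict chain of inequalities in the proof of Proposition \ref{ineq} to extract strict inequality.

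Part 2 would then follow by evaluating the recursion at $y=0$: $Q_{j+1}^\circ(0)=x^{s_j}Q_j^\circ(0)^{n_j}-\lambda_j x^{r_j}\prod_{k\in E(j)} Q_k^\circ(0)^{t_k^{(j)}}$. By induction the first summand contributes value $s_j+n_jS_j(x_r)=S_{j+1}(x_r)$, matching the stated formula. The binomial relation at $i=1$ (where $E(1)=\emptyset$) gives $r_1-s_1=n_1\gamma_1$, which yields inductively $S_k(x_r)-S_k(1) = d_k\gamma_1$; this reduces the dominance inequality needed for part 2 to the one already established for part 1, closing the induction.
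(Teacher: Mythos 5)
Your overall strategy is sound and is, in substance, the paper's own argument: induction on the recursion $Q^\circ_{j+1}=x^{s_j}(Q^\circ_j)^{n_j}-\lambda_j x^{r_j}\prod_{k\in E(j)}(Q^\circ_k)^{t^{(j)}_k}$, the degree bookkeeping (this is Lemma \ref{degdeg}), the observation that the minimal-value coefficient of the first summand is the leading one with value $s_j+n_jS_j(1)=S_{j+1}(1)$, and the identification of the crucial inequality $r_j+\sum_{k\in E(j)}t^{(j)}_kS_k(1)>S_{j+1}(1)$, which the paper obtains through the auxiliary sequences $T_i,L_i,M_i,S_i$, Lemma \ref{estim}, Corollary \ref{strineq} and the identity $(**)$, and which you correctly reformulate, via the binomial relation, as $n_j\delta_j>\sum_{k\in E(j)}t^{(j)}_k\delta_k$ with $\delta_k:=\gamma_k-S_k(1)$.

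However, the way you propose to prove that inequality does not work as stated, and this is the heart of the lemma. You want an \emph{upper} bound on $\sum_{k\in E(j)}t^{(j)}_k\delta_k$, but what you combine are the \emph{lower} bounds $\delta_k\geq d_k\gamma_1$ and the degree estimate $\sum_k t^{(j)}_kd_k<n_jd_j$; lower bounds on the $\delta_k$ cannot control $\sum_k t^{(j)}_k\delta_k$ from above, so this combination is a non sequitur. The repair uses exactly the ingredients you already have, but assembled differently: the overweight inequalities $\gamma_{k+1}>s_k+n_k\gamma_k$ give directly $\delta_{k+1}>n_k\delta_k$, and then the telescoping of Proposition \ref{ineq}, run with $\delta_k$ in place of $\gamma_k$ and using $t^{(j)}_k\leq n_k-1$, gives $\sum_{k\in E(j)}t^{(j)}_k\delta_k\leq\sum_{k<j}(n_k-1)\delta_k<\delta_j\leq n_j\delta_j$ (strict since $\delta_1=\gamma_1>0$); the bound $\delta_k\geq d_k\gamma_1$ is then superfluous for part 1. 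A parallel repair is needed for part 2: with $C_k:=\nu_0(Q^\circ_k(0))=S_k(1)+d_k\gamma_1$ (the case $t^{(j)}_1>0$ being trivial, since the second summand then has zero constant term), the inequality you need, $s_j+n_jC_j<r_j+\sum_{k\geq 2}t^{(j)}_kC_k$, is \emph{not} a formal consequence of the part 1 inequality: after substitution it reads $n_j\epsilon_j>\sum_{k\geq 2}t^{(j)}_k\epsilon_k$ with $\epsilon_k=\delta_k-d_k\gamma_1=\gamma_k-C_k$, and the $\gamma_1$-contributions go the wrong way for a direct reduction. It does hold, by rerunning the same telescoping with the $\epsilon_k$, since $\epsilon_{k+1}>n_k\epsilon_k$ again follows from $\gamma_{k+1}>s_k+n_k\gamma_k$ once $C_{k+1}=s_k+n_kC_k$ is known at the previous stage (so parts 1 and 2 should be proved by a joint induction). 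With these two points fixed, your proof is correct and amounts to a streamlined version of the paper's, trading the sequences $T_i,L_i$ and Lemma \ref{estim} for the quantities $\delta_k$ and $\epsilon_k$.
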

Given the binomials $x^{s_i}u_i^{n_i}-\lambda_ix^{r_i}\prod_{k\in E(i)}u_k^{t^{(i)}_k}$ we define two sequences:\par\noindent$\bullet$ The sequence of integers $T_i(i-k)$ is defined inductively for $0\leq k\leq i-1$ by:\par\noindent $T_i(i)=n_i, T_i(i-1)=n_in_{i-1}-t^{(i)}_{i-1},\ldots ,T_i(i-k-1)=T_i(i-k)n_{i-k-1}-t^{(i)}_{i-k-1},\ T_i(j)=0 \ {\rm if}\ j\notin\{1,\ldots ,i\}$.\par\noindent $\bullet$ The sequence $L_i(i-k)$ of elements of $Z^r$ is defined inductively for $0\leq k\leq i-1$ by:\par\noindent
$L_i(i)=s_i, L_i(i-1)=T_i(i)s_{i-1}+s_i,\ldots ,L_i(i-k-1)=T_i(i-k)s_{i-k-1}+L_i(i-k),\ L_i(j)=0\  {\rm if}\ j\notin\{1,\ldots ,i\}$.
\begin{lemma}\label{estim} We have the equality
$$\nu^\circ(x^{r_i})=\nu^\circ(x^{L_i(1)}u_1^{T_i(1)})+\sum_{k=0}^{i-2}\nu^\circ\bigl((\frac{u_{i-k}}{x^{s_{i-k-1}}u_{i-k-1}^{n_{i-k-1}}})^{T_i(i-k)}\bigr)$$
\end{lemma}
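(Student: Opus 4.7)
The plan is to prove the identity by a direct algebraic verification that reduces it, via the defining binomial relation of $H_i^\circ$, to a purely combinatorial unfolding of the recurrences for $T_i(\cdot)$ and $L_i(\cdot)$. Write $\beta_j=\nu^\circ(u_j)$. From the binomial initial form $X^{s_i}U_i^{n_i}-\lambda_i X^{r_i}\prod_{k\in E(i)}U_k^{t^{(i)}_k}$ (with $\lambda_i\in k^*$) one reads off
$$\nu^\circ(x^{r_i})=\nu^\circ(x^{s_i})+n_i\beta_i-\sum_{k\in E(i)}t^{(i)}_k\beta_k, \qquad (*)$$
so it suffices to show that the right-hand side of the lemma evaluates to the right-hand side of $(*)$.

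First I would expand the sum on the right-hand side by setting $j=i-k$ (so $j$ runs from $2$ to $i$):
$$\nu^\circ\!\left(\tfrac{u_j}{x^{s_{j-1}}u_{j-1}^{n_{j-1}}}\right)^{T_i(j)}=T_i(j)\bigl(\beta_j-\nu^\circ(x^{s_{j-1}})-n_{j-1}\beta_{j-1}\bigr),$$
and separate the $\beta$-contribution from the $x$-contribution. The initial term $\nu^\circ(x^{L_i(1)}u_1^{T_i(1)})$ contributes $T_i(1)\beta_1$ on the $\beta$-side and $L_i(1)$ on the $x$-side.

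Next I would collect the coefficient of each $\beta_m$. For $m=i$ only the $j=i$ summand contributes, giving $T_i(i)=n_i$. For $2\le m\le i-1$ the two summands $j=m$ and $j=m+1$ contribute $T_i(m)-T_i(m+1)n_m$, which by the defining recurrence $T_i(m)=T_i(m+1)n_m-t^{(i)}_m$ equals $-t^{(i)}_m$. For $m=1$ the initial $T_i(1)$ plus the $j=2$ summand give $T_i(1)-T_i(2)n_1=-t^{(i)}_1$. Summing, and using that $t^{(i)}_k=0$ for $k\notin E(i)$, one recovers exactly $n_i\beta_i-\sum_{k\in E(i)}t^{(i)}_k\beta_k$, matching the $\beta$-part of $(*)$.

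Finally I would compute the $x$-contribution, namely $L_i(1)-\sum_{j=2}^{i}T_i(j)s_{j-1}=L_i(1)-\sum_{m=1}^{i-1}T_i(m+1)s_m$. Unfolding the recursive definition $L_i(i)=s_i$ and $L_i(j-1)=T_i(j)s_{j-1}+L_i(j)$ iteratively yields the closed form $L_i(1)=s_i+\sum_{m=1}^{i-1}T_i(m+1)s_m$, so the $x$-part collapses to $s_i=\nu^\circ(x^{s_i})$. Adding this to the $\beta$-part reproduces the right-hand side of $(*)$, which equals $\nu^\circ(x^{r_i})$, completing the proof. The argument is entirely bookkeeping; the only point that requires care is matching the index ranges in $E(i)$ against the telescoping provided by the two recurrences, which is the main, and essentially only, obstacle.
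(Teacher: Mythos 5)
Your proof is correct and is essentially the paper's argument in reverse: the paper builds the identity forward by iteratively substituting $u_j^{n_j}=(x^{s_{j-1}}u_{j-1}^{n_{j-1}})^{n_j}\cdot\bigl(\tfrac{u_j}{x^{s_{j-1}}u_{j-1}^{n_{j-1}}}\bigr)^{n_j}$ into the starting equality $\nu^\circ(x^{s_i}u_i^{n_i})=\nu^\circ(x^{r_i}\prod_{k\in E(i)}u_k^{t^{(i)}_k})$, with the recurrences for $T_i$ and $L_i$ defined to record that telescoping, whereas you verify the same telescoping by collecting the coefficient of each $\nu^\circ(u_m)$ and of $x$. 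Both rest on exactly the same single input (the value equality of the two monomials of the $i$-th binomial) and the same recurrences, so this is the same proof with different bookkeeping.
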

\begin{proof} Start from the equality $\nu^\circ(x^{s_i}u_i^{n_i})=\nu^\circ (x^{r_i}\prod_{k\in E(i)}u_k^{t^{(i)}_k})$ and write the second term $\nu^\circ (x^{r_i}\prod_{k=1}^{i-1}u_k^{t^{(i)}_k})$ by adding the required number of $t^{(i)}_k$ that are zero. Then replace the first term by $\nu^\circ(x^{s_i}(x^{s_{i-1}}u_{i-1}^{n_{i-1}})^{n_i})$, adding the correction term $\nu^\circ\bigl((\frac{u_i}{x^{s_{i-1}}u_{i-1}^{n_{i-1}}})^{n_i}\bigr)$. We can simplify the powers of $u_{i-1}$ to obtain the equality:
$$\nu^\circ (x^{r_i}\prod_{k=1}^{i-2}u_k^{t^{(i)}_k})=\nu^\circ (x^{s_i+n_is_{i-1}}u_{i-1}^{n_{i-1}n_i-t^{(i)}_{i-1}})+\nu^\circ\bigl((\frac{u_i}{x^{s_{i-1}}u_{i-1}^{n_{i-1}}})^{n_i}\bigr),$$
and repeat the operation with $u_{i-1}^{n_{i-1}n_i-t^{(i)}_{i-1}}$ and so on. The inductive definition of $T_i(i)$ and $L_i(i)$ follows this process.\end{proof}

\begin{corollary}\label{strineq} We have the inequality $r_i>L_i(1)$, which is stronger than the inequality $r_i> s_i$ of proposition \ref{ineq} except if $s_1=\cdots =s_{i-1}=0$.\qed
\end{corollary}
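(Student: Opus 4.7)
\textbf{Proof plan for Corollary \ref{strineq}.}

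The plan is to read the inequality directly off the identity of Lemma \ref{estim}, once one has verified that every summand on the right-hand side is strictly positive. Concretely, I would first check by a short induction that all the integers $T_i(j)$ with $1\leq j\leq i$ are $\geq 1$: this holds for $j=i$ since $T_i(i)=n_i\geq 1$, and the recursion $T_i(i-k-1)=T_i(i-k)n_{i-k-1}-t^{(i)}_{i-k-1}$ together with the constraint $0\leq t^{(i)}_{i-k-1}<n_{i-k-1}$ propagates positivity downward. In particular $T_i(1)\geq 1$.

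Next I would argue that each factor $u_{i-k}/(x^{s_{i-k-1}}u_{i-k-1}^{n_{i-k-1}})$ appearing in the sum has strictly positive $\nu^\circ$-value. Indeed, the equation $H^\circ_{i-k-1}$ gives $u_{i-k}=x^{s_{i-k-1}}u_{i-k-1}^{n_{i-k-1}}-\lambda_{i-k-1}x^{r_{i-k-1}}\prod_{j\in E(i-k-1)}u_j^{t^{(i-k-1)}_j}$, where by construction the two monomials on the right share the same weight and their initial forms generate the binomial ideal. Since $u_{i-k}$ represents a new minimal generator of the semigroup, the overweight property (cf.\ the valuation defined in proposition \ref{resoverwght}) forces $\nu^\circ(u_{i-k})>\nu^\circ(x^{s_{i-k-1}}u_{i-k-1}^{n_{i-k-1}})$. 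Consequently each term in the sum of Lemma \ref{estim} is $>0$, and moreover $\nu^\circ(u_1^{T_i(1)})=T_i(1)\nu^\circ(u_1)>0$. Adding up we obtain
$$
\nu^\circ(x^{r_i})=\nu^\circ(x^{L_i(1)})+\nu^\circ(u_1^{T_i(1)})+\sum_{k=0}^{i-2}\nu^\circ\!\Bigl(\bigl(u_{i-k}/x^{s_{i-k-1}}u_{i-k-1}^{n_{i-k-1}}\bigr)^{T_i(i-k)}\Bigr)>\nu^\circ(x^{L_i(1)}),
$$
which is precisely the inequality $r_i>L_i(1)$ in the sense used in proposition \ref{ineq}.

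For the comparison with that proposition, I would unfold the recursion $L_i(j-1)=T_i(j)s_{j-1}+L_i(j)$ starting from $L_i(i)=s_i$, obtaining the closed form $L_i(1)=s_i+\sum_{j=1}^{i-1}T_i(j+1)\,s_j$ in $\N^r$. Since every $T_i(j+1)\geq 1$ and each $s_j$ lies in $\N^r$, this displays $L_i(1)-s_i$ as a sum in $\N^r$ which vanishes if and only if $s_1=\cdots=s_{i-1}=0$. Hence $\nu^\circ(x^{L_i(1)})\geq \nu^\circ(x^{s_i})$, with equality exactly in the exceptional case, which shows that $r_i>L_i(1)$ refines $r_i>s_i$ outside of that degenerate situation.

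The only genuine content is the positivity of the ratios $u_{i-k}/(x^{s_{i-k-1}}u_{i-k-1}^{n_{i-k-1}})$, and this is already built into the overweight-deformation hypothesis; the rest is bookkeeping with the recursions for $T_i$ and $L_i$. I therefore do not expect any real obstacle beyond being careful that the $T_i(j)$ remain $\geq 1$ all the way down to $j=1$, which is what allows the term $u_1^{T_i(1)}$ to contribute a strict inequality even when the accumulated sum above it happens to reduce to zero.
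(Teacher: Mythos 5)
Your proposal is correct and is exactly the argument the paper intends: the corollary carries a \qed with no written proof because it is meant to be read off Lemma \ref{estim} in just the way you do, namely that all $T_i(j)\geq 1$ by the recursion together with $0\leq t^{(i)}_j\leq n_j-1$, that each ratio $u_{i-k}/(x^{s_{i-k-1}}u_{i-k-1}^{n_{i-k-1}})$ has strictly positive value by the overweight condition, and that unfolding the recursion for $L_i(1)$ exhibits $L_i(1)-s_i$ as a nonnegative combination of the $s_j$, $j<i$, vanishing exactly when they all do. Your write-up simply makes explicit the bookkeeping the paper leaves to the reader.
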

 Let us define for each $i$ two more sequences as follows:\par\noindent
 $\bullet$ The sequence of positive integers $M_i(i-k)=n_in_{i-1}n_{i-2}\ldots n_{i-k}, \ M_i(j)=1\  {\rm if}\ j\notin\{1,\ldots ,i\}$.\par\noindent
 $\bullet$ The sequence of elements of $\N^r$ defined inductively by $S_i(i-1)= s_{i-1}, S_i(i-2)= n_{i-1}s_{i-2}+s_{i-1}, \ldots ,S_i(i-k-1)=M_{i-1}(i-k)s_{i-k-1}+S_i(i-k), \ S_i(j)=0\  {\rm if}\ j\notin\{1,\ldots ,i-1\}$. In particular, $S_1(1)=0$.\par\noindent

One checks easily by induction the identity
  $$M_i(i-k)=T_i(i-k)+\sum_{s=2}^{k}M_{i-s}(i-k)t^{(i)}_{i-s+1} +t^{(i)}_{i-k},$$
  which we can also write, setting $\ell=i-s+1$ and remembering that $M_{i-k-1}(i-k)=1$:
  $$M_i(i-k)=T_i(i-k)+\sum_{\ell=i-k}^{i-1}M_{\ell-1}(i-k)t^{(i)}_\ell .\eqno{(MT)}$$
  We have by definition $L_i(i)=S_{i+1}(i)=s_i$ and $L_i(i-1)=n_is_{i-1}+s_i=S_{i+1}(i-1)$. and now we can prove by induction on $k$ the equality $$L_i(i-k)+\sum_{\ell=2}^{i-1}t^{(i)}_\ell S_\ell(i-k)=S_{i+1}(i-k).$$
Assuming that it is true for $i,i-1,\ldots, i-k$, to prove the same equality for $i-k-1$, by the inductive definition, amounts to proving the equality $$T_i(i-k)s_{i-k+1}+\sum_{\ell=i-k}^{i-1}M_{\ell-1}(i-k)t^{(i)}_\ell s_{i-k+1}=M_i(i-k)s_{i-k+1},$$ which follows from the equality $(MT)$ above. We can continue until we reach $i-k-1=1$. As a consequence we have the equality$$L_i(1)+\sum_{\ell=2}^{i-1}t^{(i)}_\ell S_\ell(1)=S_{i+1}(1).\eqno{(**)}$$\par
  We are now in position to prove that the coefficient of least valuation of each polynomial $Q^\circ_i(y)$ is the coefficient of its highest degree term and that after division by this coefficient the other coefficients of the resulting unitary polynomials have valuations increasing with $i$.\par\noindent Given a polynomial $q(y)\in R_0[y]$, let us define $N(q(y))$ to be the least valuation of its coefficients. Assume that for $k<i+1$ we have the equality $N(Q^\circ_k(y))= \nu^\circ(x^{S_k(1)})$, which is obvious for $i=1,2$, and let us find the least valuation of a coefficient of the polynomial $x^{r_i}y^{t^{(i)}_1}Q_2^{\circ t^{(i)}_2}\ldots Q_{i-1}^{\circ t^{(i)}_{i-1}}$. According to Lemma \ref{estim} and equality $(**)$, we have
  $$ N(x^{r_i}y^{t^{(i)}_1}Q_2^{\circ t^{(i)}_2}\ldots Q_{i-1}^{\circ t^{(i)}_{i-1}})\geq\nu^\circ(x^{S_{i+1}(1)}u_1^{T_i(1)})+\sum_{k=0}^{i-2}\nu^\circ\bigl((\frac{u_{i-k}}{x^{s_{i-k-1}}u_{i-k-1}^{n_{i-k-1}}})^{T_i(i-k)}\bigr),\eqno{(***)}$$
  and this implies that $N(Q^\circ_{i+1}(y))=\nu^\circ(x^{S_{i+1}(1)})$ and we can continue the induction. To complete the proof of the first statement of the lemma, we check that: \begin{lemma}\label{degdeg} If the degree of $Q^\circ_k$ is $n_1\ldots n_{k-1}$ for $k\leq i-1$, the degree of the polynomial $\prod_{k\in E(i)}Q_k^{\circ t^{(i)}_k}(y)$ is $\leq \sum_{k=1}^{i-1} (n_k-1)n_1\ldots n_{k-1}= n_1\ldots n_{i-1}-1$.\qed \end{lemma}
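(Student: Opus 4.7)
The plan is to bound the degree of the product by the sum of the degrees of the factors, then use the bound $t^{(i)}_k\leq n_k-1$ (which comes from the definition of the presentation in Proposition \ref{nokey}, where each $t^{(i)}_k$ satisfies $0\leq t^{(i)}_k<n_k$) and the inductive hypothesis on $\deg Q_k^\circ$, and finally recognize the resulting sum as a telescoping sum.

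First I would write $\deg \prod_{k\in E(i)}Q_k^{\circ t^{(i)}_k}(y)\leq \sum_{k\in E(i)} t^{(i)}_k \deg Q_k^\circ$. Since $E(i)\subseteq\{1,\ldots,i-1\}$ (the indices $k$ with $n_k>1$ that are $\leq i$, other than $i$ itself) and $t^{(i)}_k=0$ for indices outside $E(i)$ in the range $\{1,\ldots,i-1\}$, the right-hand side is $\sum_{k=1}^{i-1}t^{(i)}_k \deg Q_k^\circ$. Applying the hypothesis $\deg Q_k^\circ=n_1\cdots n_{k-1}$ for $k\leq i-1$ (with the convention that $n_1\cdots n_0=1$, consistent with $\deg Q_1^\circ=\deg y=1$) and the bound $t^{(i)}_k\leq n_k-1$, I obtain
$$\deg \prod_{k\in E(i)}Q_k^{\circ t^{(i)}_k}(y)\;\leq\; \sum_{k=1}^{i-1}(n_k-1)\,n_1\cdots n_{k-1}.$$

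The final step is the combinatorial identity $\sum_{k=1}^{i-1}(n_k-1)\,n_1\cdots n_{k-1}=n_1\cdots n_{i-1}-1$, which is just a telescoping sum: each summand equals $n_1\cdots n_k - n_1\cdots n_{k-1}$, so the total collapses to $n_1\cdots n_{i-1}-n_1\cdots n_0 = n_1\cdots n_{i-1}-1$. This is purely an arithmetic check and poses no real difficulty; indeed the whole lemma is a routine degree count. The only point that requires care is ensuring the convention $n_1\cdots n_0=1$ matches $\deg Q_1^\circ=1$ (since $Q_1^\circ=y$), which fits the inductive setup of the preceding computation.

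There is no genuine obstacle here: the statement is a straightforward bookkeeping lemma meant to complete the induction establishing $\deg Q_{i+1}^\circ = n_1\cdots n_i$. Once the above estimate is in hand, combined with the equation $Q_{i+1}^\circ = x^{s_i}Q_i^{\circ n_i}-\lambda_i x^{r_i}\prod_{k\in E(i)}Q_k^{\circ t^{(i)}_k}-u_{i+1}$ from Proposition \ref{nokey}, one sees that the term $x^{s_i}Q_i^{\circ n_i}$ has degree $n_i\cdot n_1\cdots n_{i-1}=n_1\cdots n_i$, strictly greater than the bound $n_1\cdots n_{i-1}-1$ obtained above, so the leading degree is indeed attained by $x^{s_i}Q_i^{\circ n_i}$. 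This closes the induction used in Lemma \ref{order}.
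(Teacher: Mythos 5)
Your proof is correct and is exactly the routine degree count the paper intends (the lemma is stated with no proof, being treated as immediate): bound each exponent by $t^{(i)}_k\leq n_k-1$, use the inductive degree formula, and telescope $(n_k-1)n_1\cdots n_{k-1}=n_1\cdots n_k-n_1\cdots n_{k-1}$. The closing remark about how this feeds into the induction of Lemma \ref{order} matches the paper's use of the lemma as well.
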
   To prove the second statement of lemma \ref{order}, we observe that by proposition \ref{ineq}, the constant term of the polynomial $Q^\circ_{i+1}(y)$ must come from $x^{s_i}Q_i^{\circ n_i}$. Iterating the construction by elimination shows that it must ultimately come from the term $x^{r_1}$ in $Q^\circ_2(y)$ and gives the result.
  \qed  \par

Let us simplify $S_{i+1}(1)$ into $S_{i+1}$. The polynomials $x^{-S_{j+1}}Q^\circ_{j+1}(y)$ are unitary polynomials in $R_{\nu_0}[y]$. Using the presentation of $R_{\nu_0}$ as an inductive limit of regular local rings given in theorem \ref{LUTOR}, we may assume that they are polynomials in $R_0^{(h)}[y]$ for sufficiently large $h$, with $R_0^{(h)}$ a monomial $\nu_0$-modification of $R_0$ as in theorem \ref{LUTOR}. By construction, in view of equality $(**)$ above, they are the result of elimination of the $u'_j$ for $2\leq k\leq j$ between the equations ${u'}_i^{n_i}-\lambda_i x^{r_i-L_i(1)}\prod_{k\in E(i)}{u'}_k^{t^{(i)}_k}-u'_{i+1}$, where $L_i$ is the function defined after the statement of  lemma  \ref{order} and $u'_{i+1}=x^{-S_{i+1}}u_{i+1}$. A direct computation shows that these are the transforms of our overweight equations which by elimination give the unitary polynomials associated to the $Q^\circ_i(y)$.\par\noindent
We note that the semigroup associated with this new overweight deformation is equal to $\langle \Gamma_0^{(h)}, \gamma_1,\gamma_2-S_2,\ldots ,\gamma_i-S_i,\ldots \gamma_t-S_t\rangle$, where $\Gamma_0^{(h)}$ is the semigroup of $\nu_0$ on $R_0^{(h)}$   and if we want a minimal system of generators, all the indices $i$ with $n_i=1$ disappear.\par\noindent Equality $(**)$ above implies that we can rewrite the image in $R_{\nu_0}[y]$, or in $R_0^{(h)}[y]$ for sufficiently large $h$, of equation $H_i^\circ$ as
$$x^{-S_{i+1}}Q_{i+1}^\circ=(x^{-S_i}{Q_i^\circ})^{ n_i}-\lambda_ix^{r_i-L_i(1)}\prod_{k\in E(i)} (x^{-S_k}Q^\circ_k)^{ t^{(i)}_k}.\eqno{(****)}$$ Using corollary \ref{strineq} we see that \emph{$n_i$ is the smallest integer $k>0$ such that $k(\gamma_i-S_i)\in \langle\Gamma_0^{(h)}, \gamma_1,\gamma_2-S_2,\ldots ,\gamma_{i-1}-S_{i-1}\rangle$.}
\par As a result, we can for each index $j$ define a valuation $\nu^{\circ (j)}$ on \break $\hat R_0^{(h)}[y]/(x^{-S_{j+1}}Q^\circ_{j+1}(y))$ by the overweight deformation described by the first $j$ equations and $Q^\circ_{j+1}=0$. These are approximate pseudo-valuations in the sense of Vaqui\'e (see \cite{V1}).\par\noindent
With the usual convention that $n_0=1$, the degree of $Q^\circ_{j+1}(y))$ is $n_1\ldots n_j$, which is the index of $\Phi_0$ in the group of values of $\nu^{\circ (j)}$, so that each $Q^\circ_{j+1}(y))$ is $\nu_0$-analytically irreducible: by the ramification formula there is only one extension of $\nu_0$ to $K_0[y]/(x^{-S_{j+1}}Q^\circ_{j+1}(y))$.\par
We must add to the simple equations $H_i^\circ$ the "perturbations" $g_i$ in order to obtain the real equations $H_i$, but when all the $s_i$ are zero this does not change the semigroup. This has a geometric interpretation (the geometry will be more apparent in corollary \ref{quasio}):

 \begin{proposition}\label{semg} Let $p(y)$ be a unitary irreducible polynomial in $R_0[y]$ and let $\nu$ be an extension of $\nu_0$ to the field $K=K_0[y]/(p(y))$. Let $p_1(y)\in \tilde R_{\nu_0}[y]$ be the irreducible unitary factor of $p(y)$ corresponding to the extension $\nu$. Then we have:\begin{itemize}
 \item  The valuation $\nu$ has a unique extension $\tilde\nu$ to $\tilde R_{\nu_0}[y]/(p_1(y))$.
 \item If $p(y)$ is $\nu_0$-analytically irreducible, all the $s_i$ are $0$, so all the $n_i$ are $>1$. Conversely, if all the $s_i$ are zero, the polynomial $p(y)$ is $\nu_0$-analytically irreducible and its degree is $[\Phi:\Phi_0]$.
  \item Writing, according to definition \ref{ord}, the semigroup  of $\nu$ on $R=R_0[y]/(p(y))$ as $\Gamma=\langle\Gamma_0,\gamma_1,\ldots ,\gamma_t\rangle$, let $i_1,\ldots, i_l$ be the integers among $\{1,\ldots ,t\}$ such that $n_{i_k}>1$, so that $n_{i_1}\ldots n_{i_l}=[\Phi:\Phi_0]$. The semigroup $\tilde\Gamma$ of $\tilde \nu$ on $\tilde R_{\nu_0}[y]/(p_1(y))$ is equal to the semigroup $\langle \Phi_{0,\geq 0},\gamma_{i_1}-S_{i_1},\gamma_{i_2}-S_{i_2},\ldots ,\gamma_{i_l}-S_{i_l}\rangle$.

 \end{itemize}
 \end{proposition}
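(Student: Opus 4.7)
The plan is to establish the three bullets in sequence, making essential use of the rewritten equation $(****)$ for the unitary polynomials $x^{-S_i}Q^\circ_i$, the identity $S_{i+1}=s_i+n_iS_i$, and Corollary~\ref{strineq}. The first bullet (uniqueness) follows from the henselian property of $\tilde R_{\nu_0}$: since $p_1(y)$ is irreducible in $\tilde K_0[y]$, the ring $\tilde R_{\nu_0}[y]/(p_1(y))$ is a finite integral extension of the henselian local ring $\tilde R_{\nu_0}$, so $\tilde\nu_0$ extends uniquely to it; any extension $\tilde\nu$ of $\nu$ automatically dominates $\tilde R_{\nu_0}$, restricts to $\tilde\nu_0$, and must coincide with this canonical extension.

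For the second bullet, I would work inside $\tilde R_{\nu_0}[y]$. For the forward direction, assume for contradiction that $j$ is the least index with $s_j\neq 0$. Then $S_k=0$ for $k\leq j$, so $x^{-S_k}Q^\circ_k=Q^\circ_k$ for $k\leq j$, and by Corollary~\ref{strineq}, $r_j-L_j(1)=r_j-s_j>0$. Equation $(****)$ becomes $x^{-s_j}Q^\circ_{j+1}=(Q^\circ_j)^{n_j}-\lambda_j x^{r_j-L_j(1)}\prod Q^{\circ\, t^{(j)}_k}_k$, so that $(Q^\circ_j)^{n_j}$ is a unitary polynomial that differs from $x^{-s_j}Q^\circ_{j+1}$ by an element of strictly positive $\tilde\nu_0$-value. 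Using tameness (securing $n_j$-th roots of units in $\tilde R_{\nu_0}$ by Hensel's lemma), one extracts from this approximate equality a genuine coprime factorisation and thus a non-trivial factorisation of $p(y)$ in $\tilde K_0[y]$, contradicting $\nu_0$-analytic irreducibility. For the converse, if all $s_i=0$ then $n_i=1$ together with the $H_i$-equation would give $\gamma_i=r_i+\sum_k t^{(i)}_k\gamma_k\in\langle\Gamma_0,\gamma_1,\ldots,\gamma_{i-1}\rangle$, contradicting minimality of the generating family; hence all $n_i>1$ and $\prod_i n_i=[\Phi:\Phi_0]$. By Lemma~\ref{degdeg} and the last-equation statement of Proposition~\ref{nokey}~(2), $\deg p(y)\leq n_1\cdots n_t=[\Phi:\Phi_0]$, while Ostrowski's formula~$(O)$ applied to the single extension $\nu$ (with $f_\nu=1$) yields $\deg p(y)\geq e_\nu f_\nu d_\nu=[\Phi:\Phi_0]\cdot d_\nu$; equality forces $d_\nu=1$ and a single extension, so $p(y)=p_1(y)$ is $\nu_0$-analytically irreducible.

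For the third bullet, the $H_i$-equation together with $(**)$ and $S_{i+1}=s_i+n_iS_i$ gives the identity
\[
 n_i(\gamma_i-S_i) \;=\; (r_i-L_i(1)) \;+\; \sum_{k\in E(i)} t^{(i)}_k\,(\gamma_k-S_k),
\]
whose right-hand side is a non-negative combination by induction and Corollary~\ref{strineq}. When $n_i=1$ this expresses $\gamma_i-S_i$ already as an element of $\Phi_{0,\geq 0}+\langle\gamma_{i_1}-S_{i_1},\ldots\rangle$, so only the $\gamma_{i_k}-S_{i_k}$ contribute new generators. The $g_i$-perturbed polynomials $x^{-S_{i_k}}Q_{i_k}$ lie in $R_0^{(h)}[y]/(p(y))\subset \tilde R_{\nu_0}[y]/(p_1(y))$ for $h$ large and carry $\tilde\nu$-value $\gamma_{i_k}-S_{i_k}$ (overweight deformations preserving initial forms), yielding $\langle\Phi_{0,\geq 0},\gamma_{i_k}-S_{i_k}\rangle\subseteq\tilde\Gamma$. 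For the reverse inclusion, ${\rm gr}_{\tilde\nu}(\tilde R_{\nu_0}[y]/(p_1(y)))$ is generated as a $k$-algebra by $\{t^\phi:\phi\in\Phi_{0,\geq 0}\}$ (coming from $\tilde R_{\nu_0}$) and the initial forms of the $x^{-S_{i_k}}Q_{i_k}$, so $\tilde\Gamma$ is no larger than the claimed semigroup. The main obstacle in this plan is the Hensel-type factorisation in the forward direction of the second bullet: converting the approximate equality $(Q^\circ_j)^{n_j}\equiv x^{-s_j}Q^\circ_{j+1}$ (modulo positive value) into a coprime factorisation over $\tilde R_{\nu_0}$ requires extracting $n_j$-th roots of units (as in Rond's example, the square root of $1+x_2^3/x_1^2$), which succeeds only under a tameness hypothesis ensuring that $n_j$ is coprime to the residue characteristic.
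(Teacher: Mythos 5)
Your plan for the first bullet matches the paper's (classical uniqueness over a henselian base), and your converse direction in the second bullet, via the degree count $\deg p(y)=n_1\cdots n_t$ combined with the fundamental/Ostrowski inequality over $\tilde K_0$, is essentially the paper's argument in different clothing. But the forward direction of the second bullet, as you yourself flag, has a genuine gap that cannot be patched along the route you propose. You want to turn the approximate identity $(Q^\circ_j)^{n_j}\equiv x^{-s_j}Q^\circ_{j+1}$ into a coprime factorisation of $p(y)$ over $\tilde R_{\nu_0}$ by extracting $n_j$-th roots of units via Hensel's lemma, and this requires $n_j$ prime to the residue characteristic. No such tameness hypothesis is present in Proposition \ref{semg}, and none can be added: the proposition must cover wild cases, e.g.\ the Artin--Schreier polynomial $y^p-x^{p-1}(1+y)$ of Section \ref{AS}, which is $\nu_0$-analytically irreducible with $[\Phi:\Phi_0]=p$ and $s_1=0$. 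The paper avoids Hensel's lemma entirely: since $p(y)=Q_{t+1}(y)$ is irreducible over the henselian field $\tilde K_0$, all its roots in a splitting field have the \emph{same} value, whence $n\,\nu(y)=\nu_0(p(0))$; Lemma \ref{order}(2) computes $\nu_0(Q_{t+1}(0))=s_t+n_ts_{t-1}+\cdots+n_t\cdots n_2r_1$ explicitly, and comparison with $n_1\nu(y)=r_1-s_1$ forces $s_1=0$ because $\nu_0(Q_{t+1}(0))$ has no negative components; then Lemma \ref{order}(1) together with the overweight condition shows the term $y^n$ must come from the iterated initial binomial, which forces $n=n_1\cdots n_t$ and all $s_i=0$. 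This is a purely valuation-theoretic counting argument, valid in any characteristic, and it is the idea your proposal is missing. (A secondary issue: you also work with the truncated polynomials $Q_j^\circ$, whereas $p(y)$ is the fully deformed $Q_{t+1}$; the paper's argument applies Lemma \ref{order} directly to the deformed objects.)

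For the third bullet, the inclusion $\langle\Phi_{0,\geq0},\gamma_{i_k}-S_{i_k}\rangle\subseteq\tilde\Gamma$ is fine, but the reverse inclusion is exactly the content to be proved and you only assert it. The paper establishes it by showing that the localized graded rings $M^{-1}\mathrm{gr}_\nu(R_0[y]/(p(y)))$ and $(M^{(h)})^{-1}\mathrm{gr}_{\tilde\nu}(R_0^{(h)}[y]/(p_1(y)))$ are isomorphic (any cancellation of initial forms over $\hat R_0^{(h)}$ descends, after multiplication by a monomial, to one over $R_0$), and then identifying the only possible homogeneous generators as $X^{-S_i}\overline\xi_i$ using the shape $\eta_i^{n_i}=\cdots$ of the relations. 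You would need an argument of this kind to close the reverse inclusion.
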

  \begin{proof} The first statement is classical and follows from what was recalled above on the classification of extensions. The second statement follows from the first and lemma \ref{order} as follows: the polynomial $p(y)$ is equal to the last polynomial $Q_{t+1}(y)$ obtained by elimination. Since it is irreducible in $\tilde R_{\nu_0}[y]$ all its roots in a splitting extension $L$ of $\tilde K_0$ have the same valuation because the valuation of $\tilde K_0$ has a unique extension to $L$. From lemma \ref{order}, and the fact that in view of the overweight condition adding the pertubations $g_i$ to the equations $H^\circ_i$ of $(E^\circ)$ above does not affect the valuation of the constant term of the polynomials $Q^\circ_i(0)$, we see that the valuation of $Q_{t+1}(0)$ is $s_t+n_ts_{t-1}+n_tn_{t-1}s_{t-2}+\cdots +n_t\ldots n_3s_2+n_t\ldots n_3n_2r_1$. Therefore we must have the equality $n\nu(y)=\nu_0(Q_{t+1}(0))$ with $n={\rm deg}p(y)$. On the other hand, in view of the equation $x^{s_1}u_1^{n_1}-\lambda_1x^{r_1}-g_1(x,u_1)=u_2$, we must have $n_1\nu(y)=r_1-s_1$. The equality of the two expressions for $\nu(y)$ implies $s_1=0$ because there is no negative component in $\nu_0(Q_{t+1}(0))$. If the highest degree term $y^n$ does not come from the initial binomial, the first equality of lemma \ref{order} and the overweight condition yield $s_t+n_ts_{t-1}+n_tn_{t-1}s_{t-2}+\cdots +n_t\ldots n_3s_2+n_t\ldots n_2n_1\frac{r_1}{n_1}<n\frac{r_1}{n_1}$, which contradicts the previous equality. Thus, the term $y^n$ must come from the initial binomial, and in view of the structure of the equations, the only possibility is that $n=n_1\ldots n_t$ and all the $s_i$ are zero, and then all the $n_i$ are $>1$ because of the minimality of the system of generators.\Par
 Conversely, if all the $s_i$ are zero, one can choose each $g_i$ to be a polynomial of degree $<n_i$ in $u_i$. The overweight condition implies that the series in $u_i$ obtained from $H_i+u_{i+1}=u_i^{n_i}-\lambda_i x^{r_i}\prod_{k\in E(i)}u_k^{t^{(i)}_k}-g_i (x,u_1,\ldots ,u_i)$ by setting all the variables equal to zero except $u_i$, is of order $n_i$. The Weierstrass preparation theorem makes $H_i+u_{i+1}$ a unitary polynomial of degree $n_i$ in $u_i$, up to multiplication by a unit. The unit can then be absorbed in $u_{i+1}$ by a change of variable. Iteratively replacing each occurrence of $u_k^{n_k}$ by its expression modulo $H_k$ for $1\leq k<i$ and making appropriate changes of representatives, we can then assume that $g_i(x,u_1,\ldots ,u_i)$ is of degree $<n_k$ in each $u_k$.\par
  A computation which we have already used in the proof of lemma \ref{degdeg} then shows that the polynomial $p(u_1)$ which is obtained by elimination of $u_2,\ldots ,u_t$ between the $H_i$ is of degree $n_1\ldots n_t=[\Phi:\Phi_0]$. Since henselization is an immediate extension (see \cite{Ri3}, Chapitre F, Corollaire 1 du Th\'eor\`eme 3, or \cite{Ku3}, Theorem 1, for a more precise result), the extension $\tilde K_0[\rho (y)]/\tilde K_0$ is of degree $\geq [\Phi:\Phi_0]$. Since $\rho (y)$ is a root of $p(y)$ this implies that $p(y)$ is the minimal polynomial of $\rho (y)$ in this extension, and therefore irreducible in $\tilde K_0[y]$. \par
 The ring $\tilde R_{\nu_0}[y]/(p_1(y))$ is a quotient of a henselization of $R_{\nu_0}[y]/(p(y))$, and we use the presentation of $R_{\nu_0}$ as an inductive limit of regular local rings given  in theorem \ref{LUTOR}. Since henselization commutes with filtering inductive limits (see \cite{EGA4}, 18.6.14. ii)), the ring $\tilde R_{\nu_0}$ is the inductive limit of the henselizations $\tilde R_0^{(h)}$ of the regular local rings $R_0^{(h)}$ of theorem \ref{LUTOR}. We may assume that the polynomial $p_1(y)\in \tilde R_{\nu_0}[y]$ has coefficients in some $\tilde R_0^{(h)}$, so that it is irreducible in $\tilde R_0^{(h)}[y]$. Since $\tilde R_0^{(h)}$ is henselian, the polynomial $p_1(y)$ remains irreducible in $\hat{\tilde R}^{(h)}_0[y]$ where $\hat {\tilde R}^{(h)}_0$ is the $\tilde m_0$-adic completion of $\tilde R_0^{(h)}$, which is in fact the completion $\hat R^{(h)}_0$, of the noetherian local ring $R^{(h)}_0$ (see \cite{EGA4}, th\'eor\`eme 18.6.6), and then we can apply the valuative Cohen theorem.\par
     In general, since a finite algebraic extension of a henselian valued field is henselian, the valued field $(\tilde K_0[y]/(p_1(y)),\tilde\nu)$ is a henselization of $(K_0[y]/(p(y)),\nu)$. Assuming that we have taken $h$ large enough so that $p_1(y)\in \hat R^{(h)}_0[y]$, the map $\hat R^{(h)}_0\otimes_{R_0}k[[x,u_1,\ldots ,u_l]]\to \hat R^{(h)}_0[y]/(p_1(y))$ deduced from the valuative Cohen theorem for $R_1$ is surjective, so that the elements of the second ring are expressed as series in the $Q_i(y)$ with coefficients in $\hat R^{(h)}_0$. Denoting by $x^{(h)}$ the coordinates of $\hat R^{(h)}_0$, we see that any cancellation of initial forms between terms in the $Q_i(y)$ with coefficients in $\hat R^{(h)}_0$ must come, after multiplication by a monomial in $x^{(h)}$, from a cancellation of initial forms in $R_0[y]$ and therefore give rise, when the cancellation has taken place, to a term in the $Q_i(y)$ with coefficients in $\hat R^{(h)}_0$. In other words, denoting by $M$ and $M^{(h)}$ the multiplicative parts in ${\rm gr}_\nu R_0[y]/(p(y))$ and ${\rm gr}_{\tilde \nu} R^{(h)}_0[y]/(p_1(y))$ made of monomials in $X$ and $X^{(h)}$, we have that the natural map $M^{-1}{\rm gr}_\nu (R_0[y]/(p(y)))\to (M^{(h)})^{-1}{\rm gr}_{\tilde \nu} (R^{(h)}_0[y]/(p_1(y)))$ is an isomorphism. So we can take as generators of ${\rm gr}_{\tilde \nu} (R^{(h)}_0[y]/(p_1(y)))$ the $X^{(h)}$ and elements of the form $\overline\eta_i=\frac{\overline\xi_i}{(X^{(h)})^{d_i}}$. Now we remember that the equations between these generators must come from the original equations between the $\overline\xi_i$ and be of the form $\overline\eta_i^{n_i}=\ldots$. The only possibility is that $\overline\eta_i=X^{-S_i}\overline\xi_i$. The $\eta_i$ with $n_i=1$, that is $i\notin\{i_1,\ldots ,i_l\}$, disappear if we want a minimal system of generators. If we denote by $\tilde Q_i(y)$ representatives of the $\overline\eta_i$ in $\hat R^{(h)}_0[y]/(p_1(y))$, they must be obtained by elimination from equations $$\tilde u_i^{n_i}-\lambda_i(x^{(h)})^{r_i}\prod_{k\in E(i)} \tilde u_k^{t^{(i)}_k}-\tilde g(x^{(h)},\tilde u_1,\ldots ,\tilde u_i)-\tilde u_{i+1}=0,$$ where if $i_1>1$ the image $\tilde Q_{i_1}$ of $\tilde u_{i_1}$ is of the form $y-\upsilon (x^{(h)})$ with $\upsilon (x^{(h)})\in \hat R_0^{(h)}$. \end{proof} It would be interesting to have a more precise relationship between $Q_i(y)$ and $\tilde Q_i(y)$ in $\hat R^{(h)}_0[y]/(p_1(y))$.

  \begin{Remark} \small{\begin{enumerate}\item As the proof shows, the third statement of the proposition holds if $\tilde R_{\nu_0}$ is replaced by $\tilde R_0^{(h)}$ with a sufficiently large $h$, and then the semigroup is finitely generated since it is $\langle \Gamma_0^{(h)},\gamma_{i_1}-S_{i_1},\gamma_{i_2}-S_{i_2},\ldots ,\gamma_{i_l}-S_{i_l}\rangle$. In G. Rond's example, since the degree of $p_1(y)$ is one, the semigroup is $\Gamma_0^{(h)}$. Indeed, in this case $\nu (y)=w_1\in \Gamma_0$, $\gamma_1=3w_2-w_1\in \Gamma_0^{(h)}$, where according to (\cite{Te1}, 4.2) we have $\Gamma_0^{(h)}=\langle -p_hw_1+q_hw_2,p_{h+1}w_1-q_{h+1}w_2\rangle$, with $\frac{p_h}{q_h},\frac{p_{h+1}}{q_{h+1}}$ two successive approximants, say with $\frac{p_h}{q_h}<\frac{w_2}{w_1}$, from the continued fraction expansion of $\frac{w_2}{w_1}$, the vectors $(q_h,p_h),(q_{h+1},p_{h+1})$ being close enough in direction to the vector $(w_1,w_2)$ for the point $(-2,3)$ corresponding to the monomial $\frac{x_2^3}{x_1^2}$ and the weight $3w_2-2w_1$ to be in the cone generated by the vectors $(-p_h,q_h)$ and $( p_{h+1},-q_{h+1})$.

 \item For the proof of the second statement, we could have used a Newton polygon argument: if the valuation $\nu_0$ is of rank one, since $\tilde K_0$ is henselian the Newton polygon of the $\nu_0$-analytically irreducible polynomial $p(y)$ must have only one compact face (see \cite{Ri2}), which necessarily is a homothetic of the compact face of the Newton polygon  of $Q_2(y)$ in view of the weight conditions on the equations $H_i$. But since $p(y)$ is unitary this imposes that the monomial $y^n$ is on that compact face, which implies that all $s_k$ are zero and the $Q_k(y)$ are unitary polynomials. The semigroup is generated by the $\nu(x_i)$ and the valuations of the $Q_k$, and the degree of $p(y)$ is equal to $\prod_kn_k=[\Phi:\Phi_0]$ so that in this case there is no defect for the extension $K$ of $K_0$. If the rank of $\nu_0$ is $>1$ we use the remarkable work of Vaqui\'e on key polynomials, and in particular \cite{V3}, \S 3, which tells us that again in that case there is a Newton polygon in a suitable sense which has only one compact face and allows us to reach the same conclusion.
  \end{enumerate}}
\end{Remark}
Let us denote by $Q^u_i(y)\in K_0[y]$ the polynomials $Q_i(y)$ made unitary.
  \begin{proposition}\label{Key}  The polynomials $(Q^u_{i_k}(y))_{1\leq k\leq l}$ are key polynomials for the extension $\nu$ to $K_0[y]/(p(y))$ of the valuation $\nu_0$ of $K_0$.
\end{proposition}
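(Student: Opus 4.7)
The plan is to verify that each $Q^u_{i_k}(y)$ satisfies the three axioms for a key polynomial in the sense of Vaqui\'e (as in \cite{V1}, \cite{V3}): it is unitary, it is $\nu$-minimal (any polynomial of strictly smaller degree has its $\nu$-value in the group generated by $\Phi_0$ together with $\nu(Q^u_{i_1}),\ldots,\nu(Q^u_{i_{k-1}})$), and its $\nu$-initial form is irreducible in the graded algebra associated with the restriction of $\nu$ to polynomials of degree less than $\deg Q^u_{i_k}$.

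First I would handle the unitarity and the degree count. Lemma \ref{order}(1) says that the coefficient of lowest valuation of $Q^\circ_{j+1}$ is the leading coefficient $x^{S_{j+1}}y^{n_1\ldots n_j}$, and it is the unique coefficient of valuation $\nu_0(x^{S_{j+1}})$; hence dividing by $x^{S_{j+1}}$ yields the unitary polynomial $Q^u_{j+1}\in R_{\nu_0}[y]\subset K_0[y]$. Since $n_j=1$ for $j\notin\{i_1,\ldots,i_l\}$, the degree of $Q^u_{i_k}$ equals $n_{i_1}\cdots n_{i_{k-1}}$; in particular $\deg Q^u_{i_l}\cdot n_{i_l}=n_{i_1}\cdots n_{i_l}=[\Phi:\Phi_0]$, matching the degree of $p(y)$ in the defectless case.

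Next I would establish the $\nu$-minimality. Recall from the discussion just before Proposition \ref{irreducible} that the $x_j$ and the $Q_i(y)$ form a generating sequence for the valuation $\mu$ on $k[[x,y]]$; in particular every polynomial in $K_0[y]$ of degree strictly less than $\deg Q^u_{i_k}=n_{i_1}\cdots n_{i_{k-1}}$ admits, after Euclidean division by $p(y)$ if necessary, an expression as a $K_0$-combination of monomials in $Q_1,\ldots,Q_{i_k-1}$ in which the $\nu$-value of each term is at least the $\nu$-value of the whole. Using equation $(****)$ iteratively, each $Q_j$ with $j\notin\{i_1,\ldots,i_l\}$ (i.e.\ with $n_j=1$) is expressed in $\tilde R_{\nu_0}[y]$, and after descent along the tower of theorem \ref{LUTOR} in $R_{\nu_0}[y]$ or some $R_0^{(h)}[y]$, as a Laurent monomial in $x^{(h)}$ times a polynomial expression in the $Q^u_{i_s}$ with $s<k$. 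Hence the $\nu$-value of any polynomial of degree $<\deg Q^u_{i_k}$ lies in $\langle\Phi_0,\gamma_{i_1}-S_{i_1},\ldots,\gamma_{i_{k-1}}-S_{i_{k-1}}\rangle$. Since by the remark after equation $(****)$ the integer $n_{i_k}$ is precisely the smallest positive $n$ with $n(\gamma_{i_k}-S_{i_k})$ in this subgroup, and $\nu(Q^u_{i_k})=\gamma_{i_k}-S_{i_k}$ is not itself in it, $Q^u_{i_k}$ has minimal degree among polynomials whose $\nu$-value escapes the previous subgroup; this is the $\nu$-minimality property.

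Finally I would verify the irreducibility of the initial form. The associated graded algebra with respect to the truncated valuation on polynomials of degree $<\deg Q^u_{i_k}$ has its nonzero homogeneous pieces in $\langle\Phi_0,\gamma_{i_1}-S_{i_1},\ldots,\gamma_{i_{k-1}}-S_{i_{k-1}}\rangle$, and the class of $Q^u_{i_k}$ sits in a strictly larger component. The relation given by equation $(****)$ at index $i=i_k$, rewritten as $(Q^u_{i_k})^{n_{i_k}}=\lambda_{i_k}x^{r_{i_k}-L_{i_k}(1)}\prod(Q^u_{i_s})^{t^{(i_k)}_s}+\text{higher value}$, shows that $n_{i_k}\nu(Q^u_{i_k})$ is the least value in the previous subgroup attained by a power $(Q^u_{i_k})^n$, so this initial form is a prime element of the graded algebra exactly as in part $(1)$ of Proposition \ref{nokey}. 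The main obstacle will be the last step: rigorously matching our construction to Vaqui\'e's axioms in the higher rank case, where one must work with truncated (pseudo-)valuations rather than usual graded rings; here I would invoke the same descent to a sufficiently large $R_0^{(h)}$ together with the approximate pseudo-valuations $\nu^{\circ (j)}$ introduced before Proposition \ref{semg}, which were designed precisely to realize the successive augmentations attached to the $Q^u_{i_k}$.
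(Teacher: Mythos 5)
Your handling of unitarity and of $\nu$-irreducibility is essentially the paper's (the paper first reduces, via proposition \ref{semg}, to the case where $p(y)$ is $\nu_0$-analytically irreducible so that the $Q_i$ are already unitary, and then notes that the initial forms are irreducible because they form part of a minimal generating set of the graded algebra). The gap is in the minimality step. You take as the \emph{definition} of $\nu$-minimality the condition that every polynomial of degree $<\deg Q^u_{i_k}$ has its $\nu$-value in $\langle\Phi_0,\gamma_{i_1}-S_{i_1},\ldots,\gamma_{i_{k-1}}-S_{i_{k-1}}\rangle$, and you conclude that $Q^u_{i_k}$ is of minimal degree among polynomials whose value escapes that subgroup. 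But the MacLane--Vaqui\'e condition to be verified is a divisibility statement: if the $\nu$-initial form of $q(y)$ is divisible in the graded algebra by the $\nu$-initial form of $Q^u_{i_k}$, then $\deg q\geq\deg Q^u_{i_k}$. Being of minimal degree among polynomials realizing a new value does not formally imply this: if ${\rm in}_\nu q={\rm in}_\nu Q^u_{i_k}\cdot G$ with $G$ homogeneous in ${\rm gr}_\nu K_0[y]/(p(y))$, the degree of $G$ need not lie in the previous subgroup, so no contradiction with $\nu(q)$ lying there arises. The implication is true in this situation, but it is precisely what must be proved, and your proposal asserts it rather than proving it.

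The paper closes exactly this gap by writing $q(y)=\sum_t a_t(x)\,y^{t_1}(Q^u_2)^{t_2}\cdots(Q^u_k)^{t_k}$ with $t_j\leq n_j-1$ and each term of value $\geq\nu(q)$, and invoking corollary 5.4 of \cite{PP}: the degrees in $y$ of the distinct terms are pairwise distinct, so a term whose initial form is divisible by ${\rm in}_\nu Q^u_{i_k}$ must actually contain $Q^u_{i_k}$, hence has degree $\geq\deg Q^u_{i_k}$, and cannot be cancelled by the other terms. Note that this same distinct-degrees fact is also what underlies your assertion that a polynomial of degree $<\deg Q^u_{i_k}$ involves only $Q_1,\ldots,Q_{i_k-1}$ in its normal-form expansion; the \emph{in particular} by which you derive that from the generating-sequence property conceals exactly this degree count. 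To repair the argument, state the correct minimality condition and run the bounded-exponent expansion together with the distinct-degrees argument; the rest of your proposal (descent to $R_0^{(h)}$ via proposition \ref{semg}, primality of the initial form as in proposition \ref{nokey}) then matches the paper.
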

 \begin{proof} Recalling the definition of MacLane extended by Vaqui\'e (see \cite{Mac}, \cite{V1}), the three conditions are that they should be $\nu$-minimal, $\nu$-irreducible, and unitary.  In view of proposition \ref{semg} it suffices to prove the result for the polynomials $\tilde Q_j(y)$ associated to $p_1(y)\in \hat R_0^{(h)}[y]$ so that we may assume that $p(y)$ is $\nu_0$-analytically irreducible. Then, the $Q_i(y)$ are all unitary, and according to proposition \ref{irreducible} they are all irreducible. They are also $\nu$-irreducible since their initial forms minimally generate ${\rm gr}_\nu k[[x]][y]/(p(y))$ and are therefore irreducible elements. The only thing left to prove is that if the $\nu$-initial form of a polynomial $q(y)$ is divisible by the $\nu$-initial form of a polynomial $Q_i(y)$ then ${\rm deg}q(y)\geq
{\rm deg}Q_i(y)$ (resp. ${\rm deg}q(y)\geq
{\rm deg}p(y)$).\Par After what we have seen we can, in $K_0[y]/(p(y))$, write $$q(y)=\sum_{t=1}^n a_t(x)y^{t_1}{(Q^u_2)}^{t_2}\ldots {(Q^u_k)}^{t_k}$$ with $t_j\leq n_j-1$ for $1\leq j\leq k$ and the valuation of each term of the sum is $\geq\nu(q(y))$. Now we can follow the proof of corollary 5.4 given in \cite{PP}, since we have reduced our problem to a very similar situation. This corollary states in particular that the degrees in $y$ of the terms of the sum are all distinct. If one of the terms, corresponding to the initial form, contains the polynomial $Q_i(y)$ it must be of degree $\geq {\rm deg}Q_i(y)$, and this cannot be cancelled by any of the other terms, which are of different degrees.  Finally, by the valuative Cohen theorem the $Q_i(y)$ form a generating sequence for $\nu$ so that their values determine it and in fact the $Q^u_i(y)$ generate the ${\rm gr}_{\nu_0}K_0$-algebra ${\rm gr}_{\nu}K_0[y]/(p(y))$ (Compare with \cite{Ma}). \end{proof}
  \begin{proposition}\label{crit} Let us keep the notations introduced in subsection \ref{Kpol}. Given a unitary irreducible polynomial $p(y)\in R_0[y]$ and a rational Abhyankar valuation $\nu$ on $R_0[y]/(p(y))$ as in proposition \ref{nokey}, with its extension $\mu$ to $k[[x,y]]$, let $H_1,\ldots, H_t, p(u_1)-v$ be the generators of the kernel $F$ of the valuative Cohen map $k[[x,u_1,\ldots, u_t,v]]\to k[[x,y]]$ describing the valuation $\mu$. Up to a change of generators of $F$ and of representatives of the $\overline\xi_i$, there are two possibilities for the last equation $H_t$ in the sequence we have built in the proof of proposition \ref{finhyp}: \par\medskip\noindent - Either the polynomial $p(y)$ is $\nu_0$-analytically irreducible, its degree is $[\Phi:\Phi_0]$, we have $l=t$ and the last equation in the sequence is of the form $$u_t^{n_t}-\lambda_tx^{r_t}\prod_{k\in E(t)} u_k^{t^{(t)}_k}-g_t =v.$$
  In this case, and only in this case, we have $s_i=0$ for all $i$, so that all the polynomials $Q_i(y)$ are unitary, with ${\rm deg}Q_i= n_1\ldots n_{i-1}$ (setting $n_0=1$), we can assume that for each $i$ the series $g_i$ has only terms of degree $<n_k$ in each $u_k$, and $\nu$ is the unique extension of $\nu_0$ to $K_0(y)$. \par\medskip\noindent - Or the degree of $p(y)$ is $>[\Phi:\Phi_0]$ and $p(y)$ is divisible in $\tilde R_{\nu_0}[y]$ by a unitary polynomial $p_1(y)$ of degree $[\Phi:\Phi_0]$. The last equation is of the form
    $$x^{s_t}u^{n_t}_t-\lambda_tx^{r_t}\prod_{k\in E(t)}   u_k^{t^{(t)}_k}-g_t=v.$$
    where the highest degree term $y^n$ of $p(y)$ comes by elimination from a term of one of the $g_j(x,y, Q_2(y),\ldots ,Q_j(y))$.\par
In all cases, the separable extension of valued fields $(K_0,\nu_0)\subset (K_0(y),\nu)$ is defectless.
\end{proposition}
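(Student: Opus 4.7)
The plan is to extract the structural dichotomy directly from proposition \ref{semg}, then use Weierstrass preparation to refine the shape of the last equation in Case 1, and finally derive defectlessness from Ostrowski's ramification formula $(O)$.

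First, proposition \ref{semg} already gives the dichotomy: $p(y)$ is $\nu_0$-analytically irreducible if and only if all $s_i=0$, in which case $\deg p = n_1\cdots n_t = [\Phi:\Phi_0]$ and $\nu$ is the unique extension of $\nu_0$ to $K_0(y)$; otherwise $\deg p > [\Phi:\Phi_0]$ and $p(y)$ is divisible in $\tilde R_{\nu_0}[y]$ by a unitary factor $p_1(y)$ of degree $[\Phi:\Phi_0]$ singled out by $\nu$. Thus the case-by-case content of the statement, apart from the precise form of the last equation, is already available.

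Second, for the normal form of $H_t$ in Case 1, I would revisit the Weierstrass-preparation argument used in the converse part of the proof of proposition \ref{semg}. With all $s_i=0$, each relation $H_i+u_{i+1}=u_i^{n_i}-\lambda_i x^{r_i}\prod u_k^{t^{(i)}_k}-g_i$, viewed as a series in $u_i$ alone after setting the other variables to zero, is of order exactly $n_i$; Weierstrass preparation then turns it into a unitary polynomial of degree $n_i$ in $u_i$ up to a unit, and the unit is absorbed into a redefinition of $u_{i+1}$. Iteratively reducing each $g_i$ modulo the lower-index relations $u_k^{n_k}\equiv\lambda_k x^{r_k}\prod u_j^{t^{(k)}_j}+g_k$ brings $g_i$ to degree $<n_k$ in each $u_k$. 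The successive eliminations then produce unitary polynomials $Q_i(y)$ of degree exactly $n_1\cdots n_{i-1}$, so that $Q_{t+1}(y)$ is unitary of degree $n_1\cdots n_t = [\Phi:\Phi_0]=\deg p$. Since $Q_{t+1}$ is irreducible by proposition \ref{irreducible} and maps to $0$ in $R_0[y]/(p(y))$, it must coincide with $p(y)$; identifying $u_{t+1}$ with $v$ then yields the asserted form $u_t^{n_t}-\lambda_t x^{r_t}\prod u_k^{t^{(t)}_k}-g_t=v$. In Case 2 some $s_i\neq 0$, so $n_1\cdots n_t=[\Phi:\Phi_0]<\deg p$; the elimination built purely from the binomial parts of the $H_i$ produces only a polynomial of degree $n_1\cdots n_t$, so the top-degree monomial $y^n$ of $p(y)$ must come, after substitution, from a term of some $g_j(x,y,Q_2(y),\ldots,Q_j(y))$, and there is no reason for $s_t$ to vanish, so $H_t$ keeps its generic shape.

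Finally, for defectlessness I apply Ostrowski's formula $(O)$. The extensions of $\nu_0$ to $K_0(y)$ correspond bijectively to the irreducible factors of $p(y)$ in $\tilde K_0[y]$, and for each such extension $\nu_s$ one has $e(\nu_s/\nu_0)\,f(\nu_s/\nu_0)\,d(\nu_s/\nu_0)$ equal to the degree of the corresponding factor. For our extension $\nu$ this factor is $p_1(y)$ of degree $[\Phi:\Phi_0]$; since $\nu$ is rational we have $f(\nu/\nu_0)=1$, and by construction $e(\nu/\nu_0)=[\Phi:\Phi_0]$, forcing $d(\nu/\nu_0)=1$. The same conclusion in Case 1 corresponds to the trivial factorisation $p_1=p$. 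The main obstacle will be the bookkeeping in Case 1: checking that the iterative Weierstrass absorption into $u_{i+1}$ and the reduction of each $g_i$ modulo the lower relations can be carried out compatibly with the scalewise convergence required by the valuative Cohen theorem, so that the rewritten $(H_1,\ldots,H_t,p(y)-v)$ still generate (up to closure) the kernel $F$. Once that is settled, the degree count, the identification $Q_{t+1}=p$, the shape of $H_t$, and the defectlessness are almost formal consequences.
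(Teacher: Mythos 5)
Your proposal is correct and follows essentially the same route as the paper: the dichotomy, the shape of the last equation and the normal forms are delegated to proposition \ref{semg} (whose converse direction already contains the Weierstrass-preparation and degree bookkeeping you describe), and defectlessness comes from Ostrowski's formula $(O)$ together with $\deg p_1=[\Phi:\Phi_0]$, $f=1$ and $e=[\Phi:\Phi_0]$. The only cosmetic difference is in Case 2, where you justify that $y^n$ must come from some $g_j$ by the degree count $\deg Q^\circ_{t+1}=n_1\cdots n_t<n$ (Lemmas \ref{order} and \ref{degdeg}), whereas the paper argues that if $y^n$ came from the binomial part then the inequality $r_i-s_i>0$ and Lemma \ref{order} would force all $s_i=0$ and put us back in Case 1; both arguments are valid.
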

 \begin{proof} The process we have described in the proof of proposition \ref{finhyp} stops when for some index $t$ we reach a situation where $p(u_1)-u_{t+1}\in F$, which means that $$p(y)=x^{s_t}Q_t(y)^{n_t}-\lambda_tx^{r_t}\prod_{k\in E(t)}Q_k^{t^{(t)}_k}(y)-g_t(x,y,Q_2(y),\ldots, Q_t(y)).$$  The first case follows directly from proposition \ref{semg}, where we have seen that $p(y)$ is $\nu_0$-analytically irreducible if and only if all the $s_i$ are zero, and the degree of $p(y)$ is then $n_1\ldots n_t=[\Phi :\Phi_0]$.  \par In the second case, the fact that $y^n$ comes from a  $g_j$ follows from the fact that otherwise all the $s_i$ must be zero since $r_i-s_i>0$ and then we are in the first case.\par
  Since $[K_0(y):K_0]$, which is the degree of $p(y)$, is the sum of the degrees of the irreducible factors $p_s(y)$ of $p(y)$ in $\tilde R_{\nu_0}[y]$, the Ostrowski ramification formula $(O)$ shows that the extension from $K_0$ to $K$ has no defect with respect to $\nu_0$.\Par
 In this case too the semigroup is finitely generated by the valuations of the $x,y,Q_k(y)$ but the $Q_k(y)$ are not necessarily unitary polynomials.  \end{proof}
 \begin{example} In the case where $r>1$, let us assume that the valuation $\nu_0$ is of rank one and consider two equations such as $x^{s_1}y-x^{r_1}=Q_2(y),\ Q_2(y)^{n_2}-x^{r_2}-y^n=p(y)$ satisfying the conditions we have observed above, in particular that $(x^{s_1}u_1-x^{r_1}, u^{n_2}_2-x^{r_2})$ generate a prime binomial ideal with $n_2$ prime to the characteristic of $k$,  that $r_1-s_1$ is in $\Phi_{0+}$, that we have $n\nu(y)=n(r_1-s_1)>r_2$, and $r_2>n_2r_1>n_2s_1$. According to corollary \ref{seeAb}, the polynomial $p(y)$ is irreducible in $k[[x]][y]$ because it corresponds to an overweight deformation of the prime binomial ideal generated by $x^{s_1}u_1-x^{r_1}, u^{n_2}_2-x^{r_2}$, but in $\tilde R_{\nu_0}[y]$ it is divisible by a polynomial of degree $n_2$, of the form $p_1(y)= (y-x^{r_1-s_1})^{n_2}-x^{r_2-n_2s_1}+{\rm terms\  of \ higher\  value}$, as one sees using the Newton polygon of $p(y)$ according to \cite{Ri2}, 5.1, D, E. The semigroup $\Gamma$ of $R_0[y]/(p(y))$ is $\langle \Gamma_0, r_1-s_1, \frac{r_2}{n_2}\rangle$ and the semigroup $\tilde\Gamma$ of $\tilde R_{\nu_0}[y]/(p_1(y))$ is $\langle\Phi_{0,\geq 0}, \frac{r_2}{n_2}-s_1\rangle$.
 \end{example}
\begin{Remark}\label{qroot} \small{\begin{enumerate}
\item In the example of G. Rond, we have $t=2=l+1$ and Vaqui\'e's method builds, for each of the two extensions of $\nu_0$, an infinite continuous family of key polynomials which has the polynomial $p(y)$ as limit key polynomial.\par Each family corresponds to the successive truncations $$T_e(x_1,x_2)=x_1(1+\frac{1}{2}\frac{x_2^3}{x_1^2}+\cdots +c_e\frac{x_2^{3e}}{x_1^{2e}})\in k((x_1,x_2)),\ {\rm with}\  c_e\in k,$$ of the power series for $ x_1\sqrt{1+\frac{x_2^3}{x_1^2}}$.\Par Using the equation $(1+c_1\beta+\cdots +c_e\beta^e+\cdots )^2=1+\beta$ to compute inductively the coefficients $c_e$ we see that if $c_{e+1}=0$ in the field $k$, the smallest $e'>e+1$ such that $c_{e'}\neq 0$ is certainly $\leq 2e$. Otherwise all the $c_f,\ f\geq e+1$, are zero, and this makes $\sqrt{1+\beta}$ a polynomial in $\beta$, which would have to be of degree $\frac{1}{2}$.\Par Given a choice of extension of  the valuation $\nu_0$, to each truncation corresponds a polynomial $Q_e(y)$ of $k[[x_1,x_2]][y]/(y^2-x_1^2-x_2^3)$, which is $x_1^{2e-1}(y\pm T_e(x_1,x_2))$. These polynomials  satisfy relations of the form $Q_{e+1}=x_1^2Q_e-\lambda_e x_2^{3(e+1)}$ with $\lambda_e=\pm c_{e+1}$. Their valuations remain in the semigroup generated by $w_1,w_2,3w_2-w_1$: if $\lambda_e\neq 0$ they are $\nu (Q_e)=(e+1)(3w_2-w_1)+(e-1)w_1$ for $e\geq 1$. If $\lambda_e=0$ and $e'$ is the least integer $>e+1$ such that $c_{e'}\neq 0$, we compute easily that the value of the polynomial $Q_e$ is $3e'w_2+(2e-2e')w_1= e'(3w_2-w_1)+(2e-e')w_1$ and this is in the semigroup since $e'\leq 2e$. The unitary polynomials associated to the $Q_e$, after the removal of repetitions due to the fact that some $c_e$ may be zero in $k$, constitute the continuous family of key polynomials describing the given extension of $\nu_0$.
\item By proposition \ref{semg}, when the polynomial $p(y)$ is $\nu_0$-analytically irreducible, the semigroup $\Gamma$ is contained in the cone generated by $\Gamma_0$ so that the map ${\rm Spec}k[t^\Gamma]\to \A^r(k)$ corresponding to the injection $k[X_1,\ldots ,X_r]\subset k[t^\Gamma]$ is finite.\end{enumerate}}
\end{Remark}\noindent
In fact, with proposition \ref{semg}, remark \ref{tame} and the results of subsection \ref{Kpol} we have proved the following result:
\begin{corollary}\label{quasio} Let $p(y)$ be a unitary irreducible polynomial in $k[[x_1,\ldots ,x_r]][y]$, let $\nu_0$ be a monomial valuation on $k[[x_1,\ldots ,x_r]]=R_0$ such that the $\nu_0(x_i)$ are rationally independent, and let $\nu$ be an extension of $\nu_0$ to $R=R_0[y]/(p(y))$. Let $\Gamma$ be the semigroup of $\nu$ on $R$ and let $\varpi\colon{\rm Spec}k[t^\Gamma]\to \A^r(k)$ be the map corresponding to the injection $k[X_1,\ldots ,X_r]\subset k[t^\Gamma]$ determined by $X_i\mapsto t^{\nu_0(x_i)}$ (or the map ${\rm Spec}{\rm gr}_\nu R \to \A^r(k)$ corresponding to the natural inclusion ${\rm gr}_{\nu_0}R_0\subset{\rm gr}_\nu R$). The valuation $\nu$ defines a separable and tame extension of valued fields $(K_0,\nu_0)\subset (K,\nu)$ if and only if the index $[\Phi:\Phi_0]$ is prime to the characteristic of $k$, and if that is the case the polynomial $p(y)$ is $\nu_0$-analytically irreducible if and only if the map $\varpi$ is finite and thus makes ${\rm Spec}k[t^\Gamma]$ (or ${\rm Spec}{\rm gr}_\nu R$) a quasi-ordinary singularity.\hfill\qedsymbol
\end{corollary}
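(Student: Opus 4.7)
The corollary contains two equivalences. The first, that the extension $(K_0,\nu_0)\subset (K,\nu)$ is separable and tame iff $[\Phi:\Phi_0]$ is prime to $\mathrm{char}\,k$, reduces to showing that the index condition alone forces separability, since tameness is by convention this index condition. Under the tameness hypothesis, the argument is assembled from earlier results in the paper: proposition~\ref{sepex} places us in the setup of proposition~\ref{nokey} with the required coprimality, proposition~\ref{finhyp} yields finite generation of the semigroup $\Gamma$, and proposition~\ref{sepsep} deduces separability of $K/K_0$ from the non-vanishing modulo $\mathrm{char}\,k$ of the jacobian minors of the overweight equations (the equational content of remark~\ref{tame}(1)).

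For the second equivalence, the forward direction is essentially proposition~\ref{semg}. Assuming $p(y)$ is $\nu_0$-analytically irreducible, that proposition forces $s_i=0$ for every $i$, so the equations of proposition~\ref{nokey} simplify to $n_i\gamma_i=r_i+\sum_{k<i}t_k^{(i)}\gamma_k$ with $r_i\in\Phi_0^+$ and $t_k^{(i)}\in\N$. Induction on $i$ then places each $\gamma_i$ in the rational cone $\Phi_0^+\otimes\Q_{\geq 0}$ generated by $\Gamma_0$: the base case $\gamma_1=r_1/n_1$ is immediate, and given $\gamma_k$ in the cone for $k<i$, so is $\gamma_i=(r_i+\sum t_k^{(i)}\gamma_k)/n_i$. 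Hence $\Gamma$ lies in the cone of $\Gamma_0$, equivalently $k[t^\Gamma]$ is a finite $k[X_1,\ldots,X_r]$-module, and $\varpi$ is finite.

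The converse is the substantive part. Assume $\varpi$ is finite, so ${\rm gr}_\nu R=k[t^\Gamma]$ is a finitely generated torsion-free module over ${\rm gr}_{\nu_0}R_0=k[X_1,\ldots,X_r]$; a toric computation on fraction fields (both are purely transcendental of degree $r$ over $k$) identifies its rank as $[k(t^\Phi):k(t^{\Phi_0})]=[\Phi:\Phi_0]$. Meanwhile $R=R_0[y]/(p(y))$ is free of rank $\deg p$ over $R_0$. I would invoke the faithful flatness of the specialization of $R$ to ${\rm gr}_\nu R$ (proposition~2.3 and proposition~5.38 of \cite{Te1}), compatibly with the analogous specialization of $R_0$ to $k[X_1,\ldots,X_r]$, so that rank is preserved between generic and special fibers: $\deg p=[\Phi:\Phi_0]$. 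The dichotomy of proposition~\ref{crit} then excludes its second case, in which $\deg p>[\Phi:\Phi_0]$, leaving all $s_i=0$ and $p(y)$ $\nu_0$-analytically irreducible. The main obstacle is precisely this rank-preservation step: one must verify that the $R_0$-module structure on $R$ degenerates compatibly to the $k[X_1,\ldots,X_r]$-module structure on ${\rm gr}_\nu R$, so that the Rees-type flat family of modules over a flat family of rings supplies the desired equality of generic ranks. Once that compatibility is in place, proposition~\ref{crit} closes the argument.
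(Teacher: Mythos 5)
Your handling of the first equivalence and of the implication ``$p(y)$ $\nu_0$-analytically irreducible $\Rightarrow\varpi$ finite'' is correct and coincides with the paper's route: tameness is by definition the index condition, separability under that condition is Proposition \ref{sepsep} together with Remark \ref{tame}, and the forward direction of the second equivalence is Proposition \ref{semg} (all $s_i=0$) followed by the induction placing each $\gamma_i$ in the cone of $\Gamma_0$, exactly as in Remark \ref{qroot}.

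The converse implication is where there is a genuine gap, and it sits precisely at the step you yourself flag as ``the main obstacle''. Faithful flatness of the specialization of $R$ to ${\rm gr}_\nu R$ cannot yield $\deg p=[\Phi:\Phi_0]$: that flatness is flatness over the parameter ring of the degeneration and says nothing about the rank of the module over the simultaneously degenerating subring $R_0\rightsquigarrow k[X_1,\ldots,X_r]$. The paper's Example \ref{rond} shows the failure concretely: there $R$ is free of rank $2$ over $R_0$, while ${\rm gr}_\nu R=k[t^\Gamma]$ with $\Gamma=\langle w_1,w_2,3w_2-w_1\rangle$ satisfies ${\rm gr}_\nu R\otimes_{k[X]}k(X)=k(X)$, of rank $1$ (here $\Phi=\Phi_0$). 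The drop in generic rank is exactly the defect, so the identity $\deg p=[\Phi:\Phi_0]$ you need is equivalent to the uniqueness-and-defectlessness conclusion of Proposition \ref{crit}; it has to come from the key-polynomial analysis and Ostrowski's formula, not from the degeneration, and you would have to use the finiteness of $\varpi$ in an essential way to exclude the rank drop, which you do not do. Nor can the converse be repaired by inspecting the relations one at a time: one can have $s_i\neq 0$ while $\gamma_i=\bigl((r_i-s_i)+\sum_k t^{(i)}_k\gamma_k\bigr)/n_i$ still lies in the cone, the negative contribution of $-s_i$ being absorbed by the terms $t^{(i)}_k\gamma_k$. For instance, for $p=y^3+x_1y^2-x_2^2y-x_1^4$ with $\nu_0(x_1)=w_1=1$, $\nu_0(x_2)=w_2=\sqrt2$ and the extension with $\nu(y)=\tfrac{3}{2}$, one finds $\gamma_1=\tfrac{3}{2}w_1$ and $\gamma_2=\nu(y^2-x_1^3)=\tfrac{1}{2}w_1+2w_2$ both in the cone although $s_2=(1,0)\neq 0$; it is only the further value $\nu\bigl(yQ_2-x_1^2x_2^2\bigr)=4w_2-\tfrac{1}{2}w_1\in\Gamma$ that leaves the cone and witnesses that $\varpi$ is not finite. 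So the ``if'' direction genuinely requires relating the cone condition on the whole of $\Gamma$ to the vanishing of all the $s_i$ via Proposition \ref{semg}, and your rank count on the special fibre does not substitute for that.
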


As a consequence of proposition \ref{nokey} and the second statement of proposition \ref{semg}, apart from the fact that one must replace "irreducible" by "$\nu_0$-analy-\break  tically irreducible", the situation is quite similar to the plane branch case where $r=1$. Given $q(y)\in R_0[y]$ one can replace the intersection number of $q(y)=0$ with our unitary polynomial $p(y)=0$ by the valuation $\nu_0({\rm Res}_y(p(y),q(y))$ of the resultant of the two polynomials, and if we assume that the degree of $p(y)$ is prime to the characteristic of $k$, we can define approximate roots, for which we refer to \cite{PP}. We only recall here that if $n$ is the degree of $p(y)$ and $d$ is an integer dividing $n$ the approximate root of $p(y)$ of degree $n/d$ is the unique polynomial $q(y)\in  k[[x_1,\ldots x_r]][y]$ such that ${\rm deg}(p(y)-q(y)^d)< {\rm deg}p(y)-{\rm deg}q(y)$. It must be unitary. It is also the unique unitary polynomial such that the $q(y)$-adic expansion of $p(y)$ has the form:
$$p(y)=q(y)^d+a_1q(y)^{d-1}+\cdots +a_d,$$
with ${\rm deg}a_i<{\rm deg} q(y)$, and the coefficient $a_1$ is zero (see \cite{PP}, proposition 6.1).\par
We are going to relate this to the expansion $$q(y)=\sum a_t(x)y^{t_1}Q_2(y)^{t_2}\ldots Q_k(y)^{t_k}p(y)^{t_\infty}$$ given by the valuative Cohen theorem, which \emph{in the case where the $Q_i(y)$ are unitary} can be obtained by successive divisions as in \cite{PP}.\par\medskip
Given $p(y)\in R_0[y]$, of degree $n$ prime to the characteristic, and a rational Abhyankar valuation of $R_0$, we will now give an analogue of Abhyankar's irreducibility criterion for plane curves, as found in \cite{A}, in \cite{GB-P}, theorem 10.9 and in \cite{GP3}, theorem 4.2; see also the analogous result for quasi-ordinary polynomials in \cite{As}. We need some preliminaries. First, we saw in the paragraph following definition \ref{ord} that we could assume that for any extension $\nu$ of $\nu_0$ to $R_0[y]/(p(y))$ we had $\nu (y)\notin\Gamma_0$. We present the same fact a little differently:
\begin{lemma}\label{resres} If $p(y)\in R_0[y]$ is $\nu_0$-analytically irreducible, there is a series $\upsilon(x)\in R_0$ such that $\frac{\nu_0({\rm Res}_y(p(y),y-\upsilon (x))}{{\rm deg}p(y)}\notin\Gamma_0$.
\end{lemma}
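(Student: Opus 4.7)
The plan is to exploit the unitarity of $p$, which yields $\mathrm{Res}_y(p(y),y-\upsilon(x)) = p(\upsilon(x))$, and then to find $\upsilon(x) \in R_0$ with $\nu_0(p(\upsilon(x)))/\deg p \notin \Gamma_0$. The case $\deg p = 1$ is trivial: take $\upsilon$ equal to the unique root of $p$ in $R_0$, making the resultant vanish with value $+\infty \notin \Gamma_0$. So I may assume $n := \deg p \geq 2$. Under this assumption, $\nu_0$-analytic irreducibility furnishes a unique extension $\nu$ of $\nu_0$ to $R = R_0[y]/(p(y))$, and the goal can be rephrased: exhibit $\upsilon(x) \in R_0$ with $\nu(y - \upsilon(x)) \notin \Gamma_0$, via the identity
\[
\nu_0(p(\upsilon(x))) \;=\; n\cdot \nu(y - \upsilon(x)).
\]

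To establish this identity I would work in a splitting field $L$ of $p$ over $\tilde K_0$ and factor $p(\upsilon(x)) = \prod_i(\upsilon(x) - \rho_i)^{m_i}$ (in positive characteristic some roots may be repeated). Since $\tilde K_0$ is henselian and $p$ is irreducible over it, the valuation $\tilde\nu_0$ extends uniquely to each subfield $\tilde K_0(\rho_i)$ of $L$, and the $\tilde K_0$-isomorphism $\tilde K_0(\rho_i) \to \tilde K_0(\rho_j)$ sending $\rho_i \mapsto \rho_j$ is automatically an isometry for these unique extensions. Hence $\tilde\nu(\upsilon(x) - \rho_i)$ is independent of $i$, and embedding $R$ into $\tilde K_0(\rho)$ via $y \mapsto \rho$ identifies this common value with $\nu(y - \upsilon(x))$. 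To then produce the required $\upsilon(x)$, I would iterate the monomial-substitution procedure used in the proof of proposition \ref{finhyp}: as long as the current valuation $\nu(y - \upsilon(x))$ lies in $\Gamma_0$, there is a unique monomial $x^r$ of that $\nu_0$-value, and, because $\nu$ is rational, a scalar $\rho \in k^*$ such that $\nu(y - \upsilon(x) - \rho x^r) > \nu(y - \upsilon(x))$; absorb $\rho x^r$ into $\upsilon(x)$ and continue. If the iteration never terminated, the resulting (possibly transfinite) series would converge in $R_0$ by the scalewise completeness of $R_0$ with respect to $\nu_0$ invoked in section \ref{cohen}, giving a limit $\upsilon_\infty \in R_0$ equal to $y$ in $R$, hence $y \in R_0$, which contradicts $\deg p \geq 2$.

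The hard part will be the isometry claim used in the second step: it is precisely where $\nu_0$-analytic irreducibility is indispensable, for without it $p$ could split over $\tilde K_0$ into factors whose roots lie over inequivalent extensions of $\nu_0$, the factors $\upsilon(x) - \rho_i$ would no longer share a common value, and the identity $\nu_0(p(\upsilon(x))) = n\cdot \nu(y-\upsilon(x))$ would fail. Separability plays no role here, since the argument only uses that finite extensions of the henselian field $\tilde K_0$ carry a unique extension of the valuation. Once this step is secured, the monomial-substitution step is a mechanical iteration of a construction already used in the paper.
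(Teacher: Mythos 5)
Your proof is correct and follows essentially the same route as the paper: reduce the statement to finding $\upsilon(x)$ with $\nu(y-\upsilon(x))\notin\Gamma_0$, then iterate monomial subtraction (transfinitely if necessary, using completeness of $R_0$ for $\nu_0$), the failure of the process forcing $y-\upsilon(x)$ to vanish, i.e. divide $p(y)$, which is incompatible with irreducibility in degree $\geq 2$. The only divergence is how the identity $\nu_0\bigl({\rm Res}_y(p(y),y-\upsilon(x))\bigr)=n\,\nu(y-\upsilon(x))$ is justified: you prove it in general via the splitting field over $\tilde K_0$ and uniqueness of the extension of the valuation (exactly the mechanism the paper invokes later in the proof of proposition \ref{Abhirr}), whereas the paper's proof extracts the case $\upsilon=0$ from lemma \ref{order} (statement 2, with all $s_i=0$) and leaves the translates implicit, so your version makes that step explicit.
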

\begin{proof} Let $\nu$ be the unique extension of $\nu_0$ to $R_0[y]/(p(y))$ (proposition \ref{semg}). Recall that ${\rm Res}_y(p(y),y)=p(0)$ and that it follows from lemma \ref{order} (statement 2, with all the $s_i=0$) that $\frac{\nu_0(p(0))}{n}=\nu(y)$.\Par If $\frac{\nu_0(p(0))}{n}=\nu(y)\in\Gamma_0$, we have $\lambda_0\in k^*,\gamma_0\in \Gamma_0$ such that $\nu(y-\lambda_0x^{\gamma_0})>\nu(y)$. If $\nu(y-\lambda_0x^{\gamma_0})\in \Gamma_0$ we can repeat the construction and if there does not exist a series $\upsilon (x)$ as in the statement, we obtain, possibly by transfinite summation, a series such that ${\rm Res}_y(p(y),y-\upsilon (x))=0$, which means that $y-\upsilon (x)$ divides $p(y)$ and contradicts the irreducibility of the polynomial $p(y)$.\Par
The reader is encouraged to produce another proof, valid if the degree $n$ of $p(y)$ is not divisible by the characteristic of $k$, by verifying that if the coefficient of $y^{n-1}$ in $p(y)$ is zero, one can take $\upsilon(x)=0$. \end{proof} \par\medskip
 Now, assuming that the degree of $p(y)$ is not divisible by the characteristic of $k$,  we can build a sequence of values, numbers, and polynomials as follows:\par\noindent
begin with $\gamma_1=\frac{\nu_0({\rm Res}_y(p(y),y))}{{\rm deg}p(y)}$, take $n_1$ to be the smallest integer such that $n_1\gamma_1\in\Phi_0$; it divides $n$ and we can define $Q_2$ to be the approximate root of degree $n_1$ of the polynomial $p(y)$.\par\noindent
Assuming that the $\gamma_k, n_k, Q_{k+1}(y)$ have been defined for $k\leq j-1$, define $\gamma_j =\frac{\nu_0({\rm Res}_y(p(y),Q_j(y)))}{{\rm deg}p(y)}$, then define $n_j$ to be the least integer such that $n_j\gamma_j$ is in the group $\Phi_{j-1}$ generated by $\Phi_0,\gamma_1,\ldots \gamma_{j-1}$ and $Q_{j+1}$ to be the approximate root of degree $n_1\ldots n_j$ of $p(y)$. With this construction we have:
\begin{proposition}\label{Abhirr} Let $p(y)\in k[[x_1,\ldots x_r]][y]=R_0[y]$ be a unitary polynomial of degree $n$ prime to the characteristic of $k$. Let $\nu_0$ be a rational valuation of $k[[x_1,\ldots x_r]]$ such that the $\nu (x_i)$ are rationally independent.\par\noindent The following are equivalent:
\begin{enumerate}\item The polynomial $p(y)$ is $\nu_0$-analytically irreducible.
\item \begin{itemize}\item There exists a series $\upsilon(x)\in R_0$ such that $\frac{\nu_0({\rm Res}_y(p(y+\upsilon (x)),y)}{{\rm deg}p(y)}\notin\Gamma_0$ and after replacing $p(y)$ by $p(y+\upsilon (x))$, in the construction described above we have for each $j$ that $n_j>1$ and $n_j\gamma_j\in\langle\Gamma_0,\gamma_1,\ldots ,\gamma_{j-1}\rangle$, and:\item  Giving variables $u_j$ the weight $\gamma_j$, the map of complete $k$-algebras\\ $k[[x,u_1,\ldots ,u_t]]\to  R_0[y]/(p(y))$ determined by $x_i\mapsto x_i$ and $u_j\mapsto Q_j(y)$ is an overweight deformation of a prime binomial ideal corresponding to the relations $n_j\gamma_j\in\langle\Gamma_0,\gamma_1,\ldots ,\gamma_{j-1}\rangle$ (in particular, the inequalities $n_j\gamma_j<\gamma_{j+1}$ hold).\end{itemize}
\end{enumerate}
 \ \     If these conditions are satisfied, the unique extension of the valuation $\nu_0$ to $R_0[y]/(p(y))$ is given by $$\nu(q(y))=\frac{\nu_0({\rm Res}_y(p(y),q(y))}{{\rm deg}p(y)}$$ and any presentation of $R_0[y]/(p(y))$ by the valuative Cohen theorem can be modified in such a way that the polynomials $Q_j(y)$ are the approximate roots of $p(y)$.
\end{proposition}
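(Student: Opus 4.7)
The plan is to reduce both implications to the structural results already established (propositions \ref{semg}, \ref{nokey}, \ref{crit}, \ref{resoverwght}), to prove the resultant formula from the uniqueness of the valuative extension of $\nu_0$, and then to identify the elimination polynomials $Q_j(y)$ from the valuative Cohen presentation with the classical approximate roots of $p(y)$.

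For the direction (1) $\Rightarrow$ (2), assume $p(y)$ is $\nu_0$-analytically irreducible. By proposition \ref{semg} there is a unique extension $\nu$ of $\nu_0$ to $K_0[y]/(p(y))$, all the $s_i$ in any valuative Cohen presentation may be taken to be zero, all $n_i >1$, and all $Q_i(y)$ are unitary with $\deg Q_i = n_1\cdots n_{i-1}$. Lemma \ref{resres} furnishes $\upsilon(x)\in R_0$ with $\nu_0({\rm Res}_y(p(y+\upsilon(x)),y))/n\notin \Gamma_0$, and after the substitution $y\leftarrow y-\upsilon(x)$ we are exactly in the setting of proposition \ref{nokey}, with $\gamma_1=\nu(y)\notin\Gamma_0$. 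The simplified equations of proposition \ref{nokey} with $s_i=0$ then read $u_i^{n_i}-\lambda_ix^{r_i}\prod_{k\in E(i)}u_k^{t^{(i)}_k}-g_i-u_{i+1}=0$ and are manifestly overweight deformations of the prime binomial ideal encoding the relations $n_j\gamma_j\in\langle\Gamma_0,\gamma_1,\dots,\gamma_{j-1}\rangle$; the inequalities $n_j\gamma_j<\gamma_{j+1}$ are the overweight conditions themselves.

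The resultant formula is now immediate: in the algebraic closure $\overline{\tilde K_0}$ equipped with its unique extension $\overline{\tilde{\nu_0}}$ of $\nu_0$, the roots $\alpha_1,\dots ,\alpha_n$ of $p(y)$ all give the same restriction of $\overline{\tilde{\nu_0}}$ to $K_0[y]/(p(y))$ (since by proposition \ref{semg} the extension $\nu$ is unique), hence $\overline{\tilde{\nu_0}}(q(\alpha_i))=\nu(q(y))$ independently of $i$, and $\nu_0({\rm Res}_y(p,q))=\sum_i\overline{\tilde{\nu_0}}(q(\alpha_i))=n\nu(q(y))$. In particular $\gamma_j=\nu_0({\rm Res}_y(p,Q_j))/n$. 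To identify $Q_j(y)$ with the approximate root of degree $n_1\cdots n_{j-1}$ of $p(y)$, set $d_j=n_jn_{j+1}\cdots n_t$ and argue by descending induction on $j$. The base case $j=t$ follows directly: lemma \ref{degdeg} gives $\deg\prod Q_k^{t^{(t)}_k}\leq n_1\cdots n_{t-1}-1$, so the terminal equation $p=Q_t^{n_t}-\lambda_tx^{r_t}\prod Q_k^{t^{(t)}_k}-g_t$ yields $\deg(p-Q_t^{n_t})<n-n_1\cdots n_{t-1}$. For the induction step, substitute $Q_{j+1}=Q_j^{n_j}-(\text{lower-degree terms})$ into the expression of $p$ as a polynomial in $Q_{j+1},\dots ,Q_t$ and re-collect; since $n$ is prime to the characteristic one may moreover absorb, by a Tschirnhaus-type adjustment of each $Q_j$, the unwanted coefficient of $Q_j^{d_j-1}$, so that $Q_j$ becomes the unique approximate root.

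For (2) $\Rightarrow$ (1), apply proposition \ref{resoverwght}(a) to the overweight deformation produced by (2): it defines a rational valuation $\nu$, with value group $\langle\Gamma_0,\gamma_1,\dots ,\gamma_t\rangle$, on the quotient of $k[[x,u_1,\dots ,u_t]]$ by the closed ideal generated by the overweight deformation, a ring which, by the successive elimination producing $Q_2,\dots ,Q_{t+1}=p(y)$, coincides with $R_0[y]/(p(y))$. Since all $s_j=0$, the second bullet of proposition \ref{semg} (equivalently, the first case of proposition \ref{crit}) forces $p(y)$ to be $\nu_0$-analytically irreducible of degree $n_1\cdots n_t=[\Phi:\Phi_0]$, and $\nu$ is its unique extension of $\nu_0$. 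The main obstacle is the descending-induction identification of the $Q_j$ with approximate roots: tracking the degree drop through the nested substitutions requires careful bookkeeping of the degrees in $y$ contributed by the $g_i$ and by the products $\prod Q_k^{t^{(i)}_k}$ (using lemma \ref{degdeg} at each step), and it is precisely here that the assumption ${\rm char}(k)\nmid n$ is used, to secure the Tschirnhaus step that pins down $Q_j$ uniquely.
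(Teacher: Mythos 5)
Your proposal is correct and follows essentially the same route as the paper: (1)$\Rightarrow$(2) via lemma \ref{resres} and propositions \ref{semg}, \ref{nokey}, (2)$\Rightarrow$(1) via the overweight-deformation structure and propositions \ref{semg}/\ref{crit}, the resultant formula from the uniqueness of the extension to a splitting field, and the identification of the $Q_j$ with approximate roots by a Tschirnhausen-type change of representatives, legitimate because the $n_j$ are prime to the characteristic. The only slip is the claim that the case $j=t$ "follows directly": since $g_t$ is only bounded in degree by $<n$ (it may contain a term in $Q_t^{\,n_t-1}$), the inequality $\deg(p-Q_t^{n_t})<n-n_1\cdots n_{t-1}$ already requires the Tschirnhausen adjustment of $Q_t$ that you invoke in the induction step — which is precisely the central point of the paper's own proof.
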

\begin{proof} Here the "overweight deformation" condition of $2)$ means that the $Q_j$ satisfy the relations obtained by elimination: $$Q_{j+1}=Q_j^{n_j}-\lambda_j x^{r_j}\prod_{k\in E(i)}Q_k^{t^{(i)}_k}-g_j(x,y,Q_2(y),\ldots ,Q_i(y))$$ with the weight condition, so that $p(y)$ is the end result of the elimination process. Since $p(y')$ is $\nu_0$-analytically irreducible if and only if $p(y'+\upsilon(x))$ is, it follows from lemma \ref{resres} that $2)$ implies $1)$. The converse is a consequence of propositions \ref{semg} and \ref{crit} provided that we can show that the approximate roots are eligible as polynomials $Q_j(y)$ in the sense of these propositions. But if, as we may after proposition \ref{crit}, we view the expansion $$u_{j+1}=u_j^{n_j}-\lambda_jx^{r_j}\prod_{k\in E(i)}u_k^{t^{(i)}_k}-g_j(x,u_1,\ldots ,u_j)\ {\rm mod}.F$$ as giving rise to the $Q_j$-adic expansion of $Q_{j+1}$, we need only to show that we can avoid the appearance of a term with $u_j^{n_j-1}$. Should such a term occur, it must come from $g_j$ and by the overweight condition must appear as $g_{j,n_j-1}(x,u_1,\ldots ,u_{j-1})u_j^{n_j-1}$ with $w(g_{j,n_j-1})>w(u_j)$. But then we can make a change of representatives $Q_j\mapsto Q_j-\frac{g_{j,n_j-1}(x,y,\ldots ,Q_{j-1}(y))}{n_j}$ (an avatar of the Tschirnhausen transformation, permissible since $n_j$ must be prime to the characteristic) to make this term disappear and transform our $Q_j$ into approximate roots of $p(y)$. \Par
Since we assume $p(y)$ to be analytically irreducible, the fact that the valuation is given by the resultant is due to the uniqueness of the extension of $\nu$ to a splitting field of $p(y)$ and the fact that ${\rm Res}_y(p(y),q(y))=\prod_{p(\alpha_i)=0}q(\alpha_i)$ (For this usage of the resultant, which is classical for curves, see \cite{GB-GP}, \cite{PP2}, D\'efinition 5.7 and \cite{Gr}, proposition 3.1).
\end{proof}
\begin{Remark} \small{\begin{enumerate}
\item The idea behind Abhyankar's criterion is that one knows that if $p(y)$ is analytically irreducible the extension of the valuation $\nu_0$ to $R_0[y]/(p(y))$ is given by the resultant. So one begins to compute, with the resultant, the would-be valuations of the approximate roots which, as we show, should give an overweight deformation with all the $s_i=0$ (proposition \ref{semg}). If they do, and only if they do, then $p(y)$ was indeed analytically irreducible.
\item The reader who is familiar with Abhyankar's criterion will remark that the conjunction of the condition $n_j\gamma_j<\gamma_{j+1}$ and the "straight line condition" which appear in that criterion has become an overweight condition with respect to binomials. This can be compared to (\cite{GB-P}, \S\S 7 and 8 and \cite{GP3}) for plane curves. It should also be compared with the proof in \cite{CM} where in dimension 2 the existence of a generating sequence for a curve valuation replaces the valuative Cohen theorem, as key polynomials and approximate roots do in \cite{GB-P}.
\item Using the change of variables $y\mapsto y-x_1$, we can rewrite G. Rond's example in such a way that $\nu(y)\notin\Gamma_0$ as $p(y)=y^2+2x_1y-x_2^3$. Then the value $\gamma_1$ of $y$ given by the resultant is $\frac{3w_2}{2}$, so $n_1=2={\rm deg}p(y)$. Our overweight deformation would have to be $y^2-x_2^3+2x_1y$, but the weight of $2x_1y$ is $w_1+\frac{3w_2}{2}<3w_2$. The overweight deformation condition fails and the criterion tells us that the polynomial is not analytically irreducible. From the viewpoint of proposition \ref{semg}, it \emph{is} an overweight deformation, written as $2x_1y-x_2^3+y^2$ with $y$ of weight $3w_2-w_1$, but we know it is reducible because $s_1\neq 0$.\Par These two verifications correspond to the two ways of expressing that a Newton polygon is \emph{not} the segment joining the point $(0,{\rm deg}p(y))$ to some point $(m,0)$ on the horizontal axis: the first is to say that there is an exponent below that segment, and the second is to say that the point $(0,{\rm deg}p(y))$ is strictly above the line supporting the compact face of the Newton polygon ending at the point $(m,0)$.
\end{enumerate}}
\end{Remark}
The valuative Cohen theorem allows us to highlight the close relationship between this result and the next one, a relationship which is also made apparent in \cite{GB-P} for plane curves, from a different viewpoint.
\begin{proposition} Let $k$ be a field, let $r$ be an integer, and let $\Gamma$ be a torsion free commutative semigroup whose associated group is $\Z^r$, which we assume to be equipped with a total monomial order $\prec$ such that $\Gamma\subset\Z^r_{\succeq 0}$. The following two conditions are equivalent:\par\medskip\noindent
\hbox{\rm \textbf{(1)}} The semigroup $\Gamma$ is finitely generated, hence by \cite{Ne} well ordered, and contains a free subsemigroup $\Gamma_0\simeq\N^r$ such that if we write, after definition \ref{ord}, a minimal (with respect to $\Gamma_0$ and $\prec$) system of generators as $\Gamma =\langle\Gamma_0,\gamma_1,\ldots ,\gamma_l\rangle$ and define $\Phi_{i-1}$ to be the group generated by the semigroup $\Gamma_{i-1}= \langle\Gamma_0,\gamma_1,\ldots ,\gamma_{i-1}\rangle$, the following holds:\Par If $n_i$ is the least positive integer $k$ such that $k\gamma_i\in\Phi_{i-1}$,  then for each $i$, $1\leq i\leq l$, we have $n_i\gamma_i\in \Gamma_{i-1}$ and for $1\leq i\leq l-1$ we have $n_i\gamma_i\prec \gamma_{i+1}$.\par\medskip\noindent
\hbox{\rm \textbf{(2)}} The ordered semigroup $(\Gamma,\prec)$ is the semigroup of values of a rational Abhyankar valuation $\nu$ on a ring of the form $R=k[[x_1,\ldots ,x_r]][y]/(p(y))$ where $\nu(x_1),\ldots ,\nu(x_r)$ are rationally independent and the polynomial $p(y)$ is unitary and $\nu_0$-analytically irreducible with respect to the restriction $\nu_0$ of $\nu$ to $k[[x_1,\ldots ,x_r]]$.

\end{proposition}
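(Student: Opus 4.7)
The plan is to prove the equivalence by running the overweight-deformation machinery of the paper in both directions. Most of the work has already been done in propositions \ref{resoverwght}, \ref{nokey}, \ref{semg}, \ref{crit}, and the valuative Cohen theorem \ref{Valco}; the proposed proof mainly consists in assembling these.

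For the implication $(2)\Rightarrow(1)$ I would argue as follows. Given $\nu$ on $R=R_0[y]/(p(y))$ as in (2), the semigroup $\Gamma=\nu(R\setminus\{0\})$ is finitely generated by proposition \ref{semg}, and since $p(y)$ is $\nu_0$-analytically irreducible the same proposition shows that all the parameters $s_i$ appearing in the binomial relations of proposition \ref{nokey} vanish; equivalently $n_i\gamma_i\in\Gamma_{i-1}$ and not merely in the group $\Phi_{i-1}$. The integer $n_i$ is by construction the least positive $k$ with $k\gamma_i\in\Phi_{i-1}$. The overweight structure of the equations $H_i$ of proposition \ref{nokey} (2) is exactly the strict inequality $n_i\gamma_i=w(u_i^{n_i})\prec w(u_{i+1})=\gamma_{i+1}$, which is the second half of (1).

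For $(1)\Rightarrow(2)$ the strategy is to manufacture an overweight deformation from the combinatorial data and eliminate to obtain $p(y)$. First I would invoke proposition \ref{expression} to write each relation $n_i\gamma_i\in\Gamma_{i-1}$ uniquely as $n_i\gamma_i=r_i+\sum_{k<i}t_k^{(i)}\gamma_k$ with $r_i\in\Gamma_0$ and $0\le t_k^{(i)}<n_k$. Fixing any $\lambda_i\in k^*$, I would then consider the binomials $B_i=U_i^{n_i}-\lambda_iX^{r_i}\prod_{k<i}U_k^{t_k^{(i)}}$ in $k[X_1,\ldots,X_r,U_1,\ldots,U_l]$ and verify, by the same saturation induction as in the proof of proposition \ref{nokey} (1), that the lattice they span is saturated: the minimality of each $n_i$ forces $\Ll_i=\Ll_{i-1}+\Z v_i$ to remain a direct factor. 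Theorem 2.1 of \cite{Ei-S} then identifies the quotient with the semigroup algebra $k[t^\Gamma]$. Next, in $S=k[[x_1,\ldots,x_r,u_1,\ldots,u_l]]$ equipped with the weight $w(x_i)=\nu_0(x_i)$, $w(u_j)=\gamma_j$, I would form the overweight deformations
\[
F_i=u_i^{n_i}-\lambda_ix^{r_i}\prod_{k<i}u_k^{t_k^{(i)}}-u_{i+1}\ (1\le i<l),\qquad F_l=u_l^{n_l}-\lambda_lx^{r_l}\prod_{k<l}u_k^{t_k^{(l)}},
\]
where the overweight condition is supplied precisely by the strict inequality $n_i\gamma_i\prec\gamma_{i+1}$ of (1). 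Successive elimination of $u_2,\ldots,u_l$ from $F_1,\ldots,F_{l-1}$ expresses each $u_j$ as a polynomial $Q_j(y)\in R_0[y]$ in $y:=u_1$; substituting into $F_l$ yields a polynomial $p(y)\in R_0[y]$, and the map $u_i\mapsto Q_i(y)$ induces an isomorphism $S/(F_1,\ldots,F_l)\cong R_0[y]/(p(y))$. Proposition \ref{resoverwght} then endows $R$ with a rational Abhyankar valuation $\nu$ whose associated graded ring is $k[t^\Gamma]$, so the semigroup of $\nu$ on $R$ is exactly $\Gamma$. Lemma \ref{degdeg} gives $\deg p(y)=n_1\cdots n_l=[\Phi:\Phi_0]$, the vanishing of the $s_i$ together with lemma \ref{order} (1) ensures that $p(y)$ is unitary, and then proposition \ref{semg} certifies that $p(y)$ is $\nu_0$-analytically irreducible.

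The delicate point — and the heart of the equivalence — will be to check that under the strict overweight condition no new generators can sneak into the semigroup of the constructed valuation. If the inequality $n_i\gamma_i\prec\gamma_{i+1}$ were weakened to $\preceq$, the variable $u_{i+1}$ could cancel with the binomial part of $F_i$ and force $Q_{i+1}(y)$ to acquire a valuation different from $\gamma_{i+1}$; extra generators would then appear in $\Gamma$, the analytic irreducibility of $p(y)$ would collapse, and the correspondence with proposition \ref{nokey} would fail. The valuative Cohen theorem \ref{Valco} combined with proposition \ref{initial} is what guarantees, under the strict overweight condition, that $\{\nu_0(x_1),\ldots,\nu_0(x_r),\gamma_1,\ldots,\gamma_l\}$ is exactly the minimal generating set of the semigroup of $\nu$ on the deformed ring. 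Once this control is secured in both directions, the equivalence $(1)\Leftrightarrow(2)$ follows.
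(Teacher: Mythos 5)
Your proposal is correct and follows essentially the same route as the paper: for $(2)\Rightarrow(1)$ it reads off $n_i\gamma_i\in\Gamma_{i-1}$ from the vanishing of the $s_i$ in proposition \ref{semg} and the inequalities $n_i\gamma_i\prec\gamma_{i+1}$ from the normalized equations of proposition \ref{nokey}, and for $(1)\Rightarrow(2)$ it uses proposition \ref{expression} to build the binomials, the strict inequalities to obtain an overweight deformation, proposition \ref{resoverwght} to produce the valuation with semigroup $\Gamma$, and elimination plus the converse direction of proposition \ref{semg} (equivalently proposition \ref{crit}, which the paper cites) to get a unitary $\nu_0$-analytically irreducible $p(y)$. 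The only difference is that you spell out a few intermediate checks (lattice saturation, lemma \ref{degdeg}, lemma \ref{order}) that the paper delegates to earlier proofs by reference.
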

\begin{proof} We have just seen how \textbf{(2)} implies \textbf{(1)}, since the fact that all $s_i=0$ implies the inclusions $n_i\gamma_i\in \Gamma_{i-1}$, and the inequalities $n_i\gamma_i\prec \gamma_{i+1}$ follow from the equations as normalized in proposition \ref{nokey} (compare with the introduction of \cite{GB-P}). To prove the converse, using proposition \ref{expression} and the hypothesis, write each $n_i\gamma_i=r_i+\sum_{1\leq k\leq i-1}t^{(i)}_k\gamma_k$ with $r_i\in\Gamma_0$. Then, build the corresponding equations $H_i^\circ=u_i^{n_i}-x^{r_i}\prod_{1\leq k\leq i-1}u_k^{t^{(i)}_k}-u_{i+1}=0$ for $1\leq i\leq l-1$ and $ H^\circ_l=u_l^{n_l}-x^{r_l}\prod_{1\leq k\leq l-1}u_k^{t^{(l)}_k}=0$, which, in view of the inequalities $n_i\gamma_i\prec \gamma_{i+1}$, determine an overweight deformation of a prime binomial ideal (the proof that the binomial ideal is prime appears in the proof of a) of proposition \ref{nokey}). By proposition \ref{resoverwght} the weights $w(x_j)=\nu(x_j)$ and $w(u_i)=\gamma_i$  determine a valuation on the quotient $k[[x_1,\ldots ,x_r, u_1,\ldots ,u_l]]/(H^\circ_1,\ldots ,H^\circ_l)$, whose semigroup is obviously $\Gamma$. There only remains to eliminate the variables $u_2,\ldots ,u_l$ to obtain a $\nu_0$-analytically irreducible unitary polynomial (proposition \ref{crit}) of degree $n_1\ldots n_l$ in $y:=u_1$.\end{proof}\noindent
\begin{Remark}\label{curve}\small{ \begin{enumerate}
\item When $\Gamma$ generates the group $\Z^r$, to give a total monomial order on $\Z^r$ with $\Gamma\subset\Z^r_{\succeq 0}$ is equivalent to giving a total monomial order on $\Gamma$, so the proposition concerns totally ordered affine semigroups. The notation $\prec$ has been chosen here to emphasize the role of the order in the statement.
\item  The special case $r=1$ (numerical semigroups of plane branches) of this result is due to Bresinsky (\cite{Br}) in characteristic zero, using the Puiseux expansion. It was rediscovered in (\cite{Te0}, Remark 2.2.2) with a proof quite close to the one given above, but still relying on Puiseux expansions, and extended to positive characteristic by Angerm\"uller in \cite{An}. Recently in \cite{GB-P} Garc\'\i a Barroso and P\l oski have given a more general version for plane branches in arbitrary characteristic with a proof based on intersection theory. Among other things they show that the inequalities $n_i\gamma_i<\gamma_{i+1}$ imply that in the expression $n_i\gamma_i=\phi^{(i)}_0+\sum_{1\leq k\leq i-1}t^{(i)}_k\gamma_k$ of proposition \ref{expression} we must have $\phi^{(i)}_0>0$ (this is generalized in proposition \ref{ineq} above). Thus, when $r=1$ the second condition of $\mathbf{(1)}$ implies the first.\item As was already implicit in the statement of proposition 3.2.1 of \cite{Te0}, from the viewpoint of the valuative Cohen theorem, for an Abhyankar valuation the classical inequalities $n_i\gamma_i<\gamma_{i+1}$ appear as consequences of the fact that the ring has dimension $r$ and embedding dimension $\leq r+1$. If the ring $R$ is regular of dimension 2, then the inequalities hold also for non-Abhyankar valuations; see \cite{C-T} and \cite{C-V}.  Related results, with different motivations, also valid in arbitrary characteristic and expressed in the language of valuations on a polynomial ring in two variables, are found much earlier in an article\footnote{I am grateful to Arkadiusz P\l oski and Evelia Garc\'\i a Barroso for bringing it to my attention.} of A. Seidenberg; see \cite{S}. In this paper, Seidenberg in particular shows the existence of generating sequences (or key polynomials) for rational valuations centered in a polynomial ring in two variables and makes use of the inequalities $n_i\gamma_i<\gamma_{i+1}$.

\end{enumerate}}
\end{Remark}
In the same spirit we have an analogue of the Abhyankar-Moh irreducibility theorem as it is stated in (\cite{GB-P}, corollary 8.3); it is a direct consequence of proposition \ref{nokey} and what we have seen in this subsection. We keep the notations of subsection \ref {Kpol} and in particular propositions \ref{nokey} and \ref{semg}:
\begin{proposition}\label{A-M} Let $\nu_0$ be a valuation of $k[[x_1,\ldots ,x_r]]$ such that the $\nu(x_i)$ are rationally independent. Let $p(y)\in k[[x_1,\ldots ,x_r]][y]$ be a $\nu_0$-analytically irreducible unitary polynomial and $\nu$ the unique extension of $\nu_0$ to $k[[x_1,\ldots ,x_r]][y]/(p(y))$. Denote by $\mu$ the valuation on $k[[x,y]]$ with values in $(\Z\times \Z^r)_{lex}$ which is composed with $\nu$ and gives value $(1,0)$ to $p(y)$. Let $Q_1=y, Q_2(y),\ldots ,Q_l(y)$ be the unitary polynomials obtained by elimination from the $H_i$ of proposition \ref{nokey} and whose valuations, together with those of the $x_i$, minimally generate the semigroup $\Gamma$ of $\nu$. If $q'(x,y)\in k[[x,y]]$ is a series such that $q'(0,y)$ is of order $n={\rm deg}p(y)$ in $y$ and $\mu(q'(x,y))>n_l\gamma_l$, then $q'(x,y)$ can be written as the product of a unit of $k[[x,y]]$ by a unitary polynomial which is $\nu_0$-analytically irreducible in $k[[x]][y]$ and is of the form:
$$q(y)=Q_l(y)^{n_l}-\lambda_lx^{r_l}\prod_{k\in E(l)} Q_k(y)^{t^{(l)}_k}-g'_l(x,y,Q_2(y),\ldots ,Q_l(y)),$$
where $g'(x,u_1,\ldots ,u_l)$ is a series of weight $>n_l\gamma_l$ and $g'_l(x,y,Q_2(y),\ldots ,Q_l(y))$ is a polynomial of degree $<n$ in $y$. The semigroup of the unique extension of $\nu_0$ to $k[[x_1,\ldots ,x_r]][y]/(q(y))$ is $\Gamma$.
\end{proposition}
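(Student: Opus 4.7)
The plan is to reduce $q'$ to a unitary polynomial via Weierstrass preparation, extract the leading binomial by using the graded relation coming from the $\nu_0$-analytic irreducibility of $p$, and identify the remainder with a specialization of a series of weight $>n_l\gamma_l$ via a $Q$-adic expansion; the semigroup and irreducibility statements will then follow from proposition~\ref{crit}.

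Concretely, since $q'(0,y)$ has order $n=\deg p(y)$ in $y$, Weierstrass preparation in $k[[x,y]]$ yields $q'=U\cdot q$ with $U$ a unit and $q(y)\in R_0[y]$ unitary of degree $n$; as $\mu(U)=0$, we have $\mu(q)=\mu(q')>n_l\gamma_l$. By proposition~\ref{semg}, $\nu_0$-analytic irreducibility of $p$ forces $s_l=0$, so in $\mathrm{gr}_\nu R$ we have the relation $U_l^{n_l}=\lambda_l X^{r_l}\prod_{k\in E(l)}U_k^{t^{(l)}_k}$, and hence $\mathrm{in}_\mu(Q_l^{n_l})=\lambda_l\,\mathrm{in}_\mu(x^{r_l}\prod_{k\in E(l)}Q_k^{t^{(l)}_k})$ with both sides of $\mu$-value $n_l\gamma_l$. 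Setting
\[
G(y):=Q_l(y)^{n_l}-\lambda_l x^{r_l}\!\prod_{k\in E(l)}Q_k(y)^{t^{(l)}_k}-q(y),
\]
both $\mu(q)$ and $\mu(Q_l^{n_l}-\lambda_l x^{r_l}\prod Q_k^{t^{(l)}_k})$ exceed $n_l\gamma_l$, so $\mu(G)>n_l\gamma_l$; moreover by lemma~\ref{degdeg} the subtracted product has degree $<n$ in $y$, so $\deg_y G<n$. Iterated Euclidean division by the unitary polynomials $Q_l,Q_{l-1},\dots$ produces the unique normal-form expansion
\[
G(y)=\sum_{0\le t_i<n_i} a_{\mathbf t}(x)\,y^{t_1}Q_2(y)^{t_2}\cdots Q_l(y)^{t_l},\quad a_{\mathbf t}(x)\in k[[x]].
\]
By the uniqueness clause of proposition~\ref{expression} combined with the rational independence of the $\nu(x_i)$ and the one-dimensionality of the graded pieces of $\mathrm{gr}_\nu R$, the monomials $X^{\mathbf m}U_1^{t_1}\cdots U_l^{t_l}$ with $0\le t_i<n_i$ lie in pairwise distinct $\Gamma$-graded components of $\mathrm{gr}_\nu R$; hence no cancellation of initial forms can occur in the expansion, and every term satisfies $\mu(a_{\mathbf t}(x)y^{t_1}\cdots Q_l^{t_l})\ge\mu(G)>n_l\gamma_l$. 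Defining $g'_l(x,u_1,\dots,u_l):=-\sum_{\mathbf t}a_{\mathbf t}(x)u_1^{t_1}\cdots u_l^{t_l}$, every monomial of $g'_l$ has weight $>n_l\gamma_l$, the substitution $u_1\mapsto y$, $u_i\mapsto Q_i(y)$ gives $-G$, and $\deg_y g'_l(x,y,Q_2,\dots,Q_l)<n$, so $q(y)$ acquires the announced form.

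For the final statement, observe that replacing the last equation of the valuative Cohen presentation of $R_0[y]/(p(y))$ by $u_l^{n_l}-\lambda_l x^{r_l}\prod_{k\in E(l)}u_k^{t^{(l)}_k}-g'_l=0$ produces an overweight deformation of exactly the same prime binomial ideal, whose elimination recovers $q(y)$; since all $s_i=0$, the first case of proposition~\ref{crit} applies and yields both the $\nu_0$-analytic irreducibility of $q(y)$ (of degree $n_1\cdots n_l=[\Phi:\Phi_0]$) and the equality with $\Gamma$ of the semigroup of the unique extension of $\nu_0$ to $R_0[y]/(q(y))$. The main obstacle is the non-cancellation of initial forms in the $Q$-adic expansion: the bookkeeping rests on the interplay between the minimality of the $n_i$, the uniqueness in proposition~\ref{expression}, and the structure of the homogeneous components of $\mathrm{gr}_\nu R$ — all already available in the text but requiring careful assembly.
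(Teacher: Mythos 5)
Your proof is correct and follows essentially the same route as the paper's: Weierstrass preparation, expansion of the ``error term'' in the generating sequence $y,Q_2,\ldots ,Q_l$, non-cancellation of initial forms because distinct normal-form monomials $x^{m}y^{t_1}Q_2^{t_2}\cdots Q_l^{t_l}$ (with $0\leq t_i<n_i$) sit in distinct graded components by the uniqueness clause of proposition \ref{expression} and the rational independence of the $\nu(x_i)$, and the conclusion via an overweight deformation of the same prime binomial ideal together with proposition \ref{crit}. The only difference is organizational: the paper rewrites the valuative-Cohen expansion of $q$ itself modulo $F$ until its initial form is forced to be $u_l^{n_l}-\lambda_lx^{r_l}\prod_{k\in E(l)}u_k^{t^{(l)}_k}$, whereas you subtract that binomial (evaluated on the $Q_k$) from $q$ at the outset --- legitimately, since $Q_l^{n_l}-\lambda_lx^{r_l}\prod_{k\in E(l)}Q_k^{t^{(l)}_k}=p(y)+g_l(x,y,\ldots ,Q_l(y))$ already has $\mu$-value $>n_l\gamma_l$ --- and read $g'_l$ off the normal-form expansion of the difference $G$; this is a slightly cleaner bookkeeping of the same cancellation argument, and it also absorbs the paper's separate case $\mu(q)\geq (1,0)$, i.e.\ $q=p$. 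Two small repairs: the sign should be $g'_l(x,u):=+\sum_{\mathbf t}a_{\mathbf t}(x)u_1^{t_1}\cdots u_l^{t_l}$, so that $g'_l(x,y,Q_2,\ldots ,Q_l)=G$ and $q=Q_l^{n_l}-\lambda_lx^{r_l}\prod_{k\in E(l)}Q_k^{t^{(l)}_k}-G$ as announced; and the bound $\deg_yG<n$ uses not only lemma \ref{degdeg} but also that $Q_l^{n_l}$ and $q$ are both unitary of degree $n$, so their leading terms cancel.
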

\begin{proof} First, by the Weierstrass preparation theorem, up to multiplication by a unit of $k[[x,y]]$, we may replace $q'(y)$ by a unitary polynomial $q(y)$ of the same degree as $p(y)$. If we divide $q(y)$ by the unitary polynomial $Q_l(y)$ we therefore obtain $q(y)=Q_l(y)^{n_l}+A_{l-1}Q_l(y)^{n_l-1}+\cdots +A_0$ with ${\rm deg}_yA_i<{\rm deg}_yQ_l(y)$. If $\mu(q(y))\geq (1,0)$, then by reason of degree we must have $q(y)=p(y)$ and the result is proved. So we assume $\mu(q(y))< (1,0)$. By the valuative Cohen theorem $q(y)$ is the image of a series $\tilde q(x,u_1,\ldots ,u_l,v)=u_l^{n_l}+\cdots\in \widehat{k[x,u_1,\ldots, u_l,v]}$ which is of weight $\leq n_l\gamma_l$ since it contains $u_l^{n_l}$. Now we use proposition \ref{initial} much as in the proof of proposition \ref{finhyp}. By our assumption the weight of $\tilde q$ is less than the valuation of $q(y)$ so that ${\rm in}_w\tilde q$ belongs to the binomial ideal of the initial forms of the equations $H_i$. But any binomial $u_i^{n_i}-\lambda_ix^{r_i}\prod_{k\in E(i)} u_k^{t^{(i)}_k}$ which is not $u_l^{n_l}-\lambda_lx^{r_l}\prod_{k\in E(l)} u_k^{t^{(l)}_k}$ can be replaced, modulo the equations $H_i$, by $g_i+u_{i+1}$, thus increasing the weight and making it disappear from the initial form. Finally this produces a series which, since its image contains $Q_l^{n_l}$, must be of the form stated in the proposition: the only possibility for its initial form, by reason of degree and because it must be in the binomial ideal, is to be $u_l^{n_l}-\lambda_lx^{r_l}\prod_{k\in E(l)} u_k^{t^{(l)}_k}$. The polynomial $g'_l(x,y,Q_2(y),\ldots ,Q_l(y))$ is of degree $<n$ because $p(y)-q(y)$ is of degree $<n$. Finally our polynomial $q(y)$ is obtained by elimination from an overweight deformation of the same binomial ideal as the polynomial $p(y)$ and must have the same semigroup.
\end{proof}
\begin{remark} \small{\emph{Applying the proposition to the $\nu_0$-analytically irreducible polynomials $Q_j(y)$ gives a generalization of Theorem 8.2 of \cite{GB-P}. The Abhyankar-Moh irreducibility criterion is in fact an equisingularity criterion: if a polynomial has the same degree as a $\nu_0$-analytically irreducible unitary polynomial and sufficiently high contact with it, then it is not only $\nu_0$-analytically irreducible but in fact "$\nu_0$-equisingular" with that polynomial, in the sense that it determines the same semigroup.}}
\end{remark}
\section{Key polynomials and the valuative Cohen theorem}\label{keyCohen}
The classical theory of key polynomials uses division by unitary polynomials of $K_0[y]$ and can at best produce polynomials with coefficients in $R_{\nu_0}[y]$ while the valuative Cohen theorem produces, at least when $R_0$ is complete, polynomials in $R_0[y]$. Also, when the semigroup $\Gamma$ of $\nu$ on $R_0[y]/(p(y))$ is finitely generated, the valuative Cohen theorem produces finitely many polynomials while the classical theory may produce infinitely many polynomials, as we have seen. In this section we indicate references which may help the reader to analyze the differences in viewpoints.\par Concerning our construction of key polynomials, the basic mechanism appears in the equations $H_i$ used above in the study of Abhyankar valuations of $k[[x]][y]/(p(y))$ and our key polynomials are the images in $k[[x]][y]/(p(y))$ of the variables $u_i$ by the valuative Cohen map. Assuming that all the $s_i=0$, these polynomials $Q_i(y)$ determine the valuation by the following construction (compare with \cite{Te1}, Example 4.20, and \cite{M1}): given a polynomial $q(y)$, we can replace in it every occurrence of $y^{n_1}$ by $x^{r_1}+g_1(x,y)+u_2$, and then continue inductively, replacing each occurrence of $u_i^{n_i}$ by $\lambda_ix^{r_i}\prod_k u_k^{t^{(i)}_k}+g_i(x,y,\ldots ,Q_i(y))+u_{i+1}$. In this way one produces in a finite number of steps a polynomial $\tilde q(y,Q_2(y),\ldots ,Q_s(y))$ in $y$ and the $(Q_k)_{k\geq 2}$ with coefficients in $k[[x]]$, the image of $\tilde q(u_1,u_2,\ldots ,u_s)\in\widehat{k[x, (u_i)_{i\in I}]}$ which has the virtue that its value is now the minimum of the values of its terms, which is also the weight of $\tilde q(u_1,u_2,\ldots ,u_s)$. Indeed, all the cancellations of initial forms which complicate the computation of valuations have been transmuted into variables with weights.\par If the $s_i$ are not all zero, we use for $q(y)$ the construction used for $p(y)$ in the proof of proposition \ref{finhyp}, with the same result as above except that now the polynomials $Q_i$ used in the substitutions are not unitary. In the complete case, this expansion in terms of the $u_i$ is exactly the one coming from the valuative Cohen Theorem.\par The inverse operation, which we have seen above, is the elimination of the variables $u_i$ to recover the polynomials in one variable whose valuations generate the semigroup, which amounts to defining explicitly the map of the valuative Cohen theorem. Indeed, if we replace each $u_i$ by the polynomial $Q_i(y)$ we get the expansion in terms of key polynomials which comes from that theory. These polynomials are key polynomials by proposition \ref{Key}, but here they are defined for rings, not fields as in the classical case, and the proof of proposition \ref{nokey} shows that there is indeed a difference.\par
The classical theory of key polynomials does not work in this way. Key polynomials were developed by Ostrowski (see \cite{Roq}) and MacLane (see \cite{Mac}) in special cases and in full generality by Vaqui\'e (see \cite{V1}-\cite{V5}) to determine all the extensions of a given valuation of the field $K$ to the field $K(y)$, where $y$ may be algebraic over $K$ or not. Given such an extension of valuations, the key polynomials are defined as "milestones" of the cancellations of initial forms which one has to take into account when computing the valuation of an element $Q(y)\in K[y]$; this systematic recording of cancellations ultimately provides a sequence, in general indexed by an ordinal, of unitary polynomials in $K[y]$ whose values determine completely the valuation. Each element of $K[y]$ is a polynomial in these polynomials in such a way that now its valuation is the least of the values of the terms.\par In a sense the theory of key polynomials constructs simultaneously, by an inductive process, a generating sequence for the valuation on $K_0[y]$ and the relations between its members. But it uses the structure of $K_0[y]$ in an essential way. For an arbitrary equicharacteristic noetherian complete local  domain the valuative Cohen theorem does the same with series but in a different, non constructive, way. Also, Vaqui\'e's theory has the advantage that it can be used to build all the extensions of a valuation such as $\nu_0$ while the valuative Cohen theorem starts from a given extension. Thus, Vaqui\'e's theory may build infinitely many polynomials whose values belong to a finitely generated semigroup, as we saw in section \ref{precisions}. The point here is that \emph{key polynomials record new cancellations of initial forms whether their result augments the semigroup of the valuation on a given subring of $K_0(y)$ or not}. There is no requirement of them corresponding to a minimal set of generators of the graded algebra of a noetherian subring. \par In (\cite{Te1}, examples 4.20 and 4.22) the author explained in the special case of valuations of $k(x)[y]$, where $k$ is algebraically closed of characteristic zero, the relationship between key polynomials, approximate roots, the valuative Cohen theorem, and the embedding of the field $k(x)(y)$ in a field of generalized power series \emph{\`a la} Hahn-Kaplansky (proposition 5.48 of \emph{loc.cit.}). Pedro Gonz\'alez P\'erez showed among other things in \cite{GP2} that the same mechanism is at work for quasi-ordinary irreducible  hypersurface singularities, still in characteristic zero.  In \cite{M1}, M. Moghaddam has extended this mechanism again to generalized quasi-ordinary series in $X_1,\ldots, X_d$, possibly non-algebraic over the field $k( X_1,\ldots, X_d)$.\par\noindent It is perhaps notable that although the construction given above differs from that described in the work of Vaqui\'e, the polynomials $Q_i(y)\in R_0[y]\subset  K_0[y]$ are indeed key polynomials for the valuation $\nu$. This is due mostly to part $(1)$ of proposition \ref{nokey} which gives a structure to the equations: this structure is automatically given by the classical construction of key polynomials but is in general sorely lacking in the equations given by the valuative Cohen theorem. The basic reason is that the rings to which we apply the valuative Cohen theorem are not assumed to be regular.  As we noted, seeking polynomials in $R_0[y]$ instead of $K_0[y]$ also makes a difference.  \par Other approaches, which also construct key polynomials step by step (in a transfinite sense) are found in the work of M. Spivakovsky and his collaborators, who have developed (see \cite{H-O-S}) a general theory of key polynomials from the viewpoint of generating sequences for valuations of rank one on complete regular local rings, generalized to higher rank by W. Mahboub (see \cite{Ma2}) and in the work of M. Moghaddam (see \cite{M2}, \cite{M3}) who has generalized the constructions of Favre and Jonsson (see \cite{Favre-Jonsson}) for $\C\{x,y\}$\footnote{A description of the valuation semigroups and residue field extensions for valuations of \emph{any} two dimensional regular local ring, containing also an algorithmic construction of a generating sequence for the valuation on $R$ itself, is given by Cutkosky and Vinh in \cite{C-V}.}. The relationship of the first approach with Vaqui\'e's is explained in \cite{Ma} but again there, the key polynomials are sought in $K_0[y]$ and not $R_0[y]$ although Mahboub corrects a lapsus in the definition of key polynomials in \cite{H-O-S} where one seeks generators of the graded ring with respect to the valuation which is associated to $K_0(y)$ and not $K_0[y]$.\par It should be noted that in \cite{C-V}, Cutkosky and Vinh build the equivalent of a sequence of key polynomials for a valuation of any noetherian regular local ring of dimension two. Their method is different from what we do in subsection \ref{Kpol} where we heavily use the hypothesis that the valuation is Abhyankar. Our proposition \ref{Abext} can be viewed as a generalization of cases 1), 3) and 4) of their proposition 3.4. There is a difference in case 3) since we allow ourselves to move in the tree of $\nu$-modifications and so can assume $\beta=0$ in their formula in view of the results of \cite{HOST} quoted in our proof.\par The possibility of defining a generalization of key polynomials for valuations on rings that are not regular has not yet been established. It would mean essentially a structuring of the equations for the ring $R$ given by the valuative Cohen theorem allowing the systematic elimination of variables $u_i$ that are not necessary to generate the maximal ideal of $R$.

\section{The Artin-Schreier example}\label{AS}
We continue the study we have begun in \cite{Te1} of the difference between the overweight deformation method for local uniformization and the ramification theoretic method of \cite{KK1} by revisiting example 4.23 of \cite{Te1} from the viewpoint of key polynomials. The main points here are that the construction of key polynomials does not commute with base change (or field extension) and that in positive characteristic the price to pay to write a fractional power series parametrization of a curve may be high in terms of key polynomials.\par
Let $k$ be a perfect field of finite characteristic $p$ and $x$ an indeterminate; set $K=\bigcup_{n\geq 1}k(x^{\frac{1}{p^n}})$, the perfect closure of $k(x)$.
There is a unique extension $\nu$ to $K$ of the $x$-adic valuation of $k(x)$, and its valuation ring is \textit{not} n\oe therian. The value group of this valuation is $\frac{1}{p^\infty}\Z$. Consider the series
$$y=\sum_{i=1}^{\infty}x^{1-\frac{1}{p^i}}\in k[[x^{{\mathbf Q}_+}]];\eqno{(NP)}$$
it is a solution of the polynomial equation
$$y^p-x^{p-1}(1+y)=0.$$ This equation is an Artin-Schreier equation\footnote{Of course this is a "baby case" compared to the general Artin-Schreier equation\break $y^p-g(x_1,\ldots ,x_r)^{p-1}y+f(x_1,\ldots ,x_r)=0$ for which local uniformization was proved for $r\leq 3$ in \cite{CP}.}: it is obtained from the standard Artin-Schreier
$z^p-z=\frac{1}{x}$ by replacing $z$ by $\frac{y}{x}$. Note that if $p=2$ it is non singular.\par
 If we set $L=K(y)$, it is shown in \cite{Ku} that the extension $L/K$ has degree $p$ and
defect $p$. More precisely, the unique extension $\nu$ to $K$ of the $x$-adic valuation of $k(x)$ has a unique extension $\nu'$ to $L$, with the same
group of values, so that the ramification index
$e=[\Phi':\Phi]$ is equal to one, and no residual extension so that the inertia degree $f=[\kappa(\nu'):\kappa (\nu)]$ is also equal to one. The extension is
of degree $p$ so that the Ostrowski ramification formula (see \cite{Ku}, \cite{Roq} and \cite{V2}), which is $[L:K]=def$, where $d$ is the defect, gives $d=p$. This defect
complicates the parametrization but does not make it more difficult to create a non-singular model of the affine plane curve defined by the same equation in $k[x,y]$. We remark that our curve is a deformation of the
monomial curve
$y^p-x^{p-1}=0$, and apply to this monomial curve the toric embedded resolution process of \cite{GP-T1} and \cite{Te1}: it gives us a proper toric map of non singular surfaces $\Z(\Sigma)\to\A^2(k)$ and a chart $Z(\sigma)$ of $Z(\Sigma)$ where the map is described by
$x=y_1^py_2,y=y_1^{p-1}y_2$. Our equation then becomes $y_1^{p(p-1)}y_2^{p-1}(y_2-1-y_1^{p-1}y_2)$, so that the strict transform
$y_2-1-y_1^{p-1}y_2=0$ is non singular. It can be parametrized in the Zariski neighborhood $y_1\neq 1$ of the point $y_1=0,\ y_2=1$ of the exceptional divisor $y_1y_2=0$ by
$y_2=\frac{1}{1-y_1^{p-1}}$, so that we have the following rational parametrization of our curve:
$$  x=\frac{y_1^p}{1-y_1^{p-1}};\ \ y=\frac{y_1^{p-1}}{1-y_1^{p-1}}.\eqno{(1)}$$
Remark that in this case the blowing-up of the origin also gives an embedded resolution.\par\noindent
It is with this parametrization that we get the embedding of proposition \ref{Kap} of $R=k[x,y]_{(x,y)}/(y^p-x^{p-1}(1+y))$ into $k[[y_1^\N]]\subset k[[y_1^{\R_{\geq 0}}]]$. To seek a Newton-Puiseux type embedding where $y$ is a series in $x$ makes things much more complicated as in the expansion $(NP)$ above.\par The fact that the extension $K\subset K(v)$ has defect seems to be related to the fact that while the extension of fields $k(x)\to k(x)[y]/(y^p-x^{p-1}(1+y))$
is separable, the extension of graded rings associated to the $x$-adic valuation of $k[x]$ and its extension to $k[x,y]/(y^p-x^{p-1}(1+y))$,
which is
$$k[X]\to k[X,Y]/(Y^p-X^{p-1})$$ is purely inseparable of degree $p$ (the projection to the $X$-axis does not make the monomial curve quasi-ordinary; see corollary \ref{quasio}) and in addition the binomial $y^p-x^{p-1}$ becomes reducible when we extend $k(x)$ to $K$ so that there is no hope for our ring to be, after this extension, an overweight deformation of such a simple binomial.\par
Let us now illustrate on this example the difference in positive characteristic between a system of key polynomials and our method of specialization to the associated graded ring, first for the extension $k(x)\to k(x)(y)$ and then for the extension $K\to K(y)$ where the key polynomials correspond to truncations of the solution $y=\sum_{i=1}^{\infty}x^{1-\frac{1}{p^i}}$ at each exponent where the denominator of the exponent increases, which in this case means truncating successively at every exponent.\par

Here, making more explicit what is described in \cite{Te1}, Example 4.20, we must distinguish between "natural coordinates" which belong to affine space in which the singularity corresponding to $R$ is determined by "natural equations" and the "key polynomials" which are obtained by eliminating most of the natural coordinates between the natural equations in order to obtain polynomials in one variable.\par

Let us first build a system of "key polynomials without root extraction" for the pseudo-valuation of the ring $k(x)[y]$ defined by the parametrization $(1)$, whose kernel is the prime ideal generated by $y^p-x^{p-1}(1+y)$. We normalize the valuation on $k(x)$ by setting $\nu(x)=1$.\par\noindent
The first key polynomial has to be $Q_0=y$, then we have $Q_0^p-x^{p-1}=Q_1$ and here, since we work over $k$, we cannot say that $Q_1$ is a $p$-th power and therefore not of minimal degree. So we keep $Q_1$ as our key polynomial. In view of the equation the value of $Q_1$ has to be that of $yx^{p-1}$, that is $p-\frac{1}{p}$. In fact the equation tells us that $Q_1-yx^{p-1}=0$ and we have to stop. So our system of polynomials consists of $y, Q_1$, and we find again the presentation of our curve as an overweight deformation of a curve defined by binomials, namely $Q_0^p-x^{p-1}=0\ , Q_1-yx^{p-1}=0$.\par We are no longer dealing with plane curves in the coordinates $x,y$ but are working in the space with "key coordinates" $x, y=Q_0, Q_1$, where our curve is a non transversal intersection of two non-singular surfaces: $Q_0^p-x^{p-1}-Q_1=0\ , Q_1-yx^{p-1}=0$. However, in view of the form of the second equation, this presentation is isomorphic to the deformation $y^p-x^{p-1}=yx^{p-1}$ of the binomial equation $y^p-x^{p-1}=0$ which we used above.\par\noindent Here we have \textit{not} tried to solve the equation with some series $y(x)$ \textit{\`a la} Newton-Puiseux, but to present our curve as a deformation of a binomial variety over $k$, namely $y^p-x^{p-1}=0$, which is a reduced binomial curve and as such has a toric embedded resolution of singularities. Over the field $K$ the same equation is a $p$-th power, namely $(y-x^{1-\frac{1}{p}})^p$. \par
Now let us build a sequence of key polynomials for the extension of the valuation $\nu_x$ on $K$ to the pseudo valuation on $K[y]$ whose kernel is the prime ideal generated by $y^p-x^{p-1}(1+y)$. Building the key polynomials amounts to writing systems of equations of plane curves of the form $F_j(x,y)=0$ whose solutions $y^{(j)}(x)$ are better and better finite approximations to our infinite series. We notice that on the curve we have $Q_1=(y-x^{1-\frac{1}{p}} )^p=yx^{p-1}$ so that the degree one polynomial $t_1=y-x^{1-\frac{1}{p}}$ has valuation $1-\frac{1}{p^2}>1-\frac{1}{p}=\nu(y)$ and we notice that $Q_1^p=t_1^{p^2}=x^{p^2-1}(1+y)$; its initial form is $Q_1^p-x^{p^2-1}=(Q_1-x^{p-\frac{1}{p}})^p=(t_1^p-x^{p-\frac{1}{p}})^p=(t_1-x^{1-\frac{1}{p^2}})^{p^2}$ and this produces a "generating sequence" polynomial $t_2=t_1-x^{1-\frac{1}{p^2}}$ whose $p^2$-th power, on the curve, is equal to $yx^{p^2-1}$ and which has valuation $1-\frac{1}{p^3}>\nu (t_1)$. Continuing in this manner we build a system of "key coordinates" $(t_i)_{i\geq 0}$ in $K[y]$, where $t_0=Q_0=y, t_1=y-x^{1-\frac{1}{p}}$ and which are subjected to the "key equations" $t_{i+1}=t_i-x^{1-\frac{1}{p^{i+1}}}$, with $\nu(t_i)=1-\frac{1}{p^{i+1}}$.  After eliminating $t_i$ between the first $i$ equations, which means interpreting $t_i$ as $y-\sum_{k=1}^ix^{1-\frac{1}{p^k}}$, they are the key polynomials for the extension $\nu$ from $K$ to $K[y]/(y^p-x^{p-1}(1+y))$ of the valuation $\nu_x$. The limit key polynomial is the result of the successive elimination of the variables $(t_i)_{i\geq 1}$ in the infinite sequence of degree one key polynomials. If the series $\sum_{i=1}^{\infty}x^{1-\frac{1}{p^i}}$ converged in the field $K$ for the topology given by the valuation $\nu_x$, the result would be the degree one polynomial $y-\sum_{i=1}^{\infty}x^{1-\frac{1}{p^i}}$, but it does not, so the result is the Artin-Schreier equation.\par
The $T_j=y-\sum_{k=1}^jx^{1-\frac{1}{p^k}}$ form a continuous admissible family of key polynomials of degree one in the sense of Vaqui\'e \cite{V4}, and their limit key polynomial is $y^p-x^{p-1}(1+y)=0$, which is of degree $p$. One verifies in this example the result of Vaqui\'e in \cite{V2}: \emph{the jump in degree between the members of the continuous admissible family and the limit key polynomial is equal to the defect.} \par
We have built a sequence $(\nu_i)_{i\geq 1}$ of pseudo-valuations of $K[y]$, beginning with the Gauss valuation $\nu_1$ which gives $y$ the value $1-\frac{1}{p}$ and corresponds to defining the value of a polynomial $P(y)\in K[y]$ as the order in $x$ of its restriction to the curve $y=x^{1-\frac{1}{p}}$, and in general $\nu_i(P)$ computes the order in $x$ of the restriction of the polynomial $P(y)$ to the curve $C^{(i)}$ given parametrically  by $y=\sum_{k=1}^ix^{1-\frac{1}{p^k}}$. This is essentially the same as what is done by Vaqui\'e at the end of \cite{V1}.\par\noindent
According to the theory of "approximate root" polynomials (see \cite{PP}), the valuations $\nu_i$ converge to the valuation $\nu$ as $i\to\infty$. In analogy to what is explained in \cite{Te1}, Example 4.20, in the characteristic zero case, if one keeps only the first $i$ equations $t_{k+1}=t_k-x^{1-\frac{1}{p^{k+1}}},\ 1\leq k\leq i$ and sets $t_{i+1}=0$, the system obtained defines, by elimination of the $t_k$ for $k\geq 1$, the equation of the curve $C^{(i)}$. The difference with \cite{Te1} is that here the limit curve as $i\to\infty$ is algebraic. The natural valuation on the curve $y^p-x^{p-1}(1+y)=0$ appears, as we have seen, as a very simple overweight deformation of  its associated graded ring $k[X,Y]/(Y^p-X^{p-1})$. The method of "approximate root" polynomials gives us an approximation process of this valuation, seen as a pseudo-valuation on the $(x,y)$ plane, by pseudo-valuations corresponding to the finite expansions of $y$ in rational powers of $x$ parametrizing the curves $C^{(i)}$.\par
Finally, the same story can be told in the language of overweight deformations: let $R_{\nu_x}\subset K$ be the valuation ring of the valuation $\nu_x$ and let ${\rm gr}_{\nu_x}R_{\nu_x}$ be its associated graded ring. Since the positive semigroup of $\frac{1}{p^\infty}\Z$ is $\frac{1}{p^\infty}\N$, we have a presentation ${\rm gr}_{\nu_x}R_{\nu_x}\simeq k[(U_i)_{i\geq 0}]/((U_i-U_{i+1}^p)_{i\geq 0})$, corresponding as in (\cite{Te1}, 4.3, corollary 4.13) to the presentation according to proposition \ref{finapp} of $\frac{1}{p^\infty}\N$ as the limit of the inductive system indexed by $\N$:
$$\N_{(0)}\subset\N_{(1)}\subset\cdots \subset \N_{(i)}\subset\cdots\subset\frac{1}{p^\infty}\N,$$ where each map is multiplication by $p$ and the map from the $i$-th copy of $\N$ to $\frac{1}{p^\infty}\N$ is $a\mapsto\frac{a}{p^i}$. The variable $U_i$ corresponds to the generator $\frac{1}{p^i}$ of the semigroup of values of $\nu_x$.\par The ring $({\rm gr}_{\nu_x}R_{\nu_x})[y]/(y^p-U_0^{p-1}(1+y))$ can be seen as an overweight deformation of the binomial scheme corresponding to the ring $({\rm gr}_{\nu_x}R_{\nu_x})[ (T_j)_{j\geq 0}]/(\{T_{i-1}-U_i^{p^i-1}\}_{i\geq 1})$, with the $T_i$ denoting the initial forms of the $t_i$ we introduced above, and the $U_i$ denoting, by abuse of notation, the images of the variables $U_i$ in ${\rm gr}_{\nu_x}R_{\nu_x}$. The deformation is given, exactly as in \cite{Te1}, 4.20, by changing the $i$-th equation $T_{i-1}-U_i^{p^i-1}=0$ to $t_{i-1}-U_i^{p^i-1}=t_i$ for all $i\geq 1$. The first equation $T_0-U_1^{p-1}$, read as $T_0-U_0^{1-\frac{1}{p}}$, is a factor of the initial form $T_0^p-U_0^{p-1}$, which is no longer irreducible over ${\rm gr}_{\nu_x}R_{\nu_x}$, of the equation $y^p-U_0^{p-1}(1+y)$. The first $j$ deformed equations, $(t_{i}=t_{i-1}-x^{1-\frac{1}{p^i}})_{1\leq i\leq j}$, once transformed by elimination into unitary polynomials of $K[y]$ in the variable $t_0=y$ as we have seen above, are the key polynomials for the valuation corresponding by overweight deformation (see proposition \ref{resoverwght}, a)) to the natural weight on the ring $({\rm gr}_{\nu_x}R_{\nu_x})[ (T_j)_{j\geq 0}]$ making the binomial equations homogeneous with $w(x)=1$.\par\medskip
We remark finally that the construction used in the proof of the quasi finiteness of the semigroup of Abhyankar valuations uses a different presentation for the complete local ring $$R=k[[x,y]]/(y^p-x^{p-1}(1+y)).\eqno{(A)}$$Indeed it suggests to write it as $$R=k[[y,x]]/(x^{p-1}-y^p(1+y)^{-1})\eqno{(B)}$$ since if we provide $R$ with the valuation induced from the $y_1$-adic valuation via the inclusion $R\subset k[[y_1]]$ given by the parametrization $(1)$, in the presentation $(A)$ we have used above the index of the group extension $\Phi_0\subset \Phi$ is not prime to $p$ (compare with remark \ref{tame}, 2)); although the extension of valued fraction fields is separable, it is not tame, but in $(B)$ it is. While in characteristic $p\neq 2$ the series $x^{p-1}-y^p(1+vy)^{-1}$ defines a non trivial deformation with parameter $v$ of its initial binomial, this deformation is equisingular in the sense of simultaneous resolution. If $p=2$ it gives a simultaneous power series parametrization of the curves of the family.\par As was mentioned at the end of the previous paragraph, the construction of key polynomials by elimination of key coordinates between key equations is more complicated in general and not yet understood.
\section{Conclusion}
If one sets aside the question of the finite generation of semigroups before any $\nu$-modification, the results of this paper complete the program of \cite{Te1} in the special case of Abhyankar valuations. The extension to all rational valuations might lead to a proof of local resolution of singularities along the following lines:\par
Let $k$ be an algebraically closed field and $X$ a closed reduced subscheme of a proper non singular algebraic scheme $W$ over $k$. One may ask (see \cite{Te1}, \cite{Te2} and \cite{Te3}) the following question:\par\noindent
\emph{Does there exist a closed embedding of $W$ into a non singular toric variety $Z$ such that the intersection of $X$ (resp. $W$) with the torus $T$ of $Z$ is dense in $X$ (resp. $W$) and there exist toric proper birational maps $Z'\to Z$ such that $Z'$ is non singular and the strict transforms $X'$ and $W'$ of $X$ and $W$ are also non singular and transversal to the toric boundary in $Z'$?}\par Moreover, the induced map $X'\to X$ should be a resolution of singularities (i.e., an isomorphism over the non singular part). One may ask further that the map $Z'\to Z$ is a composition of blowing-ups with non singular centers.\par
There are obvious local and formal versions of this question. The first formulation of a question of this type goes back to \cite{G-T} where such a re-embedding result was proved for germs of complex analytic plane branches. Pedro Gonz\'alez P\'erez and the author then proved an embedded resolution theorem for an affine toric variety equivariantly embedded in a normal one. This is a necessary step for the proof of embedded local uniformization by deformation of the embedded resolution of a toric variety associated to the valuation.\par A recent result of Tevelev shows that if one assumes embedded resolution of singularities to be true, most of the question above has a positive answer :\par\noindent
\begin{theorem}{\rm (Tevelev, \cite{Tev})} Let $k$ be an algebraically closed field of characteristic zero. Let $X\subset \P^n$ be an irreducible algebraic variety. For a sufficiently high order Veronese re-embedding $X\subset \P^N$ one can choose homogeneous coordinates $z_0,\ldots , z_N$, a smooth toric variety $Z'$ of the algebraic torus $T=\P^N\setminus \bigcup\{z_i=0\}$ and a toric birational morphism $Z'\to \P^N$ such that the following conditions are satisfied: $X\cap T$ is non-empty, the strict transform of $X$ in $Z'$ is smooth and intersects the toric boundary transversally, and $Z'\to\P^N$ is a composition of blowing-ups with smooth torus-invariant centers.
\end{theorem}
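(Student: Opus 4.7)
The plan is to base the proof on Tevelev's theory of tropical compactifications and on Hironaka's embedded resolution of singularities, the latter being available since we are in characteristic zero. First I would apply embedded resolution of singularities to obtain a proper birational morphism $\tilde X \to X$ with $\tilde X$ smooth, and I would note that for any very ample line bundle on $\tilde X$ one has plenty of sections separating points and tangent directions. The Veronese re-embedding will be engineered so that the linear system defining it restricts on $\tilde X$ to a very ample system with enough sections for a separation statement at every point of $\tilde X$.

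Next I would set up the tropical framework. After choosing homogeneous coordinates $z_0,\ldots,z_N$ on $\P^N$, pick a hyperplane $H_0 = \{z_0 = 0\}$ and identify $\P^N \setminus H_0$ with $\A^N(k)$, whose torus is $T \simeq (k^*)^N$. The intersection $X \cap T$ is a closed subvariety of $T$ (nonemptiness being arranged by choosing coordinates so that no $\{z_i = 0\}$ contains $X$; this is possible after a generic linear change once $N$ is large enough). Its tropicalization $\mathrm{Trop}(X \cap T) \subset N_\R$ is a rational polyhedral fan of dimension $\dim X$, and for each fan $\Sigma$ in $N_\R$ refining $\mathrm{Trop}(X \cap T)$ one obtains a toric variety $Z_\Sigma$ and a closure $\bar X_\Sigma \subset Z_\Sigma$. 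The crucial notion is \emph{schönness}: $X \cap T$ is schön if every initial degeneration $\mathrm{in}_w(X \cap T)$, for $w$ in the relative interior of a cone of $\mathrm{Trop}(X \cap T)$, is smooth. When this holds, Tevelev's structure theorem guarantees that for any \emph{smooth} fan $\Sigma$ supported on $\mathrm{Trop}(X \cap T)$, the closure $\bar X_\Sigma$ is smooth and meets the toric boundary transversally; moreover $\bar X_\Sigma \to X$ is a resolution because $\bar X_\Sigma \cap T = X \cap T$ and birationality is preserved.

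The hard and central step will be to show that schönness can be achieved by a sufficiently high Veronese re-embedding. I would argue as follows: choose the Veronese degree $d$ so large that on $\tilde X$ the line bundle $d \cdot H - K_{\tilde X}$ (where $H$ is the pullback of the hyperplane class) is very ample and the restriction map $H^0(\P^N,\O(1)) \to H^0(\tilde X, dH)$ is surjective with enough sections to separate all $\ell$-jets for $\ell$ bounded by the multiplicities appearing in $\mathrm{Trop}(X \cap T)$. An initial degeneration $\mathrm{in}_w(X \cap T)$ is computed by taking $k^*$-limits of $X \cap T$ under the one-parameter subgroup corresponding to $w$; using the resolution $\tilde X$ one sees that these limits are, Zariski-locally, images of flat families over $\A^1$ whose generic fibre is $X \cap T$. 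The high-degree Veronese inflates the coordinate ring enough that the appropriate jacobian criterion holds at every point of every initial degeneration, because the vanishing locus of an initial form $\mathrm{in}_w f$ is computed in coordinates separated by the Veronese sections; one checks this jacobian nondegeneracy by reducing to the corresponding statement on $\tilde X$, where smoothness is automatic.

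Finally, once schönness is established, I would refine $\mathrm{Trop}(X \cap T)$ to a regular fan $\Sigma'$ with support contained in $\R^N_{\geq 0}$ (possible by the standard resolution theorem for normal toric varieties invoked above in the proof of Proposition \ref{approx}), and further arrange, by a sequence of star-subdivisions along smooth torus-invariant strata, that the corresponding toric morphism $Z' = Z_{\Sigma'} \to \P^N$ is a composition of blow-ups with smooth torus-invariant centers; this is classical and uses that the barycentric (or more generally star) subdivisions of a regular fan correspond exactly to blow-ups of orbit closures. The main obstacle I expect to encounter is verifying the schönness after Veronese re-embedding at cones of the tropical fan where several initial degenerations can interact: one needs an effective Bertini-type statement for initial degenerations, and this is where the characteristic zero assumption enters essentially, both through Hironaka and through the generic smoothness of the multiplication map $T \times \bar X_{\Sigma'} \to Z_{\Sigma'}$ that characterizes schönness.
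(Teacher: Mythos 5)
This theorem is quoted from \cite{Tev} and not proved in the present paper, which only records in one sentence the mechanism of Tevelev's argument: the re-embedding is built from sections of invertible sheaves of the form ``$d\cdot H$ minus exceptional divisors'' coming from an embedded resolution $Y\to\P^n$. Your proposal starts in the same place (Hironaka in characteristic zero) and ends in the same place (refining a fan and factoring the toric map into smooth torus-invariant blow-ups, which is standard by \cite{DC-P} and \cite{Mo}), but the central step is not established.

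The gap is your claim that sch\"onness of $X\cap T$ ``can be achieved by a sufficiently high Veronese re-embedding,'' justified by jet separation and a Jacobian criterion ``reduced to $\tilde X$.'' This does not work as stated, because the initial degenerations ${\rm in}_w(X\cap T)$ depend on the choice of the coordinates $z_0,\ldots,z_N$ and not merely on the degree of the embedding: the plain $d$-th Veronese with monomial coordinates induces an injective homomorphism of tori, under which the tropicalization and all initial degenerations of $X\cap T$ are simply transported; they remain exactly as singular as before, for every $d$. Raising $d$ buys you nothing unless you simultaneously specify \emph{which} sections of $\O(d)$ are taken as coordinates, and your proposal never ties that choice to the resolution. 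The point of Tevelev's construction is precisely this tie: one takes $d$ large enough that the ideal sheaf $I$ of the center of the embedded resolution $Y\to\P^n$ satisfies that $I(d)$ is globally generated (plus the needed vanishing), and one includes among the $z_i$ a generating set of sections of $I(d)$ together with enough degree-$d$ monomials. Then the toric blow-up of $\P^N$ along the corresponding coordinate linear subspaces restricts on $\P^n$ to the blow-up of $I$, i.e., to $Y$, so the strict transform of $X$ is its strict transform in $Y$, which is smooth and transverse to the exceptional divisor by the choice of the resolution; transversality to the remaining toric boundary is then arranged by genericity of the complementary coordinates. Without this identification of coordinate subspaces with the blow-up center, your ``effective Bertini-type statement for initial degenerations'' is exactly the unproved content of the theorem, not a reduction of it.
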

Tevelev's proof starts from an embedded resolution of singularities $Y\to\P^n$  of $X\subset \P^n$ and the re-embedding is constructed from sections of invertible sheaves on $\P^n$ built from the images of special classes of divisors on $Y$ involving the exceptional divisor and the pull back of the hyperplane class of $\P^n$. In fact Tevelev shows a stronger result, without assumption on the characteristic:\Par\textit{ Given an embedding of $X$ in a non singular irreducible projective variety $S$, an embedded resolution of singularities $Y\to S$ for $X\subset S$ and  an ample invertible sheaf $L$ on $S$, the projective embedding of $X$ obtained through the projective embedding of $S$ by any sufficiently large multiple of $L$ will have the desired property in such a way that the strict transform of $S$ by the toric embedded resolution will be $Y$.}\Par It shows that such toric embedded resolutions are in a sense "universal" among embedded resolutions, so that embedded resolution by a single toric modification of a larger ambient space is indeed an alternative to embedded resolution by sequences of blowing-ups with non singular centers. Since by \cite{DC-P} and \cite{Mo} birational toric maps of non singular toric varieties can be dominated by sequences of blowing-ups of non singular invariant centers, one can hope for a future unification of the two viewpoints, obtained by generalizing what is achieved for quasi-ordinary hypersurfaces in characteristic zero by (\cite{GP2}, theorem 3).\par The approach which led to the program proposed in \cite{Te1} is of course different since one of its goals is to prove embedded resolution in any characteristic. It is to prove first a "local uniformization" version of the result, obtaining for each (rational) valuation a re-embedding after which it is uniformized by a toric map of the ambient space in suitable coordinates.\par After the results of this paper one can formulate the following problem to summarize the remaining part of the program of \cite{Te1} in that direction:\par\noindent
\emph{1) Give a combinatorial proof of toric embedded local uniformization for rational Abhyankar valuations (possibly by showing that their semigroup is finitely generated).\Par 2) Show that, for every valuation $\nu$ of an excellent equicharacteristic noetherian local domain with algebraically closed residue field, there exist rational Abhyankar valuations $\nu'$ such that certain toric embedded uniformizations of $\nu'$ uniformize $\nu$.}\par\noindent
 Then, one should prove using the quasi-compactness of the Zariski-Riemann manifold that there are finitely many valuations whose uniformizations suffice to uniformize all valuations, and finally to glue up the corresponding re-embeddings into a single one in which a toric modification uniformizes those valuations and thus locally resolves singularities..\par\noindent After that one can attack the problem of globalization.\par\medskip\noindent.\par\noindent
\textbf{Acknowledgments:} I am grateful to Dale Cutkosky for detecting imperfections in a first version of this paper and for several stimulating comments and suggestions. I am also grateful to Charles Favre, Evelia Garc\'\i a Barroso, Mohammad Moghaddam and Hussein Mourtada for interesting discussions, and to Olivier Piltant, Guillaume Rond, Mark Spivakovsky and the referee for useful comments on preliminary versions. I also thank Jenia Tevelev whose theorem gives encouragement to proceed with the program, and Patrick Popescu Pampu who communicated to me the question of D.A. Stepanov. Part of this work was completed in the excellent environment of the MSRI in Berkeley in the context of the Commutative Algebra year 2012-2013, with the support of the National Science Foundation under Grant No. 0932078 000.
\section{Appendix: On Hironaka's division theorem and flattening}\label{appendix}\par\medskip
Since the references \cite{Hi} and \cite{H-L-T} are not easily available, we present here for the convenience of the reader a brief summary of Hironaka's flattening adapted to our situation.\par
Let $A$ be a quotient of a power series ring over the field $k$ and let $(t_1,\ldots, t_n)$ be indeterminates. We are going to study submodules of $A$-modules of the form $B=\bigoplus_{i=0}^nA[[t_1,\ldots ,t_i]]^{a_i}$, where $a_i\in\N$ and the summand for $i=0$ is $A$. A homomorphism $\Delta \colon B\to B'$ of $A$-modules between two such sums is said to be \emph{natural} if all the homomorphisms $A[[t_1,\ldots ,t_i]]\to A[[t_1,\ldots ,t_j]]$ induced from $\Delta$ by composing injections and projections of summands are homomorphisms of $A[[t_1,\ldots ,t_{{\rm min}(i,j)}]]$-modules. One of Hironaka's versions of the division theorem goes as follows in our context:\Par
\begin{proposition}\label{Divis}{\rm (See \cite{Hi}, lemma 4.9)} Let $q$ be an integer and $J$ be a $A[[t_1,\ldots ,t_n]]$-submodule of $S=A[[t_1,\ldots ,t_n]]^q$. After a suitable invertible $k$-linear transformation of the $t_i$ one can find a system of non negative integers $a_i$ and a natural homomorphism of $A$-modules $$\Delta\colon B=\bigoplus_{i=0}^nA[[t_1,\ldots ,t_i]]^{a_i}\to S=A[[t_1,\ldots ,t_n]]^q$$ such that
\begin{enumerate}
\item $\Delta$ induces a surjective homomorphism of $A$-modules $\delta\colon B\to S/J$,
\item We have ${\rm Ker}\delta\subset m_AB$, where $m_A$ is the maximal ideal of $A$.
\end{enumerate}
\end{proposition}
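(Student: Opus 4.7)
The plan is to reduce the statement to a standard-basis (or Gröbner-type) description of $S/J$ as an $A$-module, combined with a generic $k$-linear change of the variables $t_1,\dots,t_n$. First I would choose a total monomial order $\prec$ on the monomials $t^\alpha = t_1^{\alpha_1}\cdots t_n^{\alpha_n}$, refined by the position in the free module $S = A[[t]]^q$, whose compatibility with the filtration $A \subset A[[t_1]] \subset A[[t_1,t_2]] \subset \cdots$ is encoded by giving higher order to monomials involving variables of larger index (for example, a degree-lexicographic order with $t_1 \prec t_2 \prec \cdots \prec t_n$). Given an element $f \in S$ whose coefficients are in $A$, define its \emph{initial exponent} $\exp(f) \in \N^n \times \{1,\dots,q\}$ as the $\prec$-smallest pair $(\alpha, j)$ such that the coefficient of $t^\alpha$ in the $j$-th component of $f$ is a unit of $A$ (i.e.\ does not lie in $m_A$). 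Let $\mathrm{E}(J) \subset \N^n \times \{1,\dots,q\}$ be the set of initial exponents of nonzero elements of $J$.

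Next, I would invoke Hironaka's formal division theorem (the power series analogue of the Buchberger/Gröbner theory), which says that once a monomial order is fixed, every element of $S$ admits a unique remainder modulo $J$ supported on the complement $\mathrm{N}(J) = (\N^n \times \{1,\dots,q\}) \setminus \mathrm{E}(J)$; a proof can be given by a standard ``Weierstrass-type'' iterative division procedure, using completeness of $A[[t]]$ with respect to the monomial filtration to guarantee convergence. This already produces a surjection of $A$-modules from the free $A$-module $\bigoplus_{(\alpha,j) \in \mathrm{N}(J)} A \cdot t^\alpha e_j$ onto $S/J$, and the complement $\mathrm{N}(J)$ is a monoideal complement: if $(\alpha,j) \in \mathrm{E}(J)$ then $(\alpha + \beta, j) \in \mathrm{E}(J)$ for all $\beta \in \N^n$.

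The core combinatorial step is then to bring $\mathrm{N}(J)$ into a form adapted to the flag of subrings $A \subset A[[t_1]] \subset \cdots \subset A[[t_1,\dots,t_n]]$. For each $i$, let $\mathrm{N}_i$ be the set of $(\alpha,j) \in \mathrm{N}(J)$ with $\alpha_{i+1} = \cdots = \alpha_n = 0$; call an element of $\mathrm{N}(J)$ of \emph{type} $i$ if $i$ is the largest index such that $\alpha_i$ is unbounded in $\mathrm{N}(J)$ (the direction $t_i$ is still ``free''). After a generic invertible $k$-linear change of the $t_j$, one can arrange (this is where the choice of coordinates is essential — as in the classical preparation theorem, genericity ensures that the staircase of $\mathrm{E}(J)$ is in ``Noether position'') that $\mathrm{N}(J)$ decomposes as a finite disjoint union of translates of the form $(\alpha^{(i,s)}, j^{(i,s)}) + \N^i \times \{0\}^{n-i}$ for $i = 0,\dots,n$ and $1 \le s \le a_i$. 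Setting $\Delta$ to send the $s$-th generator of the $i$-th summand $A[[t_1,\dots,t_i]]$ to $t^{\alpha^{(i,s)}} e_{j^{(i,s)}} \in S$ yields a natural $A$-homomorphism $B \to S$, and the division theorem gives that the induced $\delta \colon B \to S/J$ is surjective, proving (1).

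For (2), I would argue by Nakayama: reducing modulo $m_A$ gives the analogous division statement for $J \otimes_A k$ in $k[[t]]^q$, where by construction of $\mathrm{N}(J)$ the corresponding map $\overline\delta$ becomes an isomorphism. Any element of $\ker \delta$ therefore has image in $m_A \cdot B$, so $\ker \delta \subset m_A B$. The main obstacle I foresee is the combinatorial decomposition of $\mathrm{N}(J)$ as a disjoint union of ``rectangular'' translates: it requires showing that after a generic linear change of the $t_j$ the complement of a monomial monoideal in $\N^n$ admits such a stratification compatible with the standard flag, and that this stratification lifts faithfully to the module-valued setting with several components $j \in \{1,\dots,q\}$. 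This genericity/Noether-position argument, and the verification that the resulting $\Delta$ is ``natural'' in the sense defined before the proposition, is the technical heart of the proof; all other ingredients are formal consequences of the division algorithm and Nakayama's lemma.
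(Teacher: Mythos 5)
Your proposal is sound and follows the second of the two routes the paper itself indicates: you argue via the formal Grauert--Hironaka division theorem (standard bases, initial exponents defined by \emph{unit} coefficients so that the division is relative to the parameter ring $A$, remainders supported on the complement $\mathrm{N}(J)$ of the initial monoideal, and Nakayama for the assertion $\mathrm{Ker}\,\delta\subset m_AB$). The paper's primary sketch is instead a direct induction on $n$ using only the classical Weierstrass division theorem: generically some element of $J$ is $t_n$-regular of finite order $d$, division by it exhibits $S/J$ as a quotient of $\bigoplus_{j<d}A[[t_1,\ldots,t_{n-1}]]^{q}t_n^{j}$ by a submodule, and one iterates (the base case $A=k$, $q=1$, $J$ principal giving $B=k[[t_1,\ldots,t_{n-1}]]^{a}$ with $\ker\delta=0$). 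The comparison is instructive precisely at the step you flag as the technical heart: the decomposition of $\mathrm{N}(J)$ into finitely many translates of the \emph{initial-flag} quadrants $\N^i\times\{0\}^{n-i}$ after a generic $k$-linear change of the $t_j$. In the inductive Weierstrass argument this flag-compatible Stanley decomposition is not something one must verify separately --- it is produced automatically, one variable at a time, and the only genericity needed at each stage is that some element be regular in the last remaining variable, which holds in any characteristic. By contrast, trying to establish the decomposition directly from properties of the generic staircase (your appeal to ``Noether position'' of $\mathrm{E}(J)$) is the delicate point of your route: the generic initial ideal is only Borel-fixed in the characteristic-$p$ sense, which is weaker than strongly stable, so the existence of an initial-flag Stanley decomposition is not an immediate combinatorial consequence and is most safely obtained by the very induction the paper uses. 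The rest of your argument (convergence of the division for a degree-first order, surjectivity of $\delta$ from the existence of remainders, the identification $\mathrm{E}(J)=\mathrm{E}(\bar J)$ under reduction mod $m_A$, and the Nakayama step) is correct; note also that the naturality of $\Delta$ is immediate from your construction, since each generator is sent to a single monomial section of $S$, so that point is not actually an obstacle.
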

\begin{proof}The proof is by induction on $n$ using the classical Weierstrass division theorem, which corresponds to the case $A=k$ and $q=1$, with $J$ a principal ideal of $k[[t_1,\ldots ,t_n]]$. Then we have $B=k[[t_1,\ldots ,t_{n-1}]]^a$ and the kernel of $\delta$ is zero. One can also view it as a consequence of the formal version of the Grauert-Hironaka division theorem (see \cite{Hi2}, \cite{Galligo}).\end{proof}
Taking now $q=1$ so that $J$ is an ideal of $S=A[[t_1,\ldots ,t_n]]$, we take a presentation $\Delta\colon B\to S$ as above and, setting $t(i)=(t_1,\ldots ,t_i)$, consider the unique expansion of elements $g$ of $\Delta^{-1}(J)$ as $$g=\sum_{i=0}^n\sum_{j=1}^{a_i}\sum_{\alpha\in\N^i}g_{ij\alpha}t(i)^\alpha ,$$
with $g_{ij\alpha}\in A$.\Par
The ideal $\Ff$ of $A$ generated by the coefficients $g_{ij\alpha}$ as $g$ runs through $\Delta^{-1}(J)$ is the \emph{universal flattener} of the map $A\to S/J$. It means that $S/J\otimes_A A/\Ff$ is a flat $A/\Ff$-module and the map $A\to A/\Ff$ is minimal for this property. The first statement is easy to verify since by construction we have $S/J\otimes_A A/\Ff= B\otimes_A A/\Ff= \bigoplus_{i=0}^nA/\Ff[[t_1,\ldots ,t_i]]^{a_i}$.\par
If the ideal $\Ff$ is invertible, there exists an element $h\in S/J$ which is not contained in the ideal $m_A.S/J$ but satisfies $h\Ff .S/J=0$. Indeed, assuming that a generator of $\Ff$ is $g_{ij\alpha}$, we can write $g=g_{ij\alpha}g'$ with $g'\in B$ and $g'\notin m_AB$ since one of its coefficients is equal to $1$. The image $h=\delta(g')\in S/J$ is not in $m_AS/J$ since ${\rm Ker}\delta\subset m_AB$, and by the definition of $\Ff$ we have $\delta (g)=h\delta (g_{ij\alpha})=0$, which means $h\Ff .S/J=(0)$.\Par
So when we blow up the flattener of $A\to S/J$, localize at a point and complete to obtain a map $A\to A'$ of quotients of power series rings, the flattener of\break $S/J\otimes_AA'$, which is $\Ff.A'$ by the universal property, is an invertible ideal, and the strict transform $ (S/J)'$ of $ S/J$, which is $S/J\otimes_AA'$ divided by its $\Ff.A'$-torsion, is a quotient of $(S/J\otimes_AA')/h'$ with $h'\notin m_{A'}(S/J\otimes_AA')$. The inclusion of the fiber of the space corresponding to $(S/J)'$ over the (point corresponding to the) maximal ideal $m_{A'}$ into the fiber of the  space corresponding to $S/J\otimes_AA'$ is strict since $h'\notin m_{A'}(S/J\otimes_AA')$. Now we take the flattener of the $A'$-module $(S/J)'$, which is an ideal of $A'$, blow it up and localize the corresponding space at a point lying over the maximal ideal of $A'$. At each iteration of this process, the inclusion of the fiber of the space corresponding to the strict transform into the previous fiber is strict, so that it has to stop after finitely many iterations, and then the flattener of the corresponding strict transform must be zero, which means that the corresponding $\delta$ for this strict transform is an isomorphism and the map is flat. In the proof of proposition \ref{sepex} we have a valuation of $S/J$ which picks a point in each of the strict transforms, and we use the strict inclusion of fibers at that point.
\frenchspacing


\providecommand{\bysame}{\leavevmode\hbox to3em{\hrulefill}\thinspace}
\providecommand{\MR}{\relax\ifhmode\unskip\space\fi MR }
\providecommand{\MRhref}[2]{%
  \href{http://www.ams.org/mathscinet-getitem?mr=#1}{#2}
}
\providecommand{\href}[2]{#2}
\begin{thebibliography}{}

\end{thebibliography}


\begin{thebibliography}{87}

\bibitem{A} S.S. Abhyankar,  Irreducibility Criterion for Germs of Analytic Functions of Two Complex
Variables. \textit{Advances in Mathematics} \textbf{74}, (1989), 190-257.

\bibitem{ACH} M.E. Alonso, F.J. Castro-Jim\'enez, and H. Hauser, Effective algebraic power series. \textit{Manuscript 2012}, available at http://homepage.univie.ac.at/herwig.hauser/index.html/Publikationen

\bibitem{An} G. Angerm\"uller, Die Wertehalbgruppe einer ebenen irreduziblen algebroiden Kurve. \textit{Math. Z.} \textbf{53} (1977), 267-282.

\bibitem{As} A. Assi, Irreducibility criterion for quasi-ordinary polynomials. \textit{Journal of singularities} \textbf{4} (2012), 23-34.

\bibitem{B3}
N.~Bourbaki, \textit{\'{E}l\'ements de math\'ematique. Alg\`ebre Commutative, Chap. I-VIII}. Masson, Paris

\bibitem{B1} N.~Bourbaki, \textit{\'{E}l\'ements de math\'ematique. Alg\`ebre commutative, Chap. VIII et IX}. Masson, Paris 1983.

\bibitem{B4} N.~Bourbaki, \textit{\'{E}l\'ements de math\'ematique. Alg\`ebre commutative, Chap. X}. Masson, Paris 1998.

\bibitem{B2}
N.~Bourbaki,  \textit{\'{E}l\'ements de math\'ematique. {VII}.
  {A}lg\`ebre, {C}hap. {III}: {A}lg\`ebre multilin\'eaire}. Actualit\'es
  Sci. Ind., No. 1044, Hermann et Cie., Paris, 1948.

\bibitem{Br} H. Bresinsky, Semigroups corresponding to algebroid branches in the plane.  \textit{Proc. Amer. Math.
Soc.} \textbf{32} (1972), 38I-384.

\bibitem{C} A. Campillo, Algebroid Curves in Positive Characteristic. \textit{Springer Lecture Notes in Math., No. 813}, Springer 1980.

\bibitem{C-G} A. Campillo, C. Galindo, On the graded algebra relative to a valuation. \textit{Manuscripta Math.} \textbf{92} (1997), 173-189.

\bibitem{CCD} E. Cattani, R. Curran and A. Dickenstein, Complete intersections in toric ideals.  \textit{Proc. Amer. Math. Soc.} \textbf{135} (2007), 329-335.

\bibitem{CM} V. Cossart and G. Moreno-Soc\'\i as, Racines approch\'ees, suites g\'en\'eratrices, suffisance des jets. \textit{Annales Fac. Sci. Toulouse} \textbf{14} (3) (2005), 353-394.

\bibitem{CP} V. Cossart and O. Piltant, Resolution of singularities of threefolds in positive characteristic. II. \textit{J. of Algebra} \textbf{321} (7) (2009), 1836-1976.

 \bibitem{C-T} S. D. Cutkosky and B. Teissier, Semigroups of valuations on local rings. \textit{Michigan Math. J.} \textbf{57} (2008), 173-193.

\bibitem{C-V} S. D. Cutkosky and P. A. Vinh, Valuation semigroups of two dimensional local rings. To appear in the \textit{Proceedings of the London Math. Soc.} \textit{ArXiv:1105.1448v1.}

\bibitem{DC-P} C. De Concini, C. Procesi, Complete symmetric varieties. II. Intersection theory. In \textit{Algebraic groups and related topics, (Kyoto/Nagoya, 1983)}. 481-513,  Adv. Stud. Pure Math., 6, North-Holland, Amsterdam, 1985.

\bibitem{E-GZ} W. Ebeling,  S. M. Gusein-Zade, On divisorial filtrations associated with Newton diagrams. \textit{J. Singul.} \textbf{3} (2011), 1-7.

\bibitem{Ei-S}
D. Eisenbud and B. Sturmfels, Binomial ideals. \textit{Duke Math. J.} \textbf{84} (1) (1996), 1-45.

\bibitem{ELS} L.~Ein, R. Lazarsfeld, and K. Smith, Uniform approximation of valuation ideals in smooth function fields. \textit{Amer. J. of Math.} \textbf{125} (2) (2003), 409-440.

 \bibitem{EL} George A. Elliott, On totally ordered groups. In \textit{Ring Theory, Waterloo, 1978}  (ed. by D. Handelman and J. Lawrence). Springer Lecture Notes in Mathematics, No. 734, 1-49.

\bibitem{Ewald}
G.~Ewald, \textit{Combinatorial convexity and Algebraic Geometry}. Graduate Texts in Mathematics No. 168. Springer 1996.

\bibitem{Ewald-Ishida}
G.~Ewald and M-N. Ishida, Completion of real fans and Zariski-Riemann spaces.
  \textit{Toh\^oku Math. J.} \textbf{58} (2) (2006), 189-218.

\bibitem{Favre-Jonsson} C.~Favre and M.~Jonsson, \textit{The valuative tree}. Lecture Notes in Mathematics, Vol.
1853, Springer-Verlag, Berlin, 2004.

\bibitem{Galligo} A. Galligo, Th\'eor\`eme de division et stabilit\'e en g\'eom\'etrie analytique locale. \textit{Ann. Institut Fourier} \textbf{29} (2) (1979), 109-184.

\bibitem{GB-P} E. Garc\'\i a Barroso and A. P\l oski, An approach to plane algebroid branches.  \textit{arXiv:1208.0913,} to appear.

\bibitem{GB-GP} E. Garc\'\i a Barroso and P. Gonz\'alez-P\'erez, Decomposition in bunches of the critical locus of a quasi-ordinary map. \textit{Compos. Math.}  \textbf{141},  (2) (2005) 461√ê486.

\bibitem{Gi} R. Gilmer, \textit{Commutative semigroup rings}. University of Chicago Press, Chicago, 1984.

\bibitem{G-T} R. Goldin and B. Teissier, Resolving singularities of plane analytic branches with one toric morphism. In
\textit{Resolution of Singularities, a research textbook in tribute to Oscar Zariski} (ed. by H. Hauser, J. Lipman, F. Oort, and A. Quir\'os). Progress in Math. Vol. 181,
Birkh\"auser, Basel, 2000, 315-340.

 \bibitem{GP1}
P.~Gonz\'alez P\'erez, Singularit\'es quasi-ordinaires toriques et poly\`edre de Newton du discriminant. \textit{Canad. J. Math.} \textbf{52} (2) (2000),  348-368.

\bibitem{GP2}
P.~Gonz\'alez P\'erez, Toric embedded resolutions of quasi-ordinary hypersurfaces.
\textit{Annales Inst. Fourier (Grenoble)} \textbf{53} (2003), 1819-1881.

\bibitem{GP3}
P.~Gonz\'alez P\'erez, Approximate roots, toric resolutions and deformations of a plane branch. \textit{Journ. Math. Soc. of Japan} \textbf{62} (3) (2010), 975-1004.

\bibitem{GP-T1}
P.~Gonz\'alez P\'erez and B. Teissier, Embedded resolutions of non necessarily normal affine toric varieties.
\textit{Comptes-rendus Acad. Sci. Paris, Ser.1} \textbf{(334)} (2002), 379-382. Available at http://people.math.jussieu.fr/$_{\tilde{\ }}$teissier/

\bibitem{GP-T2}
P.~Gonz\'alez P\'erez and B. Teissier, Toric geometry and the Semple-Nash modification.
\textit{Revista de la Real Academia de Ciencias Exactas, Fisicas y Naturales, Serie A, Matem\'aticas} \textbf{108} (1) (2014), 1-46.  Available at http://people.math.jussieu.fr/$_{\tilde{\ }}$teissier/

\bibitem{Gr} A. Granja, M.C. Mart\'inez, and C. Rodr\'iguez, Analytically irreducible polynomials with coefficients in a real-valued field. \textit{Proceedings of the AMS} \textbf{138} (10) (2010), 3449-3454.

 \bibitem{H-O-S} F.J. Herrera Govantes, M. A. Olalla Acosta and Mark Spivakovsky, Valuations in algebraic field extensions. \textit{Journal of Algebra} \textbf{312}  (2007), 1033-1074.

\bibitem{HOST} F. J. Herrera Govantes, M. A. Olalla Acosta, M. Spivakovsky and B. Teissier. Extending a valuation centered in a local domain to the formal completion. \textit{Proc. of the London Math. Soc.}  \textbf{105} (3) (2012), 571-621. doi:10.1112/plms/pds002.

\bibitem{EGA4.1} A. Grothendieck, J. Dieudonn\'e, \textit{El\'ements de G\'eom\'etrie Alg\'ebrique, Chap. IV, premi\`ere partie}, Pub. Math. I.H.E.S, No.20. 1964.

\bibitem{EGA4} A. Grothendieck, J. Dieudonn\'e. \textit{El\'ements de G\'eom\'etrie Alg\'ebrique, Chap. IV, 4\`eme partie}, Pub. Math. IHES, No.32, 1967.

\bibitem{Hi} H. Hironaka, subanalytic sets and subanalytic maps. \textit{Publ. Instituto Leonida Tonelli}, Pisa, 1973.

\bibitem{Hi2} H. Hironaka, Idealistic exponents of singularities. In \textit{Algebraic geometry (J. J.
Sylvester Sympos., Johns Hopkins Univ., Baltimore, Md., 1976)}, Johns Hopkins Univ. Press, Baltimore, Md.,1977, 52-125.

\bibitem{H-L-T} H. Hironaka, M. Lejeune-Jalabert and B. Teissier, Platificateur local en g\'eom\'etrie analytique et aplatissement local. In  \textit{Singularit\'es \`a Carg\`ese (Rencontre "Singularit\'es en G\'eom\'etrie Analytique", Institut d'Etudes Scientifiques de Carg\`ese, 1972)},  Ast\'erisque, Nos. 7 et 8 (1973), 441-463.

\bibitem{JM} M. Jonsson, M. Mus\c ta\u ta, Valuations and asymptotic invariants for sequences of ideals. ArXiv 1011.3699v.3. To appear in \textit{Annales de l'Institut Fourier}.

\bibitem{K} I. Kaplansky, Maximal fields with valuations. \textit{Duke Math. J.} \textbf{9} (1942), 303-321.

\bibitem{Ka} O. Kashcheyeva, A construction of generating sequences of valuations centered in dimension 3 regular local rings. \textit{Preprint, 2012}.

\bibitem{Ka-Kh1} K. Kaveh, A. Khovanskii, Newton-Okounkov bodies, semigroups of integral points, graded algebras and intersection theory. \textit{Annals of Math.} \textbf{176} (2) (2012), 925-978 .

\bibitem{KK1} H. Knaf,  F.-V. Kuhlmann, Abhyankar places admit local uniformization in any characteristic. \textit{ Ann. Sci. \'Ecole Norm. Sup.} \textbf{38}  (4)  (2005),   833--846.

\bibitem{KK2} H. Knaf, F.-V. Kuhlmann, Every place admits local uniformization in a finite extension of the function field,  \textit{Adv. Math.} \textbf{221} (2) (2009), 428-453.

\bibitem{Kr} W. Krull, Allgemeine Bewertungstheorie. \textit{J.f. d. Reine u. angew. Math.} \textbf{167} (1932), 160-196.

\bibitem{Ku} F.-V.\ Kuhlmann, Valuation theoretic and model theoretic aspects of local
uniformization.  In \textit{Resolution of Singularities, a research textbook in tribute to Oscar Zariski} (ed. by H. Hauser, J. Lipman, F. Oort, and A. Quir\'os). Progress in Math. Vol. 181,
Birkh\"auser, Basel 2000, 381-456.

\bibitem{Ku2}   F.-V.\ Kuhlmann, Maps on ultrametric spaces, Hensel's lemma, and differential equations over valued fields. \textit{Comm. Algebra} \textbf{39} (5) (2011) 1730-1776.

\bibitem{Ku3}   F.-V.\ Kuhlmann, Approximation of elements in henselizations. \textit{Manuscripta math.} \textbf{136} (2011), 461-474.

\bibitem{KKMS}
G. Kempf, F. Knudsen, D. Mumford, B. Saint-Donat. \textit{Toroidal embeddings I},
Springer Lecture Notes in Math. No. 339, Springer 1973.

\bibitem{Mac} S. MacLane, A construction for Absolute Values in Polynomial Rings. \textit{Transactions of the A.M.S.} \textbf{40} 3 (1936), 363-395.

\bibitem{Ma} W. Mahboub, Key polynomials. \textit{Journ. Pure and applied Alg.} \textbf{217} (6) (2013), 989-1006.

\bibitem{Ma2} W. Mahboub, Th\`ese, Universit\'e de Toulouse, Nov. 2013.

\bibitem{M1} M. Moghaddam, A construction for a class of valuations of the field $k(X_1,\ldots ,X_d,Y)$ with large value group. \textit{J. of Algebra} \textbf{319} 7 (2008) 2803-2829.

\bibitem{M2} M. Moghaddam, On Izumi's theorem on comparison of valuations, \textit{Kodai Math. J.}
\textbf{34} (2011), 16-30.

\bibitem{M3} M. Moghaddam, Realization of a certain class of semigroups as value semigroups of valuations. \textit{Bulletin of the Iranian Mathematical Society} \textbf{35}, (1) (2009), 61-95.

\bibitem{M4} M. Moghaddam, Embedding of valuations. \textit{Manuscript}.

\bibitem{Mo} R. Morelli, The birational geometry of toric varieties. \textit{J. Algebraic Geom} \textbf{5} (4) (1996), 751-782.

\bibitem{Ne} B.H. Neumann, On ordered division rings. \textit{Trans. Amer. Math. Soc.} \textbf{66} (1949), 202-252.

\bibitem{O} A. Ostrowski, Untersuchung zur arithmetische Theorie der K\"orper,
part III. \textit{Math. Zeit.} \textbf{39} (1934).

\bibitem{Pi} H. Pinkham, \textit{Deformations of algebraic varieties with ${\mathbf G}_m$ action}. Ast\'erisque No. 20. (1974)

\bibitem{PP} P. Popescu Pampu, Approximate roots. In \textit{Valuation theory and its applications, Vol. 1} Proceedings of the Saskatoon Conference and Workshop on valuation theory, Saskatoon 1999 (ed. by F-V. Kuhlmann, S. Kuhlmann, M. Marshall). Fields Institute Communications, Vol. 33, 2003, 285-321.

\bibitem{PP2} P. Popescu Pampu, sur le contact d'une hypersurface quasi-ordinaire avec ses hypersurfaces polaires. \textit{Journ. Inst. Math. Jussieu} \textbf{3} (1) (2004), 105-138.

\bibitem{R} L. R\'edei, \textit{The theory of finitely generated commutative semigroups}. Internat. Ser. Monogr. Pure Appl. Math., 82, Pergamon, Oxford,1965.

\bibitem{Ri} P. Ribenboim, Corps maximaux et complets pour des valuations de Krull. \textit{Math. Zeitschr.} \textbf{69} (1958), 466-479.

\bibitem{Ri2} P. Ribenboim, \textit{The theory of classical valuations}, Springer 1999.

\bibitem{Ri3} P. Ribenboim, \textit{Th\'eorie des Valuations}, Presses de l'Universit\'e de Montr\'eal, Novembre 1964.

\bibitem{Roq} P. Roquette, History of Valuation Theory, Part I. In \textit{Valuation theory and its applications, Vol. 1} Proceedings of the Saskatoon Conference and Workshop on valuation theory, Saskatoon 1999 (ed. by F-V. Kuhlmann, S. Kuhlmann, M. Marshall). Fields Institute Communications, Vol. 33, 2003, 291-355.

\bibitem{S} A. Seidenberg, Valuation ideals in polynomial rings, \textit{Trans. Amer. Math. Soc.} \textbf{57} (1945), 387-425.

\bibitem{SSa} J.-C. San Saturnino, Th\'eor\`eme de Kaplansky effectif pour des valuations de rang 1 centr\'ees sur des anneaux locaux r\'eguliers et complets. \textit{Ann. Inst. Fourier} \textbf{63} (2013). Available at ArXiv:1203.4283

\bibitem{St} D. A. Stepanov, Universal valued fields and lifting points in local tropical varieties. \textit{ArXiv:1304 7726v1}.

\bibitem{Stu}  B. Sturmfels, \textit{Gr\"obner bases and convex polytopes}. University Lecture Series Vol. 8, AMS Providence, 1996.

\bibitem{Te0} B. Teissier, Appendix to \textit{The Moduli problem for plane branches}, by O. Zariski, translated by Ben Lichtin, University Lecture Series, Volume 39, AMS, 2006. In French: Le probl\`eme des modules pour les branches plane, Hermann, Paris, 1986, et Ecole Polytechnique, Centre de Math\'ematiques, 1973.

\bibitem{Te1}
B. Teissier, Valuations, deformations, and toric geometry.
In \textit{Valuation Theory and its applications, Vol. II}, Fields Inst. Commun. 33, AMS., Providence, RI., 2003, 361-459.  Available at http://people.math.jussieu.fr/$_{\tilde{\ }}$teissier/

\bibitem{Te2}
B. Teissier,  Monomial ideals, binomial ideals, polynomial ideals.
\textit{Trends in Commutative Algebra}, MSRI publications, Cambridge University Press 2004, 211-246. Available at http://people.math.jussieu.fr/$_{\tilde{\ }}$teissier/

\bibitem{Te3} B. Teissier, A viewpoint on local resolution of singularities. Oberwolfach Workshop on Singularities, September 2009. \textit{Oberwolfach Reports}, Vol. 6, No. 3, 2009. European Math. Soc. Publications. Available at http://people.math.jussieu.fr/$_{\tilde{\ }}$teissier/

\bibitem{Tem} M. Temkin, Inseparable local uniformization. \textit{J. of Algebra} \textbf{373} (2013), 65-119.

\bibitem{Tev} J. Tevelev, On a question of B. Teissier. \textit{Collectanea Math.} \textbf{65} (1) (2014), 61-66. (Published on line February 2013. DOI 10.1007/s13348-013-0080-9)

\bibitem{V0} M.~Vaqui\'e, Valuations. In \textit{Resolution of Singularities, a research textbook in tribute to Oscar Zariski} (ed. by H. Hauser, J. Lipman, F. Oort, and A. Quir\'os). Progress in Math. Vol. 181,
Birkh\"auser, Basel 2000, 539-590.

\bibitem{V1} M.~Vaqui\'e, Extension d'une valuation. \textit{Trans. Amer. Math. Soc.} \textbf{359} (7) (2007), 3439-3481 (electronic).

\bibitem{V2} M.~Vaqui\'e, Famille admissible de valuations et d\'efaut d'une extension. \textit{J. of Algebra} \textbf{311} (2) (2007), 859-876.

\bibitem{V3}M.~Vaqui\'e, Extensions de valuation et polygone de Newton. \textit{Ann. Inst. Fourier} \textbf{58} (7) (2008), 2503-2541.

\bibitem{V4} M.~Vaqui\'e, Famille admise associ\'ee \`a une valuation de $K[x]$. In \textit{Singularit\'es Franco-Japonaises}, S\'emin. Congr., 10, Soc. Math. France, Paris, 2005, 391-428.

\bibitem{V5} M.~Vaqui\'e, Alg\`ebre gradu\'ee associ\'ee \`a une valuation de $K[x]$. In \textit{Singularities in geometry and topology 2004}, Adv. Stud. Pure Math., 46, Math. Soc. Japan, Tokyo, 2007, 259-271.

\bibitem{Z} O.~Zariski, Local Uniformization on Algebraic Varieties. \textit{Annals of Math., second series,} \textbf{41} (4) (1940), 852-896.

\bibitem{Z2} O. Zariski, Theory and applications of holomorphic functions on algebraic varieties over arbitrary ground fields.
\textit{Mem. Amer. Math. Soc.},  \textbf{5} (1951), 1-90. Reprinted in: \textit{Collected papers}, Vol. II, MIT Press,
1973, 72-161.

\bibitem{Z-S} O. Zariski and P. Samuel, \textit{Commutative Algebra I, II}. Van
Nostrand 1958, 1960. Reprints: \textit{Graduate Texts in Mathematics, Vols. 28, 29}, Springer 1975.

\end{thebibliography}
\end{document}